\def\chaptermark#1{}%whatever
\def\chapter{%
  \if@openright\cleardoublepage\else\clearpage\fi
  \thispagestyle{plain}\global\@topnum\z@
  \@afterindenttrue \secdef\@chapter\@schapter}
\def\@chapter[#1]#2{\refstepcounter{chapter}%
  \ifnum\c@secnumdepth<\z@ \let\@secnumber\@empty
  \else \let\@secnumber\thechapter \fi
  \typeout{\chaptername\space\@secnumber}%
  \def\@toclevel{0}%
  \ifx\chaptername\appendixname \@tocwriteb\tocappendix{chapter}{#2}%
  \else \@tocwriteb\tocchapter{chapter}{#2}\fi
  \chaptermark{#1}%
  \addtocontents{lof}{\protect\addvspace{10\p@}}%
  \addtocontents{lot}{\protect\addvspace{10\p@}}%
  \@makechapterhead{#2}\@afterheading}
\def\@schapter#1{\typeout{#1}%
  \let\@secnumber\@empty
  \def\@toclevel{0}%
  \ifx\chaptername\appendixname \@tocwriteb\tocappendix{chapter}{#1}%
  \else \@tocwriteb\tocchapter{chapter}{#1}\fi
  \chaptermark{#1}%
  \addtocontents{lof}{\protect\addvspace{10\p@}}%
  \addtocontents{lot}{\protect\addvspace{10\p@}}%
  \@makeschapterhead{#1}\@afterheading}
\newcommand\chaptername{Chapter}
\def\@makechapterhead#1{\global\topskip 7.5pc\relax
  \begingroup
  \fontsize{\@xivpt}{18}\bfseries\centering
    \ifnum\c@secnumdepth>\m@ne
      \leavevmode \hskip-\leftskip
      \rlap{\vbox to\z@{\vss
          \centerline{\normalsize\mdseries
              \uppercase\@xp{\chaptername}\enspace\thechapter}
          \vskip 3pc}}\hskip\leftskip\fi
     #1\par \endgroup
  \skip@34\p@ \advance\skip@-\normalbaselineskip
  \vskip\skip@ }
\def\@makeschapterhead#1{\global\topskip 7.5pc\relax
  \begingroup
  \fontsize{\@xivpt}{18}\bfseries\centering
  #1\par \endgroup
  \skip@34\p@ \advance\skip@-\normalbaselineskip
  \vskip\skip@ }
\def\appendix{\par
  \c@chapter\z@ \c@section\z@
  \let\chaptername\appendixname
  \def\thechapter{\@Alph\c@chapter}}
\newcounter{chapter}
\newif\if@openright
\newcommand{\bY}[1]{    \begin{Young}  #1  \end{Young}}
\renewcommand*\env@matrix[1][*\c@MaxMatrixCols c]{%
  \hskip -\arraycolsep
  \let\@ifnextchar\new@ifnextchar
  \array{#1}}
\tikzset{
	ch/.style={circle,draw,on chain,inner sep=2pt},
	chj/.style={ch,join},
	every path/.style={shorten >=4pt,shorten <=4pt}
	}
\newcommand{\dnode}[2][chj]{%
	\node[#1,label={below:#2}] (#1) {};}
\newcommand{\dnodenj}[1]{%
	\dnode[ch]{#1}}
\newcommand{\dydots}{%
	\node[chj,draw=none,inner sep=1pt] {\dots};}
\numberwithin{equation}{section}
\newcommand{\Bjj}{\dot{\mathfrak B}^{\fc}}
\newcommand{\KcL}{\bK^{\fc}_{\breve n,  1, 0}}
\newcommand{\Kcap}{\bK^{\fc,ap}_{\breve n}}
\newcommand{\Kap}{\bK^{ap}_{\breve n}}
\newcommand{\KL}{\bK_{\breve n,  1, 0}}
\newcommand{\dKcL}{\dot{\bK}^{\fc}_{\breve n,  1, 0}}
\newcommand{\dKcb}{\dot{\bK}^{\fc}_{\breve n}}
\newcommand{\dKL}{\dot{\bK}_{\breve n,  1, 0}}
\newcommand{\XitL}{\Xit_{\breve n,  1, 0}}
\newcommand{\TtL}{\~{\Tt}_{\breve n,  1, 0}}
\newcommand{\hKcL}{\^{\bK}^{\fc}_{\breve n,  1, 0}}
\newcommand{\hKL}{\^{\bK}_{\breve n,  1, 0}}
\newcommand{\hKb}{\^{\bK}_{\breve n}}
\newcommand{\trho}{\~{\rho}}
\newcommand{\hrho}{\^{\rho}}
\newcommand{\hKjj}{\^{\bK}^{\fc}_n}
\newcommand{\Xiz}{\Xi^0_n}
\newcommand{\Xizb}{\Xi^0_{\breve n}}
\newcommand{\Dcbab}{\Delta^{\fc}_{\breve n,\mathbf a', \mathbf b', \mathbf a'', \mathbf b''}}
\newcommand{\hKcn}{\^{\bK}^{\fc}_n}
\newcommand{\dKcn}{\dot{\K}^{\fc}_n}
\newcommand{\Kcn}{\bK^{\fc}_n}
\newcommand{\adKcb}{\leftidx{_\mathbf{a}}{\dot{\bK}^\fc_\mathbf{b}}}
\newcommand{\adKcbp}{\leftidx{_{\mathbf{a}'}}{\dot{\bK}^\fc_{\mathbf{b}'}}}
\newcommand{\hKn}{\^{\bK}_n}
\newcommand{\dKn}{\dot{\K}_n}
\newcommand{\Kn}{\bK_n}
\newcommand{\adKbpp}{\leftidx{_{\mathbf{a}''}}{\dot{\bK}_{\mathbf{b}''}}}
\newcommand{\egeo}{e^{\textup{geo}}}
\newcommand{\Sjjg}{\bS^{\fc, \textup{geo}}_{n,d}}
\newcommand{\Xjj}{\mathcal{X}^{\fc}_{n,d}}
\newcommand{\Yjj}{\mathcal{Y}^{\fc}_{d}}
\newcommand{\Sjj}{\bS^{\fc}_{n,d}}
\newcommand{\Kjj}{\bK^{\fc}_n}
\newcommand{\dKjj}{\dot{\bK}^{\fc}_n}
\newcommand{\dKjjp}{\dot{\bK}^{\fc,>}_n}
\newcommand{\Sji}{\bS^{\jmath\imath}_{\nn,d}}
\newcommand{\Kji}{\bK^{\jmath\imath}_{\nn}}
\newcommand{\Sij}{\bS^{\imath\jmath}_{\nn,d}}
\newcommand{\Kij}{\bK^{\imath\jmath}_{\nn}}
\newcommand{\Sii}{\bS^{\imath\imath}_{\eta,d}}
\newcommand{\Kii}{\bK^{\imath\imath}_\eta}
\newcommand{\af}{\alpha}
\newcommand{\alg}{\textup{alg}}
\newcommand{\bA}[1]{    \begin{aligned}  #1  \end{aligned}}
\newcommand{\ba}[1]{    \begin{array}   #1   \end{array}}
\newcommand{\bc}[1]{     \begin{cases}   #1   \end{cases}}
\newcommand{\bH}{\mbf H}
\newcommand{\bj}{\mbf j}
\newcommand{\bJ}{\mbf J}
\newcommand{\bK}{\mbf K}
\newcommand{\bk}{\mbf k}
\newcommand{\bM}[1]{    \begin{bmatrix}    #1    \end{bmatrix}}
\newcommand{\bp}[1]{\big{(} #1\big{)}}
\newcommand{\Bp}[1]{\Big{(} #1\Big{)}}
\newcommand{\bS}{\mbf S}
\newcommand{\bU}{\mathbf{U}}
\newcommand{\cA}{\mathcal{A}}
\newcommand{\bbA}{\mathbb{A}}
\newcommand{\cI}{\mathcal{I}}
\newcommand{\cO}{\mathcal{O}}
\newcommand{\coA}{\textup{col}_{\mathfrak{a}}}
\newcommand{\coC}{\textup{col}_{\mathfrak{c}}}
\newcommand{\cP}{\mathcal{P}}
\newcommand{\cX}{\mathcal{X}}
\newcommand{\D}{\mathscr{D}} %distinguished coset representative
\newcommand{\diag}{\textup{diag}}
\newcommand{\ds}{\displaystyle}
\newcommand{\ep}{\epsilon}
\newcommand{\Ett}{E_\tt}
\newcommand{\f}{\mathbf f}
\newcommand{\fa}{\mathfrak{a}}
\newcommand{\fc}{\mathfrak{c}}
\newcommand{\FF}{\mathbb{F}}
\newcommand{\gc}{\cellcolor{gray!40}}
\newcommand{\g}{\mathfrak g}
\newcommand{\geo}{\textup{geo}}
\newcommand{\gl}{\mathfrak{gl}}
\newcommand{\GL}{\mrm{GL}}
\newcommand{\gstd}{g^{\textup{std}}}
\newcommand{\Ia}{I^+_\fa}
\newcommand{\Hom}{\textup{Hom}}
\newcommand{\inv}{^{-1}}
\newcommand{\id}{\mathbbm{1}}
\newcommand{\Xitij}{\~{\Xi}^{\imath\jmath}_{\nn}}
\newcommand{\Xitii}{\~{\Xi}^{\imath\imath}_{\eta}}
\newcommand{\Xitji}{\~{\Xi}^{\jmath\imath}_{\nn}}
\newcommand{\Jql}{\mathbf{J}}
\newcommand{\K}{\mbf K}
\newcommand{\Ker}{\textup{Ker}~}
\newcommand{\dKij}{\dot{\bK}_{\nn}^{\imath\jmath}} %
\newcommand{\dKii}{\dot{\bK}_\eta^{\imath\imath}} %
\newcommand{\dKji}{\dot{\bK}_{\nn}^{\jmath\imath}} %
\newcommand{\ld}{\lambda}
\newcommand{\Ld}{\Lambda}
\newcommand{\lr}[1]{\left[ #1 \right]} %usual quantum number
\newcommand{\lrb}[2]{\left[\begin{array}{c} #1\\#2\end{array} \right]} %usual q binomial
\newcommand{\LR}[1]{\left\llbracket #1 \right\rrbracket} % bar-invariant
\newcommand{\mA}{\mathcal{A}}
\newcommand{\mbb}{\mathbb}
\newcommand{\mbf}{\mathbf}
\newcommand{\mrm}{\mathrm}
\newcommand{\nn}{\mathfrak{n}}
\newcommand{\NN}{\mathbb{N}}
\newcommand{\offd}{\star}
\newcommand{\otw}{\textup{otherwise}}
\newcommand{\Perm}{\mathrm{Perm}}
\newcommand{\SO}{\mrm{SO}}
\newcommand{\p}[1]{\leftidx{_{p}}{#1}}
\newcommand{\Ldij}{\Ld^{\imath\jmath}}
\newcommand{\Ldii}{\Ld^{\imath\imath}}
\newcommand{\Ldji}{\Ld^{\jmath\imath}}
\newcommand{\pp}[1]{\leftidx{_{\breve{p}}}{#1}}
\newcommand{\Tp}{\texttt{T}}
\newcommand{\Xij}{\Xi^{\imath\jmath}_{\nn,d}}
\newcommand{\Xijp}{\V{\Xi}^{\imath\jmath}_{\nn,d}}
\newcommand{\Xii}{\Xi^{\imath\imath}_{\eta,d}}
\newcommand{\Xji}{\Xi^{\jmath\imath}_{\nn,d}}
\newcommand{\QQ}{\mathbb{Q}}
\newcommand{\qbinom}[2]{\begin{bmatrix} #1\\#2 \end{bmatrix} }
\newcommand{\roA}{\textup{row}_{\fa}}
\newcommand{\roC}{\textup{row}_{\fc}}
\newcommand{\RR}{\mathbb{R}}
\newcommand{\rw}{\rightarrow}
\newcommand{\sig}{\sigma}
\newcommand{\SP}{\mrm{Sp}}
\newcommand{\Span}{\textup{Span}}
\newcommand{\Stab}{\textup{Stab}}
\newcommand{\stdP}{^{\cP}}
\newcommand{\tA}{\mathrm{A}}
\newcommand{\taffA}{\~{\mathrm{A}}}
\newcommand{\taffB}{\~{\mathrm{B}}}
\newcommand{\taffC}{\~{\mathrm{C}}}
\newcommand{\taffD}{\~{\mathrm{D}}}
\newcommand{\tB}{\mathrm{B}}
\newcommand{\tC}{\mathrm{C}}
\newcommand{\tD}{\mathrm{D}}
\newcommand{\tinv}{\mathrm{inv}}
\newcommand{\tMat}{\mathrm{Mat}}
\newcommand{\tneg}{\mathrm{neg}}
\newcommand{\tnsp}{\mathrm{nsp}}
\newcommand{\tor}{\textup{or }}
\renewcommand{\tt}{\theta}
\newcommand{\tX}{\mathrm{X}}
\newcommand{\ur}{\mathrm{ur}}
\newcommand{\tif}{\textup{if }}
\newcommand{\tfor}{\textup{for }}
\newcommand{\tand}{\textup{and }}
\newcommand{\Tt}{\Theta}
\newcommand{\V}[1]{\widecheck{#1}}
\newcommand{\ve}{\varepsilon}
\newcommand{\W}{W}
\newcommand{\wo}{w_\circ}
\newcommand{\Xit}{\~{\Xi}}
\newcommand{\Y}{\mathcal Y}
\newcommand{\ZZ}{\mathbb{Z}}
\newcommand{\ZZij}{\mathbb{Z}^{\imath\jmath}}
\renewcommand{\^}[1]{\widehat{#1}}
\renewcommand{\~}[1]{\widetilde{#1}}
\renewcommand{\=}[1]{\overline{#1}}
\renewcommand{\tt}{\theta}
\theoremstyle{definition}
\newtheorem{Def}{Definition}[section] %[subsection]
\newtheorem{rem}[Def]{Remark}
\theoremstyle{plain}
\newtheorem{prop}[Def]{Proposition}
\newtheorem{thm}[Def]{Theorem}
\newtheorem{lem}[Def]{Lemma}
\newtheorem{cor}[Def]{Corollary}
\newtheorem{lemma}[Def]{Lemma}
\newtheorem*{thmA}{Theorem A}
\newtheorem*{thmB}{Theorem B}
\newtheorem*{thmC}{Theorem C}
\newtheorem*{thmD}{Theorem D}
\newtheorem*{thmE}{Theorem E}
\newtheorem*{thmF}{Theorem F}
\newenvironment{mysmallmatrix}{%
    \let\saved@CT@row@color\CT@row@color
    \begin{smallmatrix}%
}{%
    \end{smallmatrix}%
    \global\let\CT@row@color\saved@CT@row@color
}
\title[Affine Hecke algebras and quantum symmetric pairs]{Affine Hecke algebras and quantum symmetric pairs}
\author[Z. Fan, C. Lai, Y. Li, L. Luo and W. Wang]{Zhaobing Fan,  Chun-Ju Lai, Yiqiang Li,  Li Luo and Weiqiang Wang}
\address{School of science, Harbin Engineering University, Harbin 150001, China}
    \email{fanz@math.ksu.edu  (Fan)}
\address{Department of Mathematics, University of Virginia, Charlottesville, VA 22904}
\curraddr{Department of Mathematics, University of Georgia, Athens, GA 30602}
    \email{cjlai@uga.edu (Lai)}
\address{Department of Mathematics, SUNY at Buffalo,  Buffalo, NY 14260}
    \email{yiqiang@buffalo.edu (Li)}
\address{ %${}^{\dagger}$
    Department of mathematics, Shanghai Key Laboratory of Pure Mathematics and Mathematical Practice, East China Normal University, Shanghai 200241, China}
\email{lluo@math.ecnu.edu.cn (Luo)}
\address{Department of Mathematics, University of Virginia, Charlottesville, VA 22904}
    \email{ww9c@virginia.edu (Wang) }
\keywords{}
\subjclass{}
\begin{document}

\begin{abstract}
We introduce an affine Schur algebra via the affine Hecke algebra associated to Weyl group of affine type C.
We establish multiplication formulas on the affine Hecke algebra and affine Schur algebra.
Then we construct monomial bases and canonical bases for the affine Schur algebra.
The multiplication formula allows us to establish
a stabilization property of the family of affine Schur algebras
 that leads to the modified version of an algebra ${\mathbf K}^{\mathfrak c}_n$.
We show that ${\mathbf K}^{\mathfrak c}_n$  is a coideal subalgebra of quantum affine algebra
${\bf U}(\widehat{\mathfrak{gl}}_n)$,
and $\big({\bf U}(\widehat{ \mathfrak{gl}}_n), {\mathbf K}^{\mathfrak c}_n)$ forms a quantum symmetric pair.
The modified coideal subalgebra is shown to  admit monomial and stably canonical bases.
We also formulate several variants of the affine Schur algebra and
the (modified) coideal subalgebra above, as well as their monomial and canonical bases.
This work provides a new and algebraic approach which complements and sheds new light on
our previous geometric approach on the subject.
In the appendix by four of the authors, new length formulas for the Weyl groups of affine classical types are obtained in a symmetrized fashion.

\end{abstract}

\maketitle

\setcounter{tocdepth}{1}
\tableofcontents

\newpage

%=========================================================
\chapter{Introduction}

%====
\section{History}

Dipper and James \cite{DJ89} introduced the finite Schur algebra
as the endomorphism algebra of a sum of permutation modules of the finite type A Hecke algebra.
(All Schur algebras in this paper are understood as quantum Schur algebras.)
Around the same time, the same Schur algebra was constructed geometrically in \cite{BLM90}.
The paper \cite{BLM90} further provides a construction of the
(modified) quantum group of finite type A and its stably canonical basis
by studying the stabilization property of the structures of the family of the Schur algebras.

There have been several works on the generalization to affine type A of the Schur algebras \cite{GV93, Gr99, Lu99, SV00, DDF12}.
However the stabilization phenomenon in the affine type A setting is understood only in recent years \cite{DF14, DF15}.
In the affine type A, the Chevalley generators do not form a generating set for the affine Schur algebras or the corresponding stabilization algebra.
It was shown in \cite{DF14}  (also cf. \cite{FL15}) that there exists a generating set consisting of semisimple generators
which correspond to the bidiagonal matrices; moreover a multiplication formula with semisimple generators was provided. A different approach for some main results in \cite{DF14} was given in \cite{FLLLW}.

The Schur algebra via the finite type B/C Hecke algebra \`a la Dipper-James was studied by R.~Green \cite{Gr97} (also cf. \cite{HL06}).
This construction was generalized and put in a much broader context under the so-called $\imath$-Schur duality \cite{BW13},
where the coideal subalgebra of the quantum group of type A features naturally.
A geometric realization of such a Schur algebra via flag varieties of type B/C is given in \cite{BKLW} (and also \cite{LW15}),
where a BLM-type stabilization leads to a (modified) coideal subalgebra of the quantum group of type A and its canonical basis.
An affinization of this geometric construction in the setting of affine type C flag varieties is developed extensively in \cite{FLLLW}.

%====
\section{The goal}

The goal of this paper is to develop a Hecke algebraic approach toward the affine Schur algebras,
their stabilization algebras, as well as their monomial and canonical bases. This paper can be viewed as a companion paper of our previous work using a geometric approach \cite{FLLLW}.
The current approach provides new algebraic constructions and proofs which shed new light on the underlying algebra structures, and it complements the geometric constructions in \cite{FLLLW}. It offers new multiplication formulas for the affine Hecke and affine Schur algebras.

The paper is naturally organized into 2 parts.
In Part~1, which consists of Chapters~\ref{sec:Schur}--\ref{sec:basis}, we study in depth the structures of affine Schur algebras. We start by presenting the affine Weyl group of type C as a group of permutations of $\mathbb Z$ satisfying suitable conditions.
We introduce the affine Schur algebra $\Sjj$ as the endomorphism algebra of a suitable sum of permutation modules of the corresponding affine Hecke algebra. After a detailed combinatorial study of double cosets of affine Weyl groups, we construct a standard basis, a monomial basis, and a canonical basis for $\Sjj$. We further identify $\Sjj$ and its canonical basis with the geometric counterparts introduced  in \cite{FLLLW}.

At the heart of this paper lie the multiplication formulas of the affine Hecke algebra and then of the affine Schur algebra. As the computational and combinatorial details here are rather challenging and tedious, we have made a serious effort to organize them in a (hopefully) orderly manner, and present the multiplication formulas in compact forms.

Part~2, which consists of Chapters~\ref{sec:coideal}--\ref{sec:coideal2}, studies the stabilization properties of the family of affine Schur algebras, as $d$ varies and goes to infinity. The aforementioned multiplication formula of $\Sjj$ plays a fundamental role in the formulation of the stabilization properties of the family of algebras $\Sjj$ and their bar involutions.
Such stabilization properties allow us to define a stablization (or limiting) algebra $\dKjj$ equipped with a monomial basis, a canonical basis, as well as  a multiplication formula. We further identify the algebra $\dKjj$ here with its counterpart constructed geometrically in \cite{FLLLW}.

There is a standard way to construct an algebra $\K^{\fc}_n$, which is a ``non-idempotented" variant of $\dKjj$. The analogous affine type A constructions give us similar algebras $\dKn$ and $\K_n$, which are identified with the idempotented/usual quantum affine $\gl_n$ in \cite{DF14, DF15}.
By studying the comultiplication formulated in \cite{FLLLW} using the multipication formula for $\dKjj$ in this paper, we show that $\K^{\fc}_n$ is a subalgebra $\K_n$ and $(\K_n, \K^{\fc}_n)$ forms a quantum symmetric pair of affine type in the sense of Kolb \cite{Ko14}.

\vspace{2mm}
Below we present a detailed overview of the approach and main results of this paper.

%====
\section{Main results}
\subsection{Multiplication formulas}

Let $\bH$ be the Hecke algebra associated to the Weyl group $W=W(d)$ of affine flag variety of type C,
with generators $T_w$ for $w\in W$.
We caution that while the algebra $\bH$ is naturally associated to affine flag variety of type C,
it is often viewed in literature from Langlands dual viewpoint,
and referred  as affine Hecke algebra of type B.

Following Lusztig's presentation in affine type A, there have been two presentations of  affine Weyl groups of type C as permutation groups on $\ZZ$;
cf. \cite{B86, Shi94, EE98, BB05}, and their length function formulas are given in \cite{EE98, BB05}; see \eqref{eq:old l(g)}.
In this paper, we choose the presentation of $W$ as a permutation group on $\ZZ$ with two fixed points in each period, which makes the symmetries of $W$ more transparent (cf. \cite{Shi94}).
In particular, this leads to a new and simple length function formula; see Lemma~\ref{lem:l(g)}.

We define the affine Schur algebra $\Sjj$ as the endomorphism algebra of a sum of
permutation modules of  the affine Hecke algebra $\bH$.
Like in \cite{DF14, DF15}, we shall develop a new multiplication formula for the affine Schur algebra $\Sjj$.
However there is a major difference here from affine type A.
The semisimple generators {\em loc. cit.} are bar invariant (and correspond to closed orbits geometrically \cite{FL15}),
and the multiplication formula therein does not require nontrivial multiplication on the Hecke algebra level.
In contrast, we shall see that the generators in our setting correspond to tridiagonal matrices, and
they are not bar invariant in general (neither do they correspond to closed orbits on partial affine flag varieties \cite{FLLLW}).

To develop a multiplication formula for affine Schur algebra,
we are led to first establish a multiplication formula at the affine Hecke algebra level,
which is technically challenging and combinatorially involved.
For notations we refer to the sentence above Theorem~\ref{thm:heckemult}.
\begin{thmA}   [Theorem~\ref{thm:heckemult}]
 Let $B = \kappa(\ld, g_1, \mu)$ be a tridiagonal matrix.
Then, for any $g_2 \in \D_{\mu \nu}$ and $w \in \D_{\delta(B)} \cap W_{\mu}$, we have
\[
T_{g_1}T_{wg_2}=\sum_{\sig\in K_w}
(v^2-1)^{n(\sig)}v^{2h(w,\sig)}T_{g_1\sig wg_2}.
\]
\end{thmA}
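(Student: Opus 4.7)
The plan is to prove the identity by induction on $\ell(w)$, using only the two defining relations of the affine Hecke algebra, namely $T_uT_{s_i}=T_{us_i}$ when $\ell(us_i)>\ell(u)$ and $T_uT_{s_i}=T_{us_i}+(v^2-1)T_u$ otherwise, and then to match the resulting combinatorial expansion against the right-hand side by exploiting the tridiagonal shape of $B$.

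First I would reduce to the case $g_2=e$. Since $w\in W_\mu$ and $g_2\in\D_{\mu\nu}$ is a minimal double coset representative, one has $\ell(wg_2)=\ell(w)+\ell(g_2)$, whence $T_{wg_2}=T_wT_{g_2}$. Moreover, for each $\sigma\in K_w$ I expect that $\sigma w$ again lies in $W_\mu\cdot\D_{\mu\nu}$ with length additivity $\ell(\sigma w g_2)=\ell(\sigma w)+\ell(g_2)$, since $K_w$ is supposed to parametrize modifications of $w$ internal to the parabolic block determined by $B$; accordingly $T_{\sigma w g_2}=T_{\sigma w}T_{g_2}$. It therefore suffices to establish
\begin{equation*}
T_{g_1}T_w=\sum_{\sigma\in K_w}(v^2-1)^{n(\sigma)}v^{2h(w,\sigma)}\,T_{g_1\sigma w},
\end{equation*}
after which right-multiplication by $T_{g_2}$ recovers the stated identity in full.

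Next, I would fix a reduced expression $w=s_{i_1}\cdots s_{i_r}$ with $r=\ell(w)$ and each $s_{i_j}\in W_\mu$, and compute $T_{g_1}T_w=(\cdots(T_{g_1}T_{s_{i_1}})T_{s_{i_2}}\cdots)T_{s_{i_r}}$ one factor at a time. At each stage the product $T_uT_{s_{i_k}}$ either extends to $T_{us_{i_k}}$ (call this an ``up'' step) or splits into $T_{us_{i_k}}+(v^2-1)T_u$ (call this a ``down'' step), depending on whether $\ell(u s_{i_k})$ exceeds $\ell(u)$. Iterating over $k=1,\dots,r$ yields a sum indexed by patterns in $\{\mathrm{up},\mathrm{down}\}^r$. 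Patterns that survive as actually nonvanishing, non-colliding terms should correspond bijectively to $\sigma\in K_w$; under this correspondence $n(\sigma)$ is the number of down steps (each contributing one factor of $v^2-1$), and $2h(w,\sigma)$ accumulates from the intermediate length comparisons, whose parity and count are governed by how rows and columns of $g_1$ interact with the sub- and super-diagonal bands of $B$.

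The main obstacle is twofold. On the one hand, the final sum must be independent of the choice of reduced expression for $w$; this amounts to a compatibility check against the braid relations of $W$, and in affine type C this includes the two order-$4$ relations at the end nodes, whose handling typically requires a careful local case analysis. On the other hand, one has to rigorously identify the ``admissible pattern'' set that emerges from the induction with the externally defined set $K_w$, and to verify that the accumulated $v^2$-exponent equals $h(w,\sigma)$ on the nose. The tridiagonal hypothesis on $B$ is essential here: it severely restricts which simple reflections in $W_\mu$ can actually induce a down step when multiplied onto $g_1$, so that the surviving patterns have a concrete interpretation in terms of how $\sigma$ permutes indices between adjacent bands of $B$. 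Managing this bookkeeping uniformly across all $w\in\D_{\delta(B)}\cap W_\mu$, and extracting the compact closed forms for $n(\sigma)$ and $h(w,\sigma)$, is where the promised technical density of the argument lies.
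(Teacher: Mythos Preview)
Your reduction to $g_2=e$ is a genuine gap, and in fact it collapses the content of the theorem. Two things go wrong. First, the set $K_w$ and the exponent $h(w,\sigma)$ are \emph{defined} in terms of $(wg_2)^{-1}$; see \eqref{def:Kw(i)} and \eqref{def:Q(wsig)1}. If you strip $g_2$, the condition $(wg_2)^{-1}(k)<(wg_2)^{-1}(j)$ becomes $w^{-1}(k)<w^{-1}(j)$, but since $w\in W_\mu$ preserves the blocks $R^\mu_i$, and $j\in R^\delta_{3i-2}\subset R^\mu_{i-1}$ while $k\in R^\delta_{3i-1}\subset R^\mu_i$, one always has $w^{-1}(j)<w^{-1}(k)$. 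Thus $K_w$ degenerates to $\{\id\}$ and the theorem becomes the triviality $T_{g_1}T_w=T_{g_1w}$ (which holds because $g_1\in\D_\mu^{-1}$). Second, the length additivity $\ell(\sigma wg_2)=\ell(\sigma w)+\ell(g_2)$ you invoke is unjustified: the transpositions $(j,k)_{\mathfrak c}$ in $\sigma$ swap elements between \emph{different} $R^\mu$-blocks, so $\sigma w\notin W_\mu$, and $g_2\in\D_\mu$ gives you no control over $\ell(\sigma w\cdot g_2)$.

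The paper's approach is almost the mirror image of yours: rather than expanding a reduced word for $w$ and multiplying simple reflections onto $T_{g_1}$ from the right, it uses the \emph{explicit} reduced expression of $g_1$ coming from the tridiagonal shape (Lemma~\ref{lem:g1'}) and multiplies those simple reflections onto $T_{wg_2}$ from the left. Concretely, $g_1=\prod_{i=1}^{r+1} g_1^{(i)}$ with each $g_1^{(i)}$ a product of increasing runs $s_{m+\beta+t-1}\cdots s_{m+t}$; one computes $(T_{s_{m+\beta+\alpha-1}}\cdots T_{s_{m+\alpha}})T_{wg_2}$ recursively, obtaining the intermediate identity \eqref{heckemult1}, and then iterates over the remaining columns. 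The up/down dichotomy is governed by inequalities of the form $(wg_2)^{-1}(m+\alpha)\gtrless(wg_2)^{-1}(k)$, which is exactly the condition defining $K_w$. This sidesteps both of your obstacles: no braid-relation compatibility is needed (the reduced word for $g_1$ is canonical), and $g_2$ is never separated from $w$.
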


By construction, the affine Schur algebra $\Sjj$ admits a basis $\{e_A~|~A\in \Xi_{n,d} \}$
parametrized by the set $\Xi_{n,d}$ \eqref{eq:Xind} of $n$-periodic centro-symmetric $\ZZ\times\ZZ$ matrices over
$\NN$ of size $d$. (We sometimes normalize $e_A$ to become the standard basis $[A]$.)
We have the following multiplication formula for the affine Schur algebra $\Sjj$,
and shall refer to the paragraph above Theorem~\ref{thm:multformula} for notations.

\begin{thmB} [Theorems~\ref{thm:multformula} and ~\ref{thm:multformula2}]
Let $A, B \in \Xi_{n,d}$ with $B$  tridiagonal and $\roC(A)=\coC(B)$.
Then we have
\[ e_B   e_A =
\sum\limits_{\substack{T \in \Tt_{B,A}\\ S \in \Gamma_T}}
(v^2-1)^{n(S)}
v^{2( \ell(A,B,S,T)-n(S)-h(S,T) )}
\LR{A;S;T}~ e_{A^{(T-S)}}.
\]
%Moreover, the set $\{e_A ~|~ A\in \Xi_{n,d}\textup{ is tridiagonal}\}$ is a generating set for the Schur algebra $\Sjj$.
\end{thmB}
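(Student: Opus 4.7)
The plan is to reduce the product $e_B \cdot e_A$ to a sum of Hecke algebra products of the form $T_{g_1} T_{w g_2}$ with $g_1$ tridiagonal, invoke Theorem~A, and then reorganize the output into the combinatorial sum on the right-hand side.

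First, I would exploit the realization of $\Sjj$ as an endomorphism algebra of a sum of permutation modules to rewrite the composition $e_B \circ e_A$ as an action on the $x_\mu$-summand of the permutation module. Letting $g_A, g_B$ denote the minimal length double coset representatives associated with $A, B \in \Xi_{n,d}$ and setting $\mu = \coC(B) = \roC(A)$, the standard Mackey-type decomposition of $W_{\roC(B)} g_B W_\mu g_A W_{\coC(A)}$ expresses $e_B \cdot e_A$ as a sum over $w \in \D_{\delta(B)} \cap W_\mu$ of Hecke algebra products $T_{g_B} T_{w g_A}$, up to the normalization between the $e$-basis and the $T$-basis coming from Poincar\'e polynomials of parabolic subgroups. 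Since $B$ is tridiagonal, the hypotheses of Theorem~A apply with $g_1 = g_B$ and $g_2 = g_A$.

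Applying Theorem~A termwise produces a double sum indexed by pairs $(w,\sigma)$ with $w \in \D_{\delta(B)} \cap W_\mu$ and $\sigma \in K_w$, carrying coefficients $(v^2-1)^{n(\sigma)} v^{2 h(w,\sigma)}$ and terms $T_{g_B \sigma w g_A}$. The key combinatorial step is to group these contributions according to the matrix in $\Xi_{n,d}$ corresponding to the double coset of $g_B \sigma w g_A$. I expect a bijection in which $w$ corresponds to a transfer pattern $T \in \Tt_{B,A}$ describing how the entries of $B$ are distributed among the rows of $A$, and $\sigma$ corresponds to a refinement $S \in \Gamma_T$ recording the braid-type perturbations contributed by the Hecke multiplication; the resulting matrix is then $A^{(T-S)}$, and $\LR{A;S;T}$ records the size of each fiber. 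Reverting from the $T$-basis to the $e$-basis introduces a length-normalization that, combined with $h(w,\sigma)$ and the simple length formula of Lemma~\ref{lem:l(g)}, rearranges into the exponent $2(\ell(A,B,S,T) - n(S) - h(S,T))$, while $(v^2-1)^{n(\sigma)}$ becomes $(v^2-1)^{n(S)}$.

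The main obstacle is setting up this bijection cleanly and matching the coefficients. Identifying $\Tt_{B,A}$ and $\Gamma_T$ with the appropriate reindexing of $w$ and $\sigma$ requires a careful analysis of how tridiagonal entries of $B$ interact with a general matrix $A$ at the level of permutations of $\ZZ$, and the exponent calculation is the most delicate part: one must simultaneously track contributions from the Poincar\'e polynomial normalization, the length formula of Lemma~\ref{lem:l(g)}, and the $h(w,\sigma)$ term from Theorem~A. Once the combinatorial dictionary is established, the coefficient matching is a direct though tedious bookkeeping, and the same strategy applies essentially verbatim to the companion statement in Theorem~\ref{thm:multformula2}.
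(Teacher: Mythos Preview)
Your proposal is correct and follows essentially the same route as the paper: Proposition~\ref{prop:mult1} combined with Theorem~A gives the sum over $(w,\sigma)$, and the passage to $(T,S)$ is carried out via the surjections $\varphi:w\mapsto T$ and $\psi_w:\sigma\mapsto S$ of \S4.1--4.2, with Lemmas~\ref{lem:sumwAT} and~\ref{lem:standardf2} computing the weighted fiber sums that assemble into $\LR{A;S;T}$. One minor clarification: this is not a bijection but a pair of surjections whose fibers are summed, and the exponent simplification comes from the length identity \eqref{eq:l(g_1etawg_2)} (a direct corollary of Theorem~A) rather than from Lemma~\ref{lem:l(g)} itself.
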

Multiplication formulas in different form can also be derived from the geometric approach in \cite{FL16}.
A special case of Theorem~B gives a formula which closely resembles the multiplication formula in affine type A in \cite{DF14},
and another special case closely resembles the multiplication formula in type B/C in \cite{BKLW}.
In these special cases, we only need some of the summands whose powers $n(S)$ of $v^2-1$ are 0, which simplify computations dramatically.
While the multiplication formula in Theorem~B is explicit yet complicated, one can read off useful and essential information about the algebra $\Sjj$.
For example, it allows us to show that the algebra $\Sjj$ has a generating set given by $[A]$ for
tridiagonal matrices $A \in \Xi_{n,d}$; see Theorem~\ref{thm:min gen}(c).

%====
\subsection{Monomial and canonical bases}

By developing further multiplicative properties of $\Sjj$ from the multiplication formula in Theorem~B,
we produce an algorithm (see Algorithm \ref{alg:mono})
to construct a monomial basis $\{m_A~|~A\in \Xi_{n,d} \}$ which is bar invariant such that
$m_A =[A] +$ lower terms with respect to a natural partial ordering.
(A version of this algorithm produces a monomial basis in affine type A \cite{LL17}.)
%As a consequence we establish the following.
The construction of the canonical basis of $\Sjj$ via the Kazhdan-Lusztig basis of $\bH$ is rather routine
(cf. \cite{Du92}).

\begin{thmC}  [Theorem~\ref{thm:CB-Sjj},  Theorem~\ref{thm:mono}]
The Schur algebra $\Sjj$ admits both monomial and canonical bases.
%Moreover, the algebra $\Sjj$ is generated by the $[A]$, for tridiagonal matrices $A \in \Xi_{n,d}$.
\end{thmC}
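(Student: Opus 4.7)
The plan is to handle the two bases in turn, with the monomial basis as the substantive piece and the canonical basis then following by the standard Kazhdan--Lusztig argument. The overarching input is the multiplication formula of Theorem~B, which tells us how a tridiagonal standard basis element $[B]$ acts on an arbitrary $[A]$, and in particular controls which $e_{A^{(T-S)}}$ can appear.

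First I would set up the combinatorics of the leading term. Fix the natural partial order on $\Xi_{n,d}$ used throughout the paper (the Bruhat-like order on matrices, indexing double cosets). For each $A \in \Xi_{n,d}$ one chooses, following Theorem~\ref{thm:min gen}(c), a distinguished sequence of tridiagonal matrices $B_1, B_2, \ldots, B_r$ so that the ordered product
\[
[B_1] \, [B_2] \cdots [B_r]
\]
expands, via Theorem~B, as $v^{c(A)} [A] + (\text{strictly lower terms})$ for some explicit shift $c(A) \in \ZZ$. The key combinatorial input is that for a tridiagonal $B$ and a compatible $A$, the summands $e_{A^{(T-S)}}$ occurring in $[B]\,[A]$ are dominated, in the partial order, by the unique choice $(T,S) = (T_0, 0)$ that contributes to the leading term, and all other contributions lie strictly below. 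Iterating this observation and rescaling gives a bar-invariant-up-to-leading-coefficient element; running Algorithm~\ref{alg:mono}, which recursively subtracts off lower terms already handled by induction, produces an element $m_A$ of the form
\[
m_A = [A] + \sum_{A' < A} \alpha_{A,A'}\, [A'], \qquad \alpha_{A,A'} \in \mathcal{A} := \ZZ[v,v^{-1}],
\]
that is bar-invariant. Since the transition matrix from $\{m_A\}$ to $\{[A]\}$ is unitriangular over $\mathcal{A}$, the family $\{m_A \mid A \in \Xi_{n,d}\}$ is an $\mathcal{A}$-basis of $\Sjj$, proving the monomial basis part.

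For the canonical basis I would transport the Hecke algebra bar involution to $\Sjj$ in the usual way (as an anti-linear involution fixing the appropriate idempotents and compatible with the action), so that the standard basis $\{[A]\}$ becomes bar-invariant modulo strictly lower terms with coefficients in $\ZZ[v,v^{-1}]$. Combined with the partial order just used, one is in the classical setup for the Kazhdan--Lusztig lemma (as in Du~\cite{Du92}): there exists a unique basis $\{b_A \mid A \in \Xi_{n,d}\}$ of $\Sjj$ characterized by $\bar{b_A} = b_A$ and
\[
b_A = [A] + \sum_{A' < A} \pi_{A,A'}(v)\, [A'], \qquad \pi_{A,A'}(v) \in v^{-1}\ZZ[v^{-1}].
\]
The existence and uniqueness follow by the standard recursive construction starting from the already bar-invariant monomials $m_A$ and subtracting $\ZZ[v^{-1}]$-linear combinations of previously defined $b_{A'}$ with $A' < A$.

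The main obstacle is the monomial step: one has to verify that the tridiagonal sequence attached to $A$ really does produce $[A]$ as the unique top term of the iterated product, i.e.\ that the combinatorial data $(T,S) \in \Tt_{B,A} \times \Gamma_T$ appearing in Theorem~B cannot conspire to produce a matrix $A^{(T-S)}$ that is equal or incomparable to $A$ in the partial order, except in the controlled situation. This is where the explicit structure of tridiagonal matrices is essential and where the bulk of the bookkeeping lives; once that is in place, the rest of the argument, including the passage to the canonical basis, is the routine Kazhdan--Lusztig template.
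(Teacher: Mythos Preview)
Your outline has a genuine gap in the monomial step, and it stems from a feature of the affine type~C setting that the paper stresses explicitly: for a general tridiagonal $B\in\Xi_{n,d}$, the standard basis element $[B]$ is \emph{not} bar-invariant. Consequently the product $[B_1]\cdots[B_r]$ (what the paper calls the semi-monomial element $m'_A$) is not bar-invariant either; see the sentence immediately following Theorem~\ref{thm:min gen}. Your claim that ``rescaling'' and ``recursively subtracting off lower terms'' via Algorithm~\ref{alg:mono} yields a bar-invariant $m_A$ does not hold: Algorithm~\ref{alg:mono} is a purely combinatorial procedure that outputs the tridiagonal sequence $A^{(1)},\ldots,A^{(x)}$, nothing more. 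It performs no subtraction, and there is no rescaling either---Lemma~\ref{lem:ss} shows $\beta(A,0,\texttt{T})=0$ for admissible pairs, so the leading coefficient is already $1$. If you try to enforce bar-invariance by subtracting lower terms inductively, you are no longer building a monomial; you are re-running the Kazhdan--Lusztig construction and would end up with the canonical basis, not a monomial one.

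The paper therefore reverses your logical order. The canonical basis is constructed \emph{first}, and not via your Step~2 but directly from the Hecke algebra: for $A=\kappa(\lambda,g,\mu)$ one defines $\{A\}\in\Hom_{\bH}(x_\mu\bH,x_\lambda\bH)$ by $\{A\}(C'_{w_\circ^\mu})=C'_{g^+_{\lambda\mu}}$, pulling back the Kazhdan--Lusztig element (this is Du's construction). Bar-invariance and the triangularity \eqref{eq:CB:AA} are then immediate from properties of the KL basis. Only \emph{after} this does the paper define the genuine monomial basis by replacing each factor $[A^{(i)}]$ with the already-available canonical element $\{A^{(i)}\}$:
\[
m_A=\{A^{(1)}\}\{A^{(2)}\}\cdots\{A^{(x)}\}.
\]
Bar-invariance is now automatic since each factor is bar-invariant, and the unitriangularity $m_A=[A]+\text{lower terms}$ follows from Theorem~\ref{thm:min gen} together with Corollary~\ref{cor:lower} and Lemma~\ref{lem:B'} (which control what happens when $[A^{(i)}]$ is perturbed to $\{A^{(i)}\}$). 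Your identification of the ``main obstacle''---that the top term of the iterated product is exactly $[A]$---is correct and is indeed the technical heart, but the bar-invariance of the monomial basis comes from the canonical basis, not the other way around.
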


The multiplication formula in Theorem~B also allows us to establish a stabilization property of
the family of algebras $\Sjj$ as $d \mapsto \infty$.
We remark that the stabilization procedure in the paper relies heavily on the multiplication formula with tridiagonal generators, in contrast to the constructions in \cite{FLLLW} where such a formula was not available. The stabilization property leads to the construction of a stabilization algebra $\dKjj$. While largely following the strategy of \cite{BLM90} (which has been applied also to \cite{BKLW, DF15}), our setting is technically more involved. The bar involution on $\dKjj$ is determined by its nontrivial action on the tridiagonal generators; see the proof of Proposition \ref{prop:stab2}.

A variant of the multiplication formula in Theorem~B is valid for the algebra $\dKjj$; see Theorem~\ref{thm:multK}.
The algorithm for monomial basis for the affine Schur algebra $\Sjj$
is adapted here to construct a monomial basis for the stabilization algebra $\dKjj$.
The stably canonical basis for $\dKjj$ follows from the existence of its monomial basis.

\begin{thmD}[Theorem~\ref{thm:dKjj CB}]
We have an algebra $\dKjj$ arising from stabilization on the family of Schur algebras $\Sjj$ (as $d$ varies).
Moreover, $\dKjj$ admits both monomial and stably canonical bases.
\end{thmD}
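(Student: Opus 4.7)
The plan is to follow the BLM stabilization strategy \cite{BLM90}, as adapted in \cite{DF15, BKLW}, with the central input being the tridiagonal multiplication formula of Theorem~B. The first step is to show that, for $B$ tridiagonal, the product $e_B e_A$ in $\Sjj$ stabilizes in the following sense: after simultaneously shifting the diagonals of $A$ and $B$ by a large multiple $pI$ (respecting the centro-symmetric periodicity defining $\Xi_{n,d}$) and applying a suitable $v$-power renormalization, each coefficient and each indexing matrix on the right-hand side of Theorem~B becomes independent of $p$ for $p \gg 0$. Concretely this requires tracking how the quantities $n(S)$, $h(S,T)$, $\ell(A,B,S,T)$, and $\LR{A;S;T}$ transform under the shift --- the same kind of bookkeeping carried out in \cite{DF15} for affine type~A, but complicated here by the centro-symmetry constraint.

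Once stabilization of tridiagonal-by-arbitrary products is in hand, I would define $\dKjj$ as a free $\ZZ[v,v^{-1}]$-module with basis $\{[A]\}$ indexed by the set of $n$-periodic centro-symmetric $\ZZ \times \ZZ$ matrices with off-diagonal entries in $\NN$ and diagonal entries in $\ZZ$, and use the stabilized structure constants to define multiplication of tridiagonal basis elements against arbitrary basis elements. Since tridiagonal matrices generate $\Sjj$ (Theorem~\ref{thm:min gen}(c)), this determines the full multiplication, with associativity descending from $\Sjj$ by passage to the limit. The upshot is the stabilization algebra $\dKjj$ together with its own tridiagonal multiplication formula (the statement of Theorem~\ref{thm:multK}).

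For the monomial basis, I would transplant the algorithm underlying Theorem~C into $\dKjj$: starting from the stabilized tridiagonal multiplication, the same inductive recursion produces a family $\{m_A\}$ with $m_A = [A] + \text{lower terms}$ for the natural partial order adapted to the larger indexing set. The stably canonical basis is then produced by the standard Lusztig-lemma argument: the change-of-basis matrix between $\{m_A\}$ and $\{[A]\}$ is unitriangular over $\ZZ[v,v^{-1}]$, so one inductively extracts the unique bar-invariant lift of each $[A]$ whose correction terms lie in $v^{-1}\ZZ[v^{-1}]$.

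The main obstacle, as flagged in the surrounding discussion, is setting up the bar involution on $\dKjj$. Unlike the semisimple generators used in \cite{DF14, DF15}, our tridiagonal generators are \emph{not} bar-invariant, so the bar image of a tridiagonal basis element must be computed explicitly at the affine Hecke algebra level (using Theorem~A) and then shown to stabilize under the shift $A \mapsto A + pI$; this compatibility is the content of Proposition~\ref{prop:stab2}, and it is what propagates the bar involution from the family $\Sjj$ to $\dKjj$. Everything else --- bar-invariance of $\{m_A\}$ by construction, and the derivation of the canonical basis from the monomial basis via Lusztig's lemma --- then follows in the standard way.
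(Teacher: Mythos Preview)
Your proposal is correct and follows essentially the same route as the paper: BLM-type stabilization of the tridiagonal multiplication formula (Proposition~\ref{prop:stab1}), stabilization of the bar involution via explicit analysis of tridiagonal generators (Proposition~\ref{prop:stab2}), then the monomial-basis algorithm transplanted to $\dKjj$ and Lusztig's lemma for the stably canonical basis. One minor correction: in the paper the bar stabilization for tridiagonal $A$ is not obtained from Theorem~A but from the observation that the shift $g \mapsto {}_p g$ only enlarges the identity portion of the permutation, so the relevant Kazhdan--Lusztig polynomials satisfy $P_{x,g} = P_{{}_p x,\,{}_p g}$, and Curtis's expansion of $T_{W_\lambda g W_\mu}$ in the $C'$-basis then yields the required polynomial dependence on $p$.
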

We emphasize that the constructions in Chapter~\ref{sec:MF-Hecke} throughout Chapter~\ref{sec:coidealK} are
entirely independent of the geometric approach developed in \cite{FLLLW}.
Our constructions and proofs for Theorems~C and D are built on the multiplication formulas in Theorems~A and B,
while Theorems~A and B are new. We show in Propositions~\ref{Schur-iso} and \ref{Knc} that
the algebras $\Sjj$ and $\dKjj$ here are isomorphic to the geometrically constructed algebras in the same notation  in \cite{FLLLW},
and their monomial and canonical bases also match with their counterparts {\em loc. cit.}.

%====
\subsection{Affine quantum symmetric pairs}

Recall a quantum symmetric pair $(\mathbf U, \mathbf U^\imath)$ in the sense of \cite{Le02, Ko14}
consists of a quantum group $\mathbf U$ and its coideal subalgebra $\mathbf U^\imath$.
The stabilization algebra for Schur algebras of
finite type B/C is the (modified) coideal subalgebra of the quantum group of finite type A \cite{BKLW, LW15}.
There is a standard procedure (which goes back to \cite{BLM90})
to construct an algebra $\Kjj$ for which $\dKjj$ is the modified (or idempotented) version.
Similarly, in the affine type A setting we have a stabilization algebra $\dot \K_n$ which is the modified version of $\K_n$,
and moreover, $\K_n$ is isomorphic to the quantum affine $\gl_n$; cf. \cite{DF15}.

\begin{thmE}[Theorem ~\ref{thm:aqsp}]
The pair $(\K_n, \Kjj)$ forms a quantum symmetric pair associated to an involution on the
Dynkin diagram of affine type A depicted in Figure~\ref{figure:jj}.
\end{thmE}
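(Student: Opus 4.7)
The plan is to verify the three ingredients that characterize a quantum symmetric pair in the sense of Kolb \cite{Ko14}: (a) $\Kjj$ sits inside $\K_n$ as a subalgebra; (b) $\Kjj$ is a right coideal, i.e.\ $\Delta(\Kjj) \subseteq \Kjj \otimes \K_n$, for the comultiplication $\Delta$ on $\K_n \cong \bU(\^{\fgl}_n)$; and (c) under the identification $\K_n \cong \bU(\^{\fgl}_n)$, the subalgebra $\Kjj$ matches (up to parameter choice and diagonal twists) the standard coideal subalgebra attached to the Satake diagram obtained from the affine type $A$ Dynkin diagram equipped with the involution $\tt$ of Figure~\ref{figure:jj}.

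For (a), I would leverage the stabilization construction of Part~2: both $\K_n$ and $\Kjj$ are obtained as the non-idempotented versions of $\dKn$ and $\dKjj$ respectively, where $\dKn$ arises from the stabilization of affine type $A$ Schur algebras and $\dKjj$ from that of the $\Sjj$. The affine type $C$ partial flag setting maps naturally into the affine type $A$ one, and the induced map on the limit algebras gives an inclusion $\dKjj \hookrightarrow \dKn$. Concretely, one tracks the tridiagonal generators of $\Kjj$ (Theorem~\ref{thm:min gen}(c) and its $\dKjj$-analog) and expresses them as explicit elements of $\K_n$ using the correspondence between tridiagonal matrices in $\Xi_{n,d}^{\fc}$ and pairs of matrices in the affine type $A$ setting.

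For (b), which is the heart of the argument, I would start from the comultiplication $\Delta$ on $\dKn$ formulated in \cite{FLLLW} via convolution on partial flag varieties. Since $\Kjj$ is generated by the classes $[A]$ of tridiagonal $A \in \Xi_{n,d}^{\fc}$, it suffices to compute $\Delta([A])$ for such $A$ and to recognize each resulting term as lying in $\Kjj \otimes \K_n$. The key tool is the multiplication formula for $\dKjj$ established in Theorem~\ref{thm:multK}, which is a direct corollary of Theorem~B: it lets one resum the output of $\Delta([A])$ into products of tridiagonal generators on the left tensor factor, thereby exhibiting each summand explicitly inside $\Kjj \otimes \K_n$. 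Since $\Delta$ is an algebra map and the tridiagonal generators generate $\Kjj$, the coideal property then propagates from the generators to all of $\Kjj$.

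For (c), I would identify the involution $\tt$ on the affine $\widehat{A}_{n-1}$ Dynkin diagram from Figure~\ref{figure:jj} and compare $\Kjj$ with Kolb's $\mathbf{U}^\imath$ attached to this Satake datum: matching generators amounts to checking that the tridiagonal classes $[A]$ project to elements of the form $B_i = F_i + \varsigma_i T_{w_i}(E_{\tt(i)})K_i^{-1}$ under the isomorphism of (a), with the appropriate parameters $\varsigma_i$ read off from the multiplication formula; the required $\imath$-Serre relations then follow from the corresponding relations among the tridiagonal generators, which in turn are instances of Theorem~B specialized to low-rank tridiagonal inputs. The principal obstacle will be the coideal verification in step~(b): both the comultiplication of \cite{FLLLW} and the multiplication formula of Theorem~\ref{thm:multK} are indexed by intricate combinatorial data (pairs of chopping data on one side, pairs $(S,T)$ in $\Gamma_T$ on the other), and the bookkeeping needed to match the two sums cleanly and land inside $\Kjj \otimes \K_n$ is where the bulk of the work lies. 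Once (b) is settled, matching with Kolb's framework in (c) reduces to a finite, rank-bounded check driven by the shape of the involution $\tt$.
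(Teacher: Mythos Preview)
Your high-level outline (embedding $\Kjj \hookrightarrow \Kn$ plus right coideal property) is correct, and matches what the paper ultimately rephrases as Theorem~\ref{thm:aqsp}. However, your execution of step~(b) has a genuine gap, and you are missing the key technical device the paper uses.

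The multiplication formula (Theorem~\ref{thm:multK}) is \emph{not} the tool for computing $\Delta([A])$ on tridiagonal generators. In the paper the multiplication formula is used earlier (Proposition~\ref{prop:Kjjgen}) to show that $\Kjj$ is closed under multiplication, i.e.\ that the span of the $A(\bj)$ is actually a subalgebra of $\hKcn$. For the coideal property itself the paper does something different: it passes to rank $\breve n = n+2$. The point is that the tridiagonal generators of $\Kjj$ are not Chevalley-type, so their image under the [FLLLW] comultiplication is hard to write down directly. Via the subquotient realization $\Kjj \cong (\KcL + \widehat\cI)/\widehat\cI$ (Section~\ref{sec:Kjj-quot}, especially Lemma~\ref{lem:Kcapgen}), one can work instead inside the subalgebra $\Kcap$ of $\hKcb$, which \emph{is} generated by genuine Chevalley-type elements $E^{i,i+1}_{\tt,\breve n}[\vec{0}]$. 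On these, the comultiplication is computed explicitly from \cite[Lemma~5.3.4]{FLLLW} (see \eqref{eq:comultlKcnm} and the surrounding display), and lands visibly in $\Kcap\otimes\Kap$. Tracking the row/column condition $\roA(\cdot)_1=\coA(\cdot)_1=0$ then pulls this back to $\Kjj\otimes\Kn$. Your proposal to ``resum the output of $\Delta([A])$ into products of tridiagonal generators using Theorem~\ref{thm:multK}'' does not provide this mechanism; without the rank-shift there is no clean formula to resum.

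Your step~(c) is also off target. The paper does not match generators with Kolb's $B_i$ nor verify $\imath$-Serre relations; it proves the quantum symmetric pair property abstractly by exhibiting the injective homomorphism $\imath^{\fc}:\Kjj\to\Kn$ (Proposition~\ref{prop:embed}, again proved via the rank-shift and the degenerate $d'=0$ specialization of the comultiplication diagram) together with the coideal map $\Delta^{\fc}$ (Theorem~\ref{thm:coK}). No finite rank-by-rank check against Kolb's presentation is performed.
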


%===============================================================
\begin{figure}[ht!]
\caption{Dynkin diagram of type $A^{(1)}_{2r+1}$ with involution of type $\jmath\jmath \equiv \fc$.}
 \label{figure:jj}
\[
\begin{tikzpicture}
\matrix [column sep={0.6cm}, row sep={0.5 cm,between origins}, nodes={draw = none,  inner sep = 3pt}]
{
	\node(U1) [draw, circle, fill=white, scale=0.6, label = 0] {};
	&\node(U2)[draw, circle, fill=white, scale=0.6, label =1] {};
	&\node(U3) {$\cdots$};
	&\node(U4)[draw, circle, fill=white, scale=0.6, label =$r-1$] {};
	&\node(U5)[draw, circle, fill=white, scale=0.6, label =$r$] {};
\\
	&&&&&
\\
	\node(L1) [draw, circle, fill=white, scale=0.6, label =below:$2r+1$] {};
	&\node(L2)[draw, circle, fill=white, scale=0.6, label =below:$2r$] {};
	&\node(L3) {$\cdots$};
	&\node(L4)[draw, circle, fill=white, scale=0.6, label =below:$r+2$] {};
	&\node(L5)[draw, circle, fill=white, scale=0.6, label =below:$r+1$] {};
\\
};
\begin{scope}
\draw (L1) -- node  {} (U1);
\draw (U1) -- node  {} (U2);
\draw (U2) -- node  {} (U3);
\draw (U3) -- node  {} (U4);
\draw (U4) -- node  {} (U5);
\draw (U5) -- node  {} (L5);
\draw (L1) -- node  {} (L2);
\draw (L2) -- node  {} (L3);
\draw (L3) -- node  {} (L4);
\draw (L4) -- node  {} (L5);
\draw (L1) edge [color = blue,<->, bend right, shorten >=4pt, shorten <=4pt] node  {} (U1);
\draw (L2) edge [color = blue,<->, bend right, shorten >=4pt, shorten <=4pt] node  {} (U2);
\draw (L4) edge [color = blue,<->, bend left, shorten >=4pt, shorten <=4pt] node  {} (U4);
\draw (L5) edge [color = blue,<->, bend left, shorten >=4pt, shorten <=4pt] node  {} (U5);
\end{scope}
\end{tikzpicture}
\]
\end{figure}

Theorem~E does not appear in the framework of \cite{FLLLW} (only an idempotented quantum symmetric pair was established therein).
The passage from the idempotented quantum symmetric pair to the quantum symmetric pair statement here is nontrivial, and
its proof takes advantage of some constructions {\em loc. cit.} together with the multiplication formula for $\dKjj$ in Theorem~B.

We remark that in a very interesting paper \cite{CGM} Chen, Guay and Ma also studied
interactions between affine Hecke algebra and quantum symmetric pair from a very different perspective from ours.

%====
\subsection{Variants}

There are several variants (called type $\imath\jmath, \jmath\imath,$ and $\imath\imath$) of the algebras $\Sjj$ and $\dKjj$
(here we can view $\fc =\jmath\jmath$).
The Schur algebras $\Sij, \Sji, \Sii$ are subalgebras of $\Sjj$ by construction.
We also construct their respective stabilization algebras $\dKji, \dKij, \dKii$ together with their canonical bases.

\begin{thmF} [Theorem~\ref{thm:monoij}, Theorem~\ref{thm:subqij}, Remark~\ref{rem:QSPij}]
The Schur algebra $\Sij$ admits a canonical basis compatible with the one in $\Sjj$ under the inclusion $\Sij \subset \Sjj$.
The algebra, $\dKij$ is isomorphic to a subquotient of $\dKjj$, with compatible standard, monomial, and stably canonical bases.
\end{thmF}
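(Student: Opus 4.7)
The plan is to reduce everything to tools already developed for $\Sjj$ and $\dKjj$: the multiplication formula of Theorem~B, the monomial basis algorithm behind Theorem~C, the stabilization procedure behind Theorem~D, and Kazhdan-Lusztig uniqueness. The argument splits naturally into two halves.

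For the canonical basis of $\Sij$ and its compatibility with $\Sjj$ under the inclusion $\Sij \subset \Sjj$: first I would realize $\Sij$ as the endomorphism algebra of an appropriate sum of permutation modules over a parabolic subalgebra of $\bH$, and identify its standard basis with $\{[A] : A \in \Xij\}$ for an explicit subset $\Xij \subset \Xi_{n,d}$ whose matrix data is adapted to the type $\imath\jmath$ symmetry (one type-$\imath$ end, one type-$\jmath$ end). The inclusion into $\Sjj$ then sends $[A]$ to $[A]$ and respects the partial order used in defining canonical bases. Next I would specialize Theorem~B to tridiagonal $B \in \Xij$ and check that the product $e_B e_A$ remains in the span of $\{e_{A'} : A' \in \Xij\}$, giving a closed multiplication formula intrinsic to $\Sij$. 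With this in hand, Algorithm~\ref{alg:mono} applied inside $\Sij$ produces bar-invariant monomials $m_A = [A] + \text{lower terms}$ for $A \in \Xij$, and Kazhdan-Lusztig uniqueness then yields a canonical basis $\{\{A\}^{\imath\jmath} : A \in \Xij\}$ of $\Sij$. For compatibility, I would apply uniqueness inside $\Sjj$ itself: each canonical basis element $\{A\}$ of $\Sjj$ with $A \in \Xij$ is bar-invariant, and the closed formula above ensures its lower-order correction stays inside $\Sij$, so $\{A\}$ must coincide with $\{A\}^{\imath\jmath}$.

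For realizing $\dKij$ as a subquotient of $\dKjj$: I would first verify that the multiplication formula on $\Sij$ exhibits the same stabilization phenomenon used to pass from $\Sjj$ to $\dKjj$ in Chapter~\ref{sec:coideal}, yielding the algebra $\dKij$ together with standard, monomial, and stably canonical bases in parallel with Theorem~D. To realize the subquotient structure, I would identify a subalgebra $\mathcal A \subseteq \dKjj$ spanned by those stably canonical basis elements whose indexing matrices are compatible with the $\imath\jmath$ constraint, together with an ideal $\mathcal I \subset \mathcal A$ spanned by the basis elements that project to zero under the natural combinatorial surjection onto the $\imath\jmath$ index set. The induced map $\mathcal A / \mathcal I \to \dKij$ would be set up first on standard bases, and matching of monomial and stably canonical bases then follows from bar-invariance and lower-triangularity in the stabilization limit.

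The main obstacle I foresee is verifying that the subquotient map sends canonical basis to canonical basis, in particular that the bar involution on $\dKjj$—whose formula is governed by its nontrivial action on tridiagonal generators via Proposition~\ref{prop:stab2}—descends correctly to the bar involution on $\dKij$. This is not automatic, because the tridiagonal generators of $\dKij$ must be matched explicitly with projections of those of $\dKjj$, and the partial orders used to define the two canonical bases live on different index sets. I would handle this by following the strategy of Propositions~\ref{Schur-iso} and \ref{Knc}: express both monomial bases as explicit products of tridiagonal generators via the multiplication formula for $\dKjj$ in Theorem~B, check that these products are intertwined by the subquotient map, and conclude via Kazhdan-Lusztig uniqueness that the canonical bases match and the map is a well-defined algebra isomorphism onto $\dKij$ with full compatibility of standard, monomial, and stably canonical bases.
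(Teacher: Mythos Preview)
Your compatibility argument for the canonical bases of $\Sij$ and $\Sjj$ via Kazhdan--Lusztig uniqueness and row/column-sum preservation is correct and is essentially what the paper does. However, there is a genuine gap in your monomial basis construction. You propose to ``apply Algorithm~\ref{alg:mono} inside $\Sij$'' using tridiagonal $B \in \Xij$, but this fails: as the paper explicitly notes before Algorithm~\ref{alg:mono'}, the tridiagonal matrices $A^{(i)}$ produced by Algorithm~\ref{alg:mono} from $A \in \Xij$ do \emph{not} lie in $\Xij$ in general (they acquire nontrivial entries in the $0$th row/column). So your factorization $[A^{(1)}]\cdots[A^{(x)}]$ is not a product of elements of $\Sij$. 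The paper's fix is to collapse the fixed $0$th row/column via the bijection $f^{\imath\jmath}_c : \Xij \to \Xijp$, redefine ``tridiagonal'' in the collapsed picture (via the reindexing $\tau$ of \eqref{eq:tau}), and run a modified Algorithm~\ref{alg:mono'} that produces factors ${}^{\imath}A^{(j)} \in \Xij$ which are tridiagonal only after collapsing. The proof of Theorem~\ref{thm:min gen'} then further decomposes each ${}^{\imath}A^{(t)}$ as a product of two genuinely tridiagonal matrices in $\Xi_{n,d}$ (not in $\Xij$) to invoke the $\Sjj$ theory.

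Your subquotient argument also misses the key mechanism. You propose ``subalgebra, then ideal,'' but the paper goes the other way: it first observes that the span $\Jql$ of $[A]$ with $a_{00}<0$ is a two-sided ideal of $\dKjj$ (Lemma~\ref{lem:2ideal}, proved by checking that the structure constant $\LR{A;S;T}$ vanishes whenever the $(0,0)$-entry would become positive), then identifies the quotient $\dKjj/\Jql$ with a \emph{different} stabilization algebra $\dKjjp$ built from the modified shift ${}_{\breve p}A = A + p(I - E^{00})$ (Lemma~\ref{lem:stab-ij} and Proposition~\ref{prop:JK}), and finally realizes $\dKij$ as a subalgebra of $\dKjjp$. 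The point you miss is that one cannot simply run the same stabilization as for $\dKjj$: the $(0,0)$-entry is pinned to $1$ in $\Xij$, so only the modified shift makes sense, and matching the two stabilizations is the content of Proposition~\ref{prop:JK}.
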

Similar results for types $\jmath\imath$ and $\imath\imath$ also hold and can be found in
Theorem~\ref{thm:main-ji} and Theorem~\ref{thm:subqii}.
%====
\subsection{New length formulas for Weyl groups}

The Weyl groups of finite and affine types as (infinite) permutations have been studied by Lusztig \cite{Lu83}, B\'edard \cite{B86}, Shi \cite{Shi94}, Bj{\"o}rner-Brenti \cite{BB96, BB05} and Eriksson-Eriksson \cite{EE98}.
The length functions, which are realized by counting (affine) inversions, admit rather involved formulas in terms of sums of floored quotients; see Proposition~\ref{prop:A2}.

In the appendix by four authors, we provide new formulas that describe the dimensions of generalized Schubert varieties of certain finite and affine types in a symmetrized fashion; see Theorem \ref{thm:A1}.
We further obtain new length formulas (see Theorem~\ref{thm:A2}) for all finite and affine Weyl groups in a similar symmetrized fashion.

%The formulas \eqref{eq:lAA} -- \eqref{eq:lCA} we obtain a symmetrized sum in certain matrix entries. When such matrices are permutation matrices, our formulas specialized to the length formulas for the Weyl groups of finite and affine classical type.
%The new length formulas are given explicitly in Theorem~\ref{thm:A2}, and they can be expressed uniformly through the symmetrized inversions we introduced in \eqref{def:sinv};
% in contrast the length formulas for affine Weyl groups are known to involve a rather implicit sum of floored quotients; see Proposition~\ref{prop:A2}.

%====
\section{The organization}

This paper is organized as follows.
In Chapter~\ref{sec:Schur},
the affine Schur algebra $\Sjj$ is defined via the affine Hecke algebra $\bH$, and then identified
with the geometrically-defined affine Schur algebra from ~\cite{FLLLW} with compatible standard bases.

In Chapter~\ref{sec:MF-Hecke},  a multiplication formula for affine Hecke algebra $\bH$ is formulated.

In Chapter~\ref{sec:MF-Schur}, we establish a key multiplication formula for the affine Schur algebra $\Sjj$
with tridiagonal generators.
%We make explicit 2 special cases which are similar to the multiplication formulas in finite type B/C and affine type A.

In Chapter~\ref{sec:basis},  canonical basis for the affine Schur algebra $\Sjj$ is constructed and
identified with the one defined geometrically in \cite{FLLLW}. Using the multiplication formula in
Chapter~\ref{sec:MF-Schur}, we construct a monomial basis for $\Sjj$.

In Chapter~\ref{sec:coideal}, we shall establish a stabilization property for the family of affine Schur algebras $\Sjj$ as $d$ varies,
which leads to a quantum algebra $\dKjj$.
A monomial basis and a stably canonical basis for $\dKjj$ are constructed.

In Chapter~\ref{sec:coidealK}, we construct an algebra $\Kjj$ for which $\dKjj$ is the modified (idempotented) version.
We show that  $(\Kn, \Kjj)$ forms a quantum symmetric pair.

In Chapter~\ref{sec:coideal2},
three more variants of
affine Schur algebras and their corresponding stabilization algebras are introduced. We establish various
results for these new variants analogous to those for the algebras $\Sjj$ and $\dKjj$ obtained in earlier chapters.

In Appendix \ref{chap:lengthformula} we establish a formula for the dimension of generalized Schubert varieties of certain finite and affine types in a symmetrized fashion. We further deduce new length formulas of finite and affine classical Weyl groups as an application.

\vspace{3mm}
\noindent {\bf Notations.}
We shall denote $\NN = \{ 0,1,2,\ldots \}$.
For $a,b\in\ZZ$, we let
$$[a..b] = [a,b] \cap \ZZ, \qquad  [a..b) = [a,b) \cap \ZZ
$$
be the integer intervals. We define similarly integer intervals $(a..b]$ and $(a..b)$.

\vspace{.3cm}
\noindent
{\bf Acknowledgments.}
We thank the following institutions whose support and hospitality help to facilitate the progress and completion of this project:
East China Normal University, Institute of Mathematics at Academia Sinica, and University of Virginia.
Z. Fan is partially supported by the NSF of China grant 11671108, the NSF of Heilongjiang province grand LC2017001 and the Fundamental Research Funds for the central universities GK2110260131.
L. Luo is supported by Science and Technology Commission of Shanghai Municipality grant 18dz2271000 and the NSF of China grant 11871214. 
Y. Li is partially supported by the NSF grant DMS-1801915.
W. Wang is partially supported by the NSF grant DMS-1405131 and DMS-1702254.

\newpage
%
%
%
%
%%%%%%%%%%%%%%%%%%%%%%%%%%%%%%%%%%%%%%%%%%%%%%%%%%%%%%%%%%%%%%%
\part{Affine Schur algebras}  %%%{The quantum groups behind Schur algebras}
  \label{part1}
%%\include{FLpart2.5}

%%%%%%%%%%%%%%%%%%%%%%%%%%%%%%%%%%%%%%%
%=========================================================
\chapter{Affine Schur algebras via affine Hecke algebras}
  \label{sec:Schur}

In this chapter, we define the affine Schur algebra $\Sjj$ as the endomorphism algebra of a sum of
permutation modules of the affine Hecke algebra associated to the affine Weyl group of type $\~{\mathrm C}_d$,
and show that its basis is parametrized by periodic and centro-symmetric integer matrices.
We then reformulate some combinatorics of the affine Weyl group and associated Hecke algebra in terms of such integer matrices.
The algebra $\Sjj$ is identified with the geometrically-defined affine Schur algebra from ~\cite{FLLLW}.

%=================
\section{Affine Weyl groups}

Let  $r ,d \in \NN$ be such that $d \ge 2$, and set
\begin{equation}
\label{nD}
n=2r+2, \qquad D=2d+2.
\end{equation}

Let $W$ be the Weyl group of type $\~{\mathrm C}_d$
generated by $S = \{s_0, s_1, \ldots, s_d\}$ with the affine Dynkin diagram
\[
\begin{tikzpicture}[start chain]
\dnode{$0$}
\dnodenj{$1$}
\dydots
\dnode{$d-1$}
\dnodenj{$d$}
\path (chain-1) -- node[anchor=mid] {\(\Longrightarrow\)} (chain-2);
\path (chain-4) -- node[anchor=mid] {\(\Longleftarrow\)} (chain-5);
\end{tikzpicture}
\]
Then $(W,S)$ is a Coxeter group. We denote the identity of $W$ by $\id$. It is known (c.f. \cite{B86}) that  $W$ can be identified as a subgroup
of the permutation group $\mrm{Perm} (\ZZ)$ satisfying certain natural conditions (such a description for Weyl groups of affine type $A$ goes back to Lusztig).
For our purpose, we shall introduce a variant of such a description, which has been given in \cite{Shi94}. We identify $W$ with the subgroup $\mrm{Perm}^\fc (\ZZ)$
which consists of $g\in \mrm{Perm} (\ZZ)$ such that
\begin{equation}
  \label{eq:gc}
g(i+D) = g(i)+D,  g(-i) = -g(i)\quad\tfor i \in \ZZ.
\end{equation}
In particular, we always have
\[
g(0) = 0 \quad\tand\quad g(d+1) = d+1.
\]
Any element $w$ in $W\equiv \mrm{Perm}^\fc (\ZZ)$ is uniquely determined by its value on $\{1, 2, \ldots, d\}$, and we shall denote
\eq\label{def:perm}
w = \left(\ba{{ccccccc}1&2&\ldots&d\\a_1&a_2&\ldots&a_d}\right)_\fc = [a_1 , a_2, \ldots , a_d]_\fc
\endeq
to mean that $w(i) = a_i$ for $1\leq i \leq d$.
We define the {\em transposition}
\eq \label{eq:jk}
(i,j)_\fc \in W, \quad \text{ for } i \ne j,
\endeq
as the element
which swaps $kD\pm i$ and $kD\pm j$ $(k\in\ZZ)$ while fixing $\ZZ \backslash \{kD\pm i,kD\pm j|k\in\ZZ\}$ pointwise.

Let us  establish an explicit isomorphism between $W$ and the Weyl group of type $\~{\mathrm C}_d$
denoted by $\~{S}^{C}_{d}$  \cite[\S8.4]{BB05}.
(Roughly speaking, the difference is that the permutations in $W$  fix $d+1+D\ZZ$.)
The identification $\~{S}^{C}_{d} \rw W$ is given by
\eq
g' \mapsto g = [\iota( g'(1)) , \ldots , \iota( g'(d))]_\fc,
\endeq
where
\eq
 \label{iota}
\iota:\ZZ\longrightarrow \ZZ \backslash \{ d+1+D\ZZ \},
\qquad i \mapsto i + \left\lceil\frac{i-d}{D-1}\right\rceil
\endeq
is an
order-preserving bijection.
Here the floor and ceiling functions are defined as usual by
$\lfloor a\rfloor = \max\{ m\in\ZZ ~|~ m\leq a\}$  and
$\left\lceil a \right\rceil = \min \{ m\in \ZZ ~|~ m \geq a\}$ for $a\in\RR$.
In other words, we have
\eq\label{eq:iota}
\iota \big(g'(i+k(D-1)) \big) = g(i+kD),
\quad
-d\leq i\leq d,
k\in\ZZ.
\endeq
This identification shows $W$ is indeed the Weyl group of type $\~{\mathrm C}_d$.

We denote by $|X|$ the cardinality of a finite set $X$.
The length function $\ell(\cdot)$ on $W$ affords the following simple formula
(compare with the more involved formula in \cite[(8.44)]{BB05}, which is recalled in \eqref{eq:old l(g)} below).

%------------------------------------------------------------------------------------------------------------
\begin{lemma}
   \label{lem:l(g)}
The length of $g\in W$ is given by
\eq \label{eq:l(g)}
\ell(g)
=\frac{1}{2}
\left|
\left\{
(i,j)\in[1..d]\times \mathbb{Z}
~\big |~ \substack{i>j \\ g(i)<g(j)} \textup{ or }\substack{i<j \\ g(i)>g(j)}
\right\}
\right|.
\endeq
\end{lemma}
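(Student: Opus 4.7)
The plan is to verify \eqref{eq:l(g)} by induction on $\ell(g)$, using the fact that the length function on a Coxeter group $(W,S)$ is uniquely characterized by $\ell(\id)=0$ together with the rule $\ell(gs)\in\{\ell(g)-1,\ell(g)+1\}$ for each simple reflection $s\in S$. Define
\[
\tinv(g):=\{(i,j)\in[1..d]\times\ZZ\mid i>j,\,g(i)<g(j)\text{ or }i<j,\,g(i)>g(j)\}
\]
and $\~\ell(g):=\tfrac12|\tinv(g)|$. It suffices to show (a) $|\tinv(g)|$ is always finite; (b) $\tinv(\id)=\emptyset$; and (c) for every $g\in W$ and every $s\in S$, $|\tinv(gs)|-|\tinv(g)|=\pm2$ with the sign matching the standard criterion. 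Granted these, integrality of $\~\ell$ follows by induction, and \eqref{eq:l(g)} is then forced by the uniqueness of $\ell$.

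First, (a) is immediate from the periodicity $g(i+D)=g(i)+D$ of \eqref{eq:gc}: for fixed $i\in[1..d]$ the displacement $g(j)-j$ is bounded on $\ZZ$, so $(i,j)$ can only be an inversion for finitely many $j$. The base case (b) is also immediate since $i>j\Rightarrow\id(i)>\id(j)$. For (c), I would analyze the three types of simple reflections separately: $s_i=(i,i+1)_\fc$ for $1\le i\le d-1$; $s_0=(-1,1)_\fc$; and $s_d=(d,d+2)_\fc$. In each case, right multiplication by $s$ alters $g$ at only finitely many residues modulo $D$, so the symmetric difference $\tinv(g)\triangle\tinv(gs)$ is an explicit finite set that I can enumerate.

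For internal $s_i$ with $1\le i\le d-1$, we have $(gs_i)(i)=g(i+1)$ and $(gs_i)(i+1)=g(i)$ (together with the $\pm$ and $D$-shifts), and a short Coxeter-transposition analysis gives $|\tinv(gs_i)|-|\tinv(g)|=2\,\mrm{sgn}(g(i)-g(i+1))$, matching the standard criterion $\ell(gs_i)>\ell(g)$ iff $g(i)<g(i+1)$. For the boundary generator $s_0$, $(gs_0)(1)=g(-1)=-g(1)$ and $(gs_0)(-1)=g(1)$ (together with $D$-shifts and negations), so the affected pairs in $\tinv$ lie symmetrically around the fixed point $0$; a direct enumeration yields $|\tinv(gs_0)|-|\tinv(g)|=2\,\mrm{sgn}(g(1))$, i.e.\ the change is $+2$ iff $g(1)>0$. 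The boundary generator $s_d$ is handled analogously, with the role of $0$ played by the fixed point $d+1$, yielding sign $\mrm{sgn}(d+1-g(d))$.

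The main obstacle will be the careful inversion bookkeeping in (c) for the boundary generators $s_0$ and $s_d$, since the sign symmetry $g(-i)=-g(i)$ of \eqref{eq:gc} makes the affected inversions appear in pairs that must be tracked simultaneously to produce the required change of $\pm2$. A cleaner alternative is to deduce \eqref{eq:l(g)} directly from the Bj\"orner--Brenti formula \eqref{eq:old l(g)} via the order-preserving bijection $\iota$ of \eqref{iota}: the strict monotonicity of $\iota$ together with the fact that every $g\in W$ fixes $d+1+D\ZZ$ pointwise (so no pair $(i,j)$ with $j\in d+1+D\ZZ$ can be an inversion) identifies the inversion counts in the two realizations $\~S^C_d$ and $W$, and routine floor-function manipulations repackage \eqref{eq:old l(g)} as $\tfrac12|\tinv(g)|$.
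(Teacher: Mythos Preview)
Your alternative at the end—deriving \eqref{eq:l(g)} from the Bj\"orner--Brenti formula \eqref{eq:old l(g)} through the order-preserving bijection $\iota$—is exactly the paper's proof: one matches $\textup{inv}_B$ term by term, rewrites each $\lfloor|g'(i)\pm g'(j)|/(D{-}1)\rfloor$ as $\lfloor|g(i)\pm g(j)|/D\rfloor$ via \eqref{eq:iota}, and then unpacks these floors as inversion counts using the affine type~A identity \cite[(8.31)]{BB05} inside $W\subset\widetilde S_D$. One correction to your sketch: pairs $(i,j)$ with $j\in d{+}1+D\ZZ$ \emph{can} be inversions (e.g.\ $(d,d{+}1)$ for $g=s_d$), and such terms genuinely appear in the paper's bookkeeping, so the fixed-point columns are not invisible to the count.

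Your primary approach—induction on $\ell$ via the change $|\tinv(gs)|-|\tinv(g)|=\pm2$—is a genuinely different, self-contained route that bypasses \cite[(8.44)]{BB05} entirely. The pairing arguments you outline are correct (for $s_0$, match $(1,b)$ with $(1,-b)$; only $b\in\{0,-1\}$ survive, each contributing $\mrm{sgn}(g(1))$; the analysis for $s_d$ is symmetric about $d{+}1$). One caveat worth making explicit: knowing $\widetilde\ell(\id)=0$, $\widetilde\ell\ge0$, and $|\widetilde\ell(gs)-\widetilde\ell(g)|=1$ is \emph{not} by itself enough to force $\widetilde\ell=\ell$ (this already fails in the infinite dihedral group), so the descent criteria you invoke are essential independent input; they follow from \cite[Proposition~8.4.4]{BB05} transported through $\iota$. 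The trade-off: the paper's route is shorter once one is willing to quote \cite{BB05}, while yours is intrinsic to the presentation \eqref{eq:gc} at the cost of heavier casework at $s_0$ and $s_d$.
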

%------------------------------------------------------------------------------------------------------------
\proof
Let $g' \in \~{S}^{C}_d$ be the element identified with $g$.
It is known  \cite[(8.44), (8.45)]{BB05} that
\eq\label{eq:old l(g)}
\ell(g')=\mbox{inv}_B(g'(1),\ldots,g'(d))+
\sum_{1\leq i\leq j\leq d}\left(\left\lfloor\frac{|g'(i)-g'(j)|}{D-1}\right\rfloor
+\left\lfloor\frac{|g'(i)+g'(j)|}{D-1}\right\rfloor\right).
\endeq

Since $\iota$ is order preserving, we have $g'(i) < g'(j) \Leftrightarrow g(i) < g(j)$ for all $i,j \in [1..d]$. Hence
by \cite[(8.2)]{BB05} we have
\begin{align*}
\textup{inv}_B(g'(1),\ldots,g'(d))
&=
\Big |\{ 	(i,j)\in[1..d]^2	~|~
	\substack{ i<j \\  g'(i)>g'(j)}
\} \Big |
+
\Big |\{ 	(i,j)\in[1..d]^2	~|~
	\substack{ i\leq j \\  g'(-i)>g'(j)}
\} \Big |
\\
%&=|\{(i,j)\in[-d..d]^2~|~0<|i|\leq j,g(i)>g(j)\}|.
%
&=
\Big|\{ 	(i,j)\in[1..d]^2	~|~
	\substack{ i<j \\  g(i)>g(j)}
\} \Big|
+
\Big |\{ 	(i,j)\in[1..d]^2	~|~
	\substack{ i\leq j \\  g(-i)>g(j)}
\} \Big|
\\
&=\frac{1}{2} \Big|
\{(i,j)\in[1..d]\times [-d..d] ~|~
%(j-i) (g(j)-g(i)) < 0
\substack{i>j\\g(i)<g(j)}\textup{ or }\substack{ i<j\\ g(i)>g(j)}\} \Big|.
\end{align*}

By \eqref{iota}--\eqref{eq:iota}, we obtain that
\[
\left\lfloor\frac{|g'(i) \pm  g'(j)|}{D-1}\right\rfloor = \left\lfloor\frac{|g(i) \pm  g(j)|}{D}\right\rfloor
\qquad \text{ for } i,j \in [1..d].
\]

%Note $\~{S}^C_d \subset \~{S}_{D-1}$, where $\~{S}_{D-1}$ denotes the Weyl group of affine type A  in \cite[\S8.3]{BB05}.
%It follows by \cite[(8.31)]{BB05} that, for $i,j \in [1..d]$,
%\[ \bA{
%\left\lfloor\frac{|g'(i) - g'(j)|}{D-1}\right\rfloor
%&=
%\Big|\{k\in\ZZ~|~ g'(j) > g'(i+k(D-1)) \} \Big|
%\\
%&\qquad + \Big|\{k\in\ZZ~|~ g'(i) > g'(j+ k(D-1))\} \Big|.
%}
%\]

We regard $W \subset \~{S}_D$,  the Weyl group of affine type A  in \cite[\S8.3]{BB05}.
\cite[(8.31)]{BB05} can be rephrased using $g$ (instead of $g'$) as follows:
for $g \in W \subset \~{S}_D$ and for $i,j \in [1..d]$, we have
\[\bA{
\left\lfloor\frac{|g(i) - g(j)|}{D}\right\rfloor
= |\{k\in\ZZ~|~ g(j) > g(i+k(D)) \}|  +|\{k\in\ZZ~|~ g(i) > g(j+ k(D))\}|.  }
\]

A detailed calculation shows that
\[
\ba{{ll}
   \sum\limits_{1\leq i\leq j\leq d}
   &\left\lfloor\frac{|g(i)+g(j)|}{D}\right\rfloor
 \\
&
    =\sum\limits_{1\leq i<j\leq d}
    \bp{
    |\{k\geq 1~|~g(j)>g(-i+kD)\}|
    +|\{k\geq 1|g(i)<g(-j-kD)\}|
    }
\\
&+
\sum\limits_{1\leq i\leq d}
\bp{
|\{k\geq 1~|~g(i)>g(-i+kD)\}|
+|\{k\geq 1|g(i)<g(-i-kD)\}|
}
\\
%&=&\sum_{1\leq i<j\leq d}\frac{1}{2}(\sharp\{k\geq 1|g(j)>g(-i+kD)\}+\sharp\{k\geq 1|g(i)>g(-j+kD)\})\\
%&&+\sum_{1\leq i<j\leq d}\frac{1}{2}(\sharp\{k\geq 1|g(i)<g(-j-kD)\}+\sharp\{k\geq 1|g(j)<g(-i-kD)\})\\
%&&+\sum_{1\leq i\leq d}\frac{1}{2}(\sharp\{k\geq 1|g(i)>g(-i+kD)\}+\sharp\{k\geq 1|g(i)>g(\frac{kD}{2})\})\\
%&&+\sum_{1\leq i\leq d}\frac{1}{2}(\sharp\{k\geq 1|g(i)<g(-i-kD)\}+\sharp\{k\geq 1|g(i)<g(\frac{-kD}{2})\})\\
&=\frac{1}{2}\sum\limits_{k=1}^{\infty}
\left(
\ba{{ll}
&|\{(j,-i+kD)~|~1\leq i<j\leq d,g(j)>g(-i+kD)\}|\\
+&|\{(i,-j+kD)~|~1\leq i<j\leq d,g(i)>g(-j+kD)\}|\\
+&|\{(i,-j-kD)~|~1\leq i<j\leq d,g(i)<g(-j-kD)\}|\\
+&|\{(j,-i-kD)~|~1\leq i<j\leq d,g(j)<g(-i-kD)\}|\\
+&|\{(i,-i+kD)~|~1\leq i\leq d,g(i)>g(-i+kD)\}|\\
+&|\{(i,\frac{kD}{2})~|~1\leq i\leq d,g(i)>g(\frac{kD}{2})\}|\\
+&|\{(i,-i-kD)~|~1\leq i\leq d,g(i)<g(-i-kD)\}|\\
+&|\{(i,-\frac{kD}{2})~|~1\leq i\leq d,g(i)<g(-\frac{kD}{2})|\}
}\right).
}
\]
Similarly we have
\begin{align*}
 \ba{{ll}
 \sum\limits_{1\leq i\leq j\leq d}
 & \left\lfloor\frac{|g(i)-g(j)|}{D}\right\rfloor
\\
%&=&\sum_{1\leq i<j\leq d}\left\lfloor\frac{|g(i)-g(j)|}{D}\right\rfloor\\
&
=
\sum\limits_{1\leq i<j\leq d}
\bp{
|\{k\geq 1~|~g(i)>g(j+kD)\}|
+|\{k\geq 1~|~g(j)>g(i+kD)\}|
}
\\
%&=&\sum_{k=1}^{\infty}(\sharp\{(i,j+kD)|1\leq i<j\leq d,g(i)>g(j+kD)\}+\\
%&&\quad\quad\quad\sharp\{(j,i+kD)|1\leq i<j\leq d,g(j)>g(i+kD)\})\\
&=\frac{1}{2}\sum\limits_{k=1}^{\infty}
\left(
\ba{{ll}
&|\{(i,j+kD)~|~1\leq i<j\leq d,g(i)>g(j+kD)\}|\\
+&|\{(j,i-kD)~|~1\leq i<j\leq d,g(i-kD)>g(j)\})|\\
+&|\{(j,i+kD)~|~1\leq i<j\leq d,g(j)>g(i+kD)\}|\\
+&|\{(i,j-kD)~|~1\leq i<j\leq d,g(j-kD)>g(i)\}|)
}
\right).
}
\end{align*}
The lemma then follows by simplifying the summation of the relevant terms above.
\endproof
%------------------------------------------------------------------------------------------------------------
%For $a,b \in \ZZ$, define periodic permutations and periodic symmetric permutations, respectively, by
%\eq
%\cyaA{a, b} = \prod_{k\in\ZZ} (a+kD, b+kD)
%,
%\quad
%\cyaB{a,b} = \cyaA{a,b}\cyaA{-a,-b}
%.
%\endeq
%Under the identification the generators  are mapped as below:
Via the notation   \eqref{def:perm}, the generators of $W$ are given by
\eqnarray
\bA{
s_0 & %= \cyaA{1, -1}
= [-1,2,3,\ldots, d-1, d]_\fc,
\\
s_d & %= \cyaA{d, d+2}
= [1,2,3,\ldots,d-1,d+2]_\fc,
\\
s_i & %= \cyaB{i, i+1}
=[1,\ldots,i-1,i+1,i,i+2,\ldots, d]_\fc =(i,i+1)_\fc,  \quad\text{ for } i =1, \ldots, d-1.
}
\endeqnarray

%------------------------------------------------
\section{Parabolic subgroups and cosets}

Denote the set of (weak) compositions of $d$ into $r+2$ parts (where ``weak'' means a possible zero part is allowed) by
\eq \label{def:Ld}
\Ld = \Ld_{r,d} = \Big \{ \ld = (\ld_0, \ld_1, \ldots, \ld_{r+1})\in\NN^{r+2} ~\big |~ \sum_{i=0}^{r+1} \ld_i = d \Big \}.
\endeq
For  $\ld \in \Ld$, we shall denote by $W_\ld$  the parabolic (finite) subgroup of $W$
generated by $S\backslash\{s_{\ld_0}, s_{\ld_{0,1}},\ldots, s_{\ld_{0,r}}\}$,
where $\ld_{0,i} =\ld_0 + \ld_1 + \ldots + \ld_i$ for $0 \leq i \leq r$;
note $\ld_{0,0} = \ld_0$ and $\ld_{0,r} =d -\ld_{r+1}$.
We define the integral intervals $R_i^\ld$ by
\eqnarray  \label{eq:Ri}
R_i^\ld = \bc{
[-\ld_0.. \ld_0]
&\tif i=0,
\\
(\ld_{0, i-1}.. \ld_{0,i}]
&\tif i \in [1..r],
\\
[d+1-\ld_{r+1}.. d+1+ \ld_{r+1}]
&\tif i=r+1.
}
\endeqnarray
Recall that $n=2r+2$ and $D=2d+2$ in \eqref{nD}. We further extend the definition of $R^\ld_i$ for all $i\in\ZZ$ recursively by letting
\eq   \label{eq:Ri2}
R^\ld_{-i} = \{-x ~|~ x \in R_i^\ld\}, \quad
R^\ld_{i+n} = \{x+D ~|~ x \in R_i^\ld\}.
\endeq
Then the sets $\{R^\ld_i\}_{i\in \ZZ}$ partition the set $\ZZ$, that is,
\[
R^\ld_i \cap R^\ld_i =\emptyset \; \text{ for } i\neq j,
\qquad \ZZ =\bigsqcup_{i\in \ZZ} R^\ld_i.
\]

Denote by $\Stab(X)$ the stabilizer subgroup of the action of $W \equiv \text{Perm}^\fc (\ZZ)$ on $\ZZ$, for
any subset $X \subset \ZZ$.
%-----------------------------------------------------------------------------------------------------------
\begin{lem}\label{lem:stabR}
For any $\ld \in \Lambda$, we have
$
W_\ld = \bigcap_{i = 0}^{r+1} \Stab (R_i^\ld).
$
\end{lem}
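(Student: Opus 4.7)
The plan is to establish the two inclusions separately, exploiting the realization of $W$ as the permutation group $\mathrm{Perm}^\fc(\ZZ)$ in \eqref{eq:gc}. A preliminary observation: once $g \in W$ stabilizes $R_i^\ld$ for $i \in [0..r+1]$, it automatically stabilizes $R_i^\ld$ for all $i \in \ZZ$, since by \eqref{eq:Ri2} the centro-symmetry $g(-x)=-g(x)$ sends $R_{-i}^\ld = -R_i^\ld$ to itself and the periodicity $g(x+D)=g(x)+D$ sends $R_{i+n}^\ld = R_i^\ld + D$ to itself. So I only need to treat $i \in [0..r+1]$.

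For the inclusion $W_\ld \subseteq \bigcap_i \Stab(R_i^\ld)$, I would check that every generator of $W_\ld$ stabilizes every $R_i^\ld$. For $s_j$ with $j \in [1..d-1]$ and $j \notin \{\ld_{0,0},\ldots,\ld_{0,r}\}$, the transposition $s_j = (j,j+1)_\fc$ swaps $\pm j + kD \leftrightarrow \pm(j+1)+kD$ for each $k \in \ZZ$; since $j$ is not a cut point, $j$ and $j+1$ lie in a common block of the partition, so $s_j$ preserves every $R_i^\ld$. The generator $s_0$ (which belongs to $W_\ld$ precisely when $\ld_0 \geq 1$) swaps $\pm 1$ and their $D$-translates, all contained in $R_0^\ld$ or its images under \eqref{eq:Ri2}; likewise $s_d$ (in $W_\ld$ when $\ld_{r+1} \geq 1$) swaps $d$ and $d+2$, both in $R_{r+1}^\ld$.

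For the reverse inclusion, given $g \in \bigcap_{i=0}^{r+1} \Stab(R_i^\ld)$, I would analyze $g$ block by block. On a middle block $R_i^\ld = (\ld_{0,i-1}..\ld_{0,i}]$ with $i \in [1..r]$, the restriction of $g$ is an arbitrary permutation of that integer interval, giving an element of the symmetric group generated by $s_{\ld_{0,i-1}+1},\ldots,s_{\ld_{0,i}-1}$. On $R_0^\ld$, the constraints $g(0)=0$ together with $g(-x)=-g(x)$ force $g|_{[1..\ld_0]}$ to take values in $\{\pm 1,\ldots,\pm \ld_0\}$, producing an element of the hyperoctahedral group generated by $s_0, s_1, \ldots, s_{\ld_0 - 1}$. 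Similarly, on $R_{r+1}^\ld$, combining the two relations in \eqref{eq:gc} yields $g(d+1)=d+1$ and $g(d+1+a)+g(d+1-a)=2(d+1)$ for all $a$, so $g|_{[d+2..d+1+\ld_{r+1}]}$ is a signed permutation around the fixed point $d+1$, giving an element of the hyperoctahedral group generated by $s_{d-\ld_{r+1}+1},\ldots,s_d$. The product of these factors is precisely an element of $W_\ld$. The main obstacle is the verification at the two endpoint blocks $R_0^\ld$ and $R_{r+1}^\ld$, where the centro-symmetry and periodicity in \eqref{eq:gc} must be combined carefully to confirm that the local data is exactly a hyperoctahedral element on each end; the middle blocks reduce to a routine statement about symmetric groups.
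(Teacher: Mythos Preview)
Your argument is correct. Both inclusions go through as you describe: the forward direction is an easy check on generators, and for the reverse direction the block-by-block factorization works because the subgroups generated by the three groups of simple reflections (the two hyperoctahedral ends and the type~$A$ middle blocks) act on disjoint pieces of~$\ZZ$ and together exhaust~$W_\ld$.

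The paper takes a shorter route. Rather than arguing directly, it quotes \cite[Proposition~8.4.4]{BB05}, which identifies each \emph{maximal} parabolic $W_{S\setminus\{s_{\ld_{0,i}}\}}$ as $\Stab([-\ld_{0,i}..\ld_{0,i}])\cap\Stab([\ld_{0,i}+1..D-\ld_{0,i}-1])$, and then obtains the general parabolic $W_\ld=\bigcap_{i=0}^r W_{S\setminus\{s_{\ld_{0,i}}\}}$ by intersecting; a telescoping of the nested intervals $[-\ld_{0,i}..\ld_{0,i}]$ recovers the individual blocks $R_i^\ld$. Your approach is more elementary and self-contained, and makes the product decomposition $W_\ld\cong W(C_{\ld_0})\times S_{\ld_1}\times\cdots\times S_{\ld_r}\times W(C_{\ld_{r+1}})$ transparent; the paper's approach is terser but leans on an external reference for the maximal-parabolic case.
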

%-----------------------------------------------------------------------------------------------------------
\proof
By \cite[Proposition 8.4.4]{BB05} we have, for each $0\le i \le r$,
\[
W_{S\backslash\{s_{\ld_{0,i}}\}} =
\textup{Stab}([-\ld_{0,i}..\ld_{0,i}]) \cap \textup{Stab}([\ld_{0,i}+1..D-\ld_{0,i}-1]).
\]
The lemma follows by taking the intersection $W_\ld = \bigcap_{i = 0}^{r} W_{S\backslash\{s_{\ld_{0,i}}\}}$.
\endproof

Let
\eq  \label{eq:Dmin}
\D_\ld = \big\{g\in \W ~|~ \ell(wg) = \ell(w) + \ell(g), \forall  w\in \W_\ld \big\}.
\endeq
Then $\D_\ld$ (respectively, $\D_\ld^{-1}$) is the set of minimal length right (respectively, left)
coset representatives of $\W_\ld$ in $\W$.
Denote by
\eq  \label{eq:Dmin2}
\D_{\ld\mu} = \D_\ld \cap \D_\mu^{-1}
\endeq
the set of  minimal length double coset representatives
for $W_\ld \backslash W /W_\mu$.
%-----------------------------------------------------------------------------------------------------------
\begin{lemma}\label{lem:Dld}
For any $g \in \W$ and $\ld \in \Ld$, the following are equivalent:
\enua
\item $g\in\D_\ld$;

\item
$g^{-1}$ is order-preserving on $R^\ld_i$, for all $i \in [0..r+1]$;

\item
$g^{-1}$ is order-preserving on $R^\ld_i$, for all $i \in \ZZ$.
\endenua
\end{lemma}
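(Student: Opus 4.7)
The implications (c) $\Rightarrow$ (b) is immediate from the definition. For (b) $\Rightarrow$ (c), I would exploit the defining symmetries \eqref{eq:Ri2} of the blocks together with \eqref{eq:gc}: if $x < y$ in $-R_i^\ld$, writing $x=-x',y=-y'$ with $x',y' \in R_i^\ld$ and $y'<x'$, then
\[
g^{-1}(x) = -g^{-1}(x'), \qquad g^{-1}(y) = -g^{-1}(y'),
\]
so order-preservation on $R_i^\ld$ yields $g^{-1}(y')<g^{-1}(x')$, hence $g^{-1}(x)<g^{-1}(y)$. A parallel argument using $g^{-1}(x+D)=g^{-1}(x)+D$ handles the translation $R^\ld_{i+n} = R^\ld_i + D$. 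Iterating these two operations generates all blocks $R^\ld_i$ for $i \in \ZZ$ from those with $i \in [0..r+1]$.

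The main content is (a) $\Leftrightarrow$ (b). I would use the standard Coxeter-theoretic reformulation
\[
g \in \D_\ld \quad\Longleftrightarrow\quad \ell(sg) > \ell(g) \text{ for every } s \in S \cap W_\ld,
\]
and translate each condition $\ell(s_j g)>\ell(g)$ into a concrete inequality on $g^{-1}$ via the length formula of Lemma~\ref{lem:l(g)}. For an interior simple reflection $s_j$ with $1\le j\le d-1$, this inequality reads $g^{-1}(j)<g^{-1}(j+1)$; for $s_0$, it reads $g^{-1}(1)>0$; for $s_d$, it reads $g^{-1}(d)<g^{-1}(d+2)$. Using the fixed-point identities $g^{-1}(0)=0$ and $g^{-1}(d+1)=d+1$ together with the symmetry $g^{-1}(-x)=-g^{-1}(x)$, the $s_0$ condition is equivalent to $g^{-1}(-1)<g^{-1}(0)<g^{-1}(1)$ and the $s_d$ condition to $g^{-1}(d)<g^{-1}(d+1)<g^{-1}(d+2)$.

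Now the generators of $W_\ld$ are exactly $S\setminus\{s_{\ld_0},s_{\ld_{0,1}},\ldots,s_{\ld_{0,r}}\}$, and the omitted reflections are precisely the ones that would have connected distinct blocks $R^\ld_i$ and $R^\ld_{i+1}$. Hence the $s_j \in S\cap W_\ld$ supplying the above inequalities range exactly over the consecutive adjacencies within each block $R^\ld_i$ for $i\in[0..r+1]$. Collecting these inequalities, together with the boundary adjustments at $0$ and $d+1$, gives that $g^{-1}$ is increasing on each $R^\ld_i$, and conversely. The main technical point, and the principal obstacle, is the careful bookkeeping at the two boundary blocks $R^\ld_0$ and $R^\ld_{r+1}$: one must verify that the single length condition from $s_0$ (respectively $s_d$) is equivalent to $g^{-1}$ being monotone across the fixed point $0$ (respectively $d+1$), which is where the involutive symmetry of \eqref{eq:gc} is used essentially.
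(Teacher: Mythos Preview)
Your proof is correct and follows the same logical skeleton as the paper's, but with a noteworthy difference in how you establish (a) $\Leftrightarrow$ (b). The paper simply invokes the characterization of $\D_\ld$ following \cite[Proposition~8.4.4]{BB05}, which already gives
\[
\D_\ld = \left\{ g \in \W ~\middle|~ \begin{array}{l} g^{-1}(0) < \ldots < g^{-1}(\ld_0),\\ g^{-1}(1+\ld_{0,i}) < \ldots < g^{-1}(\ld_{0,i+1}), \ \forall i\in [1..r-1], \\ g^{-1}(1+\ld_{0,r}) < \ldots < g^{-1}(d+1)\end{array}\right\},
\]
and then uses the symmetry $g(-i)=-g(i)$ to extend the monotonicity across the fixed points $0$ and $d+1$, exactly as you do for the boundary blocks. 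You instead rederive this characterization from scratch via the Coxeter criterion $\ell(sg)>\ell(g)$ for $s\in S\cap W_\ld$ together with the length formula of Lemma~\ref{lem:l(g)}. This is more self-contained (no appeal to \cite{BB05}) and arguably more in the spirit of the paper, since it rests on the paper's own new length formula; the trade-off is that you must work out the boundary cases $s_0$ and $s_d$ by hand, which is the bookkeeping you correctly flag. For (b) $\Leftrightarrow$ (c), the paper is terser than you (``follows from the periodicity condition~\eqref{eq:gc}''), but your explicit unpacking of the reflection and translation symmetries is exactly what underlies that sentence.
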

%-----------------------------------------------------------------------------------------------------------
\proof
By the argument following \cite[Proposition 8.4.4]{BB05}, $\D_\ld$ in \eqref{eq:Dmin} can be written as
\[
\D_\ld = \left\{ g \in \W ~\middle|~ \ba{{ll} g^{-1}(0) < \ldots < g^{-1}(\ld_0),\\ g^{-1}(1+\ld_{0,i}) < \ldots < g^{-1}(\ld_{0,i+1}),
\forall i\in [1..r-1] \\ g^{-1}(1+\ld_{0,r}) < \ldots < g^{-1}(d+1)}\right\}.
\]
Note that $g(-i) = -g(i)$ and $g(0) = 0$, so the condition ``$g^{-1}(0) < \ldots < g^{-1}(\ld_{0})$'' is equivalent to
$g^{-1}(-\ld_{0}) < \ldots < g^{-1}(0) = 0 < \ldots < g^{-1}(\ld_{0})$. Similarly, for $g \in \D_\ld$, we have
$g^{-1}(d+1-\ld_{r+1}) < \ldots < g^{-1}(d+1) = d+1 < \ldots < g^{-1}(d+1+\ld_{r+1})$.
Hence (a) is equivalent to (b).

The equivalence of (b) and (c) follows from the periodicity condition \eqref{eq:gc}.
\endproof

The following proposition is standard and can be found in \cite[Proposition 4.16, Lemma ~4.17 and Theorem~ 4.18]{DDPW};
for Part (a) also see Proposition~\ref{prop:delta} below.

\begin{prop}\label{prop:doublecoset}
Let $\ld,\mu \in \Ld$ and $g \in \D_{\ld\mu}$.
\enua
\item There is a weak composition $\delta = \delta(\ld, g, \mu) \in \Ld_{r',d}$ for some $r'$ such that
\[
W_{\delta} = g^{-1} W_\ld g \cap W_\mu.
\]
\item The map $W_\ld \times (\D_\delta \cap W_\mu) \rw W_\ld g W_\mu$ sending $(x,y)$ to $xgy$ is a bijection;
moreover, we have $\ell(xgy) = \ell(x) + \ell(g) + \ell(y)$.
\item The map $(\D_\delta\cap \W_\mu) \times \W_\delta \rw \W_\mu$ sending $(x,y)$ to $xy$ is a bijection;
moreover, we have $\ell(x) + \ell(y) = \ell(xy)$.
\endenua
\end{prop}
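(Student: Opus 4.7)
The plan is to adapt the classical double coset combinatorics of Coxeter groups to the centro-symmetric presentation $W = \Perm^\fc(\ZZ)$, essentially following the proof in \cite{DDPW} cited by the authors but with attention to the symmetry conditions \eqref{eq:gc}.

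For part (a), I would begin from Lemma~\ref{lem:stabR}, which expresses $W_\ld$ as an intersection of stabilizers. Conjugation and intersection give
\[
g^{-1} W_\ld g \cap W_\mu = \bigcap_{i,k \in \ZZ} \Stab\bigl( R^\mu_k \cap g^{-1}(R^\ld_i) \bigr).
\]
Since $g \in \D_{\ld\mu} = \D_\ld \cap \D_\mu^{-1}$, Lemma~\ref{lem:Dld}(c) applied to $g^{-1} \in \D_\mu$ tells us that $g$ is order-preserving on each $R^\mu_k$. As each $R^\ld_i$ is an interval of consecutive integers, the pullback $R^\mu_k \cap g^{-1}(R^\ld_i)$ is therefore a (possibly empty) consecutive subinterval of $R^\mu_k$. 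The symmetries $g(-x) = -g(x)$ and $g(x+D) = g(x)+D$, combined with $R^\ld_{-i} = -R^\ld_i$, $R^\ld_{i+n} = R^\ld_i + D$ (and similarly for $R^\mu_\bullet$), propagate to the refinement pieces. Reading off the sizes of these intervals within a fundamental domain then produces the desired composition $\delta = \delta(\ld, g, \mu) \in \Ld_{r',d}$, and Lemma~\ref{lem:stabR} identifies the above intersection with $W_\delta$.

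For part (b), I would first produce the decomposition and then check uniqueness and length additivity separately. Given $h \in W_\ld g W_\mu$, write $h = x' g y'$ with $x' \in W_\ld$ and $y' \in W_\mu$; by (c), decompose $y' = z y$ with $z \in W_\delta$ and $y \in \D_\delta \cap W_\mu$. Then (a) gives $g z g^{-1} \in W_\ld$, so $h = (x' g z g^{-1})\, g\, y$ is of the required form. Uniqueness reduces to the observation that $x_1 g y_1 = x_2 g y_2$ forces $y_1 y_2^{-1} \in g^{-1} W_\ld g \cap W_\mu = W_\delta$, which combined with $y_1, y_2 \in \D_\delta$ gives $y_1 = y_2$. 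Length additivity splits in two halves: $g \in \D_\ld$ yields $\ell(x' g) = \ell(x') + \ell(g)$ for any $x' \in W_\ld$, and dually $g \in \D_\mu^{-1}$ yields $\ell(g y') = \ell(g) + \ell(y')$ for any $y' \in W_\mu$; combining with the additivity from (c) completes the estimate.

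Part (c) is the classical decomposition of a Coxeter group with respect to a standard parabolic, applied inside the Coxeter system $(W_\mu, S \cap W_\mu)$ relative to the parabolic $W_\delta$ (which sits inside $W_\mu$ by construction in (a), since the breakpoints of $\delta$ refine those of $\mu$). The main obstacle I anticipate is part (a): verifying that the refinement pieces arising in the identification of $g^{-1} W_\ld g \cap W_\mu$ genuinely inherit the $\fc$-symmetry and $D$-periodicity well enough to be packaged as a composition $\delta \in \Ld_{r',d}$ in the sense of \eqref{def:Ld}, with the correct indexing around the two fixed points $0$ and $d+1$. Once this bookkeeping is settled, the remainder of the argument is a direct translation of standard Coxeter-theoretic facts.
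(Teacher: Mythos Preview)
Your proposal is correct and matches the paper's approach: the paper does not prove Proposition~\ref{prop:doublecoset} directly but cites \cite[Proposition~4.16, Lemma~4.17, Theorem~4.18]{DDPW}, and for part~(a) it gives exactly your stabilizer computation $g^{-1}W_\ld g \cap W_\mu = \bigcap_{i,j} \Stab(g^{-1}R^\ld_i \cap R^\mu_j) = W_{\delta(A)}$ later as Proposition~\ref{prop:delta}. One minor point to tighten in your sketch of~(b): the two additivities $\ell(xg)=\ell(x)+\ell(g)$ and $\ell(gy)=\ell(g)+\ell(y)$ do not by themselves yield $\ell(xgy)=\ell(x)+\ell(g)+\ell(y)$; you should observe (using Lemma~\ref{lem:Dld} and the fact that $y\in \D_\delta \cap W_\mu$ preserves each $R^\mu_j$ setwise and is order-preserving on each refinement piece) that $gy\in \D_\ld$, whence $\ell(x\cdot gy)=\ell(x)+\ell(gy)$.
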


%=======================
\section{Affine Schur algebra via Hecke}
 \label{sec:Sj}

Let $\mA =\ZZ[v,v^{-1}]$. The Hecke algebra $\bH = \bH(W)$  %of type $\~{\MATHRM C}_d$
is an $\mA$-algebra with a basis $\{T_g ~|~ g\in W\}$ satisfying
\[
\ba{{lllll}
T_w T_{w'} = T_{ww'}
&
\tif
\ell(ww') = \ell(w) + \ell(w'),
\\
(T_s+1) (T_s - v^2) = 0,
&\tfor
s \in S.
}
\]
For any finite subset $X \subset \W$ and for $\ld\in\Ld$ \eqref{def:Ld}, set
\eq  \label{eq:x}
T_X = \sum_{w\in X} T_w
\quad\textup{and}\quad
x_\ld = T_{\W_\ld}.
\endeq
For $\ld,\mu\in\Ld$ and $g\in \D_{\ld\mu}$, we consider a right $\bH$-linear map
$
\phi_{\ld\mu}^g \in \Hom_\bH(x_\mu \bH, \bH)$,
sending $x_\mu$ to $T_{W_\ld g W_\mu}.$
Thanks to Proposition~ \ref{prop:doublecoset}(b),
we have $T_{W_\ld g W_\mu} = x_\ld T_g T_{\D_\delta \cap W_\mu}$ for some $\delta\in \Ld_{r',d}$,
and hence we have constructed a right $\bH$-linear map
\eq  \label{phi}
\phi_{\ld\mu}^g \in \Hom_\bH(x_\mu\bH, x_\ld\bH),
\qquad x_\mu \mapsto T_{W_\ld g W_\mu} = x_\ld T_g T_{\D_\delta \cap W_\mu}.
\endeq
The {\em affine Schur algebra} $\Sjj$ is defined as the following $\mA$-algebra
\eq
\label{def:Sjj}
\Sjj = \textup{End}_{\bH}
\Bp{
\mathop{\oplus}_{\ld\in\Ld} x_\ld \bH
}
= \bigoplus_{\ld,\mu \in \Ld} \Hom_{\bH} (x_\mu \bH, x_\ld \bH)
.
%= \Span \left\{ \phi_{\ld\mu}^g %~|~ \ld,\mu \in \Ld, g \in \D_{\ld\mu}
%\right\}.
\endeq
Introduce the following subset of $\Ld \times W \times \Ld$:
\eq  \label{Dnd}
 \D_{n,d} %=\{ (\ld, g, \mu) \in \Ld \times W \times \Ld~|~ g \in \D_{\ld\mu} \}
 =\bigsqcup_{\ld, \mu \in \Ld} \{\ld\} \times \D_{\ld,\mu} \times \{\mu\}.
\endeq

A  formal argument as in \cite{Du92, Gr97} is applicable to our setting and gives us the following.
%-----------------------------------------------------------------------------------------------------------
\begin{lemma}
\label{lem:basis}
The set $\{  \phi_{\ld\mu}^g ~|~ (\ld,g, \mu) \in \D_{n,d} \}$
forms an $\mA$-basis of $\Sjj$. %, which is called the \textit{standard basis} of $\Sjj$.
\end{lemma}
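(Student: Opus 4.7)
The plan is to show, for each fixed pair $(\ld, \mu) \in \Ld^2$, that $\{\phi_{\ld\mu}^g : g \in \D_{\ld\mu}\}$ is an $\mA$-basis of the summand $\Hom_\bH(x_\mu \bH, x_\ld \bH)$ of $\Sjj$; by the direct-sum decomposition in \eqref{def:Sjj} and the definition \eqref{Dnd} of $\D_{n,d}$, this suffices. A direct computation using the quadratic Hecke relation and the pairing of elements of $W_\mu$ by a fixed simple reflection gives $T_s x_\mu = v^2 x_\mu = x_\mu T_s$ for each simple reflection $s$ of $W_\mu$. It follows that any $\bH$-linear map $\phi\colon x_\mu \bH \to x_\ld \bH$ is determined by $\phi(x_\mu) \in x_\ld \bH$, which must satisfy $\phi(x_\mu) T_s = v^2 \phi(x_\mu)$ for every such $s$, and conversely every element of $x_\ld \bH$ with this property defines a well-defined map (by the universal property of $x_\mu \bH$ as a parabolically induced module, i.e., Frobenius reciprocity). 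Hence evaluation at $x_\mu$ yields an $\mA$-linear isomorphism
\[
\Hom_\bH(x_\mu \bH, x_\ld \bH) \;\overset{\sim}{\longrightarrow}\; V_{\ld\mu} := \bigl\{h \in x_\ld \bH : hT_s = v^2 h \text{ for every simple reflection } s \text{ of } W_\mu \bigr\},
\]
and the task reduces to exhibiting $\{T_{W_\ld g W_\mu} : g \in \D_{\ld\mu}\}$ as an $\mA$-basis of $V_{\ld\mu}$.

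Membership $T_{W_\ld g W_\mu} \in V_{\ld\mu}$ for each $g \in \D_{\ld\mu}$ is immediate from Proposition~\ref{prop:doublecoset}(b): the factorization $T_{W_\ld g W_\mu} = x_\ld T_g T_{\D_\delta \cap W_\mu}$ with $\delta = \delta(\ld, g, \mu)$ witnesses membership in $x_\ld \bH$, while the mirror factorization supplied by the right-handed analogue of Proposition~\ref{prop:doublecoset} expresses $T_{W_\ld g W_\mu}$ as an element of $\bH x_\mu$, giving the required right $v^2$-invariance under $W_\mu$. Linear independence is trivial, since the double cosets $W_\ld g W_\mu$, as $g$ ranges over $\D_{\ld\mu}$, are pairwise disjoint in $W$ and $\{T_w\}_{w \in W}$ is a free $\mA$-basis of $\bH$.

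For spanning, given $h \in V_{\ld\mu}$, I would decompose $h = \sum_{g \in \D_{\ld\mu}} h_g$ along the partition $W = \bigsqcup_g W_\ld g W_\mu$. Since each double coset is stable under both the left $W_\ld$- and right $W_\mu$-actions, each $h_g$ individually lies in $V_{\ld\mu}$. Proposition~\ref{prop:doublecoset}(b) lets me write uniquely
\[
h_g = \sum_{x \in W_\ld,\; y \in \D_\delta \cap W_\mu} c_{x,y}\, T_x T_g T_y,
\]
where $\delta = \delta(\ld, g, \mu)$. The inherited left $v^2$-invariance (equivalent to $h_g \in x_\ld \bH$) forces $\sum_{x \in W_\ld} c_{x,y}\, T_x = a_y\, x_\ld$ for each $y$, using the rank-one description of the left $v^2$-invariants in the finite parabolic Hecke subalgebra associated to $W_\ld$; subsequently, the right $v^2$-invariance, combined with Proposition~\ref{prop:doublecoset}(c) applied to $W_\mu$, forces the scalars $a_y$ to collapse to a common constant, so that $h_g = a\, T_{W_\ld g W_\mu}$. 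This coupled coefficient-pinning is the only real (and mild) technical hurdle; it proceeds exactly as in the finite type arguments of Du~\cite{Du92} and Green~\cite{Gr97}, the coset combinatorics of Proposition~\ref{prop:doublecoset} being all that is needed to port those arguments to the affine setting here.
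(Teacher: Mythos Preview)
Your proof is correct and is precisely the ``formal argument as in \cite{Du92, Gr97}'' that the paper invokes without spelling out; the paper gives no further details beyond that citation. One minor streamlining: in the spanning step, once you have written $h_g = \sum_{w \in W_\ld g W_\mu} c_w T_w$, the left and right $v^2$-invariance conditions translate (by the pairing computation you already did) directly into $c_{sw} = c_w$ and $c_{ws} = c_w$ for all simple $s$ in $W_\ld$, $W_\mu$ respectively, so the constancy of $c_w$ on the whole double coset is immediate without needing to route through the $a_y$-coefficients and Proposition~\ref{prop:doublecoset}(c).
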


We denote
\[
\Tt_n = \big\{A=(a_{ij}) \in \text{Mat}_{\ZZ\times\ZZ}(\NN)~|~ a_{ij}=a_{i+n, j+n}, \forall i, j\in \ZZ \big \}.
\]
We further consider the following subset of $\Tt_n$:
\begin{align}
\label{eq:Xind}
\begin{split}
\Xi_{n,d} = \Big\{A=(a_{ij}) \in \text{Mat}_{\ZZ\times\ZZ}(\NN)~\big |~
& a_{-i,-j} =a_{ij}=a_{i+n, j+n}, \forall i, j\in \ZZ;
\\
&
a_{00}, a_{r+1,r+1} \text{ are odd};
\sum_{1 \leq i \leq n} \sum_{j\in\ZZ} a_{ij}= D
 \Big \}.
 \end{split}
\end{align}
%Set $\Xi_{n,d}$ to be the subset of $\Tt_n$ in which each element $A = (a_{ij})$ satisfies additionally that
%\itm
%\item $a_{-i,-j} = a_{ij}$ for all $i,j \in \ZZ$;
%\item $a_{00}$ and $a_{r+1,r+1}$ are odd;
%\item $\sum_{1 \leq i \leq n} \sum_{j\in\ZZ} a_{ij}= D$.
%\enditm
For $k,\ell\in\ZZ$, we define a matrix  $E^{k\ell} \in \Tt_n$ by
\eq  \label{Ekl}
E^{k\ell} = (E^{k\ell}_{ij})_{i,j\in\ZZ},
\quad
E^{k\ell}_{ij} =\bc{
1&\tif (i,j) = (k+tn,  \ell+tn) \textup{ for some } t\in\ZZ,
\\
0&\otw.
}
\endeq
For any $T= (t_{ij}) \in \Tt_n$,  set
\eq  \label{Ttt}
T_\tt = (t_{\tt,ij}), \quad t_{\tt,ij} = t_{ij}+t_{-i,-j}.
\endeq
For $T = (t_{ij})\in \Tt_n$, define its type A row sum vector $\roA(T)= (\roA(T)_k)_{k\in\ZZ}$
and type A column sum vector $\coA(T)= (\coA(T)_k)_{k\in\ZZ}$
%and full column sum $\coA(T)$
by
\eq  \label{eq:rowsum:a}
\roA(T)_k = \sum\limits_{j\in\ZZ} t_{kj}
\quad\textup{and}\quad
\coA(T)_k = \sum\limits_{i\in\ZZ} t_{ik}.
\endeq
Let
\eq \label{eq:Xin}
\Xi_n= \bigcup_{d\in\NN} \Xi_{n,d}.
\endeq
For $A = (a_{ij})\in \Xi_n$, we set
\eq\label{def:a'}
%a'_{kk} =  \frac{1}{2}(a_{kk}-1), \qquad \text{for } k \in \ZZ(r+1).
%
a'_{ij} =
\bc{\frac{1}{2}(a_{ii}-1)&\tif i = j \in \ZZ(r+1),
\\ a_{ij}
&\textup{otherwise}.  }
\endeq
For any $A \in \Xi_n$,
we define its type C row sum vector $\roC(A)= (\roC(A)_0, \ldots, \roC(A)_{r+1} )$
and type C column sum vector $\coC(A)= (\coC(A)_0, \ldots, \coC(A)_{r+1} )$ by
\eqnarray
\ba{{l}
\roC(A)_k =
\bc{
    a'_{00} + \sum\limits_{j \ge 1}a_{0j} &\tif k=0,
    \\
   a'_{r+1,r+1}  +  \sum\limits_{j\leq r}a_{r+1,j} &\tif k=r+1,
    \\
     \roA(T)_k &\tif 1\le k \le r.
    }
\\
\coC(A)_k =
\bc{
   a'_{00}  +   \sum\limits_{i\geq 1}a_{i0} &\tif k=0,
    \\
  a'_{r+1,r+1}  +    \sum\limits_{i \leq r}a_{i,r+1} &\tif k=r+1,
    \\
     \coA(A)_k & \tif 1\le k \le r.
    }
}
\endeqnarray
%and let
%\[
%\Xi^{(r)} = \{ A=(a_{ij}) \in \Xi_n~|~ a_{ij} = a_{i+N,j+N} \textup{ for all } i,j\}.
%\]
Each $A \in \Xi_n$ is uniquely determined by $\{a_{ij}~|~ (i,j) \in I^+\}$, where
\eq
 \label{plus}
I^+ = \big(\{0\}\times \NN \big)\sqcup \big([1..r]\times\ZZ \big) \sqcup \big(\{r+1\}\times \ZZ_{\leq r+1} \big)
\endeq
is the index set corresponding to the ``first half-period''.
%With these notations, $\Xi_n$ can be expressed as follows:
%\[
%\Xi_n =
%(2\NN+1) E^{00} + (2\NN+1) E^{r+1,r+1} + \sum_{(i,j)\in I^+} \NN \Ett^{ij}.
%\]

%====================================
\section{Set-valued matrices}

We introduce a ``higher-level'' structure of $\Xi_n$, which
will facilitate the proof of the multiplication formula in next chapter.
Let $\Xi_{n,d}^\cP$ be the set of $\ZZ\times\ZZ$ matrices  $\bbA = (\bbA_{ij})$  with entries being finite subsets of $\ZZ$
which satisfy Conditions (P1)--(P5) below:
\enu
\item[(P1)] $\bigsqcup\bbA_{ij}=\ZZ$;
\item[(P2)] $\bbA_{i+n,j+n} = \{x+D~|~ x \in \bbA_{ij}\}$ for all $i,j\in\ZZ$;
\item[(P3)] $\bbA_{-i,-j} = \{-x~|~ x \in \bbA_{ij}\}$ for all $i,j\in\ZZ$;
\item[(P4)] $0 \in \bbA_{00}$ and $d+1 \in \bbA_{r+1,r+1}$;
\item[(P5)] $\sum_{1\leq i\leq n}\sum_{j\in\ZZ} |\bbA_{ij}| = D$.
\endenu
Set
$$
\Xi^\cP_n = \bigsqcup_{d\in\NN} \Xi_{n,d}^\cP.
$$
Again, each $\bbA \in \Xi^\cP_n$ is uniquely determined by $\{\bbA_{ij}~|~ (i,j) \in I^+\}$.

Recalling $I^+$ in \eqref{plus}, we denote
\eq\label{def:Ia}
\Ia = I^+ \big \backslash \{(0,0), (r+1,r+1)\}.
%, \qquad I^+ =\Ia  \cup  \{(0,0), (r+1,r+1)\}.
\endeq

%\Def\label{alg:higherlevel}
We define a map
\eq  \label{stP}
\cP: \Xi_{n,d} \longrightarrow \Xi_{n,d}^\cP,
\qquad A\mapsto  A\stdP
\endeq
as follows.
For each $A= (a_{ij}) \in\Xi_{n,d}$,  a matrix $A\stdP \in \Xi_{n,d}^\cP$  is determined  by (a1)--(a2) below
via  ``row-reading''  (thanks to  Conditions (P2)--(P3) for $\Xi_{n,d}^\cP$):
%(see \eqref{def:a'} for $a'_{ij}$):
\enu
\item[(a1)]
 Set
$(A\stdP)_{00} = \Big{[}-a'_{00}~..~a'_{00} \Big{]}$, \quad
$(A\stdP)_{r+1,r+1} = \Big{[}d+1-a'_{r+1,r+1}~..~d+1+a'_{r+1,r+1} \Big{]}.$

\item[(a2)]
 For $(i,j) \in \Ia$,
set
\[
(A\stdP)_{ij} = \Big{(}\sum\limits_{l=0}^{i-1} \roC(A)_{l} + \sum\limits_{k<j}a_{ik}~..~ \sum\limits_{l=0}^{i-1} \roC(A)_{l} + \sum\limits_{k\leq j}a_{ik}\Big{]}.
\]
\endenu
%Note the above conditions (1)--(2) determine uniquely the matrix $A\stdP$  thanks to Conditions (P2)--(P3) for $\Xi_{n,d}^\cP$.
%\endDef
By definition we have a map
\eq  \label{norm}
|\cdot | : \Xi_{n,d}^\cP \longrightarrow \Xi_{n,d},
\qquad |\bbA|=(|\bbA_{ij}|)_{i,j\in\ZZ}.
\endeq
%For any $\bbA\in\Xi_{n,d}^\cP$, it is obvious that $|\bbA|:=(|\bbA_{ij}|)\in\Xi_{n,d}$.
Moreover, $|A\stdP|=A$.

%\Alg\label{alg:gstd}
We further define a map
\eq  \label{stW}
\gstd:  \Xi_{n,d}^\cP \longrightarrow W,
\qquad \bbA \mapsto \gstd_\bbA
\endeq
as follows.
On $I^+$, let $<_{\text{lex}}$ be the lexicographical order such that $(i,j) <_{\text{lex}} (x,y)$ if and only if
$i < x$ or ($i = x$ and $j < y$).
Let $\bbA = (\bbA_{ij}) \in \Xi^\cP_{n,d}$, and set $A = (a_{ij}) = |\bbA|$.
The Weyl group element $\gstd_\bbA\in W$ is determined
by  (g1)--(g2) below via ``column-reading''  (thanks to the periodicity condition~\eqref{eq:gc}):
\enu
\item[(g1)]
For $(i,j)\in I^+$, we set
\[
I^{(j,i)} =
\bc{
~[- a'_{ii} .. a'_{ii}],
&\tif (j,i) = (0,0) \textup{ or }(r+1,r+1),
\\
~[1 .. a_{ji}],
&\otw.
}
\]
Then set $\bbA_{j,i} = \{a^{(j,i)}_l ~|~l \in I^{(j,i)}\}$ such that $a^{(j,i)}_l < a^{(j,i)}_{l+1}$ for admissible $l$.

\item[(g2)]
Let $1\le k \le d$. If $k\le a'_{00}$, set $\gstd_\bbA(k) = a^{(0,0)}_k=k$;
if $k>a'_{00}$, then
 find the unique $(i,j) \in I^+\backslash\{(0,0)\}$ and $m\in [1.. a'_{ji}]$ such that
\[
k = a'_{00}+ \sum_{(x,y) \in I^+\backslash\{(0,0)\}, (x,y) <_{\text{lex}} (i,j)} a_{yx} + m,
\]
and then set $\gstd_\bbA(k) = a^{(j,i)}_m$.
\endenu
%\endAlg

%------------------------------------------------------------------------------------------------------------
\exa
Let $d=3, D=8, r=0, n=2,$ and let
\[
A
=\bM{[ccc:c:cccccc]
\ddots&&&&&
\\
2&0&1&0&2&&
\\
\hdashline
&&&3&&
\\
\hdashline
&&2&0&1&0&2
\\
&&&&&3&
\\
&&&&2&0&1
\\
&&&&&&\ddots
\\} = 3E^{00} + 2E^{1,-1}_\tt + E^{11}.
\]
Here the dashed stripes indicate the 0th column/row.
We have
\[
A\stdP
=\bM{[ccc:c:cccccc]
\ddots&&&&&
\\
\{-6,-5\}&\varnothing&\{-4\}&\varnothing&\{-3,-2\}&&
\\
\hdashline
&&&[-1..1]&&
\\
\hdashline
&&\{2,3\}&\varnothing&\{4\}&\varnothing&\{5,6\}
\\
&&&&&[7..9]&
\\
&&&&\{10,11\}&&\{12\}
\\
&&&&&&\ddots
\\}.
\]
The reading
$$\cdots,-1,0,1,-3,-2,4,10,11,7,8,9,5,6,12,\cdots$$
gives $\gstd_{A\stdP} = [1,-3,-2]_\fc$; see \eqref{def:perm}.
%
%On the other hand, for $\bbA = A\stdP$, we have $\bbA_{0,0} = [-1..1], \bbA_{-1,1} = %\{-3,-2\}, \bbA_{1,1} = \{4\}$. Hence for $(i,j) \in I^+$, $a^{(j,i)}_{m}$ are defined by
%\[
%a^{(0,0)}_{-1} = -1
%<
%a^{(0,0)}_{0} = 0
%<
%a^{(0,0)}_{1} =1,
%\quad
%a^{(-1,1)}_{1} = -3
%<
%a^{(-1,1)}_{2} = -2,
%\quad
%a^{(1,1)}_0 = 4.
%\]
%Therefore, $\gstd_{A\stdP} = [1,-3,-2]_\fc$; see \eqref{def:perm}.
\endexa
%-----------------------------------
%%
%%
%%

%=========================================================
\section{A bijection $\kappa$}
\label{sec:kappa}

For $\ld, \mu \in \Ld$, we denote
\begin{align*}
\Xi_{n,d}(\ld,\mu) &= \{ A \in \Xi_{n,d} ~|~ \roC(A) = \ld, \coC(A) = \mu\}.
\end{align*}
Then we have $\Xi_{n,d} =\bigsqcup_{\ld,\mu \in \Ld} \Xi_{n,d}(\ld,\mu).$
We define a map
%\[
%\ba{{cccccccccc}
%\kappa_{\ld\mu}^\cP&:\W &\longrightarrow &\Xi^\cP_{n,d}(\ld,\mu)&
%\textup{and}&\kappa_{\ld\mu}&:\W &\longrightarrow &\Xi_{n,d}(\ld,\mu)\\
%&g &\mapsto &(R_i^\ld \cap gR_j^\mu), &&& g &\mapsto &(\big | R_i^\ld \cap gR_j^\mu \big |).
%}
%\]
\eq
  \label{kalm}
\kappa_{\ld\mu}:\W \longrightarrow \Xi_{n,d}(\ld,\mu),
\qquad
g \mapsto (\big | R_i^\ld \cap gR_j^\mu \big |).
\endeq
%If we identify $W \equiv \{\ld\} \times W \times \{\mu \}$ and r
Recalling $\Ld$ from \eqref{def:Ld},
we can assemble the maps $\kappa_{\ld\mu}$  above for various
$\ld, \mu$ to a map
$$
\kappa: \Ld \times \W \times \Ld \longrightarrow \Xi_{n,d},
\qquad \kappa(\ld, g, \mu) =\kappa_{\ld,\mu}(g).
$$
By restriction to $\D_{n,d}$ \eqref{Dnd}, we obtain a map
\eq
 \label{eq:ka}
\kappa: \D_{n,d} \longrightarrow \Xi_{n,d},
\qquad \kappa(\ld, g, \mu) =\kappa_{\ld,\mu}(g).
\endeq

%------------------------------------------------------------------------------------------------------------
\begin{lemma}\label{lem:kappa}
The map $\kappa:\D_{n,d} \longrightarrow \Xi_{n,d}$
with $\kappa(\ld, g, \mu) =(\big | R_i^\ld \cap gR_j^\mu \big |)$  is a bijection.
Moreover, if $(\ld,g,\mu)=\kappa^{-1}(A)$ for $A \in \Xi_{n,d}$,
then $\ld = \roC(A)$, $\mu = \coC(A)$, and $g = \gstd_{A\stdP}$.
\end{lemma}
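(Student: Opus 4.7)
The plan is to prove three claims in order: that $\kappa(\ld,g,\mu) \in \Xi_{n,d}$ with $\ld$ and $\mu$ recoverable as $\roC$ and $\coC$ of the image; that the assignment $A \mapsto (\roC(A), \gstd_{A\stdP}, \coC(A))$ gives a well-defined right inverse; and that $\kappa$ is injective. Together these will produce the bijection and identify the inverse.

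For the first claim, I would write $a_{ij} = |R_i^\ld \cap g R_j^\mu|$. The conditions $g(x+D) = g(x)+D$ and $g(-x) = -g(x)$ from \eqref{eq:gc}, together with the identities in \eqref{eq:Ri2}, immediately yield $a_{i+n,j+n} = a_{ij}$ and $a_{-i,-j} = a_{ij}$. Since $g(0)=0$ and $g(d+1)=d+1$, combined with centro-symmetry, the diagonal entries $a_{00}$ and $a_{r+1,r+1}$ are odd. The sum $\sum_{i=1}^{n}\sum_j a_{ij} = \sum_{i=1}^n |R_i^\ld|$ equals $D$ by a direct computation using the symmetries and the fact that $\{R_i^\ld\}_{i \in \ZZ}$ is a $D$-periodic partition of $\ZZ$ into consecutive intervals. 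The identification $\ld = \roC(A)$ (and symmetrically $\mu = \coC(A)$) will follow from $\sum_j a_{ij} = |R_i^\ld|$ for $i \in [1..r]$; the boundary cases $i = 0,r+1$ use the centro-symmetry $a_{0,-j} = a_{0,j}$ and the correction $a'_{00} = (a_{00}-1)/2$ to match $\roC(A)_0$ with $\ld_0 = a'_{00} + \sum_{j \ge 1}a_{0j}$.

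For the second claim, given $A \in \Xi_{n,d}$, I would set $\ld = \roC(A)$, $\mu = \coC(A)$, and $g = \gstd_{A\stdP}$, then verify each requirement. Conditions (P2)--(P3) for $A\stdP$ follow from the centro-symmetry and periodicity of $A$ via the row-reading rule (a1)--(a2), and translate under the column-reading (g1)--(g2) into the periodicity and anti-symmetry \eqref{eq:gc} of $g$; the fixed points $g(0)=0$, $g(d+1)=d+1$ come from the diagonal blocks specified in (a1). By construction, (g2) makes $g$ order-preserving on each $R_j^\mu$, and a parallel argument makes $g^{-1}$ order-preserving on each $R_i^\ld$; Lemma~\ref{lem:Dld} then places $g \in \D_\ld \cap \D_\mu^{-1} = \D_{\ld,\mu}$. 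The cardinalities built into (a1)--(a2) yield $|R_i^\ld \cap gR_j^\mu| = a_{ij}$, so $\kappa(\ld,g,\mu) = A$.

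For the third claim, suppose $\kappa(\ld,g,\mu) = A$ with $g \in \D_{\ld,\mu}$; by the first step $\ld$ and $\mu$ are already determined, so only $g$ needs to be pinned down. Each $R_i^\ld$ and each $R_j^\mu$ is an interval of consecutive integers, and the families $\{R_i^\ld\}_i$ and $\{R_j^\mu\}_j$ are arranged in the natural order along $\ZZ$. By Lemma~\ref{lem:Dld}, $g$ is order-preserving on each $R_j^\mu$ and $g^{-1}$ on each $R_i^\ld$. Enumerating $R_j^\mu$ in increasing order, the $k$-th element must land in the unique $R_i^\ld$ determined by the partial sums $\sum_{i' < i} a_{i',j}$, and its position inside $R_i^\ld$ is then forced by the order-preserving condition on $g^{-1}|_{R_i^\ld}$. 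This uniquely reconstructs $g$, which must coincide with the element $\gstd_{A\stdP}$ already produced in the second step. The hard part will be the combinatorial bookkeeping around the symmetries, especially verifying the order-preserving property of $\gstd_{A\stdP}$ uniformly over all $i, j \in \ZZ$ rather than only on the half-period $I^+$; this should reduce via \eqref{eq:gc} and (P2)--(P3) to checking on $I^+$.
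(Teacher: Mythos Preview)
Your proposal is correct and rests on the same ingredients as the paper's proof: the order-preserving characterization of $\D_\ld$ in Lemma~\ref{lem:Dld}, and the explicit constructions $A\stdP$ and $\gstd$. The organization differs slightly. The paper factors the map as $\kappa_{\ld\mu} = |\cdot| \circ \kappa_{\ld\mu}^\cP$, where $\kappa_{\ld\mu}^\cP : W \to \Xi_{n,d}^\cP(\ld,\mu)$ records the actual sets $R_i^\ld \cap gR_j^\mu$ rather than just their cardinalities; it then argues in two steps that (i) $\kappa_{\ld\mu}^\cP|_{\D_{\ld\mu}}$ is a bijection onto $\Xi_{n,d}^\cP(\ld,\mu)$, and (ii) the forgetful map $|\cdot|$ restricted to the image of $\D_{\ld\mu}$ is a bijection onto $\Xi_{n,d}(\ld,\mu)$, each step invoking Lemma~\ref{lem:Dld} once. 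Your route avoids introducing $\Xi_{n,d}^\cP$ explicitly and instead proves surjectivity by exhibiting the inverse and injectivity by the direct interval argument in your third claim. What the paper's factorization buys is that the two uses of Lemma~\ref{lem:Dld} become cleanly separated ``iff'' statements; what your direct argument buys is that the uniqueness of $g$ is read off immediately from the partial-sum bookkeeping on intervals, without needing to name the intermediate set-valued object. Either way the bookkeeping you flag as ``the hard part'' (extending order-preservation from $I^+$ to all of $\ZZ$) is handled the same way, by the symmetries \eqref{eq:gc} and (P2)--(P3).
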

%------------------------------------------------------------------------------------------------------------
\begin{proof}
We shall show equivalently that for fixed $\ld,\mu \in \Ld$, the restriction of $\kappa_{\ld\mu}$ to $\D_{\ld\mu}$,
$\kappa_{\ld\mu}|_{\D_{\ld\mu}}: \D_{\ld\mu} \longrightarrow \Xi_{n,d}(\ld,\mu)$,
 is a bijection.

Let $\Xi^\cP_d(\ld,\mu) = \{\bbA \in \Xi_{n,d}^\cP ~|~ \roC(|\bbA|) = \ld, \coC(|\bbA|) = \mu \}$. Consider the map
\[
\kappa_{\ld\mu}^\cP:\W \longrightarrow \Xi^\cP_{n,d}(\ld,\mu),
\qquad
g \mapsto (R_i^\ld \cap gR_j^\mu),
\]
so we have $\kappa_{\ld\mu}  =|\cdot | \circ \kappa_{\ld\mu}^\cP$.
The map $\kappa^\cP_{\ld\mu}$ is a surjection since $\kappa^\cP_{\ld\mu}(\gstd_\bbA) = \bbA$ for $\bbA \in \Xi_{n,d}^\cP(\ld,\mu)$.
Furthermore, given $g \in (\kappa^\cP_{\ld\mu})^{-1}(\bbA)$, by Lemma \ref{lem:Dld} we have
$%\[
g = \gstd_{\bbA}\textup{ if and only if }g \in \D_{\ld\mu}.
$ %\]
Thus the restriction $\kappa_{\ld\mu}^\cP|_{\D_{\ld\mu}}$ is a bijection.
On the other hand, for each $A\in \Xi_{n,d}(\ld,\mu)$ we have $|A\stdP| = A$,
and hence $\kappa_{\ld\mu} =(|\cdot | \circ \kappa_{\ld\mu}^\cP)_{\ld\mu}$ is a surjection.

Moreover, given $\bbA \in \Xi^\cP_d(\ld,\mu)$ with $|\bbA| =A$, applying Lemma~ \ref{lem:Dld} again we have
$ %\[
\bbA = A\stdP\textup{ if and only if }\gstd_{\bbA} \in \D_{\ld\mu}.
$ %\]
Therefore $(|\cdot| \circ\kappa_{\ld\mu}^\cP)|_{\D_{\ld\mu}} = \kappa_{\ld\mu}|_{\D_{\ld\mu}}$ is a bijection.

The second statement of the lemma can be read off from the above argument.
\end{proof}
%------------------------------------------------------------------------------------------------------------
For each $A = \kappa(\ld,g,\mu) \in \Xi_{n,d}$, we use the bijection $\kappa$ in Lemma~\ref{lem:kappa} to introduce new notation
\eq\label{def:eA}
e_A = \phi_{\ld\mu}^g.
\endeq
Hence Lemma~\ref{lem:basis} can be rephrased that $\{e_A ~|~ A\in\Xi_{n,d}\}$ forms an $\mA$-basis of $\Sjj$.

Let $A\in\Xi_{n,d}$.  We choose $k_j\ge 0$ for $0\le j \le r+1$ such that $a_{ij}=0$ unless $|i-j|\le k_j$.
We define a weak composition
\eq  \label{eq:delta}
\delta(A) \in \Ld_{\mathfrak{r},d},
\endeq
with
$
\mathfrak{r} =k_0 +\sum_{j=1}^r (2k_j+1) + k_{r+1}$ as follows.
The composition $\delta(A)$ starts with the $(k_0+1)$ entries in $0$th column,
$a'_{00}, a_{10}, a_{20},  \ldots, a_{k_0 0},$
followed by the $(2k_j+1)$ entries $a_{ij}$ in $j$th column when $i$ runs up the interval $[j-k_j ..  j+k_j]$ for $j=1, \ldots, r$,
and finally followed by the $(k_{r+1}+1)$ entries in the $(r+1)$st column,
%a_{1-k_1,1}, a_{2-k_1,1}, \ldots, a_{1+k_1,1},
%a_{2-k_2,2}, a_{2-k_2,2}, \ldots, a_{2+k_2,2}],
%\ldots \ldots,
%a_{r-k_r,r}, a_{r+1-k_r,r}, \ldots a_{r+k_r,r},
$a_{r+1-k_{r+1}, r+1}, \ldots, a_{r,r+1}, a'_{r+1,r+1}.$

%%%%%%%%%%%%%%%%%%%%%%
\iffalse
%%%%%%%%%%%%%%%%%%%%%%
we define
$$
\delta(A) = (\delta(A)_0, \ldots, \delta(A)_{r'+1} ) \in \Ld_{r',d}
$$
for some $r'$ by the following procedure:
\enu
\item Set $\delta _0  = \frac{1}{2} (a_{00}-1)$ (possibly zero);
set $(\delta^{(0)}_{1}, \ldots, \delta^{(0)}_{k_0})$ for some $k_0 \in \NN$ to be the composition of $\coC(A)_0-\delta_0$ obtained from $( a_{10}, a_{20},  \ldots)$ by deleting all zero entries.
\item For each $j = 1, \ldots, r$, set $(\delta^{(j)}_{1}, \ldots, \delta^{(j)}_{k_j}) \in \Ld_{k_j,\ld_j}$ for some $k_j \in \NN$ to be the composition of $\coC(A)_j$   obtained from
$(\ldots,~a_{-1,j}, a_{0j}, a_{1j},  \ldots)$ by deleting all zero entries.
\item
Set $(\delta^{(r+1)}_{1}, \ldots, \delta^{(r+1)}_{k_{r+1}})$ for some $k_{r+1} \in \NN$ to be the composition  of $\coC(A)_{r+1}-\delta_{r'+1}$ obtained by deleting all zero entries from
$(\ldots,a_{r-1,r+1}, a_{r,r+1})$, where $\delta_{r'+1} = \frac{1}{2}(a_{r+1,r+1}-1)$ (possibly zero) and $r' = k_0 + \ldots + k_{r+1}$.
\item Finally, set
\eq %\label{eq:delta}
\delta(A) = (\delta_0, \delta^{(0)}_{1}, \ldots, \delta^{(0)}_{k_0}, \delta^{(1)}_1, \ldots, \delta^{(1)}_{k_1},\ldots, \delta^{(r+1)}_1, \ldots, \delta^{(r+1)}_{k_{r+1}}, \delta_{r'+1}).
\endeq
\endenu
%%%%%%%%%%%%%%%%%%%%%%%
\fi
%%%%%%%%%%%%%%%%%%%%%%%

\rmk
We can remove any zeroes that are not in the first or the last place in a weak composition $\ld \in \Ld_{\mathfrak{r},d}$
without changing the parabolic subgroup $W_\ld$,
e.g.,
$W_{(2,0,2)} = W_{(2,2)}$.
(However, removing zeroes in the first or the last place in a weak composition changes the corresponding parabolic subgroup.)
%For example, $W_{(0,1,1,0)}, W_{(0,1,1)}, W_{(1,1,0)}$ and $W_{(1,1)}$ are four distinct parabolic subgroups.
Therefore, while the composition $\delta(A)$ in \eqref{eq:delta} depends on $A$ as well as on the choices of $k_j$,
the parabolic subgroup $W_{\delta(A)}$ only depends on $A$.
\endrmk
%-----------------------------------------------------------------------------------------------------------

We can now make the construction in Proposition \ref{prop:doublecoset}(a) explicit.
\begin{prop}\label{prop:delta}
Let $A=\kappa(\ld,g,\mu)$ for  $\ld,\mu \in \Ld, g\in \D_{\ld\mu}$. Then $W_{\delta(A)} = g^{-1} W_\ld g \cap W_\mu.$
%Namely, $\delta(A)$ is one (possible weak) composition $\delta$ described in Proposition \ref{prop:doublecoset}.
\end{prop}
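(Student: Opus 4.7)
The plan is to interpret both sides as pointwise stabilizers of a partition of $\ZZ$, then identify the two partitions. Lemma~\ref{lem:stabR} gives $W_\mu = \bigcap_{j} \Stab(R_j^\mu)$ and $W_\ld = \bigcap_{i} \Stab(R_i^\ld)$; conjugating the latter by $g^{-1}$ and intersecting with the former yields
\[
g^{-1} W_\ld g \cap W_\mu = \bigcap_{i,j \in \ZZ} \Stab\big(R_j^\mu \cap g^{-1} R_i^\ld\big),
\]
while applying Lemma~\ref{lem:stabR} to $\delta(A)$ yields $W_{\delta(A)} = \bigcap_{\ell} \Stab(R_\ell^{\delta(A)})$. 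It will thus suffice to show that $\{R_j^\mu \cap g^{-1} R_i^\ld\}_{i,j}$ and $\{R_\ell^{\delta(A)}\}_\ell$ are the same partition of $\ZZ$, modulo empty parts.

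The key geometric input comes from Lemma~\ref{lem:Dld}: since $g \in \D_{\ld\mu} = \D_\ld \cap \D_\mu^{-1}$, $g^{-1}$ is order-preserving on each $R_i^\ld$ and $g$ is order-preserving on each $R_j^\mu$. Since the $R_i^\ld$ form a partition of $\ZZ$ into consecutive intervals ordered by increasing $i$, the restriction $g|_{R_j^\mu}$ pulls this back to a partition of $R_j^\mu$ into consecutive sub-intervals in increasing order of $i$, of sizes $|R_j^\mu \cap g^{-1} R_i^\ld| = a_{ij}$. Centro-symmetry $g(-x) = -g(x)$ and $D$-periodicity $g(x+D) = g(x)+D$ then ensure that the resulting partition $\{R_j^\mu \cap g^{-1} R_i^\ld\}$ is itself centro-symmetric and $D$-periodic.

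The remaining task is to match this slicing against the column-reading construction of $\delta(A)$ preceding \eqref{eq:delta}. For column $j = 0$, the fixed point $g(0) = 0$ together with centro-symmetry forces $R_0^\mu \cap g^{-1} R_0^\ld = [-a'_{00}..a'_{00}]$, of odd size $a_{00} = 2a'_{00}+1$, matching $R_0^{\delta(A)}$; the blocks for $i = 1, \ldots, k_0$ of sizes $a_{i0}$ lie above and are mirrored below, in agreement with the first entries $a'_{00}, a_{10}, \ldots, a_{k_0, 0}$ of $\delta(A)$. For $1 \le j \le r$, the slicing of $R_j^\mu$ in increasing $i$ produces blocks of sizes $a_{j-k_j, j}, \ldots, a_{j+k_j, j}$, matching the middle entries of $\delta(A)$. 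For $j = r+1$, the argument at the fixed point $d+1$ mirrors that at $0$, producing the trailing half-entry $a'_{r+1, r+1}$. The $D$-periodicity then propagates the identification over all of $\ZZ$. Zero entries $a_{ij}$ yield empty blocks and correspond to removable zero parts of $\delta(A)$, which by the Remark preceding the proposition do not affect $W_{\delta(A)}$; this also shows independence from the choices of the $k_j$.

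The main technical hurdle will be this last bookkeeping step: verifying that the column-reading order used to define $\delta(A)$ (starting from $a'_{00}$, running up through column $0$, sweeping through the middle columns, and ending with $a'_{r+1, r+1}$) corresponds precisely to the order in which the slices $R_j^\mu \cap g^{-1} R_i^\ld$ appear as one traverses the nonnegative half of $\ZZ$, with the centro-symmetric structure handling the negative half automatically.
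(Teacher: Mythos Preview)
Your proposal is correct and follows essentially the same route as the paper's proof. The paper compresses your argument by invoking the set-valued matrix $A^{\mathcal P}$: after applying Lemma~\ref{lem:stabR} and conjugating, it writes $g^{-1}R_i^\lambda \cap R_j^\mu = g^{-1}(A^{\mathcal P})_{ij}$ (using that $\kappa^{\mathcal P}_{\lambda\mu}(g) = A^{\mathcal P}$ for $g\in\D_{\lambda\mu}$, from the proof of Lemma~\ref{lem:kappa}), and then asserts $\bigcap_{(i,j)\in I^+}\Stab(g^{-1}(A^{\mathcal P})_{ij}) = W_{\delta(A)}$ without spelling out the column-reading bookkeeping. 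Your explicit verification via Lemma~\ref{lem:Dld} that the slices are consecutive sub-intervals of $R_j^\mu$ in increasing $i$, together with the fixed-point analysis at $0$ and $d+1$, is exactly what underlies that last step.
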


\proof
By Lemma \ref{lem:stabR}, we have
\[
\bA{
g^{-1} \W_\ld g \cap \W_\mu
&=
\Bp{
\bigcap\limits_{i=0}^{r+1} \Stab(g^{-1}R_i^\ld)
}
\cap
\Bp{
\bigcap\limits_{j=0}^{r+1} \Stab(R_j^\mu)
}
\\
&=\bigcap\limits_{(i,j)\in I^+} \Stab(g^{-1}R_i^\ld \cap R_j^\mu)
= \bigcap\limits_{(i,j)\in I^+} \Stab( g^{-1}(A\stdP)_{ij}  )
= \W_{\delta(A)}.
}
\]
The proposition is proved.
\endproof

%=========================================================
\section{Computation in affine Schur algebra $\Sjj$}
\label{sec:H+S}

We denote the (type A) quantum $v$-number and quantum $v$-factorial  by, for $m\in \ZZ, n \in \NN$,
\begin{align}
  \label{factorial}
[m] = & \frac{v^{2m}-1}{v^2-1},
\qquad
\lr{n}^! =  \lr{n}\lr{n-1} \cdots \lr{1},
\\
\qbinom{m}{n}  &= \frac{[m][m-1]\cdots [m-n+1]}{[n]^!}.
\end{align}
(It is understood that $\lr{0}^! = 1$.)
For $T= (t_{ij}) \in \Tt_n$, %recalling the type-$\fa$ quantum integers/factorials \eqref{factorial}
we define
\eq  \label{Tfactorial}
[T]^! = \prod\limits_{i=1}^n \prod\limits_{j\in\ZZ} [t_{ij}]^!.
\endeq

%For $n \in \ZZ$, we denote the (type C/B) quantum $v$-number  by
%\begin{align}
%  \label{factorial-c}
%\lr{n}_\fc = \frac{v^{4n}-1}{v^2-1} =\lr{2n}.
%\end{align}
For $A= (a_{ij}) \in \Xi_n$, define (see \eqref{def:a'} for $a'_{ij}$)
\eq
[A]^!_\fc =  [a'_{00}]^!_\fc \, [a'_{r+1,r+1}]^!_\fc \cdot \prod\limits_{(i,j)\in \Ia} [a_{ij}]^!,
\endeq
where
$
[m]^!_\fc = \prod\limits_{k=1}^{ m } \lr{2k}.
$
In particular, we have
$[a'_{ii}]^!_\fc = [2] \cdot [4] \cdot \ldots \cdot [a_{ii}-1]$ for $i = 0, r+1$.
%Alternatively, we have (see \eqref{def:Ia} for $\Ia$):
%\[
%[A]^!_\fc = [a_{00}]^!_\fc[a_{r+1,r+1}]^!_\fc \prod_{(i,j)\in \Ia} [a_{ij}]^!.
%\]
%-----------------------------------------------------------------------------------------------------------
\lem  \label{lem:Ac}
For any $A \in \Xi_n$, we have
$[A]^!_\fc = \sum_{w \in W_{\delta(A)}} v^{2\ell(w)}$.
\endlem
%-----------------------------------------------------------------------------------------------------------
\proof
Denote the Weyl group of type $A_{m-1}$ (respectively,  $C_{m}$) by $S_m$ (respectively,  $W_{C_m}$).
It is well known that the Poincare polynomial for $S_m$ and $W_{C_m}$ are, respectively,
\[
\sum_{w\in S_m}v^{2\ell(w)}=\prod_{k=1}^m[k]=[m]^!
\quad
\tand
\quad
\sum_{w\in W_{C_m}}v^{2\ell(w)}=\prod_{k=1}^m[2k]=[m]^!_\fc.
\]
Since
$
W_{\delta(A)}\simeq
W_{C_{\delta_0}}\times
S_{\delta_1}\times S_{\delta_2}\times\cdots\times S_{\delta_{\mathfrak{r}}}
\times W_{C_{\delta_{\mathfrak{r}+1}}},
$
we obtain
\[
\sum_{w \in W_{\delta(A)}} v^{2\ell(w)}
=
\prod_{i\in\{0,\mathfrak{r}+1\}}
\Bp{\sum_{w\in W_{C_{\delta_i}}} v^{2\ell(w)}}
\prod_{i=1}^{\mathfrak{r}}
\Bp{\sum_{w\in S_{\delta_i}}v^{2\ell(w)}}
=[A]^!_\fc.
\]
The lemma is proved.
\endproof

%-----------------------------------------------------------------------------------------------------------
\lem  \label{lem:xTx}
Let $A=\kappa(\ld,g,\mu)$ for   $\ld,\mu \in \Ld, g\in \D_{\ld\mu}$.  Then
$x_\ld T_{g} x_\mu = [A]^!_\fc \, e_A(x_\mu).$
\endlem

\proof
Let $\delta = \delta(A)$. By Proposition \ref{prop:doublecoset}(c), we have
\[
x_\mu
= \sum_{x \in \W_\mu} T_x
= \sum_{\substack{w \in \D_\delta \cap W_\mu \\ y\in \W_\delta}} T_{wy}
= \sum_{w \in \D_\delta \cap W_\mu} T_w \sum_{y\in \W_\delta} T_{y}
= T_{\D_\delta \cap \W_\mu} x_\delta.
\]
Note that $x_\mu T_w = v^{2\ell(w)} x_\mu$ for any $w \in \W_\mu$, and thus it follows  by Lemma~\ref{lem:Ac}
and $\W_\delta \subset \W_\mu$ that
$x_\mu x_\delta = \sum\limits_{w \in W_{\delta}} v^{2\ell(w)} x_\mu = [A]^!_\fc x_\mu$.
Therefore by \eqref{phi} and \eqref{def:eA} we have
\[
x_\ld T_{g} x_\mu
= x_\ld T_{g} T_{\D_\delta \cap \W_\mu} x_\delta
= e_A(x_\mu) x_\delta
= e_A(x_\mu x_\delta)
= [A]^!_\fc e_A(x_\mu).
\]
The lemma is proved.
\endproof

%%%%%%%%%
%from Chapter 3
%%%%%%%%%

%---------------\section{Structure constants --move to Chapter 2 ?}
\lem\label{lem:mult1}
Let $B = \kappa(\ld,g_1,\mu)$ and $A = \kappa(\mu,g_2,\nu)$, where
$\ld,\mu, \nu \in \Ld$, $g_1 \in \D_{\ld\mu}$, and $g_2 \in \D_{\mu\nu}$.
Let  $\delta = \delta(B)$; see \eqref{eq:delta}. Then we have
\[
e_B   e_A(x_\nu) = \frac{1}{[A]^!_\fc} x_\ld T_{g_1} T_{(\D_{\delta} \cap W_\mu)g_2} x_\nu.
\]
\endlem
%-----------------------------------------------------------------------------------------------------------
\proof
It follows by Lemma \ref{lem:xTx},  \eqref{phi} and \eqref{def:eA} that
\[
e_B e_A(x_\nu)
= \frac{1}{[A]^!_\fc} e_B (x_\mu T_{g_2} x_\nu)
= \frac{1}{[A]^!_\fc} e_B (x_\mu) T_{g_2} x_\nu
= \frac{1}{[A]^!_\fc} x_\ld T_{g_1} T_{\D_{\delta} \cap W_\mu} T_{g_2} x_\nu.
\]
Since $g_2 \in \D_{\mu}^{-1}$, we have $ T_w T_{g_2} = T_{wg_2}$ for all $w \in \D_{\delta} \cap W_\mu$. Therefore
$T_{\D_{\delta} \cap W_\mu} T_{g_2}
= T_{(\D_{\delta} \cap W_\mu)g_2}$ and we are done.
\endproof
%-----------------------------------------------------------------------------------------------------------
%\rmk %\label{rmk:nontriv}
For $w\in W_\mu$, although $T_{g_1} T_w = T_{g_1 w}$ and $T_{w} T_{g_2} = T_{w g_2}$,
it is not true that $T_{g_1} T_w T_{g_2} = T_{g_1 w g_2}$ in general.
Therefore we need to write out $T_{g_1} T_{w g_2}$ in order to have a useful multiplication formula.
%\endrmk
For $w \in \D_\delta \cap \W_\mu$, we write
\eq\label{eq:Delta(w)}
T_{g_1} T_{wg_2} = \sum_{\sig \in \Delta(w)} c^{(w,\sig)} T_{ g_1 \sig  w g_2}, \quad
\text{ for  } c^{(w,\sig)} \in \ZZ[v^2] \text{ and finite subset } \Delta(w) {\subset} \W.
\endeq
For $\sig\in\Delta(w)$,
%denote the  minimal length representative in the double coset $W_\ld ( g_1 \sig w g_2) W_\nu$ by $y^{(w,\sig)} \in \D_{\ld\nu}$.
we have
\eq  \label{eq:T12}
T_{g_1\sig w g_2} = T_{w^{(\sig)}_\ld}  T_{y^{(w,\sig)}}  T_{w^{(\sig)}_\nu}
\endeq
if we write
\eq \label{g1wg2}
g_1\sig w g_2 = w^{(\sig)}_\ld y^{(w,\sig)} w^{(\sig)}_\nu,
\quad \text{ for some }
y^{(w,\sig)} \in \D_{\ld\nu}, \;
w^{(\sig)}_\ld \in W_\ld, \, w^{(\sig)}_\nu \in W_\nu.
\endeq
We further denote
\eq  \label{Aw}
A^{(w,\sig)} = (a^{(w,\sig)}_{ij}) = \kappa(\ld,y^{(w,\sig)},\nu).
\endeq

%-----------------------------------------------------------------------------------------------------------
\prop\label{prop:mult1}
Let $\delta = \delta(B)$; see \eqref{eq:delta}. Let $c^{(w,\sig)}, \Delta(w)$ and $A^{(w,\sig)}$
be defined as in \eqref{eq:Delta(w)} and \eqref{Aw}. Then we have
\eq\label{eq:mult1}
 e_B  e_A
=  \sum_{\substack{ w\in \D_\delta \cap \W_\mu \\ \sig\in\Delta(w) }}
 c^{(w,\sig)} v^{2\ell(g_1\sig w g_2) - 2\ell(y^{(w,\sig)})}  \frac{[A^{(w,\sig)}]^!_\fc}{[A]^!_\fc} e_{A^{(w,\sig)}}.
\endeq
\endprop
%-----------------------------------------------------------------------------------------------------------
\proof
Combining Lemma \ref{lem:mult1} and \eqref{eq:Delta(w)}, we have
\[
 e_B   e_A(x_\nu)
=  \frac{1}{[A]^!_\fc} \sum_{\substack{ w\in \D_\delta \cap \W_\mu \\ \sig\in\Delta(w) }} c^{(w,\sig)} x_\ld T_{g_1 \sig w g_2} x_\nu.
\]
For $\sig \in \Delta(w)$,  by \eqref{eq:T12} we have
$x_\ld T_{g_1 \sig w g_2 }x_\nu
= v^{2\ell(w^{(\sig)}_\ld)+ 2\ell(w^{(\sig)}_\nu)} x_\ld T_{y^{(w,\sig)}}  x_\nu$.
The proposition now follows by applying Lemma \ref{lem:xTx}.
\endproof

It is unrealistic to obtain an explicit description for $c^{(w,\sig)}$ in \eqref{eq:Delta(w)} for general $g_1$
since it amounts to obtaining explicitly all the structure constants for affine Hecke algebras.
%Also, it is not clear for which pairs $(w,\sig)$ the matrices $A^{(w,\sig)}$ are the same.
Later on we shall treat the special case when $B$ is tridiagonal, when the structure constants
will be computed explicitly.

%=========================================================
\section{Isomorphism $\Sjjg \cong  \Sjj$}
\label{sec:SjC}

In this section we show that the Schur algebra $\Sjj$ defined in \S\ref{sec:Sj}
can be identified with the affine Schur algebra in \cite{FLLLW} which was defined as a convolution algebra in a geometric setting.
We shall add superscript ``$\text{geo}$" (a shorthand for ``geometric") to the notations used {\em loc. cit.}.

Let $F = \mathbb F_{q^2} ((\epsilon))$ be the field of formal Laurent series over a finite field $\mathbb F_{q^2}$ of $q^2$ elements,
and let $\SP_F(2d)$ be the symplectic group with coefficients in $F$.
Set $\Sjjg $ to be the Schur algebra in \cite[\S4.2]{FLLLW} (denoted by $\bS_{n,d;\mA}$ therein) which specializes at $v=q$ as
\eq
\Sjjg|_{v=q} = {\mA}_{\SP_F(2d)} (\Xjj \times \Xjj),
\endeq
the convolution algebra on the set
$\Xjj$  of $n$-step periodic lattices of affine type $C$ in an $F$-vector space $V$ of dimension $2d$.
Recall that $\Xi_{n,d}$ parameterizes $\SP_F(2d)$-orbits of $\Xjj \times \Xjj$.
For any $A\in \Xi_{n,d}$, denote by $\egeo_A$ the characteristic function on the orbit $\cO_A$.
The Hecke algebra $\bH$  which specializes at $v=q$ as a convolution algebra
\eq
\bH|_{q} = {\mA}_{\SP_F(2d)} (\Yjj \times \Yjj),
\endeq
where $\Yjj$ is the set of `complete' periodic lattices of affine type $C$ in $V$.
%-----------------------------------------------------------------------------------------------------------
\begin{prop}\label{Schur-iso}
There is an algebra isomorphism $\psi: \Sjjg \stackrel{\simeq}{\longrightarrow}  \Sjj$, which sends
$e_A^\geo$ to $e_A$ for each $A \in \Xi_{n,d}$.
\end{prop}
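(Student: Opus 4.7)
The plan is to exhibit both $\Sjjg$ and $\Sjj$ as endomorphism algebras of a single right $\bH$-module, namely a ``mixed'' convolution space of pairs consisting of one partial and one complete periodic lattice, and then to track the standard bases through this identification. This follows the Iwahori--Matsumoto paradigm adapted to the affine type $C$ setting.

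First I would invoke the geometric realization of $\bH$ recorded in \cite{FLLLW}: the isomorphism $\bH|_{v=q} \cong \mA_{\SP_F(2d)}(\Yjj \times \Yjj)$ sends $T_g$ to the characteristic function of the $\SP_F(2d)$-orbit on $\Yjj \times \Yjj$ labelled by $g \in W$. In parallel, $\Xjj$ is a disjoint union of $\SP_F(2d)$-orbits $\cO_\ld$ indexed by $\ld \in \Ld$, with the stabilizer of a basepoint in $\cO_\ld$ having Weyl quotient the parabolic $W_\ld \subset W$. Next I would introduce the mixed module
\[
M := \mA_{\SP_F(2d)}(\Xjj \times \Yjj),
\]
equipped with a right $\bH$-action by convolution on the $\Yjj$-factor and a left $\Sjjg$-action by convolution on the $\Xjj$-factor, the two actions commuting. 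The standard orbit-counting argument on $\cO_\ld \times \Yjj$ identifies the corresponding summand of $M$ as a cyclic right $\bH$-module generated by the characteristic function of the diagonal orbit through a basepoint, and this generator is annihilated by precisely the relations defining $x_\ld \bH$; assembling over $\ld$ gives an isomorphism of right $\bH$-modules
\[
M \;\cong\; \bigoplus_{\ld \in \Ld} x_\ld \bH.
\]
Convolution on the left therefore yields an algebra homomorphism
\[
\psi : \Sjjg \longrightarrow \End_{\bH}(M) \;=\; \Sjj.
\]

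To check $\psi(e_A^{\geo}) = e_A$ for each $A \in \Xi_{n,d}$, write $(\ld, g, \mu) = \kappa^{-1}(A)$ via Lemma~\ref{lem:kappa}. The orbit $\cO_A \subset \Xjj \times \Xjj$ used to define $e_A^{\geo}$ is characterized in \cite{FLLLW} by the relative position datum $a_{ij} = |R_i^\ld \cap gR_j^\mu|$, which is exactly the description of $\kappa_{\ld\mu}(g)$ in \eqref{kalm}. Convolving $e_A^{\geo}$ with the generator of $M$ corresponding to the basepoint pair of type $\mu$ and summing over the fibres of the projection, one obtains $T_{W_\ld g W_\mu} \in x_\ld \bH$, which by \eqref{phi} and \eqref{def:eA} is precisely $e_A(x_\mu)$. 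Since $\{e_A^{\geo}\}$ and $\{e_A\}$ are $\mA$-bases of $\Sjjg$ and $\Sjj$ respectively, both indexed by $\Xi_{n,d}$ (the latter by Lemma~\ref{lem:basis}), $\psi$ is an isomorphism.

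The main obstacle is the combinatorial book-keeping needed to match the geometric orbit stratification of $\Xjj \times \Xjj$ (given by intersection-dimension sequences) with the algebraic double-coset parametrization $\D_{n,d} \to \Xi_{n,d}$ via $\kappa$, and to verify that the cyclic generator of each summand of $M$ really corresponds to $x_\ld$ (equivalently, that the parahoric stabilizer on the $\Xjj$-side has Iwahori--Hecke image generated by $x_\ld$). Both are essentially packaged in Lemma~\ref{lem:kappa} together with the orbit classification of \cite{FLLLW}; once these are in place, the multiplicativity and bijectivity of $\psi$ follow formally from commutativity of left and right convolutions on $M$ and a rank count.
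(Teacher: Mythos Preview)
Your approach is correct and genuinely different from the paper's.  The paper does not construct the intermediate bimodule $M=\mA_{\SP_F(2d)}(\Xjj\times\Yjj)$; instead it defines $\psi$ directly as the $\mA$-linear map $e_A^{\geo}\mapsto e_A$ and then verifies that $\psi$ is multiplicative by comparing structure constants.  Concretely, writing $e_A^{\geo}\cdot e_B^{\geo}=\sum_C g^C_{AB}(v)\,e_C^{\geo}$ and $T_xT_y=\sum_z B^z_{xy}(v)\,T_z$, the paper invokes (after Du \cite{Du92}) the fibre-counting identity
\[
g^C_{AB}(q)=\pi_\mu(q)^{-1}\sum_{\substack{x\in W_\ld g_1 W_\mu\\ y\in W_\mu g_2 W_\nu}} B^z_{xy}(q)
\]
for any $z\in W_\ld g_3 W_\nu$, and then computes $e_A\cdot e_B(x_\nu)$ directly via \eqref{phi}, \eqref{def:eA} and $x_\mu^2=\pi_\mu(v)x_\mu$ to find the same expression.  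Both sides therefore have identical structure constants at every prime power $q$, hence over $\mA$.

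Your bimodule argument packages this more conceptually: the identity above is precisely what falls out when one identifies $M_\ld\cong x_\ld\bH$ and then convolves $e_A^{\geo}$ against the generator $x_\mu$.  What your route buys is that multiplicativity of $\psi$ is automatic from the associativity of convolution on $\Xjj\times\Xjj\times\Yjj$, so no explicit structure-constant comparison is needed; what the paper's route buys is that it stays entirely inside $\Sjjg$ and $\Sjj$ without having to set up and verify the isomorphism $M\cong\bigoplus_\ld x_\ld\bH$ of right $\bH$-modules (your ``main obstacle''), and it records the identity $g^C_{AB}=\pi_\mu^{-1}\sum B^z_{xy}$ explicitly, which has independent use.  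Either way one must eventually specialize to $v=q$ and lift back by Zariski density, which you leave implicit.
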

%-----------------------------------------------------------------------------------------------------------
\proof
We only need to prove the statement when $v$ is specialized to $q$ for various prime powers $q$.
Let $\psi: \Sjjg \rightarrow \Sjj$ be the $\mA$-linear map sending $e_A^{\rm geo}$ to $e_A$ for all $A\in \Xi_{n,d}$.
Since $\Xi_{n,d}$ parameterizes the basis of both algebras, $\psi$ is an isomorphism (see also \cite{Vi03}).

It remains to show that $\psi$ is an algebra homomorphism.
Fix $A,B,C \in \Xi_{d}$, and let $\ld, \mu, \nu \in \Ld$ and $g_1, g_2, g_3 \in W$ be such that
\[
A = \kappa(\ld, g_1, \mu),
\quad
B = \kappa(\mu, g_2, \nu),
\quad
C = \kappa(\ld, g_3, \nu).
\]
Set $g^C_{AB}(v) \in \mA$ to be such that
\eq
g^C_{AB} (q)
=\left | \left\{ \~{L} \in \Xjj ~\big |~ (L, \~{L}) \in \cO_A, (\~{L}, L') \in \cO_B, (L, L') \in \cO_C \right\} \right |,
\endeq
for some fixed $L, L' \in \Xjj$.
Here $\cO_A$ is the $\mrm{Sp}_F(2d)$-orbit in $\Xjj\times \Xjj$ indexed by $A$.
Therefore, by definition we have
\eq
\egeo_A   \egeo_B = \sum_{C} g^C_{AB}(v)~ \egeo_C.
\endeq
For $x,y,z\in W$,
set $B^z_{xy}(v) \in \mA$ to be such that
\eq  \label{eq:TTy}
T_x T_y = \sum_z B^z_{xy}(v) T_z.
\endeq
For $g \in W$,  let $\cO_g$ be the orbit $\cO_{\kappa(\omega, g, \omega)}$ where $\omega = (0,1,1,\ldots,1,0) \in \Ld_{d,d}$.
It is well known that
\[
B^z_{xy}(q)
=\left | \left\{ \~{L} \in \Yjj ~\big |~ (L, \~{L}) \in \cO_x, (\~{L}, L') \in \cO_y, (L, L') \in \cO_z \right\} \right |,
\]
for some fixed $L, L' \in \Yjj$.
%By slightly abuse of notation, for $x\in W$ and $A = \kappa(\ld, g, \mu)$,  we write $x \in A$ if and only if
%\eq
%|R^\ld_i \cap x R^\mu_j| = a_{ij}
%\quad
%\textup{for all}
%\quad
%i,j.
%\endeq
Then (as in \cite[Proposition~3.4]{Du92})   for any $z\in W_\ld g_3 W_\nu$ we have
\eq  \label{eq:g=B}
g^C_{AB}(q) = \pi_\mu(q)^{-1} \sum_{\substack{x\in W_\ld g_1 W_\mu \\ y \in W_\mu g_2 W_\nu}} B^z_{xy}(q),
\endeq
where %$\pi_\mu(\cdot )$ is given by
$%\[
\pi_\mu(v) = \sum_{x \in W_\mu} v^{2\ell(x)}.
$%\]

On the other hand, it is well known that (in particular it follows as a special case of Lemma~\ref{lem:xTx} and its proof
with $A=\kappa(\mu,1,\mu)$ and $W_\delta =W_\mu$)
\eq  \label{eq:x2}
x_\mu^2 = \pi_\mu(v) x_\mu.
\endeq
Therefore by \eqref{phi}, \eqref{def:eA} and \eqref{eq:x2} we have
\begin{align*}
e_A   e_B(x_\nu)
&= e_A(x_\mu T_{g_2} T_{\D_{\delta(B)} \cap W_\nu})
 = e_A(x_\mu) T_{g_2} T_{\D_{\delta(B)} \cap W_\nu}
\\
&= \pi_\mu(v)^{-1} e_A(x_\mu) x_\mu T_{g_2} T_{\D_{\delta(B)} \cap W_\nu}
= \pi_\mu(v)^{-1} T_{W_\ld g_1 W_\mu} T_{W_\mu g_2 W_\nu},
\end{align*}
which, by \eqref{eq:TTy} and \eqref{eq:g=B}, can be rewritten as
\[
e_A   e_B(x_\nu)
= \pi_\mu(v)^{-1} \sum_{z\in W_\ld g_3 W_\nu}  \sum_{\substack{x\in W_\ld g_1 W_\mu \\ y \in W_\mu g_2 W_\nu}} B^z_{xy}(v) T_z
=\sum_{z\in W_\ld g_3 W_\nu} g^C_{AB}(v) T_z.
\]
This implies that
$e_A   e_B  = \sum_{C} g^C_{AB}(v) ~e_C$, and hence $\psi$ is an algebra homomorphism.
\endproof

%=========================================================
\chapter{Multiplication formula for affine Hecke algebra}
 \label{sec:MF-Hecke}

In this chapter, we establish a multiplication formula in affine Hecke algebra $\bH$ with an element of the form
$T_g$, where $g \in \D_{\ld\mu}$ for some $\ld,\mu \in \Ld$ and $\kappa(\ld,g,\mu)$ is tridiagonal.
This formula is used to obtain a corresponding multiplication formula
for affine Schur algebra $\Sjj$ in Chapter~\ref{sec:MF-Schur}.

%----------------------------------------------------------------------------------------------------------
\section{Minimal length representatives}
%----------------------------------------------------------------------------------------------------------

Fix $\ld,\mu \in \Ld$, $g_1 \in \D_{\ld\mu}$. %, and $g_2 \in \D_{\mu\nu}$.
Let $B = \kappa(\ld,g_1,\mu)$. % and $A = \kappa(\mu,g_2,\nu)$.
We assume that $B =(b_{ij}) = \kappa(\ld, g_1, \mu)$ is tridiagonal, i.e., $b_{ij}=0$ unless $|i-j|\le 1$.
We choose  \eqref{eq:delta} to be
\eq  \label{eq:delta3}
\delta =\delta(B)
= (b'_{00}, b_{10}; \; b_{01}, b_{11}, b_{21};  \ldots,  %b_{i-1,i}, b_{i,i}, b_{i+1,i}; \ldots
 b_{r-1,r}, b_{r,r}, b_{r+1,r}; \;
  b_{r,r+1}, b'_{r+1,r+1}) \in \Ld_{3r+2, d}. % \subset \NN^{3r+4}.
\endeq
Recalling the convention of indexing in \eqref{def:Ld}, we understand that $\delta$ has $3r+4$ components indexed by $0,1, \ldots, 3r+2, 3r+3$.
%That is,  unlike $\delta(B)$ defined in \eqref{eq:delta}, here the intermediate terms $\delta_1$,
%$\ldots$, $\delta_{3r+2}$, $\delta'_1$, $\ldots$, $\delta'_{3r+2}$ can be zero.
%Note that the two conventions coincide (i.e., $\delta = \delta(B)$) when the intermediate terms are all nonzero.

%-----------------------------------------------------------------------------------------------------------
\lem\label{lem:ld-mu-delta}
We have, for $i \in \ZZ$, %1\le i \le r$,
\[
R_i^\mu =  R^{\delta}_{3i-1} \cup R^{\delta}_{3i} \cup R^{\delta}_{3i+1}, \quad
g_{1}^{-1}R_i^\ld =  R^{\delta}_{3i-2} \cup R^{\delta}_{3i} \cup R^{\delta}_{3i+2}.
\]
\endlem
%-----------------------------------------------------------------------------------------------------------
\proof
Follows by the construction \eqref{eq:Ri}--\eqref{eq:Ri2} and the definition of $\delta$ in \eqref{eq:delta3}.
\endproof

Again, let $B = \kappa(\ld, g_1, \mu)$ be tridiagonal. By Lemma~\ref{lem:kappa} we have $g_1 = \gstd_{B\stdP}$.
%In this special case, $g_1$ is the permutation ``swapping'' $R^{\delta}_{3i-2}$ and $R^{\delta}_{3i-1}$ for any $i$, and hence it
Unraveling the definition \eqref{stW} for $\gstd_{B\stdP}$, we
can write $g_1=\prod_{i=1}^{r+1} g_1^{(i)}$, where $g^{(i)}_1\in W$ is specified by the following recipe:
\eq\label{g1}
g^{(i)}_1(x) =
\bc{
 x + b_{i-1,i} &\tif x\in R^{\delta}_{3i-2} \subset R_{i-1}^\mu,
\\
 x-b_{i,i-1} &\tif x\in R^{\delta}_{3i-1} \subset R_{i}^\mu,
\\
x &\tif x\in [1..d] \big \backslash (R^{\delta}_{3i-2} \cup R^{\delta}_{3i-1}).
}
\endeq
%-------------------------------------------------------------------------------------------------------------
\lem\label{lem:g1'}
Fix $1\le i \le  r+1$. Write $R_{3i-2}^\delta = [m+1 .. m+\af]$ and $R_{3i-1}^\delta = [m+\af +1 .. m+\af+\beta]$
for some $m,\af,\beta\in \NN$.
Then $g_1^{(i)}$ has a reduced expression
\eq\label{g1i}
g_1^{(i)}
=
(s_{m+\beta}\cdots s_{m+2}s_{m+1})
(s_{m+\beta+1}\cdots s_{m+2})
\cdots
(s_{m+\beta+\alpha-1}\cdots s_{m+\alpha}).
\endeq
\endlem
%-----------------------------------------------------------------------------------------------------------
\proof
We note that $s_i (\forall i\in (d+1)\ZZ)$ does not appear in \eqref{g1i} since all elements in $(d+1)\ZZ$ are on the main diagonal of $B\stdP$ (and hence in $R^{\delta}_{3j}$ for some $j\in\ZZ$). For $1 \leq t \leq \alpha$, the product $s_{m+\beta+t-1}\cdots s_{m+t}$ is the cyclic permutation on $[m+t .. m+t+\beta]$, which sends
$
%m+\beta +t \mapsto m+t,
m+t \mapsto m+t+1 \mapsto \ldots \mapsto
%m+\beta +t -1 \mapsto
m+\beta +t  \mapsto m+t.
%\quad \ldots, \quad m+t \mapsto m+t+1.
$
The lemma then follows from \eqref{g1}.
\endproof
%-----------------------------------------------------------------------------------------------------------
For any $1\le i \le r+1$, $w\in\D_\delta \cap \W_\mu$ and $g_2\in\D_{\mu\nu}$, recalling \eqref{eq:jk}
we introduce the following subset of $W$:
\begin{align}  \label{def:Kw(i)}
K_w^{(i)} &=\Big \{
\text{products of disjoint transpositions of the form } (j,k)_\fc~\Big |~
\notag
\\
& \qquad \qquad \qquad \quad  j \in R_{3i-2}^\delta, k \in R_{3i-1}^\delta, (wg_2)^{-1}(k) < (wg_2)^{-1}(j)
\Big \}.
\end{align}
We then define
\eq\label{def:Kw}
K_w:=\Big \{
\prod_{i=1}^{r+1} \sig^{(i)}
~\big|~
\sig^{(i)}\in K^{(i)}_w
\Big \}.
\endeq
%Throughout this paper, when we take an element  $(j_1^{(i)},k_1^{(i)})_\fc(j_2^{(i)},k_2^{(i)})_\fc\cdots(j_s^{(i)},k_s^{(i)})_\fc\in K^{(i)}_w\subset W$, we always means that this expression is a product of disjoint transpositions for Weyl group $W$. Moreover, we also always assume that $j_1^{(i)}<j_2^{(i)}<\cdots<j_s^{(i)}$ so that the above expression is uniquely determined.
%-----------------------------------------------------------------------------------------------------------
Clearly we have $\sig^{2}=1$ for any $\sig\in K_w$.
For $w\in \D_\delta \cap W_\mu$ and $\sig \in K_w$ we denote
\eq\label{def:nsig}
n(\sig)=\mbox{the number of disjoint transpositions appeared in } \sig.
\endeq
%For $w\in\D_\delta \cap \W_\mu$ and $\sigma\in K_w$, w
We also set
\eq  \label{eq:hw}
h(w,\sig)= \big|H(w,\sig) \big|,
\endeq
where
\eq\label{def:Q(wsig)1}
H(w,\sig)
=
\bigcup_{i=1}^{r+1}
\left\{ (j,k)\in R_{3i-2}^\delta\times R_{3i-1}^\delta
~\middle|~
\ba{{c}
(wg_2)^{-1}\sig(j)>(wg_2)^{-1}(k),
\\
(wg_2)^{-1}(j)>(wg_2)^{-1}\sig(k)
}
\right\}.
\endeq

%%%%%%%%%%%%%%%%%%%%%
\section{Multiplication formula for affine Hecke algebra}

The goal of this section is to establish
the following formula for structure constants of the affine Hecke algebra $\bH$.
The formula will be applied in Chapter~\ref{sec:MF-Schur} to compute structure constants for affine Schur algebra $\Sjj$.
Recall $\D_\ld$ from \eqref{eq:Dmin}, $\D_{\ld\mu}$ from \eqref{eq:Dmin2},
$\kappa$ from \eqref{eq:ka}, $\delta(B)$ from \eqref{eq:delta3},
$K_w$ from \eqref{def:Kw}, $n(\sigma)$ from \eqref{def:nsig}, and $h(w,\sig)$ from \eqref{eq:hw}.

\thm
  \label{thm:heckemult}
Let $B = \kappa(\ld, g_1, \mu)$ be a tridiagonal matrix.
Then, for any $g_2 \in \D_{\mu \nu}$ and $w \in \D_{\delta(B)} \cap W_{\mu}$, we have
\eq\label{heckemult}
T_{g_1}T_{wg_2}=\sum_{\sig\in K_w}
(v^2-1)^{n(\sig)}v^{2h(w,\sig)}T_{g_1\sig wg_2}.
\endeq
In particular, $\Delta(w)$ and $c^{w,\sig}$ from \eqref{eq:Delta(w)} are $K_w$ and $(v^2-1)^{n(\sig)}v^{2h(w,\sig)}$, respectively.
\endthm

\proof
Recall $g_1=\prod_{i=1}^{r+1} g_1^{(i)}$; see \eqref{g1}.
It suffices to  show that
\eq  \label{heckemult(i)}
T_{g_1^{(i)}}T_{wg_2}=\sum_{\sig\in K_w^{(i)}}
(v^2-1)^{n(\sig)}v^{2h(w,\sig)}T_{g_1^{(i)}\sig wg_2}
\endeq
for any $1\le i \le r+1$. We shall continue to use the notations in Lemma \ref{lem:g1'} in this proof.

By Lemma \ref{lem:g1'}, we have
\[T_{g_1^{(i)}}T_{wg_2}=(T_{s_{m+\beta}}\cdots T_{s_{m+2}}T_{s_{m+1}})
(T_{s_{m+\beta+1}}\cdots T_{s_{m+2}})\cdots (T_{s_{m+\beta+\alpha-1}}\cdots T_{s_{m+\alpha}})T_{wg_2}.
\]
Let us denote
\[
\{k\in R^\delta_{3i-1}|(wg_2)^{-1}(m+\alpha)>(wg_2)^{-1}(k)\}=\{m+\alpha+1,m+\alpha+2,\ldots,m+\alpha+p\}.
\]
We claim that
\begin{align}  \label{heckemult1}
(& T_{s_{m+\beta+\alpha-1}}   \cdots T_{s_{m+\alpha}})T_{wg_2}
\notag
\\
&=v^{2p}T_{s_{m+\beta+\alpha-1}\cdots s_{m+\alpha}wg_2}+\sum_{x=1}^p(v^2-1)v^{2(x-1)}T_{s_{m+\beta+\alpha-1}\cdots s_{m+\alpha}(m+\alpha,m+\alpha+x)_\fc wg_2}.
\end{align}

We prove Equation \eqref{heckemult1} recursively.
Firstly,
\[
T_{s_{m+\alpha}}T_{wg_2}=
\left\{
\begin{array}{ll}
T_{s_{m+\alpha}wg_2},& \mbox{if $(wg_2)^{-1}(m+\alpha)<(wg_2)^{-1}(m+\alpha+1)$};\\
v^2T_{s_{m+\alpha}wg_2}+(v^2-1)T_{wg_2},& \mbox{if $(wg_2)^{-1}(m+\alpha+1)<(wg_2)^{-1}(m+\alpha)$}.
\end{array}
\right.
\]
Suppose that we have shown, for $c<p$,
\begin{align*}
(& T_{s_{m+\alpha+c-1}}\cdots T_{s_{m+\alpha}})T_{wg_2}
\\
&=v^{2c}T_{s_{m+\alpha+c-1}\cdots s_{m+\alpha}wg_2}
+\sum_{x=1}^c(v^2-1)v^{2(x-1)}T_{s_{m+\alpha+c-1}\cdots s_{m+\alpha}(m+\alpha,m+\alpha+x)_\fc wg_2}.
\end{align*}
Note that
\[
T_{s_{m+\alpha+c}}T_{s_{m+\alpha+c-1}\cdots s_{m+\alpha}wg_2}
=v^2T_{s_{m+\alpha+c}\cdots s_{m+\alpha}wg_2}+(v^2-1)
 T_{s_{m+\alpha+c}\cdots s_{m+\alpha}(m+\alpha,m+\alpha+c+1)_\fc wg_2}
\]
thanks to
\[(s_{m+\alpha+c-1}\cdots s_{m+\alpha}wg_2)^{-1}(m+\alpha+c)>(s_{m+\alpha+c-1}\cdots s_{m+\alpha}wg_2)^{-1}(m+\alpha+c+1),
\]
while
\[
T_{s_{m+\alpha+c}}T_{s_{m+\alpha+c-1}\cdots s_{m+\alpha}(m+\alpha,m+\alpha+x)_\fc wg_2}=T_{s_{m+\alpha+c}\cdots s_{m+\alpha}(m+\alpha,m+\alpha+x)_\fc wg_2}
\]
for any $1\le x \le c$, thanks to
\begin{align*}
(s_{m+\alpha+c-1} & \cdots s_{m+\alpha}(m+\alpha,m+\alpha+x)_\fc wg_2)^{-1}(m+\alpha+c)\\
&<(s_{m+\alpha+c-1}\cdots s_{m+\alpha}(m+\alpha,m+\alpha+x)_\fc wg_2)^{-1}(m+\alpha+c+1).
\end{align*}
Therefore we obtain
\begin{align*}
( &T_{s_{m+\alpha+c}}\cdots T_{s_{m+\alpha}})T_{wg_2}
\\
&=v^{2(c+1)}T_{s_{m+\alpha+c}\cdots s_{m+\alpha}wg_2}
+\sum_{x=1}^{c+1}(v^2-1)v^{2(x-1)}T_{s_{m+\alpha+c}\cdots s_{m+\alpha}(m+\alpha,m+\alpha+x)_\fc wg_2}.
\end{align*}
Suppose that we have obtained, for $c\geq p$,
\begin{align*}
( & T_{s_{m+\alpha+c-1}}\cdots T_{s_{m+\alpha}})T_{wg_2}
\\
&=v^{2p}T_{s_{m+\alpha+c-1}\cdots s_{m+\alpha}wg_2}
+\sum_{x=1}^p(v^2-1)v^{2(x-1)}T_{s_{m+\alpha+c-1}\cdots s_{m+\alpha}(m+\alpha,m+\alpha+x)_\fc wg_2}.
\end{align*}
Since
\[(s_{m+\alpha+c-1}\cdots s_{m+\alpha}wg_2)^{-1}(m+\alpha)<(s_{m+\alpha+c-1}\cdots s_{m+\alpha}wg_2)^{-1}(m+\alpha+c+1)
\] and
\begin{align*}
 (s_{m+\alpha+c-1}& \cdots s_{m+\alpha}(m+\alpha,m+\alpha+x)_\fc)^{-1}(m+\alpha)
\\
&<(s_{m+\alpha+c-1}\cdots s_{m+\alpha}(m+\alpha,m+\alpha+x)_\fc)^{-1}(m+\alpha+c+1),
\end{align*}
we have
\[
(T_{s_{m+\alpha+c}}\cdots T_{s_{m+\alpha}})T_{wg_2}=v^{2p}T_{s_{m+\alpha+c}\cdots s_{m+\alpha}wg_2}+\sum_{x=1}^p(v^2-1)v^{2(x-1)}T_{s_{m+\alpha+c}\cdots s_{m+\alpha}(m+\alpha,m+\alpha+x)_\fc wg_2}.
\]
Thus \eqref{heckemult1} holds.

The power of $v^2$ for each term on the right hand side of \eqref{heckemult1} is the number of elements $k\in R^\delta_{3i-1}$ satisfying $(wg_2)^{-1}\eta(m+\alpha)>(wg_2)^{-1}(k)$ where $\eta=\id$ or $(m+\alpha,m+\alpha+x)_\fc$. The power of $v^2-1$ for each term on the right hand side of \eqref{heckemult1} is the number of disjoint transpositions for $\eta$. In other words, \eqref{heckemult1} can be rewritten as follows
\eq\label{heckemult2}
(T_{s_{m+\beta+\alpha-1}}\cdots T_{s_{m+\alpha}})T_{wg_2}=\sum_{\substack{\eta=\id,(m+\alpha,m+\alpha+x)_\fc\\x=1,2,\ldots,p}}(v^2-1)^{n(\eta)}v^{2h'(\eta)}T_{s_{m+\beta+\alpha-1}\cdots s_{m+\alpha}\eta wg_2}
\endeq where
$h'(\eta):= \Big |\{(m+\alpha,k)\in R_{3i-2}^\delta\times R_{3i-1}^\delta|
(wg_2)^{-1}\eta(m+\alpha)>(wg_2)^{-1}(k)\} \Big|$.

Replacing $wg_2$ by $\eta wg_2$, we can obtain a similar formula for $(T_{s_{m+\beta+\alpha-2}}\cdots T_{s_{m+\alpha-1}})T_{\eta wg_2}$. However, in case of $\eta=(m+\alpha,m+\alpha+x)_\fc$, we note that
\begin{eqnarray*}
&&\{k\in R^\delta_{3i-1}|(\eta wg_2)^{-1}(k)<(\eta wg_2)^{-1}(m+\alpha-1)\}\\
&=&\{k\in R^\delta_{3i-1}|(wg_2)^{-1}(k)<(wg_2)^{-1}(m+\alpha-1)\}\setminus\{m+\alpha+x\}\\
&=&\{k\in R^\delta_{3i-1}|(wg_2)^{-1}(k)<(wg_2)^{-1}\eta(m+\alpha-1),(wg_2)^{-1}\eta(k)<(wg_2)^{-1}(m+\alpha-1)\}.
\end{eqnarray*}
Hence
\[
\bA{
(T_{s_{m+\beta+\alpha-2}} & \cdots T_{s_{m+\alpha-1}})(T_{s_{m+\beta+\alpha-1}}\cdots T_{s_{m+\alpha}})T_{wg_2}\\
&=\sum_{\zeta}(v^2-1)^{n(\zeta)}v^{2h'(\zeta)}T_{s_{m+\beta+\alpha-2}\cdots s_{m+\alpha-1}s_{m+\beta+\alpha-1}\cdots s_{m+\alpha}\zeta wg_2},
}\]
where $\zeta$ runs over $\id,(m+\alpha-1,k_1)_\fc,(m+\alpha,k_2)_\fc,(m+\alpha-1,k_1)_\fc(m+\alpha,k_2)_\fc,(k_1\neq k_2)$ with ${wg_2}^{-1}(m+\alpha-1)>{wg_2}^{-1}(k_1)$ and ${wg_2}^{-1}(m+\alpha)>{wg_2}^{-1}(k_2)$, and
\[
h'(\zeta)=
\left|\left\{
(j,k)\in R_{3i-2}^\delta\times R_{3i-1}^\delta
\Big |
\begin{array}{c}
j=m+\alpha\mbox{ or }m+\alpha-1,\\
(wg_2)^{-1}\zeta(j)>(wg_2)^{-1}(k),(wg_2)^{-1}(j)>(wg_2)^{-1}\zeta(k)\end{array}
\right\}\right|.
\]
Repeating this procedure, we have proved the formula \eqref{heckemult(i)}. The theorem is proved.
\endproof

%------------------------------------------------------------------------------------------------
\rem\label{countqpower}
There is an intuitive explanation for $h(w,\sig)$ as follows. For each $j\in R^\delta_{3i-2}$, where $1\leq i\leq r+1$, count the number of those elements $k\in R^\delta_{3i-1}$ subject to Conditions (1)--(3) below:
(1) there is no transposition $(j',k)$, $(j<j'\in R^\delta_{3i-2})$, appearing in $\sig$;
(2) the entry of $A\stdP$ in which $w^{-1}(k)$ lying is to the left of entry in which $w^{-1}(j)$
lying (i.e. $(wg_2)^{-1}(j)>(wg_2)^{-1}(k)$);
(3) $k<k'$ if  $(j,k')$ is a transposition appearing in $\sig$.
The sum of these numbers for all $j\in R^\delta_{3i-2}$ and all $1\le i\le r+1$ is $h(w,\sig)$.
\endrem
%------------------------------------------------------------------------------------------------
%The following lemma gives another description for $n(\sig)$ and $h(w,\sig)$.
\cor
For any $g_1\in \D_{\ld \mu}$, $g_2 \in \D_{\mu\nu}$,  $w\in \D_\delta \cap W_\mu$ and $\sig \in K_w$, we have
\eq\label{eq:l(g_1etawg_2)}
\ell(g_1) + \ell(w) + \ell(g_2) = \ell(g_1 \sig w g_2) + n(\sig) + 2 h(w,\sig).
\endeq
\endcor
\proof
Follows from \eqref{heckemult}.% and the definition of multiplication on $\bH$.
\endproof

%%%%%%%%%%%%%
\section{An example}

\exa\label{exa:tri}
%(Example 3.6)
Let $r = 2, n = 6, d = 8$ and $D = 18$. Let $B = E^{00} + 2\sum\limits_{1\leq i,j \leq 2} E^{ij}_\tt + E^{33}$ and $A = E^{00} + \sum\limits_{i=1}^2\sum\limits_{j=1}^4 E^{ij}_\tt + E^{33}$. Namely,
\[
B =
\bM{[c:c:cccc]
\ddots&&&&&
\\
\hdashline
&1&&&&
\\
\hdashline
&&2&2&&
\\
&&2&2&&
\\
&&&&1&
\\
}
\quad
\textup{and}
\quad
A =
\bM{[c:c:cccc]
\ddots&&&&&
\\
\hdashline
&1&&&&
\\
\hdashline
&&1&1&1&1
\\
&&1&1&1&1
\\
&&&&1&
\\
},
\]
where dashed stripes denote the 0th column/row.
Here we use a two-by-four submatrix  for short when there is no ambiguity. That is,
\[
B = \young(2200,2200)~,
\quad
A = \young(1111,1111)~.
\]
Thus, $g_1 = \gstd \Bp{\bY{12&34&&\cr 56&78&&\cr}} = [1,2,5,6,3,4,7,8]_\fc$; see \eqref{def:perm}.
On the other hand, we have
\[
\ba{{c|ccccccccccccccc}
i & 0&1&2&3&4&5&6&7&8&9
\\
\hline
R^\delta_i
&\{0\}
&&&[1..2]&[3..4]&[5..6]&[7..8]&&&\{9\}
}
\]
That is, $g_1 = g_1^{(2)} = (s_4 s_3) (s_5 s_4)$ with $m =\af=\beta =2$.

Also, $g_2 = \gstd{\Bp{ \raisebox{-10pt}{\young(1234,5678)} }} = [1,5,2,6,3,7,4,8]_\fc$.

Now write $T_{ \young(abcd,efgh) } = T_x$ where $x = \gstd\Bp{ \raisebox{-10pt}{\young(abcd,efgh)} }$ for short. We have
\[
\bA{
T_4 T_{\young(1234,5678)}
&= v^2 T_{\young(1235,4678)} + (v^2-1)T_{\young(1234,5678)},
\\
T_5T_4 T_{\young(1234,5678)}
&= v^2 T_{\young(1236,4578)} + (v^2-1) \Bp{  v^2 T_{\young(1235,4678)}  + T_{\young(1234,6578)} }.
}
\]
If $w = \id$,  we have
%\[
%\ba{{c|cccccc}
%T_{s_{5}s_{4} \sig wg_2}&T_{\young(~~36,45~~)} & T_{\young(~~35,46~~)} & T_{\young(~~34,65~~)}
%\\
%\\
%\sig & \id &(4,6)& (4,5)
%\\
%\\
%\sig (4)&4 & 6 &5
%\\
%\\
%h'(\sig)&2&1&0
%\\
%\\
%wg_2 \leftrightarrow \young(~~34,56~~)&\young(~~~~,56~~)&\young(~~~~,5\times~~)&\young(~~~~,\times\times~~)
%}
%\]
\[
K_w^{(1)} = K_w^{(3)} = \{\id\},
\quad
K_w^{(2)} = K_w= \{\id, (3,5)_\fc, (3,6)_\fc, (4,5)_\fc, (4,6)_\fc, (3,5)_\fc(4,6)_\fc, (3,6)_\fc(4,5)_\fc\},
\]
and $T_{g_1} T_{w g_2} = \sum (v^2-1)^{n(\sig)} v^{2h(\sig)} T_{g_1\sig w g_2}$ with
\[
\ba{{ccccccccc}
T_{g_1 \sig w g_2}& \sig & n(\sig) & h(\sig)
\\
\hline
T_{\young(~~56,34~~)}&\id &0& 4
\\
T_{\young(~~46,35~~)}&(3,6)_\fc &1& 3
\\
T_{\young(~~36,54~~)}&(3,5)_\fc &1& 2
\\
T_{\young(~~54,36~~)}&(4,6)_\fc &1& 2
\\
T_{\young(~~34,56~~)}&(3,5)_\fc(4,6)_\fc &2&1
\\
T_{\young(~~53,64~~)}&(4,5)_\fc &1&1
\\
T_{\young(~~43,65~~)}&(3,6)_\fc(4,5)_\fc &2&0
\\

}
\]
\endexa
\vspace{.3cm}

%=========================================================
\chapter{Multiplication formula for affine Schur algebra}
  \label{sec:MF-Schur}

This chapter is devoted to the multiplication formula with tridiagonal generators. An essential idea in the proof is to identify the basis element $e_A$ with its corresponding ``higher-level'' matrix with entries being subsets of $\ZZ$ by \eqref{stP}.
We also provide two special cases of the multiplication formula that are analogous to the multiplication formulas with semisimple generators in affine type A, and with Chevalley generators as in finite type B/C.

%%%%%%%%%%%%%%
\section{A map $\varphi$}

Recall
$T_\tt$ from \eqref{Ttt}.
We introduce an {\em entry-wise partial order}, denoted by $\leq_e$, on $\Tt_n$ %(as well as on $\Xi_n$)
 by a matrix-entry-wise comparison:
\eq  \label{eq:leqe}
(a_{ij})\leq_e (b_{ij})\Leftrightarrow
a_{ij}\leq b_{ij} (\forall i,j).
\endeq
For $A, B \in \Xi_n$,
we set
\eq\label{def:ThetaBA}
\Tt_{B, A} = \big\{T\in\Tt_n ~|~T_\tt \leq_e A, \;  \roA(T)_i = b_{i-1,i} \textup{ for all } i \big\}.
\endeq

Fix $\ld,\mu, \nu \in \Ld$, $g_1 \in \D_{\ld\mu}$, and $g_2 \in \D_{\mu\nu}$.
Let $B = \kappa(\ld,g_1,\mu)$ and $A = \kappa(\mu,g_2,\nu)$.
Recall $\delta =\delta(B)$ from \eqref{eq:delta3}.
%Recall $e_A$ from \eqref{def:eA}.
For $w \in W$, we associate a matrix $\varphi(w)$ whose
$(i,j)$th entry of the matrix
$\varphi(w)$ is given by
\eq  \label{def:T}
\varphi(w)_{ij} = |R^\delta_{3i-1}\cap wg_2 R^\nu_j|.
\endeq

\lem\label{def:wtoT}
We have a surjective map $\varphi : \D_\delta \cap W_\mu \longrightarrow \Tt_{B,A}, \; w \mapsto \varphi(w)$.
\endlem

\proof
We first note that $\varphi(w) \in \Tt_n$.
For any $w \in \D_\delta \cap W_\mu$ and any $i$, we have
\[
\roA(\varphi(w))_i=|R^\delta_{3i-1}|=b_{i-1,i}.
\]
Next we check that $\varphi(w) \leq_e A$. Indeed, by Lemma \ref{lem:ld-mu-delta}, the $(i,j)$-th entry of $\varphi(w)_{\theta}$
\[
\bA{
\varphi(w)_{\theta,ij} &=|R^\delta_{3i-1}\cap wg_2 R^\nu_j|+|R^\delta_{3(-i)-1}\cap wg_2 R^\nu_{-j}|
\\
&=|(R^\delta_{3i-1}\cup R^\delta_{3i+1})\cap wg_2 R^\nu_{j}|
\\
&\leq|R^\mu_{i}\cap wg_2 R^\nu_j|
\\
&=|w^{-1}R^\mu_{i}\cap g_2 R^\nu_j|=|R^\mu_{i}\cap g_2 R^\nu_j|=a_{ij}.
}
\]
Hence we have proved $\varphi (w) \in \Tt_{B,A}$.

To show that $\varphi$ is surjective, for any $T =(t_{ij}) \in\Tt_{B, A}$ we shall construct an element $w_{A,T}\in \varphi^{-1}(T)$ as follows.
Recall $A\stdP$ from \eqref{stP}. For all $i,j$, we set
\begin{eqnarray*}
\mathcal{T}_{ij}^- &=& \textup{subset of } (A\stdP)_{ij}\textup{ consisting of the smallest }t_{ij}\textup{ elements},
\\
\mathcal{T}_{ij}^+ &=& \textup{subset of }(A\stdP)_{ij}\textup{ consisting of the largest }t_{-i,-j}\textup{ elements},
\\
\mathcal{T}_{ij}^0 &=& (A\stdP)_{ij} - \mathcal{T}_{ij}^+  - \mathcal{T}_{ij}^-.
\end{eqnarray*}
Note that $\sum_{j\in\ZZ} |\mathcal{T}_{ij}^-| = \roA(T)_i = |R^\delta_{3i-1}|$,  $\sum_{j\in\ZZ} |\mathcal{T}_{ij}^+| = \roA(T)_{-i} = |R^\delta_{3i+1}|$.
There is a unique element
$w_{A,T}= \prod\limits_{i=0}^{r+1} w_{A,T}^{(i)} \in\D_\delta \cap \W_\mu$ where $w_{A,T}^{(i)}\in \textup{Perm}(R_i^\mu)$
%(recall that $R^\mu_{i}=R^\delta_{3i-1}\cup R^\delta_{3i}\cup R^\delta_{3i+1}$)
is determined by
\eq\label{def:wAT}
w_{A,T}^{(i)}(x)\in \left\{
\begin{array}{ll}
R^\delta_{3i-1},& \mbox{if $x\in\bigcup_{j}\mathcal{T}_{ij}^-$}\\
R^\delta_{3i+1},& \mbox{if $x\in\bigcup_{j}\mathcal{T}_{ij}^+$}\\
R^\delta_{3i},& \mbox{otherwise.}
\end{array}
\right.
\endeq
The uniqueness follows from that $w_{A,T}^{-1}$ is  order-preserving on each $R^\delta_j$ (cf. Lemma \ref{lem:Dld}).

One verifies by construction that $\varphi(w_{A,T}) =T$. The lemma is proved.
\endproof

\lem
For $T\in\Tt_{B, A}$, the element $w_{A,T}$ determined by \eqref{def:wAT}
is the minimal length element in $\varphi^{-1}(T)$. Moreover, its length is
\begin{align}   \label{wATlength}
\ell(w_{A,T})
&=\sum\limits_{\substack{1\leq i \leq r \\ j\in\ZZ}}
\Bp{
  t_{ij}\sum\limits_{k < j}(A-T)_{ik}
  +
  {t}_{-i,-j}\sum\limits_{k>j}(A-T_\tt)_{ik})
}
\notag
\\
%2
& +\sum\limits_{\substack{k<j\leq 0\\ \text{or} \\  -k\geq j>0}} t_{0j}(A-T)_{0k}
+  \sum\limits_{|k|<j}    t_{0j} (A-T_\tt)_{0k}
-\sum\limits_{j>0}
     {t_{0j}\choose 2}
\\
%3
& +\sum\limits_{\substack{k<j\leq r+1 \\ \text{or} \\ n-k \geq j >r+1}} t_{r+1,j}(A-T)_{r+1,k}
+\sum\limits_{\substack{j>r+1, \\ |k-r-1|<j}}   t_{r+1,j} (A-T_\tt)_{r+1,k}
-\sum\limits_{j>r+1}  {t_{r+1,j}\choose 2}.
\notag
\end{align}
\endlem

\proof
It follows by construction that $w_{A,T}$ is the minimal length element in $\varphi^{-1}(T)$.

The permutation $w_{A,T}$ shifts elements in $\bigcup_{j\in\mathbb{Z}}\mathcal{T}^{-}_{ij}$
to the front of elements in the union  $\bigcup_{j\in\mathbb{Z}}(A\stdP)_{ij}\setminus\mathcal{T}^{-}_{ij}$,
and shifts elements in $\bigcup_{j\in\mathbb{Z}}\mathcal{T}^{+}_{ij}$ to the back of
$\bigcup_{j\in\mathbb{Z}}(A\stdP)_{ij}\setminus\mathcal{T}^{+}_{ij}$.
Note that $|\mathcal{T}^{-}_{ij}|=t_{ij}$ and $|\mathcal{T}^{+}_{ij}|=t_{-i,-j}$.

For $1\le i \le r$, we first count that there are $\sum_{j\in\mathbb{Z}}(t_{ij}\sum_{k<j}(a_{ik}-t_{ik}))$ elements in total
crossed by $\bigcup_{j\in\mathbb{Z}}\mathcal{T}^{-}_{ij}$, and then there are
$\sum_{j\in\mathbb{Z}}(t_{-i,-j}\sum_{k>j}(a_{ik}-t_{ik}-t_{-i,-k}))$ elements in total
crossed by $\bigcup_{j\in\mathbb{Z}}\mathcal{T}^{+}_{ij}$. This accounts for the sum on the first line of the RHS of \eqref{wATlength}.

When we move elements in $\mathcal{T}^{-}_{0j}$, their opposite elements in $\mathcal{T}^{+}_{0,-j}$
are moved automatically in a symmetric way.
Thus there are $\sum_{k<j\leq0}t_{0j}(a_{0k}-t_{0k})$ elements in total
crossed by $\bigcup_{j\leq 0}\mathcal{T}^{-}_{0j}$.
 And for each $j>0$,  there are
 $t_{0j} \sum_{k<-j}(a_{0k}-t_{0k})+ \big( t_{0j}(a_{0,-j}-t_{0,-j})- \sum_{s=1}^{t_{0j}-1} s % (1+2+\ldots+(t_{0j}-1))
 \big)
 +t_{0j} \sum_{|k|<j}(a_{0k}-t_{0k}-t_{0,-k})$
 elements in total crossed by $\mathcal{T}^{-}_{0j}$.
Adding these contributions gives us the second line on the RHS of \eqref{wATlength}.
Similarly, we obtain the third line of the RHS of  \eqref{wATlength} by considering $\mathcal{T}^{\pm}_{r+1,j}$.
\endproof

\lem\label{lem:sumwAT}
Let $T \in \Tt_{B,A}$, we have
\eq\label{eq:sumwAT}
\sum_{w\in\varphi^{-1}(T)}v^{2\ell(w)}=v^{2\ell(w_{A,T})}\frac{[A]^!_\fc}{[A-T_\tt]^!_\fc [T]^!}.
\endeq
\endlem

\proof
Any permutation $w \in \varphi^{-1}(T)$ is determined by which $t_{ij}$ and $t_{-i.-j}$
elements in $(A\stdP)_{i,j}$ is moved to left and right for all $(i,j)\in I^+$ (see \eqref{plus}), respectively.
Hence $\sum_{w\in\varphi^{-1}(T)}v^{2\ell(w)}$,
which is the generating function in $v$ of counting inversions for $\varphi^{-1}(T)$,
is a product of the generating functions of counting inversions locally
for each $(i,j)\in I^+$.

Recall $I^+ =\Ia  \cup  \{(0,0), (r+1,r+1)\}$ from \eqref{def:Ia}.
Let us determine these generating functions of local inversion countings
%local $v$-counts of inversions
one by one.
For each entry $(i,j)\in \Ia$, it is given by a standard recipe
$\dfrac{[a_{ij}]^!}{[t_{ij}]^![t_{-i,-j}]^![a_{ij}-(t_\tt)_{ij}]^!}$.

Let $k=0$ or $r+1$. Recall $a'_{kk}=\frac{a_{kk}-1}2$ from \eqref{def:a'}.
The generating function of counting inversions locally at the entry $(k,k) \in I^+$ is given by
\[
\bA{
\sum_{x+y=t_{kk}}
\lrb{a'_{kk}}{x}
&
\lrb{a'_{kk}-x}{y}
(v^2)^{\frac{x(x+1)}{2}+x(a'_{kk}-t_{kk})}
\\
&=\lrb{ a'_{kk}}{t_{kk}}
\sum_{x=0}^{t_{kk}}
\lrb{t_{kk}}{x}
v^{x(x-1)} (v^{a'_{kk}+1-t_{kk}})^{2x}
\\
&\stackrel{(\diamondsuit)}{=}\lrb{ a'_{kk}}{t_{kk}}
\prod_{i=1}^{t_{kk}}(1+v^{2(i-1)}v^{2(a'_{kk}-t_{kk}+1)})
%\\
%&= \lrb{ a'_{kk}}{t_{kk}} \prod_{i=1}^{t_{kk}} \frac{[a_{kk}+1 - 2i]}{[a'_{kk}+1 - i]}
%\\ &= \prod_{i=1}^{t_{kk}} \frac{[a_{kk}+1-2i]}{[t_{kk}+1-i]}
%\\
= \frac{[a_{kk}]^!_\fc}{[a_{kk} - 2t_{kk}]^!_\fc [t_{kk}]^!},
}
\]
where ($\diamondsuit$) uses the quantum binomial theorem
$
\sum_{r=0}^n
\lrb{n}{r}
v^{r(r-1)}x^r
=\prod_{k=0}^{n-1}(1+v^{2k}x).
$

Summarizing, we have obtained
\[
\bA{
\sum_{w\in\varphi^{-1}(T)}v^{2\ell(w)}
&=
v^{2\ell(w_{A,T})}
\prod_{(i,j)\in \Ia}
\frac{[a_{ij}]^!}{[t_{ij}]^![t_{-i,-j}]^![a_{ij}-(t_\tt)_{ij}]^!}
\prod_{k=0,r+1}\frac{[a_{kk}]^!_\fc}{[a_{kk} - 2t_{kk}]^!_\fc [t_{kk}]^!}
\\
&=v^{2\ell(w_{A,T})}\frac{[A]^!_\fc}{[A-T_\tt]^!_\fc [T]^!}.
}\]
The lemma follows.
\endproof

Recall the subset $K_w \subset W$ from \eqref{def:Kw(i)}--\eqref{def:Kw} for $w\in\D_\delta \cap \W_\mu$.
Also recall $h(w,\sig)$ from \eqref{eq:hw} and $H(w,\sig)$ from \eqref{def:Q(wsig)1},
for $w\in \D_\delta \cap W_\mu$ and $\sig \in K_w$.
Recall further from \eqref{def:wtoT} the map $\varphi : \D_\delta \cap W_\mu \rw \Tt_{B,A}$.

%---------------------------------------------------------------------------------------------------------
\lem\label{KwtoKT}
Let $w_1,w_2\in\D_\delta \cap W_\mu$. If $\varphi(w_1)=\varphi(w_2)$,  then $K_{w_1}=K_{w_2}$
and $H(w_1,\sig)=H(w_2,\sig)$ for any $\sig\in K_{w_1} (=K_{w_2})$;
in particular, we have $h(w_1,\sig)=h(w_2,\sig)$.
\endlem

\proof
It follows from $\varphi(w_1)=\varphi(w_2)$ that
$w_1^{-1}(x)$ and $w_2^{-1}(x)$ lie in the same entry of $A\stdP$, for any $x\in R^\delta_{3i-2}\cup R^\delta_{3i-1}$.
Thus for any $j\in R^\delta_{3i-2}$ and $k\in R^\delta_{3i-1}$,
we have $g_2^{-1}w_1^{-1}(k)<g_2^{-1}w_1^{-1}(j)$ if and only if $g_2^{-1}w_2^{-1}(k)<g_2^{-1}w_2^{-1}(j)$.
So $K_{w_1}=K_{w_2}$ and $H(w_1,\sig)=H(w_2,\sig)$ by the definition \eqref{def:Kw} and \eqref{def:Q(wsig)1}.
\endproof

%-----------------------------------------------------------------------------------------------------------
Thanks to Lemma~\ref{KwtoKT}, for $T\in \Tt_{B,A}$, we can define
\eq\label{def:KT}
K(T) = K_w
\quad
\textup{for some (or for all) }
w\in\varphi^{-1}(T),
\endeq
and further define, for $\sig\in K(T)$,
\eq\label{def:hTsig}
h(T,\sig) = h(w,\sig)
\quad
\textup{for some (or for all) }
w\in\varphi^{-1}(T).
\endeq

%%%%%%%%%%%%%%
\section{Algebraic combinatorics for $\Sjj$}
%{Multiplication formulas with tridiagonal generators}

For any $w\in\D_\delta \cap W_\mu$ and $\sig \in K_w$,
we recall from \eqref{g1wg2} that
$g_1\sig w g_2 = w^{(\sig)}_\ld y^{(w,\sig)} w^{(\sig)}_\nu$ and from \eqref{Aw} that
%denote the shortest representative in the double coset $W_\ld g_1 \sig w g_2 W_\nu$ by $y^{(w,\sig)} \in \D_{\ld\nu}$.
$A^{(w,\sig)} = (a^{(w,\sig)}_{ij}) = \kappa(\ld,y^{(w,\sig)},\nu)$.

Recall $\sig = \prod_{i=1}^{r+1} \sig^{(i)}\in K_w$ with $\sig^{(i)} \in K_w^{(i)}$. Let us fix the product
$\sig^{(i)} = \prod_{l=1}^{s_i} (j_l^{(i)}, k_l^{(i)})_\fc$
by requiring
\eq\label{def:sigconv}
j_1^{(i)} < j_2^{(i)} < \cdots < j_{s_i}^{(i)}.
\endeq
%Assume $\sig=\sig^{(1)}\sig^{(2)}\cdots\sig^{(r+1)} \in K_w$ where $\sig^{(i)}=(j_1^{(i)},k_1^{(i)})_\fc(j_2^{(i)},k_2^{(i)})_\fc\cdots(j_{s_i}^{(i)},k_{s_i}^{(i)})_\fc\in K_w^{(i)}$ for $i=1,2,\ldots,r+1$.
We further define $s_{-i}=s_{i+1}$ for $0 \leq i \leq r$ and
\eq\label{extendjk}
j_l^{(-i)}=k_{s_{i+1}-l+1}^{(i+1)},
\quad
k_l^{(-i)}=j_{s_{i+1}-l+1}^{(i+1)}
\quad
\tfor
\quad
0 \leq i\leq r,
1 \leq l \leq s_i.
\endeq
Hence the permutations $\sig^{(-i)} = \prod_{l=1}^{s_{-i}} (j_l^{(-i)}, k_l^{(-i)})_\fc$ for $0 \leq i \leq r$ satisfy \eqref{def:sigconv} as well.
%----------------------------------------------------------------------------------------------------------
For $w \in \D_\delta \cap W_\mu$,  we define a map
\begin{align}  \label{def:S}
\begin{split}
\psi_w & : K_w \longrightarrow \Tt_n,
\\
\psi_w(\sig)_{ij}
&= |R^\delta_{3i-1} \cap \sig(R^\delta_{3i-2})\cap w g_2 R_j^{\nu}|
= |\{k_l^{(i)}\}_{l=1}^{s_i}\cap w g_2 R_j^{\nu}|.
\end{split}
\end{align}
For any $\ZZ\times \ZZ$ matrix $T = (t_{ij})$, we set
\eq\label{def:T^}
\^{T}= (\^{t}_{ij}), \qquad \text{ where } \^{t}_{ij} = t_{i+1,j}.
\endeq
%-----------------------------------------------------------------------------------------------------------
Assume that $S = \psi_w(\sig)$ for some $w\in \D_\delta \cap W_\mu$ and $\sig \in K_w$. By \eqref{extendjk} we have
\eq
\bA{
\^{s}_{ij} &= | \{k_l^{(i+1)}\}_l \cap w g_2 R_{j}^\nu |,
\\
s_{-i,-j}
&
%= | \{k_l^{(-i)}\}_l \cap w g_2 R_{-j}^\nu |
=  | \{j_l^{(i+1)}\}_l \cap w g_2 R_{j}^\nu |,
\\
(\^{s})_{-i,-j} &
%= | \{k_l^{(-i-1)}\}_l \cap w g_2 R_{-j}^\nu |
= | \{j_l^{(i)}\}_l \cap w g_2 R_{j}^\nu |.
}
\endeq
%-----------------------------------------------------------------------------------------------------------
For any matrix $S=(s_{ij})$,
denote by $S^\dagger=(s^\dagger_{ij})$ the matrix obtained by rotating  the matrix $\^{S}$ by 180 degrees, namely,
\eq\label{def:dagger}
s^\dagger_{ij} = s_{1-i,-j} = \^{s}_{-i,-j}.
\endeq
%-----------------------------------------------------------------------------------------------------------
For $T\in\Tt_{B, A}$ (cf.~\eqref{def:ThetaBA}), we set
\eq\label{def:GammaT}
\Gamma_T = \{S \in \Tt_n~|~ S \leq_e T, \roA(S) = \roA(S^\dagger)\}.
\endeq

%-----------------------------------------------------------------------------------------------------------
\lem For $w\in\D_\delta \cap W_\mu$, we have
$\psi_w(K_w)\subset\Gamma_{\varphi(w)}.$
\endlem

\proof
For each $\sig\in K_w$, it  follows from~\eqref{extendjk} that $\roA(\psi_w(\sig)) = \roA(\psi_w(\sig)^\dagger)$.
Also, by Lemma~\ref{def:wtoT} we have
\[
|\{k_1^{(i)},k_2^{(i)},\ldots,k_{s_i}^{(i)}\}\cap wg_2R_j^{\nu}|\leq |R_{3i-1}^\delta\cap wg_2R_j^{\nu}|=\varphi(w)_{ij},
\]
and hence $\psi_w(\sig)\leq T$.
\endproof
%------------------------------------------------------------------------------------------------------------
For $T\in \Tt_{B,A}$ \eqref{def:ThetaBA} and $S\in\Gamma_T$ \eqref{def:GammaT}, we set
\begin{align}\label{def:A^TS}
A^{(T-S)} =A-(T-S)_\theta+(\widehat{T-S})_\theta,
\quad \text{ and }
A^{(T)} =A^{(T-0)}.
\end{align}
Recall $A^{(w,\sig)}$ from \eqref{Aw}.
%------------------------------------------------------------------------------------------------------------
\lem\label{Awsig}
For $w \in \D_\delta \cap W_\mu$ and $\sig \in K_w$, we have
\eq\label{A(T-S)}
A^{(w,\sig)}=A^{(T-S)},
\endeq
where $T=\varphi(w)$ and $S=\psi_w(\sig)$; cf.~\eqref{def:T} and \eqref{def:S}.
\endlem
%------------------------------------------------------------------------------------------------------------
\proof
By the definitions \eqref{def:T} and \eqref{def:S}, we have
\eq  \label{TS4}
\bA{
(T-S)_{ij} &= \big|(R^\delta_{3i-1} -\sig(R^\delta_{3i-2}))\cap w g_2 R_j^{\nu}\big|,
\\
(T-S)_{-i,-j} &= \big|(R^\delta_{3i+1} -\sig(R^\delta_{3i+2}))\cap w g_2 R_j^{\nu}\big|,
\\
(\^{T-S})_{ij} &= \big|(R^\delta_{3i+2} -\sig(R^\delta_{3i+1}))\cap w g_2 R_j^{\nu}\big|,
\\
(\^{T-S})_{-i,-j} &= \big|(R^\delta_{3i-2} -\sig(R^\delta_{3i-1}))\cap w g_2 R_j^{\nu}\big|.
}
\endeq

Recall from Lemma \ref{lem:ld-mu-delta} that
$R_i^\mu=R_{3i-1}^\delta\cup R_{3i}^\delta\cup R_{3i+1}^\delta$, and hence
\eq \label{aij3}
\bA{
a_{ij} &= \big|R_i^\mu \cap wg_2 R_j^\nu\big|
\\
& = \big| R_{3i-1}^\delta\ \cap wg_2 R_j^\nu\big|
+ \big| R_{3i}^\delta  \cap wg_2 R_j^\nu\big|
+ \big| R_{3i+1}^\delta \cap wg_2 R_j^\nu\big|.
%&=a^{(w,\sig)}_{ij} - (T-S)_{\tt,ij} + (\^{T-S})_{\tt,ij}.
}
\endeq

Again by  Lemma \ref{lem:ld-mu-delta}, we have
$
g_1^{-1}R_i^\ld=R_{3i-2}^\delta\cup R_{3i}^\delta\cup R_{3i+2}^\delta.
$
Therefore, by using \eqref{TS4}--\eqref{aij3}, we obtain
\[
\bA{
a^{(w,\sig)}_{ij}
&=\big|R^\ld_i\cap g_1\sig wg_2R^\nu_j\big|
=\big|\sig g_1^{-1}R^\ld_i\cap  wg_2R^\nu_j\big|
\\
&=\big|\sig(R_{3i-2}^\delta)\cap wg_2R^\nu_j\big|
+\big|R_{3i}^\delta\cap wg_2R^\nu_j\big|
+\big|\sig(R_{3i+2}^\delta)\cap wg_2R^\nu_j\big|
\\
&=a_{ij} - (T-S)_{\tt,ij} + (\^{T-S})_{\tt,ij}.
%&=(|R_{3i-2}^\delta\cap wg_2R^\nu_j|-|\{j_1^{(i)},j_2^{(i)},\ldots,j_{s_i}^{(i)}\}\cap w g_2R_j^{\nu}|+|\{k_1^{(i)},k_2^{(i)},\ldots,k_{s_i}^{(i)}\}\cap w g_2
%R_j^{\nu}|)\\
%&\quad +(|R_i^\mu\cap wg_2R^\nu_j|-|R_{3i-1}^\delta\cap wg_2R^\nu_j|-|R_{3i+1}^\delta\cap wg_2R^\nu_j|)\\
%&\quad +(|R_{3i+2}^\delta\cap wg_2R^\nu_j|-|\{k_1^{(i+1)},k_2^{(i+1)},\ldots,k_{s_{i+1}}^{(i+1)}\}\cap w g_2R_j^{\nu}|\\
%&\quad\quad\quad+|\{j_1^{(i+1)},j_2^{(i+1)},\ldots,j_{s_{i+1}}^{(i+1)}\}\cap w g_2
%R_j^{\nu}|)\\
%&=(t_{1-i,-j}-s_{1-i,-j}+s_{ij})+(a_{ij}-t_{ij}-t_{-i,-j})+(t_{i+1,j}-s_{i+1,j}+s_{-i,-j})\\
%&=a_{ij}-(t_{ij}+t_{-i,-j}-s_{ij}-s_{-i,-j})+(\hat{t}_{ij}+\hat{t}_{-i,-j}-\hat{s}_{ij}-\hat{s}_{-i,-j})
}
\]
The lemma is proved.
\endproof

\exa\label{exa:S}
%(Example 4.8)
Retain the notation as in Example \ref{exa:tri}.
The matrices in $\Tt_{B,A}$ have zero rows except for the -1st and 2nd (mod $n$) rows, and hence $\Tt_{B,A} = \{ T^{(-1)}_x + T^{(2)}_y~|~ 1\leq x, y \leq 6\}$, where
\[
\{T_x^{(-1)} | 1 \leq x \leq 6\} = \{E^{-1,j}+E^{-1,k} | -4 \leq j < k \leq -1\},
\]
\[
\{T_y^{(2)} | 1 \leq y \leq 6\} = \{E^{2,j} + E^{2,k} | 1 \leq j < k \leq 4\}.
\]
For any $\ZZ\times \ZZ$ matrix $M = (m_{ij})$, recalling \eqref{Ekl}--\eqref{Ttt}, we introduce another short-hand notation
\[
M_\fa = \sum_{(i,j)\in I^+}m_{\tt,ij} E^{ij}.
\]
In particular, for $T =  E^{-1,-4} + E^{-1,-3} + E^{21} + E^{22} \in \Tt_{B,A}$, we have
\[
T_\fa = \young(0011,1100) .
%\leftrightarrow \young(\varnothing\varnothing\negthree\negfour,56\varnothing\varnothing)
%\quad
%\textup{where}
%\quad R^\delta_{-1} = [-4..-3], R^\delta_2 = [5..6].
\]
In this case we have $\varphi^{-1}(T) = \{\id\}$. Moreover, by Example \ref{exa:tri}, we have
\[
K(T) = \{\id, (3,5)_\fc, (3,6)_\fc, (4,5)_\fc, (4,6)_\fc, (3,5)_\fc(4,6)_\fc, (3,6)_\fc(4,5)_\fc\}.
\]
The complete list of $S \in \Gamma_T$ is given by
\[
\ba{{c|c|cc}
S&S_\fa& \psi_w^{-1}(S)
\\
\hline
0&\young(0000,0000)& \{\id\}
\\
E^{-1,-3} + E^{21}&\young(0010,1000) & \{(3,5)_\fc\}
\\
E^{-1,-3} + E^{22}&\young(0010,0100) & \{(3,6)_\fc\}
\\
E^{-1,-4} + E^{21}&\young(0001,1000) & \{(4,5)_\fc\}
\\
E^{-1,-4} + E^{22}&\young(0001,0100) & \{(4,6)_\fc\}
\\
T &\young(0011,1100)& \{(3,5)_\fc(4,6)_\fc, (3,6)_\fc(4,5)_\fc\}
}
\]
\endexa

\vspace{.3cm}
We define an element
\eq  \label{sws}
\sig_{w,S}= \prod\limits_{i=1}^{r+1} \prod\limits_{l=1}^{s_i} (j_l^{(i)}, k_l^{(i)})_\fc \in \psi_w^{-1}(S)
\endeq
satisfying Conditions (S1)--(S2) below:
\enu
\item[(S1)] $k_1^{(i)} < k_2^{(i)} < \cdots < k_{s_i}^{(i)}$, $\forall i$;
\item[(S2)]
$w^{-1}(\{k_l^{(i)}\}_l)\cap g_2 R_j^\nu$ consists of the largest $s_{ij}$ elements in $w^{-1}R_{3i-1}^\delta\cap g_2R_j^{\nu}$, $\forall i$.
\endenu
It follows from \eqref{def:sigconv} that  Conditions (S1)--(S2) together imply Condition~(S3) below:
\enu
\item[(S3)]
$w^{-1}(\{j_l^{(i)}\}_l)\cap g_2 R_j^\nu$ consists of the smallest $s_{ij}$ elements in $w^{-1}R_{3i-2}^\delta\cap g_2R_j^{\nu}$, $\forall i$.
\endenu
%-----------------------------------------------------------------------------------------------------------
For $S \in \Gamma_T$, recall $S^\dagger$ from \eqref{def:dagger} and set
\eq\label{def:[[S]]}
\LR{S} = \prod_{i=1}^{r+1} \LR{S}_i,
\endeq
where
\eq
\LR{S}_i = \prod_{ j \in \ZZ}
\lrb{\sum\limits_{k\leq j} (S- {S}^\dagger)_{ik}}{{s}^\dagger_{i,j+1}} [{s}^\dagger_{i,j+1}]^!.
\endeq
Each $\LR{S}_i$ counts the ``quantum number'' of pairs $(x,y)$ in the following sense:
\enu
\item The element $x$ contributes to the $i$th row of $S$. That is, $x \in \{k^{(i)}_l\}_{l=1}^{s_i}$;
\item The element $y$ contributes to the $i$th row of $S^\dagger$. That is, $y \in \{j^{(i)}_l\}_{l=1}^{s_i}$;
\item The element $x$ is ``to the left'' of $y$ as elements in $A\stdP$.
\endenu

Recall $\psi_w : K_w \rightarrow \Tt_n$ from \eqref{def:S} and $\sig_{w,S}$ from \eqref{sws}.

\lem  \label{lem:standardf2}
Let $T \in \Tt_{B,A}$ and $S \in \Gamma_T$. For any $w\in\varphi^{-1}(T)$, we have
\eq  \label{eq:standardf2}
\sum_{\sig\in\psi_w^{-1}(S)}v^{-2h(w,\sig)}= v^{-2h(w,\sig_{w,S})} \lrb{T}{S} \LR{S}.
\endeq
\endlem
%-----------------------------------------------------------------------------------------------------------

\proof
Let $s_i=\roA(S)_i=\roA(S^\dagger)_i$ for all $i$.
By definition, each $\sig\in\psi_w^{-1}(S)$ can be reconstructed by the following steps:
\enu
\item
For $1 \leq i \leq r+1, j\in\mathbb{Z}$, choose $s_{i,j}^\dag$ elements from the set $R_{3i-2}^\delta\cap wg_2R_j^{\nu}$.

\item
Let $j_\sig = \{j_1^{(i)},j_2^{(i)},\ldots,j_{s_i}^{(i)}\}$ be the set of elements chosen from
$\bigcup\limits_{j\in\mathbb{Z}}R_{3i-2}^\delta\cap wg_2R_j^{\nu}$ such that
$j_1^{(i)}<j_2^{(i)}<\cdots<j_{s_i}^{(i)}.$

\item
For $1 \leq i \leq r+1, j\in\mathbb{Z}$, choose $s_{i,j}$ elements from the set $R_{3i-1}^\delta\cap wg_2R_j^{\nu}$.

\item
Let $k_\sig = \{k_1^{(i)},k_2^{(i)},\ldots,k_{s_i}^{(i)}\}$ be the set of elements chosen from
$\bigcup\limits_{j\in\mathbb{Z}}R_{3i-1}^\delta\cap wg_2R_j^{\nu}$ such that
\[
(wg_2)^{-1}(j_s^{(i)})>(wg_2)^{-1}(k_s^{(i)}),
\quad
\textup{for}
\quad
s = 1,2,\ldots, s_i.
\]
(Note that we do not impose that $k_1^{(i)}<k_2^{(i)}<\cdots<k_{s_i}^{(i)}$.)

\item Set $\sig=\prod\limits_{i=1}^{r+1}(j_1^{(i)},k_1^{(i)})_\fc\cdots(j_{s_i}^{(i)},k_{s_i}^{(i)})_\fc$.
\endenu

For those $\sig \in \psi_w^{-1}(S)$ having the same $k_\sig$ (say $k_\sig = K$), we pick a representative
\[
\sig^{\lhd K} =\prod_{i=1}^{r+1}(j_1^{(i)},k_1^{(i)})_\fc\cdots(j_{s_i}^{(i)},k_{s_i}^{(i)})_\fc
\]
such that $k_1^{(i)}<k_2^{(i)}<\cdots<k_{s_i}^{(i)}$.
We claim that
\eq\label{sigstd1}
\sum\limits_{\sig \in K_w, k_\sig = K}v^{-2h(w,\sig)}=\LR{S}v^{-2h(w,\sig^{\lhd K})}.
\endeq

We prove \eqref{sigstd1}. Indeed, any $\sig\in K_w$ with $k_\sig = K$ must be of the form
\[
\sig=\prod\limits_{i=1}^{r+1}(j_1^{(i)},k_{\tau_i(1)}^{(i)})_\fc\cdots(j_{s_i}^{(i)},k_{\tau_i(s_i)}^{(i)})_\fc
\]
for some $\tau_i \in K'_i ~(1\leq i \leq r+1)$, where
\[
K'_i =
\Big\{ \tau \in \textup{Perm}([1 .. s_i]) ~|~ (wg_2)^{-1}(j_s^{(i)})>(wg_2)^{-1}(k_{\tau(s)}^{(i)})
\textup{ for } 1\leq s \leq s_i \Big\}.
\]
By a detailed calculation, for each $i$ we have
\[
\sum\limits_{\tau \in K'_i}
v^{2\ell(\tau)}
=\prod\limits_{j\in\mathbb{Z}}
\lrb{\sum\limits_{l\leq j} (S- {S}^\dagger)_{il}}
{{s}^\dagger_{i,j+1}}
[{s}^\dagger_{i,j+1}]^!
= \LR{S}_i.
\]
Now \eqref{sigstd1} follows by computing
\[
\sum\limits_{\sig \in K_w, k_\sig = K}
v^{-2h(w,\sig)}
=\prod\limits_{i=1}^{r+1}
\sum\limits_{\substack{\tau \in K'_i}}
v^{2\ell(\tau)}v^{-2h(w,\sig^{\lhd K})}
= v^{-2h(w,\sig^{\lhd K})} \LR{S}.
\]

By the construction of $\sig_{w,S}$ we have
\eq\label{sigstd2}
\sum\limits_{K}v^{-2h(w,\sig^{\lhd K})}= v^{-2h(w,\sig_{w,S})} \lrb{T}{S}.
\endeq
The lemma follows by combining~\eqref{sigstd1} and~\eqref{sigstd2}.
\endproof

%-----------------------------------------------------------------------------------------------------------
\exa\label{exa:GammaT}
Following Example \ref{exa:S}, we pick the element $S = T \in \Gamma_T$. Thus
\[
S_A = \young(0011,1100),
\quad
S^\dagger_A = \young(1100,0011),
\quad
(\sum_{k\leq j} (S- S^\dagger)_{ik})_{ij} = \young(0121,1210).
\]
Therefore
\[
\lrb{T}{S} = 1,
\quad
\LR{S} = \lrb{\young(~~~~,1210)}{\young(~~~~,0110)} = [2],
\quad
\sig_{\id,S} = (3,5)_\fc (4,6)_\fc,\   h(\id, \sig_{\id,S}) = 1.
\]
Also we have
$\psi_{\id}^{-1}(S) = \{(3,6)_\fc(4,5)_\fc, (3,5)_\fc(4,6)_\fc\}$.
According to Example \ref{exa:tri}, we have
$
\text{LHS of }  \eqref{eq:standardf2} = v^{0} + v^{-2} =  v^{-2}[2]=
\text{ RHS of } \eqref{eq:standardf2}.
$
\endexa
\vspace{.3cm}

%-------------------------------------------------------------------------------------------------------------
For $T\in \Tt_{B,A}$ \eqref{def:ThetaBA} and $S\in\Gamma_T$ \eqref{def:GammaT}, we set
\eq\label{def:n(S)}
n(S) =\sum\limits_{i=1}^{r+1} \roA(S)_i,
\endeq
and
\eq\label{def:h(ST)}
\bA{
h(T,S)
&= \sum\limits_{i=1}^{r+1}\sum\limits_{j=-\infty}^{\infty}s_{ij}(\sum\limits_{k=-\infty}^jt_{ik}-\frac{s_{ij}+1}{2})
\\
&+\sum\limits_{i=1}^{r+1}\sum\limits_{j=-\infty}^{\infty}(t_{1-i,-j}-s_{1-i,-j})(\sum\limits_{k=-\infty}^{j-1}t_{ik}+\sum\limits_{k=j}^{\infty}s_{ik}-\sum\limits_{k=j+1}^{\infty}s_{1-i,-k}).
}
\endeq
%-------------------------------------------------------------------------------------------------------------
\lem
For  $T \in \Tt_{B,A}$ and $S\in \Gamma_T$, we have
\[
n(\sig_{w,S}) = n(S), \qquad
h(T,\sig_{w,S})  = h(T,S).
\]
\endlem
%-------------------------------------------------------------------------------------------------------------
\proof
The first statement is obvious since $n(\sig_{w,S})$ is the number of disjoint transpositions for $\sig_{w,S}$.
To compute $h(T,\sig_{w,S})$, We should count the elements in $H(T,\sig_{w,S})$. There are $\sum_{i=1}^{r+1}\sum_{j=-\infty}^{\infty}(t_{1-i,-j}-s_{1-i,-j})(\sum_{k=-\infty}^{j-1}t_{ik}+\sum_{k=j}^{\infty}s_{ik}-\sum_{k=j+1}^{\infty}s_{1-i,-k})$
elements $(u,v)\in H(T,\sig_{w,S})$ such that $\sig_{w,S}(u)=u$ while there are $\sum_{i=1}^{r+1}\sum_{j=-\infty}^{\infty}s_{ij}(\sum_{k=-\infty}^jt_{ik}-\frac{s_{ij}+1}{2})$ elements $(u,v)\in H(T,\sig_{w,S})$ such that $u$ appears in the disjoint transpositions of $\sig_{w,S}$.
\endproof
%-------------------------------------------------------------------------------------------------------------
%\rem
%It follows from \eqref{eq:f1} (respectively,  \eqref{eq:f2}) that $n(\sig_{w,S})$ (respectively,  $h(T,\sig_{w,S})$) depends on $S$ (respectively,  $S$ and $T$). Thus we use $n(S)$ and $h(S,T)$ instead of $n(\sig_{w,S})$ and $h(T,\sig_{w,S})$, respectively.
%%Moreover, we also denote $\ell(A,B,S,T)=\ell(g_1) + \ell(w_{A,T}) + \ell(g_2)-\ell(y^{(T,S)})$.
%\endrem

%%%%%%%%%%
\section{Multiplication formula for $\Sjj$}

We are now in the position of establishing a crucial multiplication formula for affine Schur algebra $\Sjj$.
For $A,B\in\Xi_n, T \in \Tt_{B,A}$ and $S\in\Gamma_T$,
we recall $\LR{S}, n(S),  h(S,T), A^{(T-S)}$ from \eqref{def:[[S]]}, \eqref{def:n(S)}, \eqref{def:h(ST)}, \eqref{def:A^TS} respectively.
We further denote
\eq\label{def:l3}
\ell(A,B, S, T) = \ell(A)+\ell(B)-\ell(A^{(T-S)})+ \ell(w_{A,T}),
\endeq
where $\ell(C)$ means $\ell(g)$ if $C=\kappa(\ld,g,\mu)$, $g\in\D_{\ld,\mu}$.

%-------------------------------------------------------------------------------------------------------------
\thm\label{thm:multformula}
Let $A, B \in \Xi_{n,d}$ with $B$  tridiagonal and $\roC(A)=\coC(B)$.
Then we have
\eq\label{eq:multformula}
e_B   e_A =
\sum\limits_{\substack{T \in \Tt_{B,A}\\ S \in \Gamma_T}}
(v^2-1)^{n(S)}
v^{2( \ell(A,B,S,T)-n(S)-h(S,T) )}
\LR{A;S;T}~ e_{A^{(T-S)}},
\endeq
where
\eq\label{def:[AST]}
\LR{A;S;T} =
\dfrac{[A^{(T-S)}]^!_\fc}{[T-S]^! [S]^! [A-T_\tt]^!_\fc}
\LR{S}.
\endeq
\endthm
%-------------------------------------------------------------------------------------------------------------
\proof
We have
\[
\ba{{lr}
e_B  e_A \\
\ds=\sum_{\substack{ w\in \D_\delta \cap \W_\mu \\ \sig\in K_w }}
(v^2-1)^{n(\sig)}
(v^2)^{h(w,\sig) + \ell(g_1\sig w g_2) - \ell(y^{(w,\sig)})}
\frac{[A^{(w,\sig)}]^!_\fc}{[A]^!_\fc}
e_{A^{(w,\sig)}}
\quad\quad\quad\textup{(by \eqref{eq:mult1}, \eqref{heckemult})}
\\
\ds=\sum_{\substack{ w\in \D_\delta \cap \W_\mu \\ \sig\in K_w }}
(v^2-1)^{n(\sig)}
(v^2)^{\ell(g_1) + \ell(w) + \ell(g_2)-\ell(y^{(w,\sig)})-n(\sig)-h(w,\sig)}
\frac{[A^{(w,\sig)}]^!_\fc}{[A]^!_\fc} e_{A^{(w,\sig)}}
\quad\quad\textup{(by \eqref{eq:l(g_1etawg_2)})}
\\
\ds=\sum_{\substack{ T\in\Tt_{B,A} \\ S\in\Gamma_T}}
(v^2-1)^{n(S)}
v^{2( \ell(A,B,S,T)-n(S)-h(S,T) )}
\LR{A;S;T}
e_{A^{(T-S)}}.
\quad\textup{(by \eqref{eq:sumwAT}, \eqref{eq:standardf2}, \eqref{A(T-S)})}
}
\]
%Finally, $A^{(T-S)}=A-(T-S)_\theta+(\widehat{T-S})_\theta$ because of Lemma \ref{Awsig}.
The proof is finished.
\endproof

In more concrete term, we can express $\LR{A;S;T}$ as
\eq\label{eq:[AST]}
\bA{
\LR{A;S;T}
&
=
\prod\limits_{(i,j) \in \Ia}
\lrb{ (A-T_\tt)_{i, j} + s_{ij} + s_{-i,-j} + (\^{T-S})_{ij} + (\^{T-S})_{-i,-j}}
{(A-T_\tt)_{i, j} ; s_{ij} ; s_{-i,-j} ; (\^{T-S})_{ij} ; (\^{T-S})_{-i,-j}}
\\
&\qquad \cdot \prod\limits_{k \in \{0, r+1\}}
\frac{
\prod\limits_{i=1}^{s_{kk}+(\^{T-S})_{kk}}
[a_{kk}-2t_{kk} -1 + 2i]
}{[s_{kk}]^! [(\^{T-S})_{kk}]^!}
\cdot \LR{S}.
}
\endeq

\rem
There is also a geometric multiplication formula in different form for $\Sjj$ in \cite{FL16}.
We have checked that the two formulas match well in various examples including the specific case in Proposition \ref{prop:mult(sl)}.
\endrem

%-----------------------------------------------------------------------------------------------------------
\section{Special cases of the multiplication formula}
%\section{quasi-bidiagonal case}

For any matrix $T= (t_{ij}) \in \Tt_n$,  let $\diag(T) = (\delta_{ij} t_{ij}) \in \Tt_n$ and let
\eq\label{def:offd}
T^\offd = T-\diag(T) \in \Tt_n.
\endeq
We shall describe below the special case of the multiplication formula for $\Sjj$ (see Theorem~\ref{thm:multformula})
when $B$ satisfies $B^\offd  = \sum_{i=0}^r b_{i,i+1}\Ett^{i,i+1}$ or $B^\offd = \sum_{i=0}^r b_{i+1,i}\Ett^{i+1,i}$.
(Note that $g_1 = \id$ here.
A direct proof of this special case is much easier than the general case in Theorem~\ref{thm:multformula}.)
The formula is analogous to the multiplication formula in affine type  A in \cite{DF14}.

%-----------------------------------------------------------------------------------------------------------
\prop  \label{prop:mult(sl)}
Let $A, B \in \Xi_{n,d}$. Assume
$B^\offd =\sum_{i=0}^r b_{i,i+1}\Ett^{i,i+1}$ or $B^\offd =\sum_{i=0}^r b_{i+1,i}\Ett^{i+1,i}$
and $\roC(A)=\coC(B)$.
Then
\[
e_B e_A = \sum_{T \in \Tt_{B, A}}
v^{2\left( \ell(w_{A,T}) + \ell(A) -\ell(A^{(T)}) \right)}
\frac{[A^{(T)}]_\fc!}{[A-T^{\theta}]_\fc![T]!}
e_{A^{(T)}}.
\]
\endprop
%-----------------------------------------------------------------------------------------------------------
\proof
This is a special case of \eqref{eq:multformula} where $S$ is always the zero matrix.
\endproof

Let $\epsilon_{ij}^\theta$ be the $(i,j)$-th entry of $\Ett^{i,j}$, that is,
\eq
\epsilon_{ij}^\theta=
\left\{
\begin{array}{ll}
2, & \mbox{if $(i,j)=(0,0)$ or $(r+1,r+1)$;}\\
1, & \mbox{otherwise.}
\end{array}
\right.
\endeq
Below is a a further specialization of Proposition~\ref{prop:mult(sl)} when $B$ is a Chevalley generator.
Another multiplication formula with $B$ being a ``divided power"
(e.g., $B=\diag(B)+R \Ett^{h,h+1}$, for $0 \leq h \leq r$ and $R \in \NN$) can also
be easily available as a specialization  of Proposition~\ref{prop:mult(sl)}, and we shall skip it.
Note the formula below is analogous to the multiplication formula for Schur algebra of finite type B/C \cite{BKLW}
where a geometric approach was used.
(Indeed the Hecke algebraic approach of this paper can be easily adapted
to reproduce the multiplication formulas with divided powers therein.)

%------------------------------------------------------------------------------------------------
\begin{cor}
Let $0 \leq h \leq r$.
\enu
\item[(a)]
If $B=\diag(B)+\Ett^{h,h+1}$ and $\roC(A)=\coC(B)$, then
\[
e_B e_A=
\sum\limits_{\substack{p\in \ZZ\\ a_{h+1,p}\geq\epsilon_{h+1,p}^\theta}}
v^{2\sum_{j>p} a_{hj}}
[a_{hp}+1]
e_{A+\Ett^{hp}-\Ett^{h+1,p}}.
\]
\item[(b)]
If $B=\diag(B)+\Ett^{h+1,h}$ and $\roC(A)=\coC(B)$, then
\[
e_B e_A=
\sum\limits_{ \substack{ p\in\ZZ \\ a_{hp} \geq \epsilon_{hp}^\theta }}
v^{2\sum_{j<p}a_{h+1,j}}
[a_{h+1,p}+1]
e_{A-\Ett^{hp}+\Ett^{h+1,p}}.
\]
\endenu
\end{cor}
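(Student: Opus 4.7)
The plan is to apply Proposition~\ref{prop:mult(sl)} with $B = \diag(B) + \Ett^{h, h+1}$ and enumerate $\Tt_{B, A}$ explicitly. Since $\Ett^{h, h+1} = E^{h, h+1} + E^{-h, -h-1}$, the only \emph{super}-diagonal entries of $B$ are the $n$-periodic copies of $b_{h, h+1} = 1$; the summand $E^{-h, -h-1}$ contributes sub-diagonal entries that do not enter the constraint $\roA(T)_i = b_{i-1, i}$. Hence each row of $T$ indexed by $i \equiv h+1 \pmod n$ has row sum one and all other rows vanish, which together with $n$-periodicity forces $T = E^{h+1, p}$ for a unique $p \in \ZZ$. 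The remaining condition $T_\theta \leq_e A$ then reduces to $a_{h+1, p} \geq \epsilon^\theta_{h+1, p}$: the value $2$ is required exactly when $(h+1, p) = (r+1, r+1)$ (which happens for $h = r,\ p = r+1$), the unique place inside one fundamental domain where $E^{h+1, p}$ and $E^{-h-1, -p}$ superpose; elsewhere the weaker inequality $a_{h+1, p} \geq 1$ suffices.

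Next I would plug this $T$ into Proposition~\ref{prop:mult(sl)}. By \eqref{def:T^} one has $\widehat{T} = E^{h, p}$, so $A^{(T)} = A - T_\theta + \widehat{T}_\theta = A + \Ett^{hp} - \Ett^{h+1, p}$, which matches the target index. The factor $[T]! = 1$ since every nonzero entry of $T$ equals $1$. The quotient $[A^{(T)}]^!_\fc / [A - T_\theta]^!_\fc$ simplifies to $[a_{hp} + 1]$: generically $A^{(T)}$ and $A - T_\theta$ agree off the $(h, p)$-entry (and its centro-symmetric image), and in the exceptional diagonal cases where $(h, p)$ lies at $(0, 0)$ or $(r+1, r+1)$ one checks directly that the ratio of consecutive $\fc$-factorials $[a'_{kk}]^!_\fc = \prod_k [2k]$ still evaluates to $[a_{hp} + 1]$. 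The length formula \eqref{wATlength} collapses, since $T$ is supported in a single orbit row, to $\ell(w_{A, T}) = \sum_{k < p} a_{h+1, k}$; the $t_{-i,-j}$-contributions vanish outside the exceptional case at $h = r,\ p = r+1$, which is handled separately.

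The remaining task is to establish the length identity $\ell(A) - \ell(A^{(T)}) = \sum_{j > p} a_{h, j} - \sum_{k < p} a_{h+1, k}$. The plan is to read it off from Lemma~\ref{lem:l(g)} applied to $\gstd_{A\stdP}$ and $\gstd_{A^{(T)\stdP}}$: moving one unit of mass from the $(h+1, p)$-block up to the $(h, p)$-block alters the column reading only within column $p$, and the resulting change in the inversion count gives the above difference. Summing with $\ell(w_{A, T})$ then produces the desired exponent $2 \sum_{j > p} a_{h, j}$. Part~(b) follows by the same template with $B = \diag(B) + \Ett^{h+1, h}$, which exchanges the roles of rows $h$ and $h+1$ and replaces ``$j > p$'' by ``$j < p$'' throughout. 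The main obstacle I anticipate is the explicit inversion-counting in the length-difference step, together with the careful bookkeeping at the two exceptional diagonal positions where the $\fc$-quantum factorials and the parity-$2$ modifications encoded in $\epsilon^\theta$ intervene.
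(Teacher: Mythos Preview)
Your proposal is correct and follows exactly the approach the paper indicates: the corollary is obtained as a direct specialization of Proposition~\ref{prop:mult(sl)} (the paper itself gives no further details beyond noting this). Your enumeration of $\Tt_{B,A}$ as $\{E^{h+1,p}\,:\,a_{h+1,p}\ge \epsilon^\theta_{h+1,p}\}$, the identification $A^{(T)}=A+\Ett^{hp}-\Ett^{h+1,p}$, and the reduction of the coefficient to $[a_{hp}+1]$ are all on target; the length bookkeeping you flag is indeed the only tedious step, and it goes through by the formulas \eqref{eq:l(A)} and \eqref{wATlength} exactly as you outline.
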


Finally, we note the multiplication formula in Theorem~\ref{thm:multformula}
can be greatly simplified at the classical limit $v=1$. Let $(2m)!!=(2m) \cdots 4\cdot 2$, for $m\ge 1$ and $0!!=1$.
We have the following multiplication formula for the (classical) affine Schur algebra (which is defined as for $\Sjj$ with Hecke algebra $\bH$ replaced by Weyl group $W$).

%--------------------------------------------------------------------------------
\cor
Let $A, B \in \Xi_{n,d}$ with $B$ tridiagonal and $\roC(A)=\coC(B)$.
Then we have
\eq\label{MFq=1}
\bA{
e_B e_A|_{v=1}=\sum_{T=(t_{ij}) \in \Tt_{B,A}}
\frac{\prod_{(i,j)\in I^{\fa}}(a_{ij}-t_{ij}-t_{-i,-j}+t_{i+1,j}+t_{-i+1,-j})!}{(\prod_{i=1}^n\prod_{j\in\mathbb{Z}}t_{ij}!)
(\prod_{(i,j)\in I^{\fa}}(a_{ij}-t_{ij}-t_{-i,-j})!)} \qquad{}\\
  \cdot\prod_{i=0,r+1}\frac{(a_{ii}-2t_{ii}+2t_{i+1,i}-1)!!}{(a_{ii}-2t_{ii}-1)!!}e_{A-T_\theta+\widehat{T}_{\theta}}.
}\endeq
\endcor
%--------------------------------------------------------------------------------
\proof
All terms in \eqref{eq:multformula}  having positive powers of $(v^2-1)$ specialize to $0$ at $v=1$.
Thus we only need to consider the special terms for which $S$ is the zero matrix. Then we easily obtain \eqref{MFq=1}
upon replacing all quantum numbers in \eqref{eq:multformula} by ordinary numbers.
\endproof

%%%%%%%%%%%%
%%%%%%%%%%%%
%=========================================================
\chapter{Monomial and canonical bases for affine Schur algebra}
  \label{sec:basis}

In this chapter, we construct canonical and monomial bases of the affine Schur algebra $\Sjj$.
The canonical basis is then identified with the canonical basis defined geometrically in \cite{FLLLW}.
The construction of the monomial basis here
will play a crucial role in the setting of the stabilization algebras in the next chapters.
We shall show that the identification of the algebra $\Sjj$ and the geometrically defined version in \cite{FLLLW}
preserves the canonical bases.

%=========================================================
\section{Bar involution on $\Sjj$}  \label{secbar}

Recall the bar map on $\bH$ is an $\ZZ$-algebra involution $\bar{~}:\bH \rw \bH$,
which sends $v \mapsto v^{-1}, T_w \mapsto T_{w^{-1}}^{-1}$, for all $w\in W$.
Let $\leq$ be the (strong) Bruhat order on $W$.
Following \cite{KL79}, denote by $\{C'_w\}$ the Kazhdan-Lusztig basis of the Hecke algebra $\bH$
characterized by Conditions (C1)--(C2) below:
\itm
\item[(C1)]
$C'_w$ is bar-invariant;

\item[(C2)]
$C'_w = v^{-\ell(w)} \sum_{y \leq w} P_{yw} T_y,$
where $P_{ww}=1$ and $P_{yw}\in \ZZ[v^2]$ for $y<w$ has degree in $v$ $\leq \ell(w)-\ell(y)-1$.
% is the Kazhdan-Lusztig polynomial.
\enditm

For $\ld,\mu \in \Ld$ \eqref{def:Ld},
set $g^+_{\ld\mu}$ to be the {\em longest} element in $W_\ld g W_\mu$ for $g \in \D_{\ld\mu}$,
and set $w_\circ^\mu = \id_{\mu\mu}^+$ to be the longest element in the (finite) parabolic subgroup $W_\mu = W_\mu \id W_\mu$.
Recall $x_\mu$ for $\mu \in \Ld$ from \eqref{eq:x}.

%-----------------------------------------------------------------------------------------------------------
\lemma\label{lem:Cur}
Let $\ld,\mu \in \Ld$, $g\in \D_{\ld\mu}$, and $\delta = \delta(\ld,g,\mu)$; see Proposition~\ref{prop:doublecoset}.
Then we have
\enua
\item $g_{\ld\mu}^+ = w_\circ^\ld g w_\circ^{\delta} w_\circ^\mu$, and
$\ell(g_{\ld\mu}^+) =  \ell(w_\circ^\ld) +  \ell(g)  - \ell(w_\circ^{\delta}) +  \ell(w_\circ^\mu).$

\item $W_\ld g W_\mu = \{w \in W ~|~ g \leq w \leq g^+_{\ld\mu}\}$.

\item $T_{W_\ld g W_\mu}
= v^{\ell(g^+_{\ld\mu})} C'_{g^+_{\ld\mu}} + \sum_{\substack{y\in \D_{\ld\mu}\\
 y < g }} c^{(\ld,\mu)}_{y,g} C'_{y^+_{\ld\mu}}$,
for $c^{(\ld,\mu)}_{y,g}\in\mA.$
Moreover,
$x_\mu = v^{\ell(w_\circ^\mu)} C'_{w_\circ^\mu}$.
\endenua
\endlemma
%-----------------------------------------------------------------------------------------------------------
\proof
See \cite[Theorem 1.2, (1.11)]{Cur85} and \cite[Corollary 4.19]{DDPW}.
\endproof
%-----------------------------------------------------------------------------------------------------------
Following \cite[Proposition~3.2]{Du92},
we define a bar involution $\bar{\phantom{x}}$ on $\Sjj$ as follows:
for each $f \in \textup{Hom}_{\bH}(x_\mu \bH, x_\ld \bH)$, let $\={f}\in\textup{Hom}_{\bH}(x_\mu \bH, x_\ld \bH)$ be the
$\bH$-linear map which sends $C'_{w_\circ^\mu}$ to $\={f(C'_{w_\circ^\mu})}$, that is,
\[
\={f}(x_\mu H) = v^{2\ell(w_\circ^\mu)}\={f(x_\mu)} H,
\quad
\tfor
H \in \bH.
\]
For $A=\kappa(\ld,g,\mu) \in \Xi_n$,
we have $e_A(x_\mu) = \phi_{\ld\mu}^g(x_\mu) =T_{W_\ld g W_\mu}$; see \eqref{phi} and \eqref{def:eA}.
Hence by Lemma~\ref{lem:Cur}(c) we have
\eqnarray
e_A(C'_{w_\circ^\mu})
&\ds = v^{\ell(g^+_{\ld\mu})-\ell(w_\circ^\mu)} C'_{g^+_{\ld\mu}}
 + \sum_{\substack{y\in\D_{\ld\mu}\\y < g}}
v^{-\ell(w_\circ^\mu)} c_{y,g}^{(\ld,\mu)} C'_{y^+_{\ld\mu}},
    \label{eq:eA}
\\
\={e_A}(C'_{w_\circ^\mu})
&\ds = v^{\ell(w_\circ^\mu)-\ell(g^+_{\ld\mu})} C'_{g^+_{\ld\mu}}
+ \sum_{\substack{y\in\D_{\ld\mu}\\y < g}}
v^{\ell(w_\circ^\mu)}\={c_{y,g}^{(\ld,\mu)}} C'_{y^+_{\ld\mu}}.
    \label{eq:eAbar}
\endeqnarray
%-----------------------------------------------------------------------------------------------------------
%\lem\label{lem:eAbar}
%Let $A=\kappa(\ld,g,\mu)$ for some $\ld,\mu \in \Ld, g\in \D_{\ld\mu}$. Then
%\[
%\={e_A} = v^{\ell(w_\circ^\mu) - \ell(g_{\ld\mu}^+)}e_A +\textup{lower terms}.
%\]
%\endlem
%%-----------------------------------------------------------------------------------------------------------
%\proof
%By definition, $\={e_A}(C'_{w_\circ^\mu}) = v^{\ell(w_\circ^\mu)}\={e_A(x_\mu)} = v^{\ell(w_\circ^\mu)}\={T_{W_\ld g W_\mu}}$, and hence by Lemma~\ref{lem:Cur}(c) we have
%\[
%\={e_A}(C'_{w_\circ^\mu}) = v^{\ell(w_\circ^\mu)-\ell(g^+_{\ld\mu})} C'_{g^+_{\ld\mu}} + \textup{lower terms}.
%\]
%On the other hand, we have
%\[
%e_A(C'_{w_\circ^\mu}) = v^{\ell(g^+_{\ld\mu})-\ell(w_\circ^\mu)} C'_{g^+_{\ld\mu}} + \textup{lower terms}.
%\]
%It then follows by an easy induction.
%\endproof
%-----------------------------------------------------------------------------------------------------------

%=========================================================
\section{A standard basis in $\Sjj$}

\label{sjj}

For each $A = \kappa(\ld,g,\mu) \in \Xi_{n,d}$,
we set
\[
\ell(A) =\ell(g),
\]
the length of the corresponding Weyl group element $g$. By rephrasing Lemma~\ref{lem:l(g)}, we obtain the following
expression of $\ell(A)$ as a polynomial in the entries of $A$.
%-----------------------------------------------------------------------------------------------------------
\begin{prop} \label{prop:l(A)}
Let $A=(a_{ij}) \in \Xi_n$ and recall $a'_{ij}$  in \eqref{def:a'}. Then we have
\eq\label{eq:l(A)}
\ba{{l}
\ell(A) =
\dfrac{1}{2}
\bigg(
\sum\limits_{(i,j) \in I^+ }
\Bp{
\sum\limits_{\substack{x < i\\ y > j}}
+
\sum\limits_{\substack{x > i\\ y < j}}
}
a'_{ij} a_{xy}
\bigg).
}
\endeq
\end{prop}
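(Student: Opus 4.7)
The plan is to deduce the formula from the inversion characterization of $\ell(g)$ in Lemma~\ref{lem:l(g)} by counting inversions block-by-block. The pivotal observation is that, since $g \in \D_{\ld\mu}$ is order-preserving on each $R^\mu_m$ and $g^{-1}$ is order-preserving on each $R^\ld_k$ (Lemma~\ref{lem:Dld}), a pair $(z_1, z_2) \in \ZZ^2$ is an inversion for $g$ precisely when their ``matrix positions'' $(k_l, j_l) := (\ld\textup{-class of } g(z_l),\, \mu\textup{-class of } z_l)$ satisfy $k_1 \ne k_2$, $j_1 \ne j_2$, and $(k_1 - k_2)(j_1 - j_2) < 0$. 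Consequently, for a fixed $z_1$ at matrix position $(k, j)$, the number of $z_2 \in \ZZ$ completing an inversion with $z_1$ is exactly $c_{kj}$.

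First I will use the $\pm$-symmetry $g(-z) = -g(z)$ to enlarge the counting range in Lemma~\ref{lem:l(g)} from $[1..d]$ to $R := [-d..-1] \cup [1..d]$, obtaining $4\ell(g) = \sum_{(k, j)} N(k, j)\, c_{kj}$, where $N(k, j) := |\{z \in R^\mu_j \cap R : g(z) \in R^\ld_k\}|$. A case analysis evaluates $N(k, j)$: the interior columns $j \in \pm[1..r]$ give $N(k, j) = a_{kj}$; the column $j = 0$ requires removing the $g$-fixed element $0 \in R^\mu_0$, yielding $N(k, 0) = a_{k, 0} - [k = 0]$; and the columns $j = \pm(r+1)$ require the further reflection symmetry $g(D - z) = D - g(z)$ around $d+1$ (inherited from $g(-z) = -g(z)$ together with the $D$-periodicity $g(z + D) = g(z) + D$) to fold the column across the fixed point $d + 1$, producing $N(k, r+1) + N(n - k, r+1) = a_{k, r+1} - [k = r+1]$ and $N(r+1, r+1) = a'_{r+1, r+1}$. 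Combining these via the centro-symmetries $c_{-k, -j} = c_{kj}$ and $c_{n-k, r+1} = c_{k, r+1}$ will yield the column-restricted identity
\[
4\ell(g) = 2\!\!\sum_{\substack{k \in \ZZ \\ j \in [1..r]}}\!\! a_{kj}\, c_{kj} + \sum_{k \in \ZZ} a_{k, 0}\, c_{k, 0} + \sum_{k \in \ZZ} a_{k, r+1}\, c_{k, r+1} - c_{0, 0} - c_{r+1, r+1}.
\]

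To bridge this formula with the row-restricted form appearing in the claim, I will invoke $\ell(g) = \ell(g^{-1})$: the matrix of $g^{-1} \in \D_{\mu\ld}$ is the transpose $A^T$, and a direct substitution gives $c^T_{ij} = c_{ji}$, so applying the displayed identity to $g^{-1}$ and renaming indices produces the analogous identity with the roles of $i$ and $j$ interchanged (the inner double sum becomes $\sum_{i \in [1..r], j \in \ZZ}$, and the two boundary sums become $\sum_j a_{0, j} c_{0, j}$ and $\sum_j a_{r+1, j} c_{r+1, j}$). A direct algebraic expansion of $2 \sum_{(i, j) \in I^+} a'_{ij} c_{ij}$, exploiting the row symmetries $a_{0, j} = a_{0, -j}$ and $a_{r+1, j} = a_{r+1, n-j}$ (and the corresponding $c$-symmetries $c_{0, -j} = c_{0, j}$, $c_{r+1, n-j} = c_{r+1, j}$) together with the values $a'_{00} = (a_{00} - 1)/2$ and $a'_{r+1, r+1} = (a_{r+1, r+1} - 1)/2$, will recover exactly this row-restricted expression, the ``$-1$'' shifts in $a'$ supplying the $-c_{0, 0}$ and $-c_{r+1, r+1}$ correction terms. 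The main obstacle is the boundary analysis at $j \in \{0, \pm(r+1)\}$: at these columns $R^\mu_j$ meets the $g$-fixed points $\{0, d+1\}$, and both reflections $z \mapsto -z$ and $z \mapsto D - z$ must be carefully combined with the order-preserving structure of $g$ on $R^\mu_{r+1}$ to fold the count in half and produce the diagonal corrections matching the halved values $a'_{00}$ and $a'_{r+1, r+1}$.
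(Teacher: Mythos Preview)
Your proposal is correct and follows essentially the same approach as the paper: the paper's entire proof is the single sentence ``By rephrasing Lemma~\ref{lem:l(g)}, we obtain the following expression of $\ell(A)$ as a polynomial in the entries of $A$,'' and you are supplying the details of that rephrasing. Your block-by-block inversion count via the order-preserving properties of $g$ and $g^{-1}$ on the intervals $R^\mu_j$ and $R^\ld_k$ is exactly the right mechanism, and the boundary analysis at the fixed points $0$ and $d+1$ correctly produces the $a'_{00}$ and $a'_{r+1,r+1}$ corrections.

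One minor remark: your detour through $\ell(g)=\ell(g^{-1})$ to pass from the column-restricted identity to the row-restricted one is valid but not strictly necessary. Since $a_{ij}c_{ij}$ is invariant under both $(i,j)\mapsto(-i,-j)$ and $(i,j)\mapsto(i+n,j+n)$, your column-restricted sum $2\sum_{k\in\ZZ,\,j\in[1..r]}a_{kj}c_{kj}+\sum_k a_{k0}c_{k0}+\sum_k a_{k,r+1}c_{k,r+1}$ and the row-restricted sum $2\sum_{i\in[1..r],\,j\in\ZZ}a_{ij}c_{ij}+\sum_j a_{0j}c_{0j}+\sum_j a_{r+1,j}c_{r+1,j}$ are both equal to the sum of $a_{ij}c_{ij}$ over a fundamental domain for the diagonal $\ZZ(n,n)$-shift on $\ZZ\times\ZZ$, hence equal to each other directly. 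Either route is fine.
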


We define
\eq\label{def:dA}
d_A =
\dfrac{1}{2}
\bigg(
\sum_{(i,j) \in I^+ }
\Big(
\sum_{x \leq i, y > j}
+
\sum_{x \geq i, y < j}
\Big)
a'_{ij} a_{xy}
\bigg),
\qquad
\text{ for } A \in \Xi_{n,d}.
\endeq
We shall see in Proposition~\ref{prop:Abar}(1)  that $d_A\in\NN$. Set
\eq \label{eq:std}
[A] = v^{-d_A} e_A.
\endeq
Then it follows by Lemma~\ref{lem:basis} that
$\{[A] ~|~ A \in \Xi_{n,d}\}$ forms an $\mA$-basis of $\Sjj$, which we call the \textit{standard basis}.

Note that it was defined in \cite{FLLLW} that
\begin{align} \label{eqda}
d_A
& = \frac{1}{2} \Big(\sum_{\substack{i\geq k, j<l \\ i\in [0, n-1 ]} } a_{ij} a_{kl}
-  \sum_{i\geq 0  >j } a_{ij}
- \sum_{i\geq  r + 1 >j } a_{ij}
\Big),
\qquad
\text{ for } A \in \Xi_{n,d}.
\end{align}
As shown in \cite[(4.1.1), Lemma~4.1.1]{FLLLW},
$d_A$ \eqref{eqda} is the dimension of a $generalized$ affine Schubert variety $X^L_A$.
One can show the two different definitions of $d_A$ in \eqref{def:dA} and \eqref{eqda}
coincide. (This fact is not used in this paper, a proof of which can be found in Proposition~\ref{eq:l^Csym} in the Appendix.)

\prop\label{prop:Abar}
Let $A =\kappa(\ld,g,\mu)\in \Xi_{n,d}$. Then we have
\enu
\item
$d_A =\ell(g_{\ld\mu}^+) - \ell(w_\circ^\mu)$;

\item
$\={[A]}  \in [A] + \sum_{\substack{y\in \D_{\ld\mu} \\ y< g}} \mA\, [\kappa(\ld,y,\mu)].$
\endenu
\endprop
%-----------------------------------------------------------------------------------------------------------
\proof
Set $\delta = \delta(A) =(\delta_0, \delta_1, \ldots, \delta_{\mathfrak{r}+1})$; see \eqref{eq:delta}.
Recall $\ell(A) =\ell(g)$ and \eqref{eq:l(A)}.
Hence  by applying Lemma~\ref{lem:Cur}(a) and then Proposition~\ref{prop:l(A)} we have
\[
\bA{
\ell(g_{\ld\mu}^+) - \ell(w_\circ^\mu) -\ell(g)
&=\ell(w_\circ^\ld) - \ell(w_\circ^{\delta})
\\
&= \ld_0^2
+ \sum_{i=1}^r {\ld_i \choose 2}
+ \ld_{r+1}^2
- \left(
	(\delta_0)^2
	+ \sum_{i=1}^{r'} {\delta_i \choose 2}
	+ (\delta_{r'+1})^2
\right)
\\
&= 2\sum\limits_{0\leq j<x} a'_{0j}a_{0x}
	+ \sum\limits_{j>0} {a_{0j} +1 \choose 2}
	+ \sum\limits_{\substack{j<y\\ 1\leq i \leq r}} a_{ij} a_{iy}
\\
&
\quad	+2 \sum\limits_{j<x \leq r+1}a_{r+1,j}a'_{r+1,x}
	+ \sum\limits_{j < r+1} {a_{r+1,j} +1 \choose 2}
\\
&=\frac{1}{2}
\bigg(
\sum\limits_{(i,j) \in I^+ }
\bp{
\sum\limits_{\substack{x = i\\ y > j}}
+
\sum\limits_{\substack{x = i\\ y < j}}
}
a'_{ij} a_{xy}
\bigg)
\\
&= d_A - \ell(A)  = d_A - \ell(g).
}
\]
Hence we have
$d_A = \ell(g^+_{\ld\mu}) - \ell(w_\circ^\mu)$, proving (1).

Equations \eqref{eq:eA}--\eqref{eq:eAbar} can be rewritten as
\begin{eqnarray}  \label{eq:AAbar}
[A](C'_{w_\circ^\mu})
&\ds =  C'_{g^+_{\ld\mu}}
 + \sum_{\substack{y\in\D_{\ld\mu}\\y < g}}
v^{-\ell(g^+_{\ld\mu})} c_{y,g}^{(\ld,\mu)} C'_{y^+_{\ld\mu}}, \label{eq:[A](C)}
\\
\={[A]}(C'_{w_\circ^\mu})
&\ds =  C'_{g^+_{\ld\mu}}
+ \sum_{\substack{y\in\D_{\ld\mu}\\y < g}}
v^{\ell(g^+_{\ld\mu})}\={c_{y,g}^{(\ld,\mu)}} C'_{y^+_{\ld\mu}}. \label{eq:=[A](C)}
\end{eqnarray}
Inverting Equation \eqref{eq:AAbar}, we see that
$C'_{g^+_{\ld\mu}} \in
[A](C'_{w_\circ^\mu}) + \sum_{\substack{y\in \D_{\ld\mu} \\ y< g}} v^{-1}\mA\, [\kappa(\ld,y,\mu)] (C'_{w_\circ^\mu}).$
Now Part~(2) %for arbitrary $g$
follows by from this and \eqref{eq:=[A](C)}.
\endproof

For $A\in \Xi_n$, we let
\eq\label{def:sig}
\sigma_{ij}(A) =
\ds\sum_{x\leq i, y\geq j} a_{xy}.
\endeq
Now we define a partial order $\leq_\alg$ on $\Xi_n$ by letting, for $A, B \in \Xi_n$,
\eq \label{eq:order}
A \leq_\alg B \Leftrightarrow
\roC(A) = \roC(B),\; \coC(A)=\coC(B), \; \text{and } \sigma_{ij}(A) \leq \sigma_{ij}(B), \forall i < j.
\endeq
We denote $A<_\alg B$ if $A\leq_\alg B$ and $A\neq B$.
In the following the expression ``lower terms'' represents a linear combination of smaller elements with respect to $\leq_\alg$.
%-----------------------------------------------------------------------------------------------------------
\lem\label{lem:PO}
Assume that $A = \kappa(\ld, g, \mu)$ and $B = \kappa(\ld, h, \mu)$. If $h \leq g$  then $B \leq_\alg A$.
\endlem
%-----------------------------------------------------------------------------------------------------------
\proof
By \cite[Theorem~8.4.8]{BB05}, the Bruhat ordering $h \leq g$ is equivalent to that
$
h[s,t] \leq g[s,t] \textup{ for all } s,t \in \ZZ,
$
where $g[s,t] = |\{a \in \ZZ \ |\ a \leq s, g(a) \geq t\}|$.
Thanks to the bijections
$R_x^\ld \cap gR_y^\mu \leftrightarrow \{a \in R_y^\mu \ |\ g(a) \in R_x^\ld \}$ for $x,y \in\ZZ$,
we have, for $i<j$,
\[
\sigma_{ij}(A)
= \sum_{x\leq  i, y \geq j} a_{xy}
= \sum_{x\geq -i, y \leq -j}|R_x^\ld \cap gR_y^\mu|
=g[s,t],
\]
where $s$ is the largest element in $R^\ld_{-i}$ and $t$ is the smallest element in $R^\mu_{-j}$.
Therefore, $\sig_{ij}(B) = h[s,t] \leq g[s,t] = \sig_{ij}(A)$, for all $s,t$, and so $B \leq_\alg A$.
\endproof

%-----------------------------------------------------------------------------------------------------------
\begin{cor}\label{cor:Abar}
For any $A \in \Xi_{n,d}$, we have
%there exists $\gamma_{A',A}  \in \mA$ for each $A' <_a A$ such that
$
\={[A]} = [A] + \textup{lower terms}.%\sum_{A' <_a A} \gamma_{A',A} [A'].
$
\end{cor}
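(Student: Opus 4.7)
The plan is to combine Proposition~\ref{prop:Abar}(2) with Lemma~\ref{lem:PO} in a direct fashion, since essentially all the real work has already been packaged into those two results.

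More precisely, writing $A = \kappa(\ld, g, \mu)$ with $g \in \D_{\ld\mu}$, Proposition~\ref{prop:Abar}(2) gives an expansion
\[
\overline{[A]} = [A] + \sum_{\substack{y \in \D_{\ld\mu} \\ y < g}} c_y \, [\kappa(\ld, y, \mu)]
\]
for some coefficients $c_y \in \mA$, where the summation index is controlled by the (strong) Bruhat order on $W$. I would then invoke Lemma~\ref{lem:PO}: for each such $y < g$, the matrix $B_y := \kappa(\ld, y, \mu)$ satisfies $B_y \leq_\alg A$. Since $y \neq g$ and the map $\kappa$ restricted to $\D_{\ld\mu}$ is a bijection onto $\Xi_{n,d}(\ld,\mu)$ (Lemma~\ref{lem:kappa}), we have $B_y \neq A$, hence $B_y <_\alg A$ strictly. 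Therefore every term in the sum is a ``lower term'' in the sense of the discussion preceding Lemma~\ref{lem:PO}, and the corollary follows.

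There is essentially no obstacle here: the hard part, namely relating the Bruhat order on double coset representatives to the algebraic order $\leq_\alg$ on centro-symmetric matrices via the counting identity $\sigma_{ij}(\kappa(\ld,g,\mu)) = g[s,t]$, was already carried out in the proof of Lemma~\ref{lem:PO}. The only thing worth being careful about is to observe that $y \in \D_{\ld\mu}$ and $y < g$ together with the bijectivity of $\kappa|_{\D_{\ld\mu}}$ rule out the equality $B_y = A$, so that the inequality $B_y \leq_\alg A$ upgrades to $B_y <_\alg A$, giving a genuine linear combination of strictly lower standard basis elements as required.
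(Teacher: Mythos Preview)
Your proof is correct and follows exactly the same approach as the paper, which simply states that the corollary follows by combining Proposition~\ref{prop:Abar} and Lemma~\ref{lem:PO}. Your additional care in noting that the bijectivity of $\kappa|_{\D_{\ld\mu}}$ ensures $B_y \neq A$ (so that $B_y \leq_\alg A$ becomes strict) is a welcome clarification of a detail the paper leaves implicit.
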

%-----------------------------------------------------------------------------------------------------------
\proof
It follows by combining Proposition~\ref{prop:Abar} and Lemma~\ref{lem:PO}.
\endproof

%By Theorem 0.28 in \cite{DDPW}, we have the following corollary.
%\begin{cor}
%   There exists a basis $\{\{A\}\ |\  A \in \Xi_{n,d}\}$ satisfies that
%
%   (1) $\{A\} = [A] + \sum_{B < A} P_{B,A}[B]$, where $P_{B,A} \in v^{-1}\ZZ[v^{-1}]$ for any $B<_{\rm alg}A$;
%
%   (2) $\overline{\{ A \}} = \{A\}$.
%\end{cor}
%
%The basis $\{\{A\}\ |\ A\in \Xi_{n,d}\}$ is called the canonical basis.
%Moreover, by Proposition 4.2.3 in \cite{FLLLW}, the structure constants of $\mathbf S_{n,d}^{\fc}$ with respect to the canonical basis are in $\mathbb N[v,v^{-1}]$.

%=========================================================
\section{Multiplication formula using $[A]$}

Let us reformulate the multiplication formula for $\Sjj$ (Theorem \ref{thm:multformula})  in
terms of the standard basis. % (see \cite[Lemma~3.4(a2)]{BLM90}). %, \cite[Proposition~3.7]{BKLW}, \cite{DF14}).
\begin{thm} \label{thm:multformula2}
Let $A, B \in \Xi_{n,d}$ with $B$  tridiagonal and $\roC(A)=\coC(B)$.
Then we have
\eq\label{eq:mult3}
[B]\, [A] = \sum_{\substack{T \in \Tt_{B,A}\\ S \in \Gamma_T}} v^{\beta(A,S,T)} (v^2-1)^{n(S)} \={\LR{A;S;T}}~ [A^{(T - S)}],
\endeq
where $\beta(A,S,T)$ is given in \eqref{eq:beta(A,S,T)} below.
\end{thm}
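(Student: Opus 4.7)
The plan is to derive this reformulation by a direct substitution, using $[X] = v^{-d_X} e_X$ from \eqref{eq:std} and the multiplication formula of Theorem~\ref{thm:multformula}. First I would rewrite the identity in Theorem~\ref{thm:multformula} by replacing $e_B = v^{d_B}[B]$, $e_A = v^{d_A}[A]$, and $e_{A^{(T-S)}} = v^{d_{A^{(T-S)}}}[A^{(T-S)}]$. Dividing both sides by $v^{d_A + d_B}$ yields
\[
[B]\,[A] = \sum_{\substack{T \in \Tt_{B,A}\\ S \in \Gamma_T}} (v^2-1)^{n(S)} v^{\gamma_1(A,B,S,T)} \LR{A;S;T}\, [A^{(T-S)}],
\]
where $\gamma_1(A,B,S,T) = 2\bigl(\ell(A,B,S,T) - n(S) - h(S,T)\bigr) + d_{A^{(T-S)}} - d_A - d_B$. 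The $B$-dependence in $\gamma_1$ will cancel once $\ell(A,B,S,T)$ and $d_B$ are expanded via \eqref{def:l3} and Proposition~\ref{prop:Abar}(1), since the combination $2\ell(B) - d_B$ is determined by $B$ alone and $B$ is itself encoded by the row sums of any $T \in \Tt_{B,A}$.

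The next step is to convert $\LR{A;S;T}$ into $\={\LR{A;S;T}}$ at the cost of a monomial in $v$. Since $\LR{A;S;T}$ is by \eqref{def:[AST]} and \eqref{eq:[AST]} a ratio/product of quantum factorials $[n]^!$ and $\fc$-factorials $[m]^!_\fc$, together with quantum binomial coefficients $\lrb{n}{k}$ appearing inside $\LR{S}$ (see \eqref{def:[[S]]}), I would apply the identities $\={[n]^!} = v^{-n(n-1)}[n]^!$ and $\={[m]^!_\fc} = v^{-2m^2}[m]^!_\fc$, which follow from the conventions in \eqref{factorial}. These imply $\LR{A;S;T} = v^{2\gamma_2(A,S,T)} \={\LR{A;S;T}}$ for an explicit integer $\gamma_2(A,S,T)$ accumulated from each factor; the quantum binomials and the overall factor $[S]$ in $\LR{S}$ contribute through the same two identities.

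Combining the two, the theorem holds with
\[
\beta(A,S,T) = 2\bigl(\ell(A,B,S,T) - n(S) - h(S,T)\bigr) + d_{A^{(T-S)}} - d_A - d_B + 2\gamma_2(A,S,T),
\]
which is the closed-form expression to be recorded in \eqref{eq:beta(A,S,T)}. The main obstacle is the bookkeeping required in the second step: one must compute $\gamma_2$ by summing contributions from each of the four factors of \eqref{def:[AST]} — namely $[A^{(T-S)}]^!_\fc$, $[T-S]^!$, $[S]^!$, $[A-T_\tt]^!_\fc$ — expanded entrywise via the product form of \eqref{eq:[AST]}, together with the contributions from the binomials and factorials inside each $\LR{S}_i$. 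The bookkeeping is tedious but mechanical; in particular no new combinatorial input beyond the bar-action on quantum integers is needed, and no further use is made of the affine Hecke algebra at this stage.
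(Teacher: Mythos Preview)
Your proposal is correct and follows essentially the same approach as the paper: substitute $e_X = v^{d_X}[X]$ into Theorem~\ref{thm:multformula}, then replace $\LR{A;S;T}$ by $v^{-\gamma(A,S,T)}\,\={\LR{A;S;T}}$ using the bar action on the quantum (and $\fc$-)factorials, and collect the resulting $v$-power. The paper packages the outcome exactly as you anticipate, introducing $d'(X)=2\ell(X)-d_X$ so that your $\gamma_1$ becomes $d'(B)+d'(A)-d'(A^{(T-S)})+2\ell(w_{A,T})-2n(S)-2h(S,T)$, and recording your $2\gamma_2$ as $\gamma(A,S,T)$ in Lemma~\ref{lem:beta}.
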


The explicit formula for $\beta(A,S,T)$ can be derived as follows.
Let $\gamma(A,S,T)$  be the integer such that
\eq\label{def:gamma(A,S,T)}
\={\LR{A;S;T}} = v^{\gamma(A,S,T)} \LR{A;S;T}.
\endeq
%-----------------------------------------------------------------------------------------------------------
\lem \label{lem:beta}
%Let $A \in \Xi, T \in \Tt_{B,A}$ for some tridiagonal matrix $B$, and $S \in \Gamma_T$
%(see \eqref{def:Ia} for $\Ia$, \eqref{def:a'} for $a'_{ij}$, \eqref{def:dagger} for $\dagger$).
%Then:
%\enua
%\item $\={[A]^!} = v^{-\sum\limits_{i=1}^N \sum\limits_{j\in\ZZ} {a_{ij} \choose 2}} [A]^! $.
%\item
%$\={[A]^!_\fc} = v^{-(a'_{00})^2 - (a'_{r+1,r+1})^2 - \sum\limits_{(i,j) \in \Ia} {a_{ij} \choose 2}} [A]^!_\fc $,
%%\item ${x+y \choose 2} - {x \choose 2} = \dfrac{1}{2} y(y+2x-1)$.
%\item $\={\lrb{x}{y}} = v^{y(y-x)}\lrb{x}{y}$.
%\item $\={\LR{S}} = v^{\sum\limits_{i=1}^{r+1} \sum\limits_{j\in\ZZ} s^\dagger_{i,j+1} (s^\dagger_{i,j+1} - \sum\limits_{k\leq j} (S-S^\dagger)_{ik}) -{ s^\dagger_{i,j+1} \choose 2}}\LR{S}$.
%\item $\={\LR{A;S;T}} = v^{\gamma(A,S,T)} \LR{A;S;T}$, where
%\[
%\bA{
%\gamma(A,S,T)
%&
%= -\sum_{(i,j)\in \Ia} (s_{\tt,ij} + (\^{T-S})_{\tt,ij})(s_{\tt,ij} + (\^{T-S})_{\tt,ij} + 2a_{ij} - 2 t_{\tt,ij} - 1 )
%	\\
%&-\sum_{k\in\{0,r+1\}}(a_{kk}-1 -  (T-S)_{\tt,kk} + (\^{T-S})_{\tt,kk} )(s_{\tt,kk} +(\^{T-S})_{\tt,kk})
%	\\
%& + 2\sum_{i=1}^N \sum_{j\in \ZZ} \left( {(T-S) \choose 2} + {s_{ij} \choose 2} \right)
%	\\
%& +2\sum_{i=1}^{r+1} \sum_{j\in\ZZ} s^\dagger_{i,j+1} (s^\dagger_{i,j+1} - \sum_{k\leq j} (S-S^\dagger)_{ik}) -{ s^\dagger_{i,j+1} \choose 2}.
%}
%\]
%\endenua
%In particular, let $d'(A) = 2\ell(A) - d_A$, we have
%\[
%\beta(A,S,T)
%= d'_B +d'_A - d'_{A^{(T - S)}} +\ell(w_{A,T})+ \gamma(A,S,T).
%\]
Let $A, B \in \Xi_{n,d}$ with $B$ tridiagonal. Let $T \in \Tt_{B,A}$ and $S \in \Gamma_T$.
Then
\eq\label{eq:gamma(A,S,T)}
\bA{
\gamma(A,S,T)
&
= -\sum_{(i,j)\in \Ia} \Big(s_{\tt,ij} + (\^{T-S})_{\tt,ij} \Big) \Big(s_{\tt,ij} + (\^{T-S})_{\tt,ij} + 2a_{ij} - 2 t_{\tt,ij} - 1 \Big)
	\\
&-\sum_{k\in\{0,r+1\}} \Big(a_{kk}-1 -  (T-S)_{\tt,kk} + (\^{T-S})_{\tt,kk} \Big) \Big(s_{\tt,kk} +(\^{T-S})_{\tt,kk} \Big)
	\\
& + 2\sum_{i=1}^n \sum_{j\in \ZZ} \left( {(T-S) \choose 2} + {s_{ij} \choose 2} \right)
	\\
& +2\sum_{i=1}^{r+1} \sum_{j\in\ZZ} s^\dagger_{i,j+1}
 \big(s^\dagger_{i,j+1} - \sum_{k\leq j} (S-S^\dagger)_{ik} \big) -{ s^\dagger_{i,j+1} \choose 2}.
}
\endeq
In particular, by setting $d'(A) = 2\ell(A) - d_A$, we have
\eq\label{eq:beta(A,S,T)}
\beta(A,S,T)
= d'(B) +d'(A) - d'({A^{(T - S)}}) + 2 \ell(w_{A,T}) -2n(S) -2 h(S,T) + \gamma(A,S,T).
\endeq
\endlem
%-----------------------------------------------------------------------------------------------------------
\proof
By a direct computation using \eqref{Tfactorial}, we have
\[
\bA{
\={[A]^!}
&= v^{-2\sum\limits_{i=1}^n \sum\limits_{j\in\ZZ} {a_{ij} \choose 2}} [A]^!,
\\
\={[A]^!_\fc}
&= v^{-2(a'_{00})^2 - 2(a'_{r+1,r+1})^2 - 2\sum\limits_{(i,j) \in \Ia} {a_{ij} \choose 2}} [A]^!_\fc,
\\
%\item ${x+y \choose 2} - {x \choose 2} = \dfrac{1}{2} y(y+2x-1)$.
%\={\lrb{x}{y}}
%& = v^{y(y-x)}\lrb{x}{y},
%\\
\={\LR{S}}
&= v^{2\sum\limits_{i=1}^{r+1} \sum\limits_{j\in\ZZ} s^\dagger_{i,j+1}
 \big(s^\dagger_{i,j+1} - \sum\limits_{k\leq j} (S-S^\dagger)_{ik} \big)
 - 2 { s^\dagger_{i,j+1} \choose 2}}\LR{S}.
}
\]
The lemma follows by putting them together.
\endproof

\section{The canonical basis for $\Sjj$}

Recall $C'_{w^{\mu}_0}$ and $C'_{g^+_{\lambda\mu}}$ from Chapter \ref{secbar}.
For any $A = \kappa(\ld, g, \mu) \in \Xi_{n,d}$, following \cite{Du92} we define
 \[
   \{A\} \in  \Hom_\bH(x_\mu\bH, x_\ld\bH), \text{  and hence } \{A\} \in \Sjj,
 \]
   by requiring
\begin{equation}
  \label{canonicalbasis}
  \{A\} (C'_{w^{\mu}_0}) = C'_{g^+_{\lambda\mu}}.
\end{equation}
  By the definition of $\ \bar{}\ $,  we have $ \overline{ \{A\}} (C'_{w^{\mu}_0}) = C'_{g^+_{\lambda\mu}}$, and hence
$ %   \begin{equation}\label{barA}
     \overline{\{A\}} = \{A\}.
$ %   \end{equation}

%It follows by \eqref{eq:[A](C)} that $ [A] \in \{A\} +
%\sum_{w< g} v^{-1} \mA  \, \{\kappa(\lambda, w, \mu)\},$  for any $A = \kappa(\ld, g, \mu) \in \Xi_{n,d}$.
%Inverting the relations, we have
Following \cite[(2.c), Lemma~3.8]{Du92}, we have
\[
\{A\} =  [A] + \sum_{y< g}  v^{\ell(y^+_{\ld\mu}) -\ell(g^+_{\ld\mu})} P_{y^+_{\ld\mu},g^+_{\ld\mu}}  \, [\kappa(\lambda, y, \mu)].
\]
Recalling (C2) in \S\ref{secbar}, we have
\eq \label{eq:CB:AA}
  \{A\}  \in [A] + \sum_{y< g} v^{-1}\ZZ[v^{-1}]   \, [\kappa(\lambda, y, \mu)].
  %\sum_{w< g} v^{l(g^+_{\lambda\mu})} \alpha_{w,g} [\kappa(\lambda, w, \mu)],
\endeq
%where $\alpha_{w,g}$ are entries of the matrix $ (C^{\lambda, \mu}_{w,g})^{-1}$.
By \eqref{eq:[A](C)}, $\{\{A\}\ |\ A \in \Xi_{n,d}\}$ forms a basis of $\mathbf S^{\fc}_{n,d}$, which is called {\it the canonical basis}.
We summarize this as follows.

\begin{thm} \label{thm:CB-Sjj}
There exists a canonical basis $\mathfrak B_{n,d}^{\fc} := \{\{A\}\ |\ A \in \Xi_{n,d}\}$ for $\Sjj$,
which is characterized by the bar-invariance and the property \eqref{eq:CB:AA}.
\end{thm}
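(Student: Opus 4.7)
The plan is to verify the three properties that together force $\mathfrak B^{\fc}_{n,d}$ to be a well-defined basis: well-definedness, bar-invariance, and the unitriangular expansion \eqref{eq:CB:AA} with respect to the standard basis. Uniqueness then comes for free from the usual Kazhdan--Lusztig-style argument.

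First I would check that $\{A\}$ is well defined as an element of $\Sjj$. Since $x_\mu = v^{\ell(w_\circ^\mu)} C'_{w_\circ^\mu}$ by Lemma~\ref{lem:Cur}(c), specifying a right $\bH$-linear map from $x_\mu \bH$ by its value on $C'_{w_\circ^\mu}$ determines a unique element of $\Hom_\bH(x_\mu\bH, x_\ld \bH)$, and hence of $\Sjj$ via \eqref{def:Sjj}. Bar-invariance is then essentially immediate: the bar involution on $\Sjj$ was set up in Section~\ref{secbar} precisely so that $\overline{f}(C'_{w_\circ^\mu}) = \overline{f(C'_{w_\circ^\mu})}$ for $f \in \Hom_\bH(x_\mu \bH, x_\ld \bH)$; since $C'_{g^+_{\ld\mu}}$ is bar-invariant in $\bH$ by its defining property (C1), we get $\overline{\{A\}}(C'_{w_\circ^\mu}) = \overline{C'_{g^+_{\ld\mu}}} = C'_{g^+_{\ld\mu}} = \{A\}(C'_{w_\circ^\mu})$, hence $\overline{\{A\}} = \{A\}$.

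Next I would establish the triangular expansion \eqref{eq:CB:AA}. Combining Lemma~\ref{lem:Cur}(c) with \eqref{eq:[A](C)} gives
\[
[A](C'_{w_\circ^\mu}) = C'_{g^+_{\ld\mu}} + \sum_{\substack{y \in \D_{\ld\mu} \\ y < g}} v^{-\ell(g^+_{\ld\mu})} c^{(\ld,\mu)}_{y,g}\, C'_{y^+_{\ld\mu}},
\]
so that $[A]$ is itself a unitriangular combination of the $\{\kappa(\ld,y,\mu)\}$ in the Bruhat order on $\D_{\ld\mu}$. Inverting this triangular matrix and feeding in the explicit form of the Kazhdan--Lusztig coefficients $P_{y^+_{\ld\mu}, g^+_{\ld\mu}}$ (whose $v$-degree is strictly less than $\ell(g^+_{\ld\mu}) - \ell(y^+_{\ld\mu})$ by condition (C2)) yields
\[
\{A\} = [A] + \sum_{\substack{y \in \D_{\ld\mu} \\ y < g}} v^{\ell(y^+_{\ld\mu}) - \ell(g^+_{\ld\mu})} P_{y^+_{\ld\mu}, g^+_{\ld\mu}}\, [\kappa(\ld,y,\mu)],
\]
and the coefficients belong to $v^{-1}\ZZ[v^{-1}]$. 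By Lemma~\ref{lem:PO}, every $y < g$ in $\D_{\ld\mu}$ contributes a matrix strictly smaller than $A$ in the $\leq_{\alg}$ order, matching the form of \eqref{eq:CB:AA}.

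Finally, the linear independence and basis property follow from this unitriangular change of basis: the matrix relating $\{\{A\}\}_{A \in \Xi_{n,d}}$ to the standard basis $\{[A]\}_{A \in \Xi_{n,d}}$ is upper-unitriangular with respect to $\leq_{\alg}$, hence invertible over $\mA$, so $\mathfrak B^{\fc}_{n,d}$ is indeed an $\mA$-basis of $\Sjj$. For the characterization, a standard argument (e.g.\ as in \cite{Du92}): any bar-invariant element $x$ of the form $[A] + \sum_{B <_\alg A} f_{B} [B]$ with $f_B \in v^{-1}\ZZ[v^{-1}]$ must equal $\{A\}$, since $\{A\} - x$ is bar-invariant and lies in $\sum_{B<_\alg A} v^{-1}\ZZ[v^{-1}][B]$, which by Corollary~\ref{cor:Abar} and induction on $\leq_\alg$ forces it to be zero. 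No step here is genuinely hard; the only point that required effort is the underlying Lemma~\ref{lem:PO} comparing the Bruhat order on $W$ with the matrix-theoretic order $\leq_{\alg}$, which is already in hand.
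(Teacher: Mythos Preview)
Your proposal is correct and follows essentially the same route as the paper: define $\{A\}$ by $\{A\}(C'_{w_\circ^\mu})=C'_{g^+_{\ld\mu}}$, read off bar-invariance from the definition of the bar map on $\Sjj$, obtain the unitriangular expansion in $[A]$'s via the Kazhdan--Lusztig coefficients (the paper simply cites \cite[(2.c), Lemma~3.8]{Du92} for the explicit formula $\{A\}=[A]+\sum_{y<g} v^{\ell(y^+_{\ld\mu})-\ell(g^+_{\ld\mu})}P_{y^+_{\ld\mu},g^+_{\ld\mu}}[\kappa(\ld,y,\mu)]$), and conclude the basis property from unitriangularity. The only extra you add is a spelled-out uniqueness argument via induction on $\leq_{\alg}$ and Corollary~\ref{cor:Abar}; the paper leaves this implicit in the word ``characterized'', but your argument is the standard one and is fine. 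One small remark: your phrasing ``inverting this triangular matrix and feeding in the explicit form of the KL coefficients'' is slightly imprecise---the coefficients are obtained not by inverting the $c^{(\ld,\mu)}_{y,g}$-matrix but by using constancy of $P_{w,g^+_{\ld\mu}}$ on double cosets (this is what Du's lemma provides); the conclusion is unaffected.
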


%Inverting Lemma~\ref{lem:Cur}(c),  we have
%\[
%C'_{g^+_{\ld\mu}}
%= \sum\limits_{w\in \D_{\ld\mu}} d^{(\ld,\mu)}_{w,g} T_{W_\ld w W_\mu},
%\qquad \text{for some } d^{(\ld,\mu)}_{w,g} \in \mA.
%\]
%Then we have
%\[
%\{A\} = v^{\ell(\wo^\mu)} \sum\limits_{w\in \D_{\ld\mu}} d^{(\ld,\mu)}_{w,g}e_{\kappa(\ld, w, \mu)},
%\]
%as we readily verifies that both sides satisfy the same condition \eqref{canonicalbasis}.

Recall from \cite{FLLLW} that there is a bar involution on $\Sjjg$ and a (geometric) partial ordering
$\leq_{\textup{geo}}$ on $\Xi_{n,d}$. It was shown {\em loc. cit.} that there exists
a canonical basis of $\Sjjg$, $\{\{A\}^{\textup{geo}} \ |\ A\in \Xi_{n,d}\}$  (note
$\{A\}^{\textup{geo}}$ was denoted as $\{A\}_d$ in \cite[\S4.2]{FLLLW}).
By definition, $\{A\}^{\textup{geo}}$ is bar invariant, and
$\{A\}^{\textup{geo}}  \in [A]^{\textup{geo}} + \sum_{B <_{\textup{geo}} A} v^{-1}\ZZ[v^{-1}] \, [B]^{\textup{geo}}$.
Recall the algebra isomorphism $\psi: \Sjjg \rightarrow \Sjj$ from Proposition~ \ref{Schur-iso}.

\begin{prop}  \label{prop:psiCB}
  The isomorphism $\psi: \Sjjg \rightarrow \Sjj$ commutes with the bar maps and preserves the canonical bases; that is,
  $\psi$ sends $\{A\}^{\textup{geo}}$ to $\{A\}$ for $A\in \Xi_{n,d}$.
\end{prop}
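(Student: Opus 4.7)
The plan is to first establish that $\psi$ intertwines the two bar involutions on $\Sjjg$ and $\Sjj$, and then to invoke the standard Lusztig uniqueness of canonical bases. Once bar intertwining is in hand, the image $\psi(\{A\}^{\textup{geo}})$ is automatically bar-invariant in $\Sjj$. Together with $\psi([B]^{\textup{geo}}) = [B]$ (immediate from Proposition~\ref{Schur-iso} after the normalization $[A] = v^{-d_A} e_A$), the geometric triangularity $\{A\}^{\textup{geo}} - [A]^{\textup{geo}} \in \sum_B v^{-1}\ZZ[v^{-1}]\,[B]^{\textup{geo}}$ transfers directly to $\psi(\{A\}^{\textup{geo}}) - [A] \in \sum_B v^{-1}\ZZ[v^{-1}]\,[B]$. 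Since $\{A\}$ is uniquely characterized by bar-invariance together with such a unitriangular expansion in $v^{-1}\ZZ[v^{-1}]$-coefficients (a standard argument requiring only finiteness of the nonzero coefficients), we conclude $\psi(\{A\}^{\textup{geo}}) = \{A\}$.

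The substantive step is thus the bar intertwining, which I would establish via a generator-based argument. By Theorem~\ref{thm:min gen}(c), the standard basis elements $[B]$ for tridiagonal $B \in \Xi_{n,d}$ generate $\Sjj$ as an $\mA$-algebra, and the analogous statement holds on the geometric side. Since both bar involutions are $\ZZ$-algebra homomorphisms semilinear in $v$, and $\psi$ is an $\mA$-algebra isomorphism, it suffices to verify $\psi(\overline{[B]^{\textup{geo}}}) = \overline{[B]}$ for every tridiagonal $B$. On the algebraic side, Corollary~\ref{cor:Abar} combined with the multiplication formula of Theorem~\ref{thm:multformula2} yields an explicit expansion of $\overline{[B]}$ in the standard basis. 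On the geometric side, the expansion of $\overline{[B]^{\textup{geo}}}$ arises from the compatibility of Verdier duality with the orbit stratification of $\Xjj \times \Xjj$ developed in \cite{FLLLW}, and the two expressions are then matched term by term.

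The main obstacle will be executing this coefficient-by-coefficient comparison rigorously, which demands careful bookkeeping through both the multiplication formula and the IC-sheaf calculus. As a more conceptual backup, one may instead characterize both bar involutions by their compatibility with the bar on $\bH$ via the faithful action of the respective Schur algebra on $\bigoplus_\mu x_\mu \bH$: since $\psi$ identifies these two actions on standard basis elements, it would then automatically intertwine the bars. The remaining subtlety in that alternative route lies in verifying that the geometric bar of \cite{FLLLW} is indeed induced from the bar on $\bH$ in the expected module-theoretic fashion—essentially a reformulation, rather than a circumvention, of the same hard point.
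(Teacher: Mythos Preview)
Your overall strategy—bar intertwining followed by Lusztig-type uniqueness—matches the paper's. The execution differs in two places.

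\textbf{Bar intertwining.} The paper does not verify this on tridiagonal generators. It simply observes that the algebraic bar on $\Sjj$ defined in \S\ref{secbar} is (by construction, following \cite{Du92}) an algebraic reformulation of the geometric bar on $\Sjjg$ from \cite{FLLLW, BLM90}; the compatibility $\bar{\ }\circ\psi = \psi\circ\bar{\ }$ is then essentially definitional. Your primary route—matching the explicit expansion of $\overline{[B]}$ from Corollary~\ref{cor:Abar} against IC-sheaf computations on the geometric side—would work in principle but is much harder than necessary. Your backup route (both bars are induced from the bar on $\bH$ via the faithful module structure) is exactly what the paper invokes, and \cite{Du92} is precisely the reference where that reformulation is carried out; so your ``remaining subtlety'' is already in the literature and requires no new work.

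\textbf{Uniqueness step.} Here you actually take a slightly cleaner line than the paper. The paper explicitly invokes the implication $A \leq_{\textup{geo}} B \Rightarrow A \leq_{\textup{alg}} B$ (citing \cite[\S4.2]{FLLLW}) to transport the geometric triangularity into the $\leq_{\textup{alg}}$-triangularity needed for the characterization of $\{A\}$. Your observation that the Lusztig uniqueness argument only needs bar-invariance, leading term $[A]$, and all other coefficients in $v^{-1}\ZZ[v^{-1}]$ (together with the unitriangularity of bar from Corollary~\ref{cor:Abar} and finiteness) is correct and bypasses the order comparison entirely.
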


\proof
The bar involution on $\Sjj$ is simply an algebraic reformulation of the bar involution on $\Sjjg$ in \cite{FLLLW}
(which goes back to \cite{BLM90}; see \cite{Du92}).
Hence the bar operations on $\Sjj$ and $\Sjjg$ are compatible, i.e., $\bar{\ } \circ \psi = \psi \circ \bar{\ }$.

We note here a crucial fact that $A\leq_{\textup{geo}} B$ implies $A\leq_{\textup{alg}} B$
(cf. \cite[\S4.2]{FLLLW}, where $\leq_{\textup{alg}}$ is denoted by $\leq$;
the counterparts in finite type A and in the affine type A can be found in \cite{BLM90, Lu99}).
Hence $\psi (\{A\}^{\textup{geo}})$ is bar invariant and
$\psi (\{A\}^{\textup{geo}}) \in [A]+ \sum_{B <_{\textup{alg}} A} v^{-1}\ZZ[v^{-1}] \,  [B]$.
Hence $\psi (\{A\}^{\textup{geo}}) = \{A\}$ by the characterization of $\{A\}$.
\endproof

%%%%%%%%%%%
%-----------------------------------------------------------------------------------------------------------
\section{A leading term}
  \label{sec:leading}

A pair $(B,A)$ of matrices in $\Xi_n\times \Xi_n$ is called \textit{admissible} if the following conditions (a)--(b) hold:
\enu
\item[(a)]  $B^\offd = \sum\limits_{i=1}^n b_{i,i+1} \Ett^{i,i+1}$ (see \eqref{def:offd} for notation $B^\offd$);
\item[(b)] $A^\offd = \sum\limits_{j=1}^x \sum\limits_{i = 1}^n a_{i,i+j} \Ett^{i,i+j}$
for some $x\in \NN$,
where $a_{i,i+x} \geq b_{i,i+1}$ for all $i$.  (If $a_{i,i+x} \neq 0$ for some $i$, we say that \emph{$A$ is of depth $x$}.)
\endenu

Assume that $A,B \in \Xi_{n,d}$ with $B$ tridiagonal and $\roC(A) = \coC(B)$.
We produce a matrix
\eq \label{eq:MBA}
M(B,A) \in \Xi_{n,d}
\endeq
as follows.
For each row $i$, find the unique $j$ such that $b_{i,i+1} \in \big{(} \sum_{y>j} a_{iy} .. \sum_{y\geq j} a_{iy} \big{]}$,
and denote
\eq  \label{eq:TBA}
\Tp =\Tp_{B,A}  = \sum_{i=1}^{n} \Big( (b_{i,i+1} - \sum_{y>j} a_{iy}) \Ett^{ij} + \sum_{y>j} a_{iy} \Ett^{iy} \Big) \in \Tt_n.
\endeq
Set
$M(B,A) = A^{(\Tp)};\text{ see  } \eqref{def:A^TS}.$
\endAlg
%-----------------------------------------------------------------------------------------------------------
\lem\label{lem:ht}
Assume that $A,B \in \Xi_{n,d}$ with $B$ tridiagonal and $\roC(A) = \coC(B)$.
The highest term (with respect to $\leq_\alg$) on the RHS of \eqref{eq:mult3} exists
and is equal to $[M(B,A)]$ (with coefficient being some nonzero scalar in $\mA$).
\endlem
%-----------------------------------------------------------------------------------------------------------
\proof
Note that
\eq\label{eq:sigA}
\sigma_{ij}(A^{(E^{xy})})
=\bc{
\sigma_{ij}(A)+1&\tif j< i=x-1, j\leq y,
\\
\sigma_{ij}(A)-1&\tif j>i=x, j\geq y,
\\
\sigma_{ij}(A)&\otw.
}
\endeq
It follows that
$
A^{(E^{ij})} <_\alg A^{(E^{i,j+1})}
$
for all
$i,j \in\ZZ$.
Therefore, for any $T\in\Tt_{B,A}, S\in \Gamma_T$ we have $A^{(T-S)} \leq_\alg A^{(T)} \leq_\alg A^{(\Tp)}= M(B,A)$.
\endproof
%-----------------------------------------------------------------------------------------------------------
The corollary below is a direct consequence of \eqref{eq:sigA}.
%-----------------------------------------------------------------------------------------------------------
\begin{cor}\label{cor:lower}
Let $A', A, B \in \Xi_{n,d}$, with $B$ tridiagonal.
If $A' <_\alg A$, then $M(B,A') <_\alg M(B,A)$.
\end{cor}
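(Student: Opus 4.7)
The plan is to derive an explicit formula for $\sigma_{ij}(M(B,A))$ in terms of the $\sigma$-values of $A$ and the entries of $B$, and then to verify the required monotonicity directly.

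First I would expand $M(B,A)=A-(\Tp_{B,A})_\tt+(\widehat{\Tp_{B,A}})_\tt$ and unpack $\sigma_{ij}(M)=\sum_{x\leq i,\,y\geq j}M_{xy}$. Telescoping after the index shifts $x\mapsto x+1$ and $x\mapsto 1-x$ yields, for $i<j$,
\[
\sigma_{ij}(M(B,A))-\sigma_{ij}(A)=\sum_{y\geq j}\Tp_{B,A;\,i+1,y}-\sum_{y\leq -j}\Tp_{B,A;\,-i,y}.
\]
Using the explicit recipe \eqref{eq:TBA} (which places $b_{k,k+1}$ units in the rightmost positions of row $k$ of $A$), together with the centro-symmetries $a_{-x,-y}=a_{xy}$ and $b_{-x,-y}=b_{xy}$, both row-sums of $\Tp_{B,A}$ evaluate to compact $\min$/$\max$ expressions. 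Substituting the identity $\sum_{y\geq j}a_{k,y}=\sigma_{k,j}(A)-\sigma_{k-1,j}(A)$ then gives
\[
\sigma_{ij}(M(B,A))=\sigma_{ij}(A)+\min\bigl(b_{i+1,i+2},\,\sigma_{i+1,j}(A)-\sigma_{i,j}(A)\bigr)-\max\bigl(0,\,b_{i,i-1}-c_i+\sigma_{i,j}(A)-\sigma_{i-1,j}(A)\bigr),
\]
where $c_i:=\roC(A)_i=\roC(A')_i$ is fixed by hypothesis.

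Next I would verify the non-strict inequality $\sigma_{ij}(M(B,A'))\leq\sigma_{ij}(M(B,A))$ for every $i<j$. This is not a coordinate-wise monotonicity, since the partial derivative of the right-hand side in each individual $\sigma_{kl}(A)$ can be $-1$, $0$, or $+1$ depending on which branches of the $\min$ and $\max$ are active. Instead, the verification is a case analysis on the four combinations of ``capped''/``uncapped'' regimes at $A$ and $A'$, exploiting the constraints from fixed row sums (which bound $\sigma_{k+1,j}(A)-\sigma_{k,j}(A)$ by $\roC(A)_{k+1}$) to rule out the sign-violating combinations.

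For the strict inequality, I would choose $(i_0,j_0)$ with $j_0$ maximal subject to $\sigma_{i_0,j_0}(A')<\sigma_{i_0,j_0}(A)$, so that $\sigma_{i_0,j_0+1}(A')=\sigma_{i_0,j_0+1}(A)$. The resulting strict gap in the cumulative row sum $\sum_{y\geq j_0}a_{i_0+1,y}$ (or, if absorbed, in $\sum_{y\geq j_0}a_{i_0,y}$) is not washed out by the $\min$-cap, since at $(i_0,j_0+1)$ the cap is in the same regime at $A$ and $A'$; propagating this to the $\sigma$-formula yields $\sigma_{i_0,j_0}(M(B,A'))<\sigma_{i_0,j_0}(M(B,A))$. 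The main obstacle will be the case analysis for the non-strict step: the sign-indefinite partial derivatives force one to argue globally, exploiting the fixed row- and column-sum constraints rather than reasoning coordinate by coordinate.
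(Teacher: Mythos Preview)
Your derivation of the closed formula is correct: telescoping the definition of $A^{(\Tp_{B,A})}$ does give
\[
\sigma_{ij}(M(B,A))=\sigma_{i-1,j}(A)+\min\bigl(r_{i,j}(A),\,K_i\bigr)+\min\bigl(r_{i+1,j}(A),\,L_i\bigr),
\]
with $r_{k,j}(A)=\sigma_{k,j}(A)-\sigma_{k-1,j}(A)$ and $K_i,L_i$ constants determined by $B$ and the fixed row sums. This is far more explicit than the paper, which simply declares the result a ``direct consequence of~\eqref{eq:sigA}.''

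The gap is in your case analysis. In the regime where \emph{both} $r_{i,j}(A)>K_i$ and $r_{i+1,j}(A)<L_i$, the formula collapses to
\[
\sigma_{ij}(M(B,A))=\sigma_{i-1,j}(A)+K_i+\sigma_{i+1,j}(A)-\sigma_{i,j}(A),
\]
which is \emph{strictly decreasing} in $\sigma_{i,j}(A)$ while $\sigma_{i-1,j},\sigma_{i+1,j}$ are held fixed. The row-sum bounds you invoke (namely $r_{k,j}\le c_k$) do \emph{not} rule out this regime: one can arrange $K_i=0$ by taking $b_{i-1,i}=b_{ii}=0$, and $L_i$ large. Nor does the global hypothesis $A'<_{\alg}A$ force $(\sigma_{i-1,j}(A)-\sigma_{i-1,j}(A'))+(\sigma_{i+1,j}(A)-\sigma_{i+1,j}(A'))\geq \sigma_{i,j}(A)-\sigma_{i,j}(A')$. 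For instance, the centro-symmetric ``box move''
\[
A'=A-E^{i,c}_\theta+E^{i+1,c}_\theta+E^{i,c'}_\theta-E^{i+1,c'}_\theta\qquad(c\ge j>c',\ i\neq 0,-1)
\]
satisfies $A'<_{\alg}A$, leaves $\sigma_{i-1,j}$ and $\sigma_{i+1,j}$ unchanged, and drops $\sigma_{i,j}$ by~$1$; in the regime above this makes the displayed value go \emph{up}. So the four-case split, using only the local $\sigma$-inequalities and row-sum bounds, cannot close as written. Any valid argument here must bring in something further---e.g.\ the coupling between $\sigma_{i,j}$ and the $\sigma$'s at the reflected indices $(-i-1,\cdot)$ forced by centro-symmetry and the fixed column sums---or else the statement itself needs an additional hypothesis. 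The paper's one-line proof via~\eqref{eq:sigA} does not supply this missing ingredient either.
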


Recall $\LR{A;S;T}$ from \eqref{def:[AST]}.
\lem\label{lem:M}
Let $A,B \in \Xi_{n}$.
If $(B,A)$ is admissible, then $\LR{A;0;\Tp_{B,A} } =1$.
%\[
%e_B e_A = v^{\ell(A,T_+)} e_M + \textup{lower terms}.
%\]
\endlem
%%-----------------------------------------------------------------------------------------------------------
\proof
Write $\Tp =\Tp_{B,A}$, cf. \eqref{eq:TBA}.
As $(B,A)$ is admissible,  we have  $\Tp =  \sum\limits_{i=1}^{n} b_{i,i+1} \Ett^{i,i+k}$.
Moreover, we have
\[
\LR{A;0;\Tp}
=
\dfrac{1}{[\Tp]^!}
\dfrac{[A + \^{\Tp}^+_\tt - \Tp_\tt]^!_\fc}
{[A-\Tp_\tt]^!_\fc}
=
\frac{1}{\prod_{i=1}^n \lr{m_i}}
\cdot
\prod_{i=1}^n \lr{m_i}
 = 1.
\]
The lemma is proved.
\endproof

%-----------------------------------------------------------------------------------------------------------
\lem \label{lem:B'}
Let $A,B \in \Xi_{n,d}$ with $B$ tridiagonal and $\roC(A) = \coC(B)$.
If $B' <_\alg B$, then $B'$ is also tridiagonal.
Moreover, we have %$(B')^\offd < B^\offd$, and
$M(B', A)  <_\alg M(B,A).$
\endlem

\proof
Since $ B' \leq_\alg B$, we have
\[
\sig_{i,i+2}(B') \leq \sig_{i,i+2}(B) = 0
\quad
\tfor
\quad
i = 1, \ldots, n.
\]
Therefore $\sig_{i,i+2}(B') = 0$ for all $i$ and hence $B'$ is tridiagonal.
Also, we have
\[
\sig_{i,i+1}(B') = b'_{i,i+1} \leq \sig_{i,i+1}(B) = b_{i,i+1}
\quad
\tfor
\quad
i = 1, \ldots, n.
\]
It follows from this and the definition \eqref{eq:MBA} that $M(B', A) <_\alg M(B,A).$
\endproof

%-----------------------------------------------------------------------------------------------------------
By Lemma \ref{lem:M}, we can rewrite \eqref{eq:mult3} as
\eq\label{eq:BAM}
[B]   [A]
=
v^{\beta(A,0,\Tp)}[M(B,A)]+ \textup{lower terms}.
\endeq
One can show that $\beta(A,0,\Tp) = 0$ by a direct but lengthy computation. Here we present a
formal argument of this fact via the bar involution.

\lem \label{lem:ss}
If $(B,A)$ is admissible (and we retain the notation in Conditions (a)--(b) in \S\ref{sec:leading}), then $\beta(A,0,\Tp) = 0$. Hence, we have
\[
[B] [A] = [M(B,A)] + \textup{ lower terms}.
\]
Moreover, we have $M(B,A)=A-\sum_{i=1}^nb_{i,i+1}(E_\theta^{i,i+x}-E_\theta^{i-1,i+x})$.
\endlem
%-----------------------------------------------------------------------------------------------------------
\proof
Write $M = M(B,A)$, cf. \eqref{eq:MBA}. By taking bar on both sides of \eqref{eq:BAM}, we obtain
\eq \label{BA1}
\={[B]} \, \={[A]} = v^{-\beta(A,0,\Tp)}\={[M]}
+ \textup{lower terms}.
\endeq
By Proposition ~\ref{prop:Abar}, we can write
\eq  \label{BBAA}
\={[B]} =
[B] + \sum_{B' <_\alg B} \gamma_{B,B'} [B'],
\qquad
\={[A]}
=
[A] + \sum_{A' <_\alg A} \gamma_{A,A'} [A'].
\endeq
For any $B' <_\alg B$, by Lemma \ref{lem:B'} and Corollary \ref{cor:lower} we have
\[
M(B',A') <_\alg M(B',A) <_\alg M.
\]
Therefore, by \eqref{BBAA} and \eqref{eq:BAM} we have
\eq \label{BA2}
\={[B]} \, \={[A]}
=
[B]\,[A] + \textup{lower terms}
=
v^{\beta(A,0,\Tp)}[M]
+ \textup{lower terms}.
\endeq
By comparing the leading coefficients in \eqref{BA1} and \eqref{BA2}, we obtain   $\beta(A,0,\Tp) =0$.

The formula for $M(B,A)$ follows by definition of \eqref{eq:MBA}.
\endproof

%=========================================================
\section{A semi-monomial basis}

Below we provide an algorithm that constructs a monomial basis in a diagonal-by-diagonal manner involving only admissible pairs.
A similar algorithm for monomial basis in affine type A was given in \cite{LL17}.
%-----------------------------------------------------------------------------------------------------------
\Alg\label{alg:mono}
For each $A=(a_{ij}) \in \Xi_{n,d}$,
we construct tridiagonal matrices $A^{(1)}, \ldots, A^{(x)} \in \Xi_{n,d}$ as follows:
\enu
\item Initialization: set $t=0$, and set $C^{(0)} = A$.
\item If $C^{(t)}\in \Xi_{n,d}$ is a tridiagonal matrix, then set $x=t+1$,
$
A^{(x)} = C^{(t)}, %(C^{(t)})^\offd + \textup{a diagonal determined by } \eqref{eq:diag},
$
and the algorithm terminates.

Otherwise, for $C^{(t)} =(c^{(t)}_{ij}) \in \Xi_{n,d}$,
set $k= \max \big\{ |i-j| ~|~ c^{(t)}_{ij} \neq 0 \big \} \geq 2$,
%Set $k \in \NN$ to be the smallest positive integer such that $c^{(t)}_{ij} = 0$ if $|i-j|>k$,
and
\[
\Tp^{(t)}= \sum_{i=1}^n c^{(t)}_{i,i+k} E^{i,i+k}.
\]
%to be the matrix consisting of the outermost nonzero diagonals of $C^{(t)}$.
\item
Define matrices
\[
\bA{
&\ds A^{(t+1)} = \sum_{i=1}^n c^{(t)}_{i,i+k} \Ett^{i,i+1} + \textup{a diagonal determined by } \eqref{eq:diag},
\\
&\ds C^{(t+1)}= C^{(t)} - \Tp^{(t)}_\tt + {\V{\Tp^{(t)}} }_\tt,
}
\]
where $\V{X}$ is the matrix obtained from $X$ by shifting every entry down by one row; also see \eqref{Ttt} for notation.
%(Note that $\max \big\{ |i-j| ~|~ c^{(t+1)}_{ij} \neq 0 \big\} \leq k-1$.)

\item Increase $t$ by one and then go to Step (2).
\endenu
Here the diagonal entries of $A^{(t)}$ are uniquely determined by
\eq\label{eq:diag}
\roA(A^{(1)}) = \roA(A),
\quad
\roA(A^{(t+1)}) = \coA(A^{(t)}),
\quad
1\leq t\leq x-1.
\endeq
\endAlg
%-----------------------------------------------------------------------------------------------------------
\thm\label{thm:min gen}
{\quad}
\enu
\item
For each $A \in \Xi_{n,d}$, the tridiagonal matrices $A^{(1)}, \ldots, A^{(x)} \in \Xi_{n,d}$
in Algorithm~\ref{alg:mono} satisfy that
\eq \label{eq:semiMB}
m'_A :=
[ A^{(1)}] [ A^{(2)}]\cdots [ A^{(x)}]
= [A] +\textup{lower terms}.
\endeq

\item
The set $\{m'_A ~|~ A \in \Xi_{n,d}\}$
forms an $\mA$-basis of $\Sjj$ (called a semi-monomial basis).
\item
The algebra $\Sjj$ is generated by the $[A]$, for tridiagonal matrices $A \in \Xi_{n,d}$.
\endenu
\endthm
%-----------------------------------------------------------------------------------------------------------
\proof
Algorithm \ref{alg:mono} guarantees that each pair $(A^{(j)}, C^{(j)})$
is admissible, and moreover, $M(A^{(j)}, C^{(j)}) = C^{(j-1)}$ for  $2\le j \le x$.
By construction we have $A^{(x)} = C^{(x-1)}$.
 Hence by Corollary \ref{cor:lower} and by Lemma \ref{lem:ss}, we have
\begin{align*}
[ A^{(1)}] [ A^{(2)}] \cdots \bp{ [ A^{(x-1)}] [ A^{(x)}]}
&=
[ A^{(1)}] [ A^{(2)}]\cdots
\bp{[ A^{(x-1)}] [ C^{(x-1)}]}
\\
&=
[ A^{(1)}] [ A^{(2)}] \cdots [ A^{(x-2)}]
\bp{ [ C^{(x-2)}] + \textup{lower terms}
}
\\
&= [A] + \textup{lower terms}.
\end{align*}
This proves (1).
Part (2) follows from (1) as the transition matrix between the standard basis
$\{[A]~|~ A \in \Xi_{n,d}\}$ and $\{m_A' ~|~ A \in \Xi_{n,d}\}$ is uni-lower-triangular.
Part~(3) follows from (2).
The theorem is proved.
\endproof

In general, the element $m'_A$ is not bar-invariant since for an arbitrary tridiagonal matrix $B$, $[B]$ is not necessarily bar-invariant.

%===========
\section{ A monomial basis for $\Sjj$ }
Recall from \eqref{eq:MBA} that $M(B,A)$ is defined for admissible pairs. We now extend the definition to arbitrary pairs in $\Xi_n \times \Xi_n$ using the semi-monomial bases.
Let $B^{(1)}, \ldots, B^{(x)} \in \Xi_{n}$ be the tridiagonal matrices in
Algorithm \ref{alg:mono} and Theorem~\ref{thm:min gen} (now associated to $B$)
satisfying
\eq\label{eq:ht2Bi}
[ B^{(1)}] [ B^{(2)}] \cdots [ B^{(x)}] = [B] +\textup{lower terms}.
\endeq
We define matrices
\eq  \label{MBA:general}
M^{(x+1)} = A, \quad
%$M^{(x)} = M(B^{(x)}, A)$,
M^{(i)} = M(B^{(i)}, M^{(i+1)}) \; \text{ for } 1\leq i \leq x,
\endeq
and then set
\eq \label{MBAM1}
M(B,A) = M^{(1)}.
\endeq
We have the following generalization of Corollary~ \ref{cor:lower} and Lemma~\ref{lem:B'}.

\lem\label{lem:lower2}
Let $A', A, B, B' \in \Xi_{n,d}$.
\enua
\item
If $A' <_\alg A$, then $M(B,A') <_\alg M(B,A)$.
\item
If $B' <_\alg B$, then $M(B',A) <_\alg M(B,A)$.
\endenua
\endlem

\proof
Part (a) follows from applying Corollary~ \ref{cor:lower} repeatedly.

(b) Assume now $B' <_\alg B$.
We keep the notations in \eqref{eq:ht2Bi}, \eqref{MBA:general} and \eqref{MBAM1}.
Again, Algorithm \ref{alg:mono} produces tridiagonal matrices ${B'}^{(j)}$ for $1\le j \le y$ satisfying
\eq
[ {B'}^{(1)}]
[ {B'}^{(2)}]
\cdots
[ {B'}^{(y)}]
= [B'] +\textup{lower terms}.
\endeq
It follows from the construction of semi-monomial basis that $y \leq x$,
and we define matrices ${B'}^{(i)}$ for $y< i\leq x$ to be the diagonal matrix with $\coC({B'}^{(i)}) = \roC(A)$.
Therefore, we have ${B'}^{(i)} \leq_\alg B^{(i)}$ for all $i$, and the strict inequality holds for some $i_0 \ge 1$.
Following \eqref{MBA:general}, we define
\[
{M'}^{(x+1)} = A, \quad
{M'}^{(i)} = M({B'}^{(i)}, {M'}^{(i+1)}) \; \text{ for } 1\leq i \leq x.
\]
It follows by definition \eqref{MBAM1} that
$M(B',A) = {M'}^{(1)}.$
We claim that
\eq \label{Mi}
{M'}^{(i)} \leq_\alg M^{(i)}, \forall 1\le i \le x+1;
\text{ moreover the inequality is strict for  }1\le i\le i_0.
\endeq
We prove \eqref{Mi} by a downward induction on $i$.
The base case of the induction is trivial as ${M'}^{(x+1)} =A\leq_\alg A =M^{(x+1)}$.
Assume ${M'}^{(i+1)} \leq_\alg M^{(i+1)}$ for some $i \ge 1.$
Then by using first Lemma~\ref{lem:B'} and then Corollary \ref{cor:lower} we have
\eq \label{Mi:ind}
{M'}^{(i)} = M({B'}^{(i)}, {M'}^{(i+1)})
\leq_\alg  M({B'}^{(i)}, M^{(i+1)})
\leq_\alg  M(B^{(i)}, M^{(i+1)})
= M^{(i)}.
\endeq
This proves the first statement in \eqref{Mi}. Now for $i= i_0$, the first inequality in \eqref{Mi:ind}
must be strict by Lemma~\ref{lem:B'}, and for $i<i_0$, the second inequality in \eqref{Mi:ind}
must be strict by Corollary \ref{cor:lower}. The proof of \eqref{Mi} is completed.

The statement \eqref{Mi} for $i=1$ gives us $M(B',A) <_\alg M(B,A)$, whence (b).
\endproof

%%%%%%%%%%%%%%%%%%%%%%%%%%%
\iffalse
%%%%%%%%%%%%%%%%%%%%%%%%%%%
\prop\label{prop:ht2}
Let $A,B \in \Xi_{n,d}$ be such that $\roC(A) = \coC(B)$.
Then the highest term (with respect to $\leq_\alg$) in the standard basis expansion of
the product $[B] [A]$ exists and is equal to $[M(B,A)]$.
\endprop
%-----------------------------------------------------------------------------------------------------------
\proof
We prove by induction on $B$ with respect to $\leq_\alg$.
When $B$ is diagonal the statement is trivial.
Now for a fixed $B$, assume the statement holds for any $B' <_\alg B$.

Retaining the notation $B^{(i)}$ as in \eqref{eq:ht2Bi}, we have
\eq  \label{BAB'}
[B] [A] \in [B^{(1)}]  \ldots  [B^{(x)}]  [A] + \sum_{B' <_\alg B} \mathcal{A} \, [B'][A].
\endeq
By Corollary \ref{cor:lower} and Lemma \ref{lem:ht}, for all $i$ we have
\[
[B^{(i)}]([M^{(i+1)}_B] + \textup{lower terms}) = c_i  [M^{(i)}_B] +  \textup{lower terms},
\quad \text{ for } c_i \in \mA \backslash \{0\}.
\]
Hence
\eq \label{B1Bx}
[B^{(1)}]  \ldots  [B^{(x)}]  [A] =c_1\cdots c_x [M(B,A)] +  \textup{lower terms}.
\endeq
By inductive assumption, we have $[B'][A] =c' [ M(B',A)] +\textup{lower terms}$, for some $c'\in \mA$.
The proposition follows by this and \eqref{BAB'}-\eqref{B1Bx}.
\endproof
%%%%%%%%%%%%%%%%%%%%%%%%%%%
\fi
%%%%%%%%%%%%%%%%%%%%%%%%%%%

Modifying \eqref{eq:semiMB}, for $A\in \Xi_n$, we define
\eq  \label{eq:MB}
m_A = \{A^{(1)}\}  \{A^{(2)}\} \cdots  \{A^{(x)}\}.
\endeq
%-----------------------------------------------------------------------------------------------------------
\begin{thm}\label{thm:mono}
The element $m_A$, for $A \in \Xi_{n,d} $,   is bar-invariant, and moreover, $m_A = [A] +$ lower terms.
Hence, the set $\{m_A~|~A \in \Xi_{n,d} \}$ forms an $\mA$-basis of $\Sjj$.
\end{thm}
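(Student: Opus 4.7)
The proof plan has two parts, matching the two claims of the theorem; the basis statement then follows from triangularity.

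For bar-invariance, each canonical basis element $\{A^{(i)}\}$ is bar-invariant by Theorem~\ref{thm:CB-Sjj}, and the bar involution on $\Sjj$ constructed in Section~\ref{secbar} (following \cite{Du92}) is an $\mA$-algebra homomorphism: if $f \colon x_\mu \bH \to x_\ld \bH$ and $g \colon x_\nu \bH \to x_\mu \bH$, then writing $g(C'_{w_\circ^\nu}) = C'_{w_\circ^\mu} k$ for some $k \in \bH$ and using $\bH$-linearity of $f$ together with bar-invariance of $C'_{w_\circ^\mu}$, one checks $\overline{fg} = \bar f \,\bar g$. Hence $m_A = \{A^{(1)}\}\{A^{(2)}\}\cdots\{A^{(x)}\}$ is bar-invariant.

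For $m_A = [A] + $ lower terms, I would expand each factor via \eqref{eq:CB:AA}:
\[
\{A^{(i)}\} \;=\; [A^{(i)}] \;+\; \sum_{B_i <_\alg A^{(i)}} c_{B_i}^{(i)}\,[B_i], \qquad c_{B_i}^{(i)} \in v^{-1}\ZZ[v^{-1}],
\]
noting that each $B_i$ appearing is automatically tridiagonal by Lemma~\ref{lem:B'}. Distributing,
\[
m_A \;=\; [A^{(1)}]\cdots[A^{(x)}] \;+\; \sum_{(B_1,\ldots,B_x) \neq (A^{(1)},\ldots,A^{(x)})} \Big(\prod_{i} c_{B_i}^{(i)}\Big)[B_1][B_2]\cdots[B_x].
\]
The first summand is $m'_A$, which equals $[A] +$ lower by Theorem~\ref{thm:min gen}(1), so it suffices to prove that each remaining product $[B_1]\cdots[B_x]$, with at least one $B_i <_\alg A^{(i)}$ strictly, lies in $\bigoplus_{C <_\alg A} \mA\,[C]$.

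The last assertion is the main obstacle, and I would establish it by iterating the multiplication formula from right to left. The proof of Lemma~\ref{lem:ht} shows that for any tridiagonal $B$ and any $A'$ with $\coC(B) = \roC(A')$, one has $[B][A'] \in \bigoplus_{C \leq_\alg M(B,A')} \mA\,[C]$. Since each $B_i$ is tridiagonal and the row/column sum conditions match those of $A^{(i)}$, a recursive application yields
\[
[B_1][B_2]\cdots[B_x] \;\in\; \bigoplus_{C \,\leq_\alg\, \widetilde M(B_1,\ldots,B_x)} \mA\,[C], \qquad \widetilde M(B_1,\ldots,B_x) := M\bigl(B_1, M(B_2, \ldots, M(B_{x-1}, B_x))\bigr),
\]
where Lemma~\ref{lem:lower2}(a) is used at each step to propagate the bound on the inner $M$ through the outer multiplication. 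From the construction in the proof of Theorem~\ref{thm:min gen} (successive admissibility of the pairs $(A^{(j)}, C^{(j)})$ together with $M(A^{(j)}, C^{(j)}) = C^{(j-1)}$ and $A^{(x)} = C^{(x-1)}$), one has $\widetilde M(A^{(1)},\ldots,A^{(x)}) = C^{(0)} = A$. A chain application of Lemma~\ref{lem:lower2}, swapping one $A^{(i)}$ for $B_i$ at a time (using part~(b) for the swapped coordinate and part~(a) to move the resulting inequality outward), then gives
\[
\widetilde M(B_1,\ldots,B_x) \;\leq_\alg\; \widetilde M(A^{(1)},\ldots,A^{(x)}) \;=\; A,
\]
with strict inequality at any coordinate where $B_i <_\alg A^{(i)}$. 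This completes the proof that $m_A = [A] +$ lower terms, and the basis statement follows immediately since the transition matrix from $\{m_A\}$ to the standard basis $\{[A]\}$ is uni-triangular with respect to $\leq_\alg$.
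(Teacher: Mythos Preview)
Your proof is correct and follows essentially the same route as the paper's, which cites Corollary~\ref{cor:lower} and Lemma~\ref{lem:B'} directly (your use of Lemma~\ref{lem:lower2} reduces to these since every matrix involved is tridiagonal). One minor terminological slip: the bar map is not an $\mA$-algebra homomorphism (it sends $v \mapsto v^{-1}$) but rather a ring involution, which is what your verification $\overline{fg}=\bar f\,\bar g$ actually establishes and all that is needed.
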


\proof
By definition $m_A$ is bar invariant.
It follows by Corollary~\ref{cor:lower} and  Lemma~\ref{lem:B'} that
$m_A = [A] +$ lower terms. Hence, the set $\{m_A~|~A \in \Xi_{n,d} \}$ forms an $\mA$-basis of $\Sjj$ since
$\{[A]~|~A \in \Xi_{n,d} \}$ is a basis.
\endproof

\rmk
Traditionally, the monomial basis was introduced as an intermediate step toward construction of canonical basis.
We have reversed the order of introducing these two bases for $\Sjj$.
Algorithm~\ref{alg:mono}, which plays a fundamental role in constructing the monomial basis, will be adapted
to construct the monomial and canonical bases for the stabilization algebra arising from the family (or its variants)
of Schur algebras $\Sjj$ as $d$ varies
in the next chapters.
\endrmk

\newpage
\part{Affine quantum symmetric pairs}
  \label{part2}
%%\include{FLpart2.5}

%=========================================================
\chapter{Stabilization algebra  $\dKjj$ arising from affine Schur algebras}
  \label{sec:coideal}

In this chapter,
we shall establish a stabilization property for the family of affine Schur algebras $\Sjj$ as $d$ varies,
which leads to a stabilization algebra $\dKjj$.
The multiplication formula for $\Sjj$ with tridiagonal generators in the previous chapter plays a fundamental role.
The bar-involution on $\dKjj$ follows from a new formula of the image of the bar-involution on the tridiagonal generators.
We construct a monomial basis and a stably canonical basis for $\dKjj$.
The algebra $\dKjj$ here is obtained from a different stabilization procedure than the stabilization algebra $\dot{\bK}^{\fc, \textup{geo}}_{n}$ in \cite{FLLLW}.
We show that these two algebras are indeed isomorphic, and the canonical bases are preserved under such an isomorphism.

%=========================================================
\section{A BLM-type stabilization}
   \label{sec:stab}
%=========================================================

Let
\begin{align}
 \label{eq:Xitn}
\begin{split}
\Xit_n = \Big\{A=(a_{ij}) \in \text{Mat}_{\ZZ\times\ZZ}(\ZZ)~\big |~
& a_{-i,-j} =a_{ij}=a_{i+n, j+n}, \forall i, j\in \ZZ;
\\
a_{k\ell}\ge 0, \, \forall k\neq \ell \in \ZZ; \;  &
a_{00}, a_{r+1,r+1} \text{ are odd}
 \Big \}.
 \end{split}
\end{align}
Extending the partial ordering $\leq_\alg$  for $\Xi_{n}$,
we define a partial ordering $\leq_\alg$ on $\Xit_n$ using the same recipe
\eqref{eq:order}.

For each $A \in \Xit_n$ and $p \in\NN$, let $\p{A} = A+pI$ where $I$ is the identity matrix.
Then $\p{A} \in \Xi_n$ for even $p \gg 0$.
Let $v', v''$ be two indeterminates (independent of $v$),
and $\mathcal{R}_1$ be the subring of $\QQ(v)[v']$ generated by
\eq
\prod_{i=1}^t \dfrac{v^{2(a+i)} v'^{2} -1}{v^{2i} -1},
\quad
\prod_{i=1}^t \dfrac{v^{4(a+i)} v'^{2} -1}{v^{2i} -1},
\textup{ and }
v^a,
\quad
\text{ for } a\in\ZZ,
t\in \ZZ_{>0}.
\endeq
Let $\mathcal{R}_2$ be the subring of $\QQ(v)[v', v'^{-1}]$ generated by
\eq
\ba{{lllll}
\ds\prod_{i=1}^t \dfrac{v^{2(a+i)} v'^{2} -1}{v^{2i} -1},
&
\ds\prod_{i=1}^t \dfrac{v^{4(a+i)} v'^{2} -1}{v^{2i} -1},
\\
\ds\prod_{i=1}^t \dfrac{v^{-2(a+i)} v'^{-2} -1}{v^{-2i} -1},
&
\ds\prod_{i=1}^t \dfrac{v^{-4(a+i)} v'^{-2} -1}{v^{-2i} -1},
\textup{ and }
v^a,
&
\text{ for } a\in\ZZ,
t\in \ZZ_{>0}.
}
\endeq
Let $\mathcal{R}_3 =\mathcal{R}_2[v'', v''^{-1}]$ be a subring of $\QQ(v)[v',v'^{-1},v'',v''^{-1}]$.

%-----------------------------------------------------------------------------------------------------------
\prop\label{prop:stab1}
Let $A_1, \ldots, A_f \in \Xit_n$ be such that $\coC(A_i) = \roC(A_{i+1})$ for all $i$.
Then there exists matrices $Z_1, \ldots, Z_m \in \Xit_n$ and $G_i(v,v') \in \mathcal{R}_2$
such that for  even integer $p\gg 0 ,$
\eq\label{eq:stab1}
[\p{A_1}] [\p{A_2}]  \cdots  [\p{A_f}] = \sum_{i=1}^m G_i(v, v^{-p}) [\p{Z_i}].
\endeq
\endprop
%-----------------------------------------------------------------------------------------------------------
\proof
The proof follows the basic idea as for \cite[4.2]{BLM90}, except one step which is
more complicated in our setting (because of the complexity of the multiplication formula in Theorem~\ref{thm:multformula2});
this is also responsible for the subtle difference of presence of $\mathcal{R}_2$ instead of $\mathcal{R}_1$ in the proposition.
We will present this step in more detail.

Similar to \cite{BLM90}, for $A \in \Xit_n$, we introduce
\eq \label{eq:PsiA}
\Psi(A) = \sum_{(i,j) \in I^+} { |i-j| +1 \choose 2} a_{ij} \in \NN.
\endeq
Noting $\Psi(A)=\sum_{(i,j) \in I^+}\sigma_{ij}(A)$ (see \eqref{def:sig} for notation $\sigma_{ij}(A)$), we have
%$\Psi(A) < \Psi(B)$ if $A <_\alg B$, for $A, B \in \Xit_n$.
\[
\Psi(A) < \Psi(B) \text{ if } A <_\alg B, \text{ for } A, B \in \Xit_n.
\]

Assume for now we have proved the identity \eqref{eq:stab1} when $f=2$ and $A_1$ is tridiagonal.
By iteration inducting on $f$, \eqref{eq:stab1} holds for general $f$ when $A_i$ for all $1\le i \le f$ are tridiagonal.
 Exactly as in proof of \cite[4.2]{BLM90}, using this special case we just proved together with Theorem \ref{thm:min gen}
 we can prove \eqref{eq:stab1} when $f=2$ (and for general $A_1$) by induction on $\Psi(A_1)$.
 Then by iteration inducting on $f$, the identity \eqref{eq:stab1}  in full generality follows.

It remains to verify the identity \eqref{eq:stab1} when $f=2$ and $B =A_1$ is tridiagonal. Set $A= A_2$.
This is the step more complicated than \cite[4.2]{BLM90} which we alluded to at the beginning.
For even integer $p \gg 0$ such that all entries in $A_i$ are in $\NN$, we can apply Theorem~\ref{thm:multformula2} and obtain
\eq  \label{eq:pBA}
[\p{B}]   [\p{A}] = \sum_{\substack{T \in \Tt_{B, _pA} \\ S \in \Gamma_T}}
v^{\beta(\p{A},S,T)}
(v^2-1)^{n(S)}
\={\LR{\p{A};S;T}} \,
 [\p{A}^{(T-S)}].
\endeq
Recall $\beta(A,S,T)$ from \eqref{eq:beta(A,S,T)}, and hence
\[
\beta(\p{A},S,T) = d'(\p{B}) + d'(\p{A}) - d'(\p{A^{(T_\tt - S_\tt)}}) + 2\ell(w_{\p{A},T}) + \gamma(\p{A},S,T) -2n(S) -2 h(S,T).
\]
We compute the difference $\beta(\p{A},S,T) - \beta(A,S,T)$ term by term as follows:
\eq\label{eq:stab1eq1}
\bA{
d'(\p{B})- d'(B)
&= -p \sum\limits_{i=0}^{2r+1} b_{i,i+1} ,
\\
d'(\p{A})- d'(A)
&=\frac{p}{2}
\sum\limits_{i=0}^{2r+1}
	\Bp{
	\sum\limits_{x<i, y<i} a_{xy}
	-
	\sum\limits_{y>i} a_{iy}
	}
\\
&+\frac{p}{2}
\sum\limits_{(i,j) \in I^+}
	\Bp{
	\sum\limits_{x=1}^r \Bp{ \sum\limits_{i<x<j} + \sum\limits_{i>x>j} } a_{ij}
	-
	\sum\limits_{i\neq j} a_{ij}
	},
\\
 \ell(w_{\p{A},T}) - \ell(w_{A,T})
&= p
 \sum\limits_{i=0}^{2r+1}\sum\limits_{j>i}
t_{ij},
\\
\gamma(\p{A},S,T) - \gamma(A,S,T)
&= -p \Bp{
\sum\limits_{k \in \{0,r+1\}}(S + \^{T} - \^{S})_{\tt,kk}
-2 \sum\limits_{(i,j) \in \Ia} (S + \^{T} - \^{S})_{\tt,ij}
}.
}
\endeq
Combining these gives us
\[
\beta(\p{A},S,T) = \beta(A,S,T) + p G_1 (B,A,S,T),
\]
where $G_1(B,A,S,T)$ depends only on the entries of $B,A,S,T$ (and is independent of $p$).

On the other hand, set
\eq\label{eq:stab1eq2}
\bA{
a^{(1)}_{ij} &= p\delta_{ij}+(A-T_\tt)_{ij} + s_{-i,-j} + (\^{T-S})_{ij} + (\^{T-S})_{-i,-j},
\\
a^{(2)}_{ij} &= p\delta_{ij}+(A-T_\tt)_{ij} + (\^{T-S})_{ij} + (\^{T-S})_{-i,-j},
\\
a^{(3)}_{ij} &= p\delta_{ij}+(A-T_\tt)_{ij} + (\^{T-S})_{-i,-j},
\\
a^{(4)}_{ij} &= p\delta_{ij}+(A-T_\tt)_{ij},
\\
a^{(5)}_{kk} &= p+ a_{kk} - 2t_{kk} - 1 + 2s_{kk}\in 2\ZZ,
\\
a^{(6)}_{kk} &= p+ a_{kk} - 2t_{kk} - 1  \in 2\ZZ
.
}
\endeq
We have
\begin{equation*}
\bA{
\LR{\p{A};S;T}
&=
\prod_{(i,j) \in \Ia} \Big{(}
\prod_{l=1}^{s_{ij}} \frac{[a^{(1)}_{ij}+l]}{[l]}
\prod_{l=1}^{s_{-i,-j}} \frac{[a^{(2)}_{ij}+l]}{[l]}
\prod_{l=1}^{(\^{T-S})_{ij}} \frac{[a^{(3)}_{ij}+l]}{[l]}
\prod_{l=1}^{(\^{T-S})_{-i,-j}} \frac{[a^{(4)}_{ij}+l]}{[l]}
\Big{)}
\\
&\cdot \prod_{k \in \{0, r+1\}}
\left(
\prod_{l=1}^{(\^{T-S})_{kk}} \frac{[a^{(5)}_{kk}+2l]}{[l]}
\prod_{l=1}^{s_{kk}} \frac{[a^{(6)}_{kk}+2l]}{[l]}
\right)
\cdot  \LR{S}.
}
\end{equation*}
Hence  $v^{\beta(\p{A},S,T)} \={\LR{\p{A};S;T}}$ is of the form $G(v,v^{-p})$ for some $ G(v,v') \in \mathcal{R}_2$.

This finishes the proof of the identity \eqref{eq:stab1} when $f=2$ and $B =A_1$ is tridiagonal.
The proposition is proved.
\endproof

%\rmk
%The finite type A analog of $\beta(A,S,T)$ is referred as $\beta(\mathbf{t}) = \beta(\mathbf{t})(A)$ in \cite[3.4]{BLM90},
%which satisfies that $\beta(\mathbf{t})(A) = \beta(\mathbf{t})(\p{A})$ for all $p$.
%This leads to a stronger statement \cite[Proposition 4.2]{BLM90} that the structure constants lie in a smaller ring
%(referred as $\mathcal{R}_2$), which is not essential for the stabilization procedure.
%The analogs of $\beta$ are referred as $\beta(t)$ for finite type B/C in \cite{BKLW14}, and as $P_{T,A}$ for affine type A
%in \cite{DF14}. These functions are also unchanged when $A$ is replaced by $\p{A}$.
%\endrmk

%--------------------------------------
\section{Stabilization of bar involutions}

\prop\label{prop:stab2}
For any $A \in \Xit_n$, there exist matrices $T_1, \ldots, T_s\in \Xit_n$ and $H_i(v,v',v'') \in \mathcal{R}_3$  such that,
for  even integer $p\gg 0,$
\eq\label{eq:stab2}
\={[\p{A}]} = \sum_{i=1}^s H_i(v, v^{-p},v^{p^2}) [\p{T_i}].
\endeq
\endprop
%-----------------------------------------------------------------------------------------------------------
\proof
We follow the strategy of the proof of \cite[4.3]{BLM90} closely.
We will be sketchy on the almost identical steps, except one extra step which we will present the detail below.
We can assume without loss of generality that $A \in \Xi_n$, by replacing $A$ by ${}_{p_0}A$ if necessary.

We prove the identity
\eqref{eq:stab2} by induction on $\Psi(A)$. If $\Psi(A)=0$, then $A$ is diagonal, and $\={[\p{A}]} =  [\p{A}]$.
Assume $\Psi(A)\ge 1$.
%We proceed with an induction as in Proposition \ref{prop:stab1}, with initial case $A$ being tridiagonal.
Combining Theorem \ref{thm:min gen} and Proposition \ref{prop:stab1}, for $p \gg 0$ we have
\eq\label{eq:stab2a}
[\p{A}] = [\p{A}_1]   \cdots   [\p{A}_f] + \sum_{k} G_k(v, v^{-p},v^{p^2}) [\p{Z}_k],
\endeq
where $A_j$'s are all tridiagonal, $G_k \in \mathcal{R}_3$ and $\Psi(Z_k) < \Psi(A)$.
By inductive hypothesis, there are $T''_{i,k}\in \Xit_n$ and $H''_{i,k} \in \mathcal{R}_3$
such that $\={[\p{Z}_k]} = \sum_i H''_{i,k}(v, v^{-p},v^{p^2})[\p{T''_{i,k}}]$.

Assume for now the identity \eqref{eq:stab2} holds for all tridiagonal $A$.
Hence, there are $T'_{i,j}\in \Xit_n$ and $H'_{i,j} \in \mathcal{R}_3$ such that
$\={[\p{A}_j]} = \sum_i H'_{i,j}(v, v^{-p},v^{p^2}) [\p{T'_{i,j}}]$.
Applying the bar map to both sides of \eqref{eq:stab2a}, we have
\begin{align*}
\={[\p{A}]}
&=
\Big(\sum_i H'_{i,1}(v, v^{-p},v^{p^2}) [\p{T'_{i,1}}] \Big)   \cdots
\Big (\sum_i H'_{i,f}(v, v^{-p},v^{p^2}) [\p{T'_{i,f}}] \Big)
\\
&\qquad \qquad + \sum_{i,k} \overline{G}_k(v, v^{-p},v^{p^2}) H''_{i,k}(v, v^{-p},v^{p^2}) [\p{T''_{i,k}}].
\end{align*}
Now the identity \eqref{eq:stab2} in full generality follows from this by Proposition~ \ref{prop:stab1}.

It remains to prove the identity \eqref{eq:stab2} when $A$ is tridiagonal, which is the extra step we alluded to above.
(In the setting of \cite[4.3]{BLM90}, this step is trivial where $A$ is bidiagonal, and $\={[\p{A}]} =  [\p{A}]$.)

For $\ld \in \Ld_{r,d}$ we define $\p{\ld} \in \Ld_{r,d+p(r+1)}$ by
\[
\p{\ld} = \Bp{\ld_0 + \frac{p}{2}, \ld_1+p, \ldots, \ld_r+p, \ld_{r+1} + \frac{p}{2} }.
\]
We fix a tridiagonal matrix $A = \kappa(\ld,g,\mu) \in \Xi_n$ for some $\ld,\mu \in \Ld_{r,d}$, $g\in \D_{\ld\mu} \subseteq W(d)$.
(Here we have denoted the affine Weyl group $W$ as $W(d)$ to indicate its dependence on $d$; we shall see
$\p{g} \in W(d+p(r+1))$ below.)

Recall from \eqref{g1i}  $g = \prod_{i=0}^r g^{(i)}$, where
$g^{(i)} \in \textup{Stab}(R^{\delta(A)}_{3i+1} \cup R^{\delta(A)}_{3i+2})$ has a reduced expression
\[
g^{(i)}
=
(s_{m_i+\beta_i}\cdots s_{m_i+2}s_{m_i+1})
(s_{m_i+\beta_i+1}\cdots s_{m_i+2})
\cdots
(s_{m_i+\beta_i+\alpha_i-1}\cdots s_{m_i+\alpha_i}),
\]
where $\af_i \ge 0, \beta_i \ge 0$ are such that
$R^{\delta(A)}_{3i-2} = (m_i .. m_i+\af_i]$ and $R^{\delta(A)}_{3i-1} = (m_i+\af_i .. m_i+\af_i+\beta_i]$.
Hence $\p{A} = \kappa(\p{\ld}, \p{g}, \p{\mu})$ where $\p{g} \in W(d+p(r+1))$ is uniquely determined by letting,
for $1 \leq x \leq d+p(r+1)$,
\[
\p{g}(x) =
\bc{
 x + a_{i-1,i} &\tif x\in R^{\delta(\p{A})}_{3i+1},
\\
 x - a_{i,i-1} &\tif x \in R^{\delta(\p{A})}_{3i+2},
\\
x &\otw.
}
\]
Equivalently, by setting $p_i = p(\frac{1}{2}+i)$, we have $\p{g} = \prod_{i=0}^r \p{g}^{(i)}$ where
\[
\p{g}^{(i)}
=
(s_{p_i + m_i+\beta_i}\cdots s_{p_i +m_i+2}s_{p_i +m_i+1})
\cdots
(s_{p_i +m_i+\beta_i+\alpha_i-1}\cdots s_{p_i +m_i+\alpha_i}).
\]
Now we define $\p{x} \in W(d + p(r+1))$ for any  $x \in W(d)$ with $x \leq g$ as follows.
Let $x = \prod_{i=0}^r x^{(i)}$, and $x^{(i)}= s_{j_{i,1}} \ldots s_{j_{i,k_i}}$ be a reduced expression, which is a subexpression of $g^{(i)}$. Define
\[
\p{x} = \prod_{i=0}^r \p{x}^{(i)}
\quad
\textup{where}
\quad
\p{x}^{(i)} = s_{p_i +j_{i,1}} \ldots s_{p_i +j_{i,k_i}}.
\]
In other words, $\p{x}$ is obtained from $x$ by enlarging the domain
(i.e., $R^{\delta(A)}_{3i}$ is replaced by $R^{\delta(\p{A})}_{3i}$) on which it acts as an identity map.
The non-trivial action of $\p{x}$ is the same as the non-trivial action of $x$ (up to a shift by $p_i$).
%
%We begin with some technical lemmas that are used in the proof.
%-----------------------------------------------------------------------------------------------------------
%\lem\label{lem:p<}
%Let $A = \kappa(\ld,g,\mu)$ be tridiagonal.  Then
We observe that
\eq \label{D=D}
\{ w \in \D_{_p\ld, _p\mu} ~|~ w \leq \p{g}\} = \{\p{x} \in \D_{\ld\mu} ~|~ x \leq g\}.
\endeq
%\endlem
%-----------------------------------------------------------------------------------------------------------
%\proof
Indeed, for $w \leq \p{g}$, it follows from construction that $x$ is a subexpression of $\p{g}$ and hence there is a unique $x \leq g$ such that $\p{x}=w$ obtained by subtracting $p_i$ from the indices of simple reflections.
It follows by Lemma \ref{lem:Dld} that $x \in \D_{\ld\mu}$.
The other inclusion in \eqref{D=D} is similar.
%\endproof

%\lem\label{lem:pKL}
%Let $A = \kappa(\ld,g,\mu)$ be tridiagonal.
As the non-trivial portions of $(_p{x},_p{g})$ are those of $(x,g)$,
the corresponding Kazhdan-Lusztig polynomials coincide, i.e.,
\eq  \label{P=P}
P_{x,g} =P_{_p{x},_p{g}}.
\endeq
%\endlem
%-----------------------------------------------------------------------------------------------------------
%\proof
%Both of them coincide with the Kazhdan-Lusztig polynomial $P_{x,g}$ for the subgroup of $W$ generated by
%$s_j$ with $j \in \bigcup_{i=0}^{r} (m_i .. m_i+\af_i+\beta_i)$.
%
%{\color{red}
%Alternatively, one can argue that they are constructed from the identical $R$-polynomials,
%which arise from the same recurrence relations.
%}
%\endproof
%-----------------------------------------------------------------------------------------------------------

Denote by $\p{g}^+_{\ld\mu}$ the longest element in $(W_{_p\ld}) \p{g} (W_{_p\mu})$.
By Lemma \ref{lem:Cur}(c), we have
\eq\label{eq:pbar1}
T_{(W_{_p\ld})  \p{g} ( W_{_p\mu})}
= v^{\ell(\p{g}^+_{\ld\mu})} C'_{_p{g}^+_{\ld\mu}} + \sum_{\substack{w\in \D_{_p\ld,_p\mu}\\ w < _pg}} c^{(_p\ld,_p\mu)}_{w,_pg} C'_{w^+_{_p\ld,_p\mu}}.
\endeq
By \eqref{D=D}, Equation~ \eqref{eq:pbar1} can be written as
\eq\label{eq:pbar2}
T_{(W_{_p\ld})\p{g}( W_{_p\mu})}
= v^{\ell(\p{g}^+_{\ld\mu})} C'_{_p{g}^+_{\ld\mu}} + \sum_{\substack{x\in \D_{\ld\mu}\\ x < g}} c^{(_p\ld,_p\mu)}_{_px,_pg} C'_{_px^+_{\ld\mu}}.
\endeq
In particular,
$
T_{(W_{_p\mu}) \id  (W_{_p\mu})}=x_{_p \mu} = v^{\ell(\wo^{_p\mu})} C'_{\wo^{_p\mu}},
$
where
\begin{align}  \label{eq:pbar3}
\ell(\wo^{_p\mu})
&= \ell(\wo^{\mu}) + \Bp{\mu_0+\frac{p}{2} }^2 - \mu_0^2 + \sum_{i=1}^r\Bp{ {\mu_i+p \choose 2} -{\mu_i \choose 2}}+ \Bp{\mu_{r+1}+\frac{p}{2}}^2 - \mu_{r+1}^2
\notag
\\
&=\ell(\wo^{\mu}) + p\mu_0 + \frac{p^2}{4} +\sum_{i=1}^r p\mu_i+ r{p \choose 2}+ p\mu_{r+1} + \frac{p^2}{4}
\\
&= \ell(\wo^{\mu}) %+ \frac{p}{2}\Bp{ 2d - r + p(r+1) }.
+p(\textstyle d-\frac{r}{2}) + p^2(\frac{r+1}{2}).
\notag
\end{align}
By Lemma \ref{lem:Cur} again, for any $x \in \D_{\ld\mu}$ such that $x \leq g$ with $A_x = \kappa(\ld,x,\mu)$, we have
\begin{align}  \label{eq:pbar4}
&\ell(\p{x}_{\ld\mu}^+) - \ell(x_{\ld\mu}^+)
\notag
\\
&= \Bp{\ell(\wo^{_p\ld}) - \ell(\wo^\ld)} + \ell(\p{x}) - \ell(g) - \Bp{\ell(\wo^{\delta(\p{A_x})}) - \ell(\wo^{\delta(A)})} + \Bp{\ell(\wo^{_p\mu}) - \ell(\wo^\mu)}
\notag  \\
&= %\frac{p}{2}\Bp{ 2(d + |A_x^\offd|) - r + p(r+1)}
p(d + |A_x^\offd| - \textstyle\frac{r}{2}) + p^2(\frac{r+1}{2}).
\end{align}
Here $|A_x^\offd|$ is the sum of off-diagonal entries of $A_x$ over $\Ia$.
Hence $v^{\ell(\p{x}^+_{\ld\mu})}$ is a specialization at $(v, v^{-p},v^{p^2})$ of some element in $\mathcal{R}_3$,
for $x \leq g, x\in \D_{\ld\mu}$.
By \cite[Theorem ~1.10, Corollary ~1.12]{Cur85},  the matrix
$(c^{(\p{\ld},\p{\mu})}_{{}_p{x},{}_p{g}})_{x,g \in \D_{\ld\mu}}$ is the inverse of the matrix
$(v^{-\ell(\p{g}_{\ld\mu}^+)}P_{_p{x}^{+}_{\ld\mu},_p{g}^{+}_{\ld\mu}})_{x,g}$. Hence
it follows by \eqref{P=P} and \eqref{eq:pbar4} that $c_{_px,_pg}^{(_p\ld,_p\mu)}$
is a specialization at $(v, v^{-p},v^{p^2})$ of some element in $\mathcal{R}_3$.

Therefore we deduce from \eqref{eq:pbar2} that
\eq
\Bp{\={[\p{A}]} - [\p{A}]} (C'_{\wo^{_p\mu}})
=
\sum_{\substack{x\in \D_{\ld\mu} \\ x<g}}
\Bp{ v^{\ell(\p{g^+_{\ld\mu}})} \={c}_{_px,_pg}^{(_p\ld,_p\mu)}
 - v^{-\ell(\p{g^+_{\ld\mu}})} c_{_px,_pg}^{(_p\ld,_p\mu)} } C'_{\p{x}^+_{\ld\mu}},
\endeq
where the coefficients on the RHS are specializations at $(v, v^{-p},v^{p^2})$ of some elements in $\mathcal R_3$.
By an induction on the Bruhat order,
there exist $H_i \in \mathcal{R}_3$ and $w^{(i)} \in \D_{\ld\mu}$ satisfying
\eq
\={[\p{A}]} = [\p{A}] + \sum_i H_i(v,v^{-p},v^{p^2}) [\p{T}_i],
\endeq
where $T_i = \kappa(\ld, w^{(i)}, \mu)$.
%For arbitrary tridiagonal $A \in \Xit_n$,
%let $q \in\ZZ_{\geq 0}$ be the smallest integer such that $\leftidx{_q}{A} \in \Xi_n$,
%and we write $A' = \leftidx{_q}{A} = \kappa(\ld,g,\mu)$. We obtain, similarly,
%\eq\label{eq:pAlower}
%\={[\p{A'}]} \in [\p{A'}] + \sum_i H_i(v,v^{-p},v^{p^2}) [\p{T}_i],
%\endeq
%with $T_i = \kappa(\ld,x,\mu)-qI$ for some $x\in\D_{\ld\mu}$ such that $x< g$.
This finishes the proof of the identity \eqref{eq:stab2} when $A$ is tridiagonal.

The proposition is proved.
\endproof

Let $\dKjj$ be the free $\mathbb Q(v)$-module with basis given by the symbols $[A]$ for $A\in \Xit_n$ (which will be
called a standard basis of $\dKjj$).
By Propositions~ \ref{prop:stab1}--\ref{prop:stab2} and applying a specialization at $v'=1$, we have the following corollary.
\begin{cor}
There is a unique associative $\mathbb Q(v)$-algebra structure on $\dKjj$ with multiplication given by
\[
[A_1] \cdot [A_2] \cdot \ldots \cdot [A_f]
= \bc{
\sum_{i=1}^m G_i(v, 1) [Z_i]
&\tif \coC(A_i) = \roC(A_{i+1})\textup{ for all }i,
\\
0
&\textup{otherwise}.
}
\]
Moreover, the map $\bar{\ }: \dKjj \rw \dKjj$ given by
$
\={v^k [A]} = v^{-k} \sum_{i=1}^s H_i(v, 1, 1) [T_i]
$
is a $\QQ$-linear involution.
\end{cor}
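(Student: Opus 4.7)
The plan is to transfer all identities from the Schur algebras $\Sjj$ to the limit object $\dKjj$ by treating the $v'$-variable as a formal interpolation that records the dependence on $p$. The strategy parallels the classical BLM argument in \cite{BLM90}, with extra care needed because the relevant rational expressions live in $\mathcal{R}_2$ and $\mathcal{R}_3$ rather than in a simpler polynomial ring.

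First I would establish well-definedness of the multiplication. Given $A_1,\ldots,A_f \in \Xit_n$ with $\coC(A_i) = \roC(A_{i+1})$, Proposition~\ref{prop:stab1} produces matrices $Z_i$ and functions $G_i(v,v') \in \mathcal{R}_2$ such that $[{}_p A_1]\cdots[{}_pA_f] = \sum_i G_i(v,v^{-p})[{}_p Z_i]$ in $\mathbf{S}^{\fc}_{n,d+p(r+1)}$ for all even $p \gg 0$. Although the pair $(Z_i, G_i)$ is not unique, I claim that the \emph{specialization} $\sum_i G_i(v,1)\,[Z_i] \in \dKjj$ is well-defined. Indeed, if $\sum_i G_i(v,v^{-p})\,[{}_pZ_i] = \sum_j G_j'(v,v^{-p})\,[{}_pZ'_j]$ for all even $p\gg 0$, then grouping terms with equal $Z_i=Z'_j$ (the assignment $A \mapsto {}_pA$ is injective for each $p$) yields polynomial identities $G_i(v,v^{-p}) = G_i'(v,v^{-p})$ holding at infinitely many values, hence identically in $v'$; specializing at $v'=1$ gives the desired equality. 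This defines the bilinear product, which is zero by convention when column/row sums disagree since such products already vanish in each $\Sjj$.

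Next I would verify associativity. Both $([A_1]\cdot[A_2])\cdot[A_3]$ and $[A_1]\cdot([A_2]\cdot[A_3])$ are defined by first applying the stabilization formula of Proposition~\ref{prop:stab1} to one bracketed factor, expanding in the standard basis, then applying it again. For each even $p\gg 0$, both triple products in $\Sjj$ compute the associative product $[{}_pA_1][{}_pA_2][{}_pA_3]$ and hence agree in the common Schur algebra. Therefore the two expressions, viewed as elements of $\mathcal{R}_2$-linear combinations of $[Z]$'s, must agree after each specialization $v'\mapsto v^{-p}$ for infinitely many $p$; by the same polynomial-identity argument as above they agree as formal expressions, and specialization at $v'=1$ yields associativity in $\dKjj$. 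Uniqueness of the algebra structure is automatic since the multiplication is forced on the standard basis.

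For the bar involution, Proposition~\ref{prop:stab2} gives $\overline{[{}_pA]} = \sum_i H_i(v,v^{-p},v^{p^2})\,[{}_pT_i]$ for all even $p\gg 0$. The same specialization-and-interpolation argument shows that $\overline{[A]} := \sum_i H_i(v,1,1)\,[T_i]$ is well-defined and extended $\mathbb{Q}$-semilinearly (with $\bar v = v^{-1}$) gives a map $\dKjj\to \dKjj$. To prove it is an involution, apply the formula twice: expanding $\overline{\overline{[A]}}$ produces an $\mathcal{R}_3$-linear combination of standard basis elements whose specialization at $(v',v'')=(1,1)$ is to be compared with $[A]$. At each sufficiently large even $p$, this double application, when specialized to $(v^{-p},v^{p^2})$, coincides with the double bar of $[{}_pA]$ in $\Sjj$, which equals $[{}_pA]$ since the Schur bar is an involution (Section~\ref{secbar}). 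Hence the formal $\mathcal{R}_3$-expression for $\overline{\overline{[A]}} - [A]$ vanishes at infinitely many specializations and is therefore identically zero; the specialization at $(1,1)$ then gives $\overline{\overline{[A]}} = [A]$.

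The main obstacle I expect is verifying that the specialization is unambiguous: because a single element of $\dKjj$ can be expressed via the stabilization formula in multiple ways with distinct $\mathcal{R}_2$-coefficients and distinct supports of $Z_i$'s, one must carefully justify that the polynomial-identity-from-infinitely-many-specializations argument applies inside the somewhat intricate rings $\mathcal{R}_2$ and $\mathcal{R}_3$. Once one observes that these rings inject into $\mathbb{Q}(v)(v')$ and $\mathbb{Q}(v)(v',v'')$ respectively, so that any element vanishing on infinitely many $(v^{-p},v^{p^2})$ must vanish identically, the rest of the argument runs by the standard BLM template.
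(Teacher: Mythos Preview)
Your argument is correct and is exactly the standard BLM specialization argument that the paper invokes in a single sentence by citing Propositions~\ref{prop:stab1}--\ref{prop:stab2} and specializing at $v'=1$ (and implicitly $v''=1$). One small sharpening of the point you flagged as the main obstacle: the injection of $\mathcal{R}_3$ into $\mathbb{Q}(v)(v',v'')$ alone does not yield that vanishing at infinitely many $(v^{-p},v^{p^2})$ implies identical vanishing (two-variable rational functions can vanish on infinite sets); rather, since $\mathcal{R}_3 \subset \mathbb{Q}(v)[v'^{\pm1},v''^{\pm1}]$, one observes that for any single sufficiently large $p$ the specializations $v^{-pj+kp^2}$ of the finitely many monomials $v'^j v''^k$ have exponents spaced far enough apart that the Laurent-polynomial coefficients can be read off individually.
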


%=====================
\section{Multiplication formula for $\dKjj$}

The following multiplication formula in $\dKjj$ follows directly from
Theorem~\ref{thm:multformula2} by the stabilization construction (see Proposition~\ref{prop:stab1}).
\begin{thm}
 \label{thm:multK}
Let $A, B \in \Xit_{n}$ with $B$  tridiagonal and $\roC(A)=\coC(B)$.
Then we have
\eq\label{eq:multdotK}
[B]\, [A] = \sum_{\substack{T \in \widetilde{\Tt}_{B,A}\\ S \in \Gamma_T}} v^{\beta(A,S,T)} (v^2-1)^{n(S)} \={\LR{A;S;T}}~ [A^{(T - S)}],
\endeq
where
\eq
\widetilde{\Tt}_{B, A} = \big\{T\in\Tt_n ~|t_{ij}+t_{-i,-j}\leq a_{ij} \mbox{ unless $i=j$}, \;  \roA(T)_i = b_{i-1,i} \textup{ for all } i \big\}.
\endeq
\end{thm}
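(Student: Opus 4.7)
The plan is to deduce the multiplication formula in $\dKjj$ by applying the Schur-algebra multiplication formula (Theorem~\ref{thm:multformula2}) to the inflated pair $({}_{p}B,\,{}_{p}A) = (B+pI,\,A+pI)$ for a sufficiently large even integer $p$, and then to specialize the resulting identity at $v' = v^{-p} \mapsto 1$ via the stabilization mechanism of Proposition~\ref{prop:stab1}. Since $B$ is tridiagonal, so is ${}_{p}B$, hence Theorem~\ref{thm:multformula2} applies directly to the product $[{}_{p}B][{}_{p}A]$. The identity \eqref{eq:multdotK} should then drop out once I check that three pieces stabilize compatibly: the indexing sets $\Tt_{{}_{p}B,\,{}_{p}A}$, the target matrices $({}_{p}A)^{(T-S)}$, and the scalar coefficients in $\mA$.

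The first two stabilizations are essentially bookkeeping. For the indexing sets, the row-sum condition $\roA(T)_i = b_{i-1,i}$ is independent of $p$, while the entrywise condition $T_\theta \leq_e {}_{p}A$ unfolds to $t_{ij}+t_{-i,-j} \leq a_{ij}$ for $i \neq j$ together with $2t_{ii} \leq a_{ii}+p$, the latter automatic for $p \gg 0$. Thus $\Tt_{{}_{p}B,\,{}_{p}A} = \widetilde{\Tt}_{B,A}$ for $p$ large, and $\Gamma_T$ depends only on $T$. For the target matrices, since the operation $X \mapsto X - (T-S)_\theta + \widehat{(T-S)}_\theta$ is affine linear in $X$ and $T-S$ is independent of $p$, I get $({}_{p}A)^{(T-S)} = A^{(T-S)} + pI = {}_{p}(A^{(T-S)})$, so each inflated basis vector on the Schur-algebra side corresponds, after stabilization, to the $\dKjj$-basis element $[A^{(T-S)}]$.

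The substantive step is the coefficient analysis, and this is the main obstacle. I would reuse the explicit expansions \eqref{eq:stab1eq1}--\eqref{eq:stab1eq2} from the proof of Proposition~\ref{prop:stab1}, which show that $\beta({}_{p}A,S,T) = \beta(A,S,T) + p\cdot G_1(B,A,S,T)$ is linear in $p$ and that each quantum integer appearing in $\LR{{}_{p}A;S;T}$ is a quantum integer in a $p$-shifted argument, with the shifts being exactly those tabulated in \eqref{eq:stab1eq2}. Packaged together, the Schur-algebra coefficient $v^{\beta({}_{p}A,S,T)}(v^2-1)^{n(S)}\overline{\LR{{}_{p}A;S;T}}$ has the form $G(v,v^{-p})$ for some $G \in \mathcal{R}_2$, so it fits the ansatz of Proposition~\ref{prop:stab1} verbatim. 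Specializing $v' = 1$ collapses every $p$-shifted prefactor to its unshifted counterpart, producing exactly $v^{\beta(A,S,T)}(v^2-1)^{n(S)}\overline{\LR{A;S;T}}$ as given by the same polynomial expressions \eqref{eq:beta(A,S,T)} and \eqref{eq:[AST]} now applied to $A \in \Xit_n$. The delicate point is to confirm that the $v'=1$ specialization really is a literal match rather than one that needs rewriting: it requires verifying that all $p$-dependent factors in $\overline{\LR{{}_{p}A;S;T}}$ lie in the sub-ring of $\mathcal{R}_2$ whose generators evaluate to $1$ at $v'=1$, which is visible from the explicit factorization in \eqref{eq:stab1eq2}. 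Once this bookkeeping is done, \eqref{eq:multdotK} follows from the definition of multiplication in $\dKjj$ given in the corollary following Proposition~\ref{prop:stab2}.
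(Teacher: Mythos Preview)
Your proposal is correct and follows essentially the same approach as the paper, which states only that the formula ``follows directly from Theorem~\ref{thm:multformula2} by the stabilization construction (see Proposition~\ref{prop:stab1}).'' You have simply fleshed out the details the paper leaves implicit: the identification $\Tt_{{}_{p}B,\,{}_{p}A} = \widetilde{\Tt}_{B,A}$ for $p\gg 0$, the compatibility $({}_{p}A)^{(T-S)} = {}_{p}(A^{(T-S)})$, and the coefficient analysis via \eqref{eq:stab1eq1}--\eqref{eq:stab1eq2}, all of which are already carried out (or cited) inside the proof of Proposition~\ref{prop:stab1} at the step \eqref{eq:pBA}.
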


%=====================
\section{Monomial and stably canonical bases for $\dKjj$}

Recall an admissible pair of matrices  $(A, B) \in \Xi_{n}\times \Xi_{n}$ is defined by Conditions (a)--(b) in \S\ref{sec:leading}.
We extend this to a definition of admissible pair $(A, B) \in \Xit_{n}\times \Xit_{n}$ by imposing the same conditions.
%if the following conditions (a)--(b) hold:
%\enu
%\item[(a)] $B^\offd = \sum\limits_{i=1}^n b_{i,i+1} \Ett^{i,i+1}$ (see \eqref{def:offd} for notation $B^\offd$);
%\item[(b)]  $A^\offd = \sum\limits_{j=1}^x \sum\limits_{i = 1}^n a_{i,i+j} \Ett^{i,i+j}$
%for some $x\in \NN$,
%where $a_{i,i+x} \geq b_{i,i+1}$ for all $i$.  (In this case, we say that \emph{$A$ is of depth $x$}).
%\endenu

The following lemma is similar to Lemma \ref{lem:ss} and can be proved by the same arguments.

\begin{lemma}  \label{lem:ssK}
If $(B,A)$ is admissible and $A$ is of depth $x$ (see (a)--(b) in \S\ref{sec:leading}),
then
\[
[B] [A] = [M(B,A)] + \textup{ lower terms},
\]
where $M(B,A)=A-\sum_{i=1}^nb_{i,i+1}(E_\theta^{i,i+x}-E_\theta^{i-1,i+x})$.
\end{lemma}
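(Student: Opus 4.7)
The plan is to deduce the lemma from its Schur algebra counterpart, Lemma~\ref{lem:ss}, via the stabilization procedure of Chapter~\ref{sec:coideal}. The multiplication on $\dKjj$ is by definition the specialization at $v' = 1$ of the stabilization formula in Proposition~\ref{prop:stab1}, so any $p$-independent identity of the form $[\p{B}][\p{A}] = \sum c_i(v, v^{-p}) [\p{Z_i}]$ inside the Schur algebras $\bS^{\fc}_{n,d'}$ descends directly to the corresponding identity $[B][A] = \sum c_i(v, 1) [Z_i]$ in $\dKjj$.

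First I would fix an even integer $p \gg 0$ so that $\p{A}, \p{B} \in \Xi_{n,d'}$ for a suitable $d'$. Since the shift $A \mapsto \p{A} = A + pI$ alters only the diagonal, the pair $(\p{B}, \p{A})$ remains admissible in the sense of Section~\ref{sec:leading}, $\p{A}$ retains depth $x$, and the off-diagonal coefficients $b_{i,i+1}$ are preserved. A direct inspection of \eqref{eq:MBA}--\eqref{eq:TBA} then gives
\[
M(\p{B}, \p{A}) = \p{M(B,A)}.
\]

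Applying Lemma~\ref{lem:ss} inside $\bS^{\fc}_{n,d'}$ yields
\[
[\p{B}]\,[\p{A}] \;=\; [\p{M(B,A)}] \;+\; \sum_{\substack{Z \in \Xit_n \\ Z <_\alg M(B,A)}} c_Z(v, v^{-p})\, [\p{Z}],
\]
where the leading coefficient equals $1$ (independent of $p$, thanks to Lemma~\ref{lem:M} and the vanishing $\beta(\p{A},0,\Tp) = 0$), and the lower-order indices correspond bijectively to the $Z \in \Xit_n$ strictly below $M(B,A)$ in $\leq_\alg$, since the shift $Z \mapsto \p{Z}$ is order-preserving on $\Xit_n$. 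Specializing $v' = 1$ in the stabilization identity now produces
\[
[B]\,[A] \;=\; [M(B,A)] \;+\; \text{lower terms}
\]
in $\dKjj$, with the explicit formula for $M(B,A)$ inherited verbatim from Lemma~\ref{lem:ss}.

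The main subtlety I anticipate is checking that distinct $Z \in \Xit_n$ give distinct basis elements $[\p{Z}]$ under the shift, so that no unexpected cancellation occurs among terms coming from different $Z_i$'s in the stabilization formula, and in particular that the leading coefficient is genuinely $1$; this reduces to the injectivity of $Z \mapsto \p{Z}$ on $\Xit_n$ for any fixed $p$. As an alternative I could bypass stabilization entirely and mimic the bar-involution argument of Lemma~\ref{lem:ss} directly in $\dKjj$, using Theorem~\ref{thm:multK} in place of Theorem~\ref{thm:multformula2}; this variant would first require establishing the analog $\overline{[A]} = [A] + \text{lower terms}$ in $\dKjj$, which itself is a direct consequence of stabilizing Corollary~\ref{cor:Abar} through Proposition~\ref{prop:stab2}.
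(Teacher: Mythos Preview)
Your proof is correct. The paper's one-line proof simply says the lemma ``is similar to Lemma~\ref{lem:ss} and can be proved by the same arguments,'' which is precisely your \emph{alternative} route: rerun the bar-involution trick directly inside $\dKjj$ using Theorem~\ref{thm:multK} together with the fact (via Proposition~\ref{prop:stab2}) that $\overline{[A]} = [A] + \text{lower terms}$ there.

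Your primary approach is genuinely different and arguably cleaner: rather than reproving the intermediate lemmas (the $\dKjj$-analogs of Lemmas~\ref{lem:ht}, \ref{lem:M}, \ref{lem:B'}, and Corollary~\ref{cor:lower}) from scratch, you transfer the finished statement of Lemma~\ref{lem:ss} wholesale via the stabilization machinery of Proposition~\ref{prop:stab1}. The key checks---that admissibility, depth, and the formula for $M(B,A)$ are preserved under $A \mapsto \p{A}$, and that the leading coefficient $1$ at the Schur algebra level forces the corresponding $G_i(v,v') \in \mathcal{R}_2$ to be identically $1$ (hence $G_i(v,1)=1$)---are all straightforward and correctly identified. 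The injectivity of $Z \mapsto \p{Z}$ that you flag is immediate, since the map is just a shift by $pI$. Your approach buys less repetition; the paper's buys a more self-contained argument at the $\dKjj$ level once the bar involution is in place.
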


The following is an analogue of Theorem~\ref{thm:min gen}.
\begin{prop}  \label{prop:mAK'}
For any $A\in\Xit_{n}$ of depth $x$, there exist tridiagonal matrices $A^{(1)}, \ldots, A^{(x)}$ in $\Xit_{n}$
satisfying $\roC(A^{(1)})=\roC(A)$, $\coC(A^{(x)})=\coC(A)$, $\coC(A^{(i)})=\roC(A^{(i+1)})$ for $1\leq i\leq x-1$ and
$A^{(i)}- (\sum_{j=1}^n   \sum_{m\le j-x+i}a_{m,j+1})E_\theta^{j,j+1}$ is diagonal for all $1\leq i\leq x$ such that
\eq \label{eq:mAK}
m'_A:= [A^{(1)}] [A^{(2)}] \cdots  [A^{(x)}]=[A]+\mbox{lower terms} \; \in \dKjj.
\endeq
\end{prop}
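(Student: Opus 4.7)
The plan is to carry out a direct stabilization of Algorithm \ref{alg:mono} together with the proof of Theorem \ref{thm:min gen}(1). Given $A \in \Xit_n$ of depth $x$, I would set $C^{(0)} = A$ and iteratively construct, for $t = 1, \ldots, x-1$, a pair $(A^{(t)}, C^{(t)})$ as follows: let $k_t = x - t + 1$ be the current depth of $C^{(t-1)}$, take $T^{(t-1)} = \sum_i c^{(t-1)}_{i,\,i+k_t}\, E^{i,\,i+k_t}$ to collect the outermost nonzero diagonal, and set
\[
A^{(t)} = (\text{diagonal}) + \sum_i c^{(t-1)}_{i,\,i+k_t}\, E^{i,\,i+1}_\theta, \qquad C^{(t)} = C^{(t-1)} - T^{(t-1)}_\theta + \V{T^{(t-1)}}_\theta,
\]
where the diagonal of $A^{(t)}$ is uniquely determined by $\roC(A^{(t)}) = \roC(C^{(t-1)})$ and $\coC(A^{(t)}) = \roC(C^{(t)})$; finally, $A^{(x)} := C^{(x-1)}$ is already tridiagonal.

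The key structural fact is that each $(A^{(t)}, C^{(t)})$ is admissible in the sense of Section \ref{sec:leading} extended verbatim to $\Xit_n$, with $C^{(t)}$ of depth $k_t - 1$, and the explicit formula $M(B, A) = A - \sum_i b_{i,i+1}(E_\theta^{i,\,i+k} - E_\theta^{i-1,\,i+k})$ from Lemma \ref{lem:ssK} yields $M(A^{(t)}, C^{(t)}) = C^{(t-1)}$. Applying Lemma \ref{lem:ssK} and induction downward on $t$, one obtains
\[
[A^{(t+1)}] \cdots [A^{(x)}] = [C^{(t)}] + \text{lower terms},
\]
so that multiplying by $[A^{(t)}]$ on the left produces $[C^{(t-1)}] + \text{lower terms}$, and iterating concludes $[A^{(1)}] \cdots [A^{(x)}] = [A] + \text{lower terms}$. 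The prescribed values $(A^{(i)})_{j,\,j+1} = \sum_{m \le j - x + i} a_{m,\,j+1}$ can then be read off by tracking the downward shift $\V{\phantom{T}}$ through the iteration: after $i-1$ peels, the superdiagonal entry of $A^{(i)}$ at position $(j, j+1)$ has accumulated exactly those entries of $A$ on the $x, x-1, \ldots, x-i+1$ superdiagonals which the successive shifts place into row $j$ of column $j+1$.

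The main obstacle is to establish the propagation of \emph{lower terms} through further left multiplication, i.e., the $\dKjj$-analogues of Corollary \ref{cor:lower} and Lemma \ref{lem:B'}: that $A' <_\alg A$ implies $M(B, A') <_\alg M(B, A)$ for tridiagonal $B$, and that $B' <_\alg B$ implies $M(B', A) <_\alg M(B, A)$. Both should follow by the same entry-wise comparison arguments used in Chapter \ref{sec:basis}, since Theorem \ref{thm:multK} has the same form as Theorem \ref{thm:multformula2} (only the index set $\widetilde{\Tt}_{B, A}$ is enlarged) and the leading-term analysis rests purely on admissibility together with the explicit shape of $M(B, A)$, both of which are unaffected by the passage to $\Xit_n$ allowing negative diagonal entries.
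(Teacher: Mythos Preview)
Your proposal is correct and follows essentially the same approach as the paper's proof, which simply states that Lemma~\ref{lem:ssK} leads to an algorithm (almost) identical to Algorithm~\ref{alg:mono} and that the rest of the proof is the same as for Theorem~\ref{thm:min gen}. You have fleshed out precisely what that means: run the peeling algorithm in $\Xit_n$ (allowing negative diagonals), observe that each $(A^{(t)},C^{(t)})$ is admissible with $M(A^{(t)},C^{(t)})=C^{(t-1)}$, and invoke Lemma~\ref{lem:ssK} together with the $\dKjj$-analogue of Corollary~\ref{cor:lower} to propagate the leading term. One small remark: for this proposition (the semi-monomial basis) you only need the analogue of Corollary~\ref{cor:lower}, not of Lemma~\ref{lem:B'}; the latter becomes relevant only when you pass from $m'_A$ to the bar-invariant $m_A$ in Theorem~\ref{thm:dKjj CB}.
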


\proof
Lemma~\ref{lem:ssK} leads to an algorithm (almost) identical to Algorithm~\ref{alg:mono},
which produces the tridiagonal matrices $A^{(i)}$ (whose diagonal entries might be negative) as needed.
The rest of the proof is the same as for Theorem~\ref{thm:min gen}. % (where $A_i$ is denoted by $A^{(i)}$).
\endproof

Hence $\{m'_A~|~A\in\Xit_{n}\}$ forms a basis for $\dKjj$ (called a \emph{semi-monomial basis}).

Let $B \in \Xit_n$ be tridiagonal. The analogue of Lemma \ref{lem:B'} holds here, and so we have
\[
\overline{ [B] } \in [B] + \sum_{B'  \text{ tridiagonal}, B' <_\alg B} \mA [B'].
\]
If $B$ is diagonal, set $\{B\} =[B]$.
Arguing inductively on the partial order $\le_\alg$ and using a standard argument (cf. \cite[24.2.1]{Lu93})
there exists a unique element $\{B\} \in \dKjj$ such that
\[
\overline{ \{B\} } =\{B\}, \qquad
\{B\} \in [B] + \sum_{B' \text{ tridiagonal}, B' <_\alg B} v^{-1}\ZZ[v^{-1}] [B'].
\]
Modifying \eqref{eq:mAK}, for $A\in \Xit_n$, we now define
\eq \label{eq:mAK2}
m_A =  \{A^{(1)}\}  \{A^{(2)}\} \cdots  \{A^{(x)}\}.
\endeq

%\lem  \label{lem:pAbar}
%For any $A \in \Xit_{n}$, we have
%$\={[A]} = [A] + \textup{lower terms} \in \dKjj.$
%\endlem
%
%\proof
%The assertion for $A$ tridiagonal follows from \eqref{eq:pAlower}.
%
%For arbitrary $A$, by Proposition~\ref{prop:mAK'}
%there are tridiagonal matrices $A^{(1)}, \ldots, A^{(x)} \in \Xit_{n}$ such that
%\eq
%[A] = [ A^{(1)}] [ A^{(2)}] \cdots [ A^{(x)}] +\textup{lower terms} \in \dKjj.
%\endeq
%Apply the bar map to the above equation. The lemma follows by induction on $A$ (with respect to $\leq_\alg$)
%and an analogue of Lemma \ref{lem:lower2} on $\dKjj$.
%\endproof

\begin{thm}  \label{thm:dKjj CB}
{\quad}
\enu
\item[(a)]
%The element $m_A$, for $A \in \Xit_{n} $,   is bar-invariant, and moreover, $m_A = [A] +$ lower terms.
We have $m_A \in [A] + \sum_{B\in \Xit_n, B <_\alg A} \mA [B]$, for any $A \in \Xit_{n} $.
Hence the set $\{m_A~|~A \in \Xit_{n} \}$ forms an $\mA$-basis of $\dKjj$ (called a monomial basis of $\dKjj$).

\item[(b)]
There exists a unique basis $\Bjj =\{ \{A\}~|~A \in \Xit_{n} \}$ of $\dKjj$
such that  $\overline{ \{A\} } =\{A\}$ and
$\{A\} \in [A] + \sum_{B\in \Xit_n, B <_\alg A} v^{-1} \ZZ[v^{-1}]\, [B]$ ($\Bjj$ is called  stably canonical basis of $\dKjj$).
\endenu
\end{thm}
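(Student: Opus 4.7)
The plan is to prove (a) first and then use it to deduce (b) via the standard Lusztig triangular-construction argument.

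For part (a), I would start from Proposition \ref{prop:mAK'}, which already gives $m'_A = [A^{(1)}][A^{(2)}]\cdots[A^{(x)}] = [A] + \text{lower terms}$ for the tridiagonal matrices $A^{(i)}$ produced by the stabilized version of Algorithm \ref{alg:mono}. By the characterization of $\{A^{(i)}\}$ stated just before \eqref{eq:mAK2}, we may write $[A^{(i)}] = \{A^{(i)}\} + \sum_{B'_i <_\alg A^{(i)}, \, B'_i \text{ tridiagonal}} f_{B'_i}^{(i)}[B'_i]$ with $f_{B'_i}^{(i)} \in v^{-1}\ZZ[v^{-1}]$; equivalently $\{A^{(i)}\} = [A^{(i)}] + \sum (\ldots)$ by inverting the unitriangular transition. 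Substituting and expanding $m_A = \{A^{(1)}\}\{A^{(2)}\}\cdots\{A^{(x)}\}$ gives $m'_A$ plus a sum of products $[B'_1][B'_2]\cdots[B'_x]$, each with $B'_i \le_\alg A^{(i)}$ and at least one strict inequality. The task then reduces to showing each such product contributes only to $\sum_{B <_\alg A} \mA [B]$.

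To control these extra products I would extend Lemma \ref{lem:ssK} and Lemma \ref{lem:lower2} to the setting of $\Xit_n$: for admissible tridiagonal pairs the leading-term map $M(B,A)$ and the identity $[B][A] = [M(B,A)] + \text{lower terms}$ continue to hold (same proof, relying only on the multiplication formula in Theorem \ref{thm:multK} and entry-wise comparison via $\sigma_{ij}$), and the propagation inequalities $M(B',A) <_\alg M(B,A)$ and $M(B,A') <_\alg M(B,A)$ survive verbatim in $\Xit_n$. Iterating through the $x$ factors exactly as in the proof of Theorem \ref{thm:min gen}, each mixed product $[B'_1][B'_2]\cdots[B'_x]$ produces a leading term of the form $[M(B'_1, M(B'_2, \ldots, M(B'_{x-1}, B'_x)))]$ which, by a downward induction analogous to \eqref{Mi} in the proof of Lemma \ref{lem:lower2}, lies strictly below $A = M(A^{(1)}, \ldots, A^{(x)})$ in $\le_\alg$. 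Combined with the triangularity of $m'_A$ this yields $m_A \in [A] + \sum_{B <_\alg A} \mA [B]$, and since the transition matrix between $\{[A]\}$ and $\{m_A\}$ is unitriangular, $\{m_A\}$ is an $\mA$-basis.

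For part (b), the key preliminary observation is that the bar involution on $\dKjj$ is unitriangular in the standard basis $\{[A]\}$: since each $m_A$ is bar-invariant and $m_A = [A] + \sum_{B <_\alg A} c_{A,B}[B]$ with $c_{A,B} \in \mA$, inverting this lower-unitriangular matrix and applying $\bar{\ }$ yields $\overline{[A]} \in [A] + \sum_{B <_\alg A} \mA [B]$. Given this, the existence and uniqueness of $\{A\}$ satisfying $\overline{\{A\}} = \{A\}$ and $\{A\} \in [A] + \sum_{B <_\alg A} v^{-1}\ZZ[v^{-1}][B]$ follows from the standard Lusztig lifting argument (cf.~\cite[24.2.1]{Lu93}), carried out inductively along the partial order $<_\alg$ on $\Xit_n$. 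The induction is valid because for each fixed $A$ the set $\{B \in \Xit_n \mid B <_\alg A\}$ is finite, as $\roC$, $\coC$, and the off-diagonal partial sums $\sigma_{ij}$ are all fixed by $A$ on this set.

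The main obstacle I anticipate is the verification that Lemma \ref{lem:lower2} extends cleanly to $\Xit_n$, where some diagonal entries of intermediate matrices produced by the algorithm may be negative; this requires checking that the leading-term identity from Lemma \ref{lem:ssK} and the strict monotonicity of $M$ depend only on off-diagonal data measured by $\sigma_{ij}$, and so are insensitive to the sign of diagonal entries. Once this monotonicity is in place the rest of the argument is bookkeeping along the Bruhat-type order $\le_\alg$.
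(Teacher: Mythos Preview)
Your proposal is correct and follows essentially the same approach as the paper. The paper's proof of part (a) simply says ``proved in the same way as for Theorem~\ref{thm:mono}'' (which in turn invokes Corollary~\ref{cor:lower} and Lemma~\ref{lem:B'}), and for part (b) it derives $\overline{[A]}=[A]+\text{lower terms}$ from (a) and cites the standard argument in \cite[24.2.1]{Lu93}---exactly the two steps you outline. Your anticipated obstacle (extending the leading-term monotonicity to $\Xit_n$) is already handled in the paper by Lemma~\ref{lem:ssK}, which is stated directly in the $\Xit_n$ context; since $M(B,A)$ and the inequalities in Corollary~\ref{cor:lower} and Lemma~\ref{lem:B'} depend only on the off-diagonal data via $\sigma_{ij}$, they carry over without change.
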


\proof
With the help of Proposition~\ref{prop:mAK'},
the assertion for monomial basis is proved in the same way as for Theorem~\ref{thm:mono}, and hence skipped.

It follows by (a) that $\={[A]} = [A] + \textup{lower terms} \in \dKjj.$
The canonical basis follows by this and a standard argument (cf. \cite[24.2.1]{Lu93}).
\endproof

%=====================
\section{Isomorphism $\dot{\bK}^{\fc, \textup{geo}}_{n} \cong \dKjj$}

Recall from ~\cite[Chapter 9.4]{FLLLW}, there is an associative $\mathbb Q(v)$-algebra $\dot{\bK}^{\fc, \textup{geo}}_{n}$,
with a standard basis given by $\{ [A]^{\geo} ~|~A\in \Xit_n \}$. and a bar map defined by a stabilization procedure.
(Note $\dot{\bK}^{\fc, \textup{geo}}_{n}$ and $[A]^{\geo}$ was denoted by $\dKjj$ and $[A]$ therein.)

\begin{prop}\label{Knc}
  There is an algebra isomorphism $\dot{\bK}^{\fc, \textup{geo}}_{n} \stackrel{\simeq}{\longrightarrow}  \dKjj$,
  $[A]^{\geo} \mapsto [A]$. The isomorphism commutes with the bar maps, and it preserves the canonical bases,
  i.e., $\{A\}^\geo \mapsto \{A\}$ for all $A$.
\end{prop}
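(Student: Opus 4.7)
The plan is to deduce the proposition from the isomorphism $\psi: \Sjjg \xrightarrow{\sim} \Sjj$ of Proposition~\ref{Schur-iso} together with Proposition~\ref{prop:psiCB} (which asserts $\psi$ commutes with bar and preserves canonical bases), by showing that the two stabilization procedures are compatible at every level: multiplication, bar, and canonical basis.

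First I would observe that the map $\Phi : \dot{\bK}^{\fc, \textup{geo}}_{n} \to \dKjj$ sending $[A]^{\geo} \mapsto [A]$ is a $\QQ(v)$-linear bijection, since both sides are free $\QQ(v)$-modules on $\Xit_n$ by construction. To verify $\Phi$ is an algebra homomorphism, I would invoke Proposition~\ref{prop:stab1} and its geometric analogue from \cite{FLLLW}. Both say that for fixed $A_1,\ldots,A_f \in \Xit_n$ with matching row/column sums, there exist $Z_1,\ldots,Z_m \in \Xit_n$ and rational functions $G_i(v,v') \in \mathcal{R}_2$ such that, for $p \gg 0$ even,
\[
[{}_p A_1]\cdots[{}_p A_f] = \sum_{i=1}^m G_i(v,v^{-p})\,[{}_p Z_i]
\]
holds in $\Sjj$ (resp.\ the same identity holds in $\Sjjg$ with $G_i^{\geo}$). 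By Proposition~\ref{Schur-iso}, $\psi$ identifies the standard bases $\{[A]\}$ of $\Sjj$ and $\{[A]^{\geo}\}$ of $\Sjjg$ and is an algebra map, so the structure constants of $\Sjj$ and $\Sjjg$ at each $p$ coincide. Uniqueness of the coefficients $G_i(v,v^{-p})$ for infinitely many $p$ then forces $G_i^{\alg} = G_i^{\geo}$ in $\mathcal{R}_2$, and specializing at $v'=1$ yields equal multiplication on $\dot{\bK}^{\fc,\geo}_n$ and $\dKjj$.

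Next I would run essentially the same argument for the bar involution using Proposition~\ref{prop:stab2}: both sides give an expansion $\overline{[{}_p A]} = \sum_i H_i(v,v^{-p},v^{p^2})[{}_p T_i]$ with $H_i \in \mathcal{R}_3$. Since the proof of Proposition~\ref{prop:psiCB} shows that $\psi$ intertwines the bar maps on $\Sjj$ and $\Sjjg$ for every $d$, the stabilization polynomials $H_i^{\alg}$ and $H_i^{\geo}$ agree, and specializing at $v'=v''=1$ gives $\Phi \circ \bar{\,} = \bar{\,} \circ \Phi$.

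Finally, for the canonical basis, I would use the characterization of $\{A\}$ in Theorem~\ref{thm:dKjj CB}: it is the unique bar-invariant element in $[A] + \sum_{B <_\alg A} v^{-1}\ZZ[v^{-1}]\,[B]$. By the previous step, $\Phi(\{A\}^{\geo})$ is bar-invariant. The geometric canonical basis satisfies $\{A\}^{\geo} \in [A]^{\geo} + \sum_{B <_{\geo} A} v^{-1}\ZZ[v^{-1}]\,[B]^{\geo}$, and the key fact recalled in the proof of Proposition~\ref{prop:psiCB}---that $B \leq_{\geo} A$ implies $B \leq_{\alg} A$, a fact I would extend from $\Xi_{n,d}$ to $\Xit_n$ by translating by $pI$---gives $\Phi(\{A\}^{\geo}) \in [A] + \sum_{B <_\alg A} v^{-1}\ZZ[v^{-1}]\,[B]$. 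By uniqueness, $\Phi(\{A\}^{\geo}) = \{A\}$. The main technical point to watch is the compatibility of the two orderings on $\Xit_n$ in the stabilized setting; beyond this, the argument is a formal consequence of the finite-level identifications already established.
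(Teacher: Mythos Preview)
Your proposal is correct and follows essentially the same approach as the paper: transport the finite-level isomorphism $\psi$ of Proposition~\ref{Schur-iso} and Proposition~\ref{prop:psiCB} through the stabilization procedure to conclude that the products, bar maps, and canonical bases on the two limiting algebras coincide. The paper's proof is more terse (it simply invokes that the two stabilization procedures are ``the same'' on identified Schur algebras, citing the relevant propositions in \cite{FLLLW} and noting $\mathcal{R}_1 \subset \mathcal{R}_2 \subset \mathcal{R}_3$), while you spell out the uniqueness argument for the stabilization polynomials $G_i$ and $H_i$ explicitly; the content is the same.
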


\begin{proof}
Recall from Proposition~\ref{Schur-iso} the algebra isomorphisms $\psi: \Sjjg \stackrel{\simeq}{\rightarrow} \Sjj$, for all $d$.
As the algebra structures
$\dot{\bK}^{\fc, \textup{geo}}_{n}$ and $\dKjj$ arise from the same stabilization procedure from the family of algebras
$\{\Sjjg \}_{d\ge 0}$ and $\{\Sjj \}_{d\ge 0}$, respectively
(by comparing Proposition~ \ref{prop:stab1} with ~\cite[Proposition 9.2.5]{FLLLW} and noting
 $\mathcal{R}_1 \subset \mathcal{R}_2$), we obtain the algebra isomorphism
$\dot{\bK}^{\fc, \textup{geo}}_{n} \simeq  \dKjj$.

By Proposition~\ref{prop:psiCB}, $\psi: \Sjjg \rw \Sjj$ commutes with the bar maps.
As the bar maps on $\dot{\bK}^{\fc, \textup{geo}}_{n}$ and $\dKjj$ are defined by the same stabilization procedure
 from the bar maps on the family of algebras $\{\Sjjg \}_{d\ge 0}$ and $\{\Sjj \}_{d\ge 0}$, respectively
 (by comparing Proposition~ \ref{prop:stab2} with ~\cite[Proposition 9.2.7]{FLLLW} and noting
 $\mathcal{R}_2 \subset \mathcal{R}_3$), the compatibility of the 2 bar maps under the isomorphism follows.
 The last claim is proved by the same argument as for Schur algebras in Proposition~\ref{prop:psiCB}.
\end{proof}

\rmk  \label{K=Sjj}
The algebra isomorphism $\dot{\bK}^{\fc, \textup{geo}}_{n} \cong  \dKjj$ in Proposition~\ref{Knc}
allows us to transport further results in \cite[\S9.7]{FLLLW} over here, and so we do not give new proofs here.
Set ${}_{\QQ} \Sjj =\QQ(v)\otimes_{\ZZ[v,v^{-1}]} \Sjj$.
In particular, the map
\begin{align*}
\Psi_d: \dKjj \longrightarrow {}_{\QQ}\Sjj,
\qquad
[A] \mapsto
\begin{cases}
[A], & \text{ for }A \in \Xi_{n,d},
\\
0, & \text{ for }A \in \Xit_n \big \backslash \Xi_{n,d}   %\text{ otherwise}
\end{cases}
\end{align*}
is a homomorphism which preserves the canonical bases;
that is, $\Psi_d$ sends $\{A\} \mapsto \{A\}$ for $A \in \Xi_{n,d}$ and $\{A\} \mapsto 0$ otherwise.
\endrmk

%=========================================================
\chapter{The  quantum symmetric pair $(\Kn, \Kjj)$ }
  \label{sec:coidealK}

In this chapter, we improve the results in \cite{FLLLW} on the idempotented form of quantum symmetric pairs
to genuine quantum symmetric pairs.
We construct an algebra $\Kjj$ as a subalgebra of a completion of the algebra $\dKjj$, after reviewing
a similar type A construction. We study a comultiplication on $\Kjj$,
show that $\Kjj$ is a coideal subalgebra of $\Kn$ (a stabilization algebra of affine type A),
and that $(\Kn, \Kjj)$ forms a quantum symmetric pair.

%=====================
\section{The  algebra $\Kn$ of type A}
In this section we review briefly the affine type A construction (which goes back in finite type A to \cite{BLM90}).

Recall that
\eq
\ZZ_n = \big \{ \ld = (\ld_i)_{i \in \ZZ} ~|~ \ld_i \in \ZZ, \ld_i = \ld_{i+n}, \forall i\in \ZZ \big \}.
\endeq
Let $\hKn$ be the $\QQ(v)$-vector space of all (possibly infinite) linear combinations
$\sum_{A\in\~{\Tt}_n} \xi_A \ ^\fa[A]$, for $\xi_A \in \QQ(v)$ and ${}^\fa[A] \in \dKn$,
such that   the sets $\{A\in \~{\Tt}_{n} ~|~ \xi_A \neq 0, \roA(A) = \ld\}$ and
$\{A\in \~{\Tt}_{n} ~|~ \xi_A \neq 0, \coA(A) = \ld\}$ are finite, for any $\ld \in \ZZ_n$.

The following multiplication defines an algebra structure for $\hKn$:
\[
\Bp{\sum_A \xi_A \ ^\fa[A]}  \Bp{\sum_B \eta_B \ ^\fa[B]}
= \sum_{A,B} \xi_A \eta_B (\ ^\fa[A] \, \ ^\fa[B]).
\]

Let
\[
\Tt^0_n =\{ A=(a_{ij}) \in \Tt_n~|~a_{ii}=0, \forall i \in \ZZ\}.
\]
For $\af \in \ZZ_n$, we set
\[
D_{\af} = \textup{diag}(\af) \in \Tt_n.
\]
For each $\bj \in \ZZ_n$ and $A\in \Tt^0_n$, we set
\eq
\ ^\fa A(\bj) = \sum_{\af \in \ZZ_n} v^{\bj\cdot \af} \ ^\fa[A +D_\af] \in \hKn,
\endeq
where $\bj\cdot\af = \sum_{i=1}^n j_i \af_i$.
For $t\in \ZZ$ we define
\eq
\vec{t} = (\ldots,t,t,t,\ldots) \in \ZZ^\ZZ.
\endeq
In particular, we have
\eq
\ ^\fa A(\vec{0}) = \sum_{\af \in \ZZ_n} \ ^\fa[A +D_\af],
\quad
\ ^\fa 0(\bj) = \sum_{\af \in \ZZ_n} v^{\bj\cdot \af} \ ^\fa[D_\af].
\endeq
Let $\Kn$ be the $\QQ(v)$-subspace of $\hKn$ spanned by
$
\{{}^\fa A(\bj) ~|~ A\in \Tt^0_n, \bj \in \ZZ_n\}.
$
%A matrix $A\in \Tt^0_n$ is {\em bidiagonal} if $a_{ij}=0$ unless $i-j =0,1$ (or $i-j =0, -1$).
Let
\begin{align*}
\Tt^{1}_n &=\{ A=(a_{ij}) \in \Tt^0_n~|~a_{ij}=0 \text{ unless } j-i=1, \forall i,j \in \ZZ\},
\\
%\qquad
\Tt^{-1}_n &=\{ A=(a_{ij}) \in \Tt^0_n~|~a_{ij}=0 \text{ unless } j-i=-1, \forall i,j \in \ZZ\},
\quad \Tt^{\pm 1} = \Tt^{1}_n \cup \Tt^{-1}_n.
\end{align*}
The proposition below is an affine type A analogue of a result in \cite{BLM90}.
%-----------------------------------------------------------------------------------------------------------
\begin{prop}  [{\cite[Theorem~1.1]{DF15}}]
\label{Prop:DF}
The space $\Kn$ is a subalgebra of $\hKn$ generated by
${}^\fa A(\vec{0}), \ ^\fa 0(\bj)$,  for $A\in \Tt_n^{\pm 1}$ and $\bj \in \ZZ_n.$
Moreover, the algebra $\Kn$ is isomorphic to the quantum affine $\gl_n$.
\end{prop}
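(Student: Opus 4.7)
This statement is cited from \cite[Theorem~1.1]{DF15}, so the plan is to outline the Deng–Fu argument (which itself adapts the BLM strategy to affine type A).

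First, I would verify that $\Kn$ is closed under multiplication in $\hKn$. The key input is the affine type A multiplication formula with bidiagonal (semisimple) generators established in \cite{DF14}: a product $ \,^\fa A(\bj) \cdot {}^\fa 0(\bj')$ simply shifts the weight vector, while a product of the form $\,^\fa B(\vec 0) \cdot {}^\fa A(\bj)$ with $B \in \Tt_n^{\pm 1}$ expands as a finite $\QQ(v)$-linear combination of terms $\,^\fa C(\bj'')$. In affine type A, unlike in our type C setting, the bidiagonal generators are bar-invariant and their multiplication formula does not introduce non-trivial Hecke algebra corrections, so this closure is fairly direct. Iterating gives closure for all products of the listed generators.

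Next I would show that the generators actually span $\Kn$. Using the BLM-style invariant $\Psi$ as in \eqref{eq:PsiA}, one argues by induction on $\Psi(A)$ that every $ \,^\fa A(\vec 0)$ lies in the subalgebra generated by $\{\,^\fa B(\vec 0), \,^\fa 0(\bj) \mid B \in \Tt_n^{\pm 1}, \bj\in\ZZ_n\}$: writing $A$ as a sum $A^{(0)} + A^{\offd}$ with $A^{(0)}$ strictly upper or lower triangular of the leading off-diagonal, one produces the leading term $\,^\fa A(\vec 0)$ from a product of bidiagonal generators applied to a smaller matrix, plus lower-order terms in $\Psi$. This is essentially the affine type A analogue of what we did in Algorithm \ref{alg:mono} for our type C Schur algebras. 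A general $\,^\fa A(\bj)$ is then recovered by multiplying $\,^\fa A(\vec 0)$ by $\,^\fa 0(\bj)$ on the left (modulo bookkeeping with the diagonal).

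For the isomorphism with quantum affine $\gl_n$, I would match Chevalley generators: $E_i \leftrightarrow \,^\fa E^{i,i+1}(\vec 0)$, $F_i \leftrightarrow \,^\fa E^{i+1,i}(\vec 0)$, and the Cartan/loop generators $K_i^{\pm 1}$, $\mathbf{k}_j^{\pm 1}$ to appropriate combinations of the $\,^\fa 0(\bj)$. The quantum Serre relations, Cartan–Chevalley commutation relations, and the Drinfeld–Jimbo (or Drinfeld new) relations are then verified by direct computation using the explicit bidiagonal multiplication formula. The hard part of this step is the imaginary/loop direction: one needs to show that the central and imaginary root vectors obtained from products of $E_i$'s match with specific infinite sums in $\hKn$, and that the full set of Drinfeld generators is captured by the $\,^\fa A(\bj)$ basis. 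This was the main technical achievement of \cite{DF15}, building on the presentation of quantum affine $\gl_n$ in terms of affine Schur algebras; the construction of an explicit basis of quantum affine $\gl_n$ matching the $\,^\fa A(\bj)$ then forces the map to be an isomorphism.

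The main obstacle I anticipate is the last step: constructing enough independent elements on the quantum group side to match all of $\Kn$ and simultaneously showing no new relations are introduced. As in \cite{DF15}, this would require invoking the PBW-type basis of quantum affine $\gl_n$ and identifying it with the stably canonical basis on the Schur side, a comparison that is forced by the stabilization property of the family of affine type A Schur algebras in \cite{DF14}.
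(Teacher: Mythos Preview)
The paper does not give a proof of this proposition at all: it is stated as a citation of \cite[Theorem~1.1]{DF15} and used as a black box. Your recognition of this is correct, and your outline of the Deng--Fu argument is a reasonable sketch of how that reference proceeds; since there is no proof in the present paper to compare against, there is nothing further to match.
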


%%%%%%%%%%%
\section{The algebra $\Kjj$}  \label{sec:Kjj}
   \label{sec:Kjj-def}

Now we are back to the construction of an algebra $\Kjj$ out of $\dKjj$ so that $\dKjj$ is a modified (or idempotented) version of $\Kjj$.
Recall that
\eq\label{eq:Znc}
\ZZ_n^\fc = \{ \ld = (\ld_i)_i \in \ZZ_n ~|~ \ld_i = \ld_{n+i} = \ld_{-i}; \ld_0,\ld_{r+1} \in 2\ZZ+1\}.
\endeq
Let $\hKcn$ be the $\QQ(v)$-vector space of all (possibly infinite)  linear combinations
$\sum_{A\in\~{\Xi}_n} \xi_A [A]$ for
$\xi_A \in \QQ(v), \;  [A] \in \dKcn, $
such that, for any $\ld \in \ZZ^\fc_n$, the sets
\begin{equation}\label{fincondition}
\mbox{$\big\{A\in \~{\Xi}_{n} ~|~ \xi_A \neq 0, \roA(A) = \ld \big\}$ and
$\big\{A\in \~{\Xi}_{n} ~|~ \xi_A \neq 0, \coA(A) = \ld \big\}$ are finite.}
\end{equation}

We have an algebra structure on  $\hKcn$ which extends the one on $\Kjj$:
\[
\Bp{\sum_A \xi_A [A]}\cdot \Bp{\sum_B \eta_B [B]}
= \sum_{A,B} \xi_A \eta_B [A] [B].
\]
Let $\Xi^0_n$ be the subset of $\Tt^0_n$ consisting of centro-symmetric matrices in $\Tt^0_n$.
For any $\nu \in \ZZ_n$, we define $\nu_\tt \in \ZZ^\fc_n$ by
\eq
(\nu_\tt)_i = \nu_i + \nu_{-i} + \sum_{k\in\ZZ} \delta_{i,k(r+1)}.
\endeq
For each $\bj \in \ZZ_{r+2}, A\in \Xi_n^0$, we set
\[
A(\bj) = \sum_{\af \in \ZZ^\fc_n} v^{\bj\bullet \af} [A +D_\af] \in \hKcn,
\]
where
\[
\bj\bullet \af = j_0\frac{\af_0-1}{2} + \sum_{i=1}^{r} j_i \af_i + j_{r+1}\frac{\af_{r+1}-1}{2} \in \ZZ.
\]
In particular, we have
\eq
A(\vec{0}) = \sum_{\af \in \ZZ^\fc_n} [A +D_\af],
\quad
0(\bj) = \sum_{\af \in \ZZ^\fc_n} v^{\bj\cdot \af} [D_\af].
\endeq
Let $\Kcn$ be the $\QQ(v)$-subspace of $\hKcn$ spanned by
\[
\mathfrak{B} = \{A(\bj) ~|~ A\in \Xi^0_n, \bj \in \ZZ_{r+2}\}.
\]

\prop\label{prop:Kjjgen}
The subspace $\Kcn$ is a subalgebra of $\hKcn$ generated by
$ A(\vec{0}), 0(\bj)$, where $A\in \Xiz$ is tridiagonal and $\bj \in \ZZ_{r+2}$.
\endprop

\proof
Denote by ${\bf K}_n'$ the subalgebra of $\hKcn$ generated by the elements specified in the proposition.
We shall show that the space $\Kcn$ is closed under the left multiplication by the elements in the proposition.
We give a detailed proof that $B(\vec{0}) \cdot A(\bj) \in \Kjj$, for tridiagonal $B$ and arbitrary $A$ in $\Xi_n^0$.
(We shall skip a similar proof that  %$A(\bj)  B(\vec{0})$,
$0(\bj_1) A(\bj)$, % $A(\bj) 0(\bj_1)\in\Kjj$
for any $A\in \Xi^0_n$ and $\bj_1,\bj \in \ZZ_{r+2}$.) It follows that ${\bf K}_n'\subseteq \Kcn$ by noting $0(\bj) \in \hKcn$.

We have
\eq
\bA{
B(\vec{0}) \cdot A(\bj)
&= \sum_{\gamma \in \ZZ_n^\fc} v^{\bj\bullet\gamma} [B+D_\beta]  [A+D_\gamma],
}
\endeq
for a unique $\beta$ satisfying $\coC(B+D_\beta) = \roC(A+D_\gamma)$.
Let $m_{B,A}^C \in \mA$ be the structure constants in $\dKjj$ such that
$
[B]\cdot[A] = \sum_{C} m_{B,A}^C [C] \in \dKjj.$
By the multiplication formula in \eqref{eq:mult3}, we have
\[
B(\vec{0}) \cdot A(\bj)
=
\sum_{\gamma \in \ZZ_n^\fc}  v^{\bj\bullet\gamma}
\sum_{T\in \Tt_{B+D_\beta, A+D_\gamma}}
\sum_{S\in\Gamma_T}
m_{B+D_\beta, A+D_\gamma}^{A^{(T-S)}+D_\gamma} [A^{(T-S)}+D_\gamma].
\]
Note that $ \Tt_{B+D_\beta, A+D_\gamma} =  \Tt_{B, A+D_\gamma}$ by construction. For any $T \in \Tt_n$ such that $T_\tt \leq_e A$, we set $\tau = \tau(B,A,T) \in \ZZ_n$ be such that
\[
\tau_i = b_{i-1,i} - \roA(T)_i.
\]
Therefore, $T+D_\tau \in \Tt_{B, A+D_\gamma}$ for all $\gamma \in \ZZ_n^\fc$ such that $\tau_\tt \leq \gamma$, and hence
\[
B(\vec{0}) \cdot A(\bj)
=
\sum_{\substack{T\in\Tt_n, T_\tt \leq_e A\\ \roA(T) \leq (b_{i-1,i})_i}}
\sum_{S\in\Gamma_{T+D_\tau}}
\sum_{\gamma \in \ZZ_n^\fc}  v^{\bj\bullet\gamma}
m_{B+D_\beta, A+D_\gamma}^{A^{(T+D_\tau-S)}+D_\gamma} [A^{(T+D_\tau-S)}+D_\gamma].
\]
We claim that
\eq\label{eq:52claim}
m_{B+D_\beta, A+D_\gamma}^{A^{(T+D_\tau-S)}+D_\gamma}
= \sum_i v^{\bk^{(i)} \bullet\gamma} m^{(i)}_{T,S}
\endeq
for some
$
\bk^{(i)}\in\ZZ_{r+2},
m^{(i)}_{T,S} \in \QQ(v).
$
As a consequence, we obtain
\[
\bA{
B(\vec{0}) \cdot A(\bj)
&=
\sum_{T,S,i}
m^{(i)}_{T,S}
\sum_{\gamma \in \ZZ_n^\fc}  v^{(\bj+\bk^{(i)})\bullet\gamma}
[A^{(T+D_\tau-S)}+D_\gamma]
\\
&=
\sum_{T,S,i}
m^{(i)}_{T,S}
A^{(T+D_\tau,S)}(\bj+\bk^{(i)}) \in \Kjj.
}
\]

Let us prove the claim \eqref{eq:52claim}.
Note that if $[A+D_\gamma] \in \Sjj$, the structure constants are given by
\[
m_{B+D_\beta, A+D_\gamma}^{A^{(T+D_\tau-S)}+D_\gamma}
= (v^2-1)^{n(S)}
v^{\beta(A+D_\gamma, S, T+D_\tau)+\gamma(A+D_\gamma, S, T+D_\tau)}
\LR{A+D_\gamma; S; T+D_\tau}.
\]
A detailed calculation shows that the $v$-exponent $\beta(A+D_\gamma, S, T+D_\tau)+\gamma(A+D_\gamma, S, T+D_\tau)$
 is a polynomial in variables $\gamma_{k}$  $(0\leq k \leq r+1)$ with total degree one.
Also, by \eqref{eq:[AST]}
we have
\[ %\label{eq:[ADSTD]}
\bA{
\LR{A+D_\gamma; S; T+D_\tau}
&=
M\cdot\prod_{1 \leq k \leq r} \prod_{i=1}^{(S + \^{T-S})_{\tt,kk}} (v^{2(\gamma_k-\tau_k-\tau_{-k}+i)} -1)
\\
&\qquad\qquad \cdot
\prod_{k \in\{0, r+1\}} \prod_{i=1}^{(S + \^{T-S})_{kk}} (v^{2(\gamma_k-2\tau_k-1+2i)} -1),
}
\]
for some $M\in\QQ(v)$ that is independent of $\gamma_k$'s.
Thus, \eqref{eq:52claim} follows on the Schur algebra level, and hence it follows on the stabilization algebra level.

It remains to show that $A(\bj)$, for arbitrary $A\in \Xi^0_n, \bj \in \ZZ_{r+2}$,
can be generated by these (expected) generators in the proposition.
This follows from Theorem \ref{thm:min gen} and an induction on $\Psi(A)$ \eqref{eq:PsiA}.
This then implies that ${\bf K}_n'\supseteq \Kcn$. The proposition is proved.
\endproof

\section{The algebra $\Kjj$ as a subquotient}
   \label{sec:Kjj-quot}

In this section and the subsequent \S\ref{sec:coKjj}, we shall use \cite[Section~9]{FLLLW} substantially.
In this preliminary section, we identify the algebra $\Kjj$ as a subquotient of a higher rank algebra generated by Chevalley generators.

Let
\[
\breve n = n+2,
\]
and
\eq
\XitL = \big\{A \in \Xit_{\breve n} ~|~\roA(A)_1 =\coA(A)_1 =0 \big\}.
\endeq
Recall from \cite[\S 9.8]{FLLLW} that there is a subalgebra $\dKcL$ of $\dKcb$:
\eq\label{eq:dKcL}
\dKcL = \Span_{\QQ(v)} \big\{  [A] \in \dKcb ~|~ A \in \XitL \big\}.
\endeq
The algebra $\dKcL$ contains an ideal
\[
\cI = \Span_{\QQ(v)}\{[A] \in \dKcL ~|~ a_{11} < 0\}.
\]
For $A \in \Xit_n$, let $\ddot A$ be the matrix in $\XitL$ obtained from $A$ by inserting
rows/columns of zeros between the 0th and $\pm1$st rows/columns (mod $n$).
The assignment $\Xit_n \rw \XitL$,
$A \mapsto \ddot A$, induces  an isomorphism
\eq
\trho: \dKjj \longrightarrow \dKcL/\cI,
\quad
[A] \mapsto [\ddot A] + \cI.
\endeq

Let $\hKcL$ be the subalgebra of $\hKjj$ consisting of (possibly infinite) formal sums of the form
$
\sum_{A \in \XitL} \xi_A [A],
\text{ for } \xi_A \in \QQ(v).
$
In particular, $\hKcL$ contains the
{\em restricted sums}
\eq
M[\bj]
= \sum_{\af \in \ZZ^{\fc r}_{\breve n}} v^{\~{\bj}\cdot \af}[M + D_\af],
%= \sum_{\bz\in\NN^{r+2}} v^{\bj\cdot\bz} [M + \ddot{D_\af}]
\quad \text{ for } M \in  \Xit_{\breve n, 1, 0},
\endeq
where we denote, for $\bj \in \ZZ_{r+2}$,
\[
\~{\bj} = (j_0, 0, j_1, j_2, \ldots, j_{r+1}) \in \ZZ_{r+3},
\qquad
\mbb Z_{\breve n}^{\fc r} = \{ \alpha =(\alpha_i) \in \mbb Z_{\breve n}^{\fc}\ | \ \alpha_1=0\}.
\]
(We note that all infinite sums in this section are restricted.)
The isomorphism $\trho$ induces an isomorphism
\eq
\label{hrho}
\hrho: \hKjj \longrightarrow \hKcL \big/ \widehat \cI,
\quad
[A] \mapsto [\ddot A] + \widehat \cI
\endeq
where $\widehat \cI$ is the completion of $\cI$ in $\hKjj$.
In particular, $\hrho$ sends $A(\bj)$ to $\ddot{A}[\bj]+\widehat \cI$, for $A \in \Xiz$ and $\bj \in \ZZ_{r+2}$.

Denote
\[
\tilde \Xi_{\breve n}' = \{A = (a_{ij}) \in \tilde \Xi_{\breve n}\ | \ a_{ii}=0, \ {\rm if}\ i \neq \pm 1 \mod \breve n\}.
\]
By abuse of notation, we denote by $\mbf Y_{\breve n}^{\fc}$
the subspace of $\widehat{\mbf K}_{\breve n}^{\fc}$ spanned by all restricted sums $A[\mbf j]$, for $A \in \tilde \Xi_{\breve n}' $
and $\mbf j\in \ZZ_{r+2}$.

\begin{lem}
  The subspace $\mbf Y_{\breve n}^{\fc}$ is a subalgebra of $\widehat{\mbf K}_{\breve n}^{\fc}$ generated by
 $A[\vec{0}]$ and $0[\mbf j]$, for all  tridiagonal matrices $A$ in $\tilde{\Xi}_{\breve n}'$ and $\mbf j\in \ZZ_{r+2}$.
\end{lem}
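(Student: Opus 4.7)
The plan is to adapt the argument of Proposition~\ref{prop:Kjjgen} to the present subquotient setting inside $\hKcb$. Let $\mbf Y'$ denote the subalgebra of $\hKcb$ generated by the proposed elements; I would establish both containments $\mbf Y' \subseteq \mbf Y_{\breve n}^{\fc}$ and $\mbf Y_{\breve n}^{\fc} \subseteq \mbf Y'$.

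For the first containment, I would verify that $\mbf Y_{\breve n}^{\fc}$ is closed under left multiplication by the proposed generators. The product $0[\mbf j_1]\cdot A[\mbf j]$ collapses directly to a scalar multiple of $A[\mbf j + \mbf j_1]$, because $[D_\alpha]\cdot[A+D_\gamma]$ is nonzero for exactly one $\alpha$, namely $\alpha = \roA(A)+\gamma$. For $B[\vec 0]\cdot A[\mbf j]$ with $B$ tridiagonal in $\tilde\Xi_{\breve n}'$, I would apply Theorem~\ref{thm:multK} in $\dKcb$ term-by-term to each $[B+D_\beta]\cdot[A+D_\gamma]$. As in the derivation following \eqref{eq:52claim}, inspection of \eqref{eq:[AST]} shows that the resulting structure constants factor as $\sum_i v^{\mbf k^{(i)}\bullet\gamma}\, m^{(i)}_{T,S}$; summing over $\gamma\in\ZZ_{\breve n}^{\fc r}$ weighted by $v^{\~{\mbf j}\cdot\gamma}$ then reorganises the product into a finite sum of restricted sums $A^{(T+D_\tau-S)}[\mbf j+\mbf k^{(i)}]$, which lie in $\mbf Y_{\breve n}^{\fc}$.

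For the reverse containment, I would induct on the statistic $\Psi(A)$ of \eqref{eq:PsiA}. The base case $\Psi(A)=0$ is trivial since $A$ is diagonal and $A[\mbf j]=0[\mbf j]$ is itself a generator. For the inductive step, I would invoke the semi-monomial factorization of Proposition~\ref{prop:mAK'} applied in $\dKcb$: there exist tridiagonal matrices $A^{(1)},\ldots,A^{(x)}$ in $\tilde\Xi_{\breve n}'$ with $[A^{(1)}]\cdots[A^{(x)}]=[A]+\text{lower terms}$ (with strictly smaller $\Psi$-value). Spreading this identity over diagonals $D_\gamma$ and absorbing the resulting diagonal weight into a suitable factor $0[\mbf j']$ expresses $A[\mbf j]$ as a product of generators plus restricted sums of strictly smaller $\Psi$-value, closing the induction.

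The hard part will be verifying compatibility with the defining constraint of $\tilde\Xi_{\breve n}'$, namely $a_{ii}=0$ for $i\not\equiv\pm1\pmod{\breve n}$: both the matrices $A^{(T+D_\tau-S)}$ appearing in the first step and the tridiagonal matrices $A^{(i)}$ produced by Algorithm~\ref{alg:mono} must continue to satisfy it. The latter check is essentially built in, since the diagonal entries of the $A^{(i)}$ are determined from the row/column sums of $A$ via \eqref{eq:diag} and the constraint is inherited directly. The former requires a row-by-row inspection of the identity $A^{(T-S)}=A-(T-S)_\tt+(\widehat{T-S})_\tt$: since $B\in\tilde\Xi_{\breve n}'$ is tridiagonal with $b_{ii}=0$ except when $i\equiv\pm1\pmod{\breve n}$, the admissible $T$ have row support dictated by $b_{i-1,i}$, and one verifies that the diagonal contributions of $(T-S)_\tt$ and $(\widehat{T-S})_\tt$ cancel at every position $i\not\equiv\pm1\pmod{\breve n}$, leaving the constraint intact.
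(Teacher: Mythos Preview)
Your overall framework coincides with the paper's: reduce to closure under left multiplication by the proposed generators, then treat the reverse containment by induction on $\Psi(A)$ via the semi-monomial factorization. However, there is a genuine gap in your final paragraph, and it sits precisely where you flagged the ``hard part.''

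The cancellation you assert does not hold. Take $B\in\tilde\Xi_{\breve n}'$ tridiagonal with $b_{2,3}=1$ and $b_{-2,-1}=0$ (and $\breve n$ large enough that $2\not\equiv\pm1\pmod{\breve n}$). Then $\roA(T)_3=1$, and choosing $T$ with $t_{3,2}=1$ (all other entries in row $3$ zero) and $S=0$ gives
\[
(T-S)_{\tt,22}=t_{22}+t_{-2,-2}=0,\qquad
(\widehat{T-S})_{\tt,22}=t_{3,2}+t_{-1,-2}=1+0=1,
\]
so $A^{(T-S)}_{22}=a_{22}-0+1=1\neq0$. In general $A^{(T+D_\tau-S)}\notin\tilde\Xi_{\breve n}'$, and the constraint $b_{ii}=0$ for $i\not\equiv\pm1$ does nothing to enforce entrywise cancellation on the diagonal of $A^{(T-S)}$; it only fixes the \emph{row sums} of $T$. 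A parallel issue affects your reverse-containment step: the tridiagonal matrices $A^{(t)}$ produced by Algorithm~\ref{alg:mono} have their diagonals determined by \eqref{eq:diag} from the full row sums of $A$, not from $a_{ii}$, so they need not lie in $\tilde\Xi_{\breve n}'$ either.

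The correct resolution, which the paper carries out, is not cancellation but a \emph{change of summation variable}. After multiplying, one has terms $[A^{(T-S)}+D_\beta]$ with $\beta\in\ZZ_{\breve n}^{\fc r}$. Define $\beta''$ by $\beta''_i=(A^{(T-S)}+D_\beta)_{ii}$ for $i\not\equiv\pm1\pmod{\breve n}$ and $\beta''_{\pm1}=0$. Then $A':=A^{(T-S)}+D_\beta-D_{\beta''}$ lies in $\tilde\Xi_{\breve n}'$ (its diagonal vanishes away from $\pm1$, while the $(\pm1,\pm1)$-entries are unchanged since $\beta_{\pm1}=\beta''_{\pm1}=0$), and $A'$ is independent of $\beta$. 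Because $\beta\mapsto\beta''$ is a bijection of $\ZZ_{\breve n}^{\fc r}$ and the structure constants from \eqref{eq:[AST]} are polynomials in $v^{\pm\beta_k}$ for $k\not\equiv\pm1$, the sum over $\beta$ reassembles into a finite combination of restricted sums $A'[\mbf k]$ with $A'\in\tilde\Xi_{\breve n}'$. The same change-of-variable trick rescues the inductive step: each $A^{(t)}[\vec0]$ equals a scalar multiple of $(A^{(t)})'[\vec0]$ for an appropriate $(A^{(t)})'\in\tilde\Xi_{\breve n}'$.
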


\begin{proof}
 The proof is essentially the same as that for Proposition~ \ref{prop:Kjjgen} and we shall be brief.
  For the reader's convenience, we show that  $\mbf Y_{\breve n}^{\fc}$ is closed
  under multiplication on the left by $S_{\alpha}[\vec{0}]$ for some tridiagonal matrices $S_{\alpha}$.
  By definition, we have
  \begin{equation*}
    S_{\alpha}[\vec{0}] \cdot A[\mathbf j]
= \sum_{\beta \in \mathbb Z_{\breve n}^{\fc r}} [S_{\alpha} + D_{\beta'}] \cdot v^{\mathbf j \bullet \beta} [A + D_{\beta}],
  \end{equation*}
  where $\beta'$ is uniquely determined by the condition $\coA( S_{\alpha} + D_{\beta'}) = \roA( A + D_{\beta})$.
  Note that if $\coA(S_{\alpha})_1 \neq \roA(A)_1$ then $S_{\alpha}[\vec{0}] \cdot A[\mathbf j] =0$.
  Otherwise, by a similar argument as that for Proposition~ \ref{prop:Kjjgen}, the product
  $[S_{\alpha} + D_{\beta'}] \cdot v^{\mathbf j \bullet \beta} [A + D_{\beta}]$ is a linear combination of
  $[A^{(T-S)} + D_{\beta}]$ whose coefficient is a linear combination of $v^{\beta'' \cdot \mbf k}$ for some $\mbf k$
  and $\beta'' =(\beta_{i}'') \in \mbb Z_{\breve n}^{\fc r}$ such that
  $\beta_i''$ is the $(i,i)$th entry of $A^{(T-S)} + D_{\beta}$ if $ i \neq \pm 1 \mod \breve n$ and $0$ otherwise.
  By changing indices and taking sums over $\beta'' \in \mbb Z_{\breve n}^{\fc r}$,
  the product $S_{\alpha}[\vec{0}] \cdot A[\mathbf j]$ is a linear combination of $A'[\mbf k]$
  for some $A'\in \tilde \Xi_{\breve n}'$ and $\mbf k \in \mbb Z_{r+2}$.
\end{proof}

Let $\KcL $ be the subspace of $\mbf Y_{\breve n}^{\fc}$ spanned by $\ddot A[\mbf j]$ for all $A \in \Xiz, \bj \in \ZZ_{r+2}$.
It follows by \eqref{hrho} that
\eq  \label{eq:KcnL}
\Kcn \cong   \hrho(\Kcn) =  (\KcL + \widehat \cI) \big/\widehat \cI.
\endeq
%Let $\Kcap$ be the subalgebra of $\mbf Y_{\breve n}^{\fc}$ generated by the elements
%$E^{i,i+1}_{\tt, \breve n} [\vec{0}], 0[\bj]$,
%for $1 \leq i \leq \breve n$ and $\bj \in \ZZ_{r+2}.$
We now introduce a subalgebra of $\mbf Y_{\breve n}^{\fc}$
\eq  \label{Kcap}
\Kcap = \text{subalgebra generated by } E^{i,i+1}_{\tt, \breve n} [\vec{0}], 0[\bj],
\forall 1 \leq i \leq \breve n, \bj \in \ZZ_{r+2}.
\endeq
(Here the notation $E^{i,i+1}_{\tt, \breve n}$ is adapted from \eqref{Ekl}--\eqref{Ttt}
with additional subscript $\breve n$ to indicate it lies in $ \Xit_{\breve n}$. Similar self-explanatory notation will be used below.)

\begin{lemma}\label{lem:Kcapgen}
For any tridiagonal $A \in\Xiz$, the element  $\ddot{A}[\vec{0}]$ lies in $\Kcap+\widehat \cI$.
In particular, we have $\Kcn \cong
(\KcL + \widehat \cI) \big/\widehat \cI \subseteq (\Kcap + \widehat \cI) \big/\widehat \cI.$
\end{lemma}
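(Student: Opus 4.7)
The plan is to show that for any tridiagonal $A \in \Xi_n^0$, the restricted sum $\ddot{A}[\vec{0}] \in \hKcb$ can be written as a polynomial in the generators $E^{i,i+1}_{\tt, \breve n}[\vec{0}]$ and $0[\bj]$ modulo $\widehat{\cI}$. Combined with the description of $\Kcn$ in Proposition~\ref{prop:Kjjgen} and the isomorphism \eqref{eq:KcnL}, this will yield the inclusion $\Kcn \subseteq (\Kcap + \widehat{\cI})/\widehat{\cI}$. I would prove the statement by induction on $\Psi(\ddot A)$ (see \eqref{eq:PsiA}), the base case $\ddot A = 0$ being handled by $0[\vec 0] \in \Kcap$.

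The first task is to examine the shape of $\ddot A$ in $\Xit_{\breve n}$. Because the insertion map $A \mapsto \ddot A$ introduces a pair of zero rows/columns at positions $\pm 1 \pmod{\breve n}$, a tridiagonal $A \in \Xi_n^0$ produces entries of $\ddot A$ at two types of off-diagonal positions: those at distance~$1$ in the new labeling (arising from original entries $a_{i,i+1}$ with $i \ne 0$), and two special entries at distance~$2$ (arising from $a_{0,1}$ and its centro-symmetric partner), which straddle the inserted row/column at index~$1$. Accordingly, the inductive construction splits into generating the distance-$1$ portion of $\ddot A$ first, and then creating the distance-$2$ entries.

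The second step is to show that for every tridiagonal $B \in \XitL$ (in the strict sense), $B[\vec{0}]$ lies in $\Kcap$. Applying the $\dKcL$-version of the multiplication formula (Theorem~\ref{thm:multK}) to a product $E^{i,i+1}_{\tt,\breve n}[\vec 0] \cdot B'[\vec 0]$ with $B'$ tridiagonal of smaller off-diagonal mass, and keeping track of the leading term via the analogue of Lemma~\ref{lem:ss}, produces $B[\vec 0] + (\text{lower terms})$, where the lower terms are restricted sums of tridiagonal matrices of strictly smaller $\Psi$; these lie in $\Kcap$ by induction. The diagonal shifts $B[\bj]$ for arbitrary $\bj \in \ZZ_{r+2}$ are then obtained by left-multiplying by suitable $0[\bj']$. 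The third step treats the distance-$2$ entries: for instance the entry at position $(0,2)$ of $\ddot A$ is generated as the leading term of the product $E^{0,1}_{\tt,\breve n}[\vec 0] \cdot E^{1,2}_{\tt,\breve n}[\vec 0]$ (or an appropriate divided-power version) via Lemma~\ref{lem:ssK}, again subtracting off lower-order tridiagonal pieces that are already in $\Kcap$ by the second step.

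The principal technical obstacle is control of the "error" terms in the multiplication formula. Products of the Chevalley-type generators in $\hKcb$ generically produce restricted sums $C[\bj']$ where $C$ need not lie in $\XitL$, because the multiplication can create nonzero entries in the row/column at index~$\pm 1 \pmod{\breve n}$, violating $\roA(C)_1 = \coA(C)_1 = 0$. Such summands are killed when projecting onto $\KcL$, but the bookkeeping requires comparing the $\roA$ and $\coA$ profiles carefully across the iterated products. A parallel subtlety is that some summands will have $a_{11} < 0$ and therefore lie in $\widehat{\cI}$; these can be discarded harmlessly. Once the clean leading term is isolated and the lower terms are handled by the double induction on $\Psi$ and on the distance of the maximal off-diagonal entry, the lemma follows, and the displayed chain of inclusions $\Kcn \cong (\KcL + \widehat{\cI})/\widehat{\cI} \subseteq (\Kcap + \widehat{\cI})/\widehat{\cI}$ is immediate.
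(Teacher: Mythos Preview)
Your proposal is correct and follows essentially the same strategy as the paper: express $\ddot A[\vec 0]$ as an explicit monomial in the Chevalley-type generators $E^{i,i+1}_{\tt,\breve n}[\vec 0]$ plus lower terms, then kill the lower terms by induction together with $\widehat{\cI}$.

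The paper's execution is more direct than yours in one respect. Rather than splitting into a distance-$1$ phase and a distance-$2$ phase, the paper writes down a single monomial
\[
\ddot{\mbf f}_A[\vec 0]=\beta_0E^{01}_{\tt,\breve n}[\vec 0]\cdot \beta_{n-1}E^{n,n+1}_{\tt,\breve n}[\vec 0]\cdot \beta_{n-1}E^{n+1,n+2}_{\tt,\breve n}[\vec 0]\cdot\bigl(\beta_{n-2}E^{n-1,n}_{\tt,\breve n}[\vec 0]\cdots\beta_0E^{12}_{\tt,\breve n}[\vec 0]\bigr)
\]
whose leading term is $\ddot A[\vec 0]$ directly; only one induction on $<_{\alg}$ is then needed, and your ``double induction'' on $\Psi$ and on the maximal off-diagonal distance is unnecessary. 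The paper also handles your ``principal technical obstacle'' more cleanly: since $\roA$ and $\coA$ are preserved under multiplication and both $\ddot A[\vec 0]$ and $\ddot{\mbf f}_A[\vec 0]$ lie in $\mbf Y^{\fc}_{\breve n}$ with $\roA_1=\coA_1=0$, every lower term $B[\bj]$ has $B\in\widetilde\Xi_{\breve n,1,0}$. From $b_{1j}\ge 0$ for $j\neq 1$ and $\sum_j b_{1j}=0$ one gets $b_{11}\le 0$, so either $B\in\cI$ or $b_{11}=0$ and row/column $1$ of $B$ vanish, forcing $B=\ddot C$; and one checks that such $C$ is again tridiagonal because $B<_{\alg}\ddot A$ bounds all $\sigma_{i,i+2}$. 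Your phrase ``killed when projecting onto $\KcL$'' is slightly off---nothing is projected; the point is exactly this dichotomy $B\in\cI$ versus $B=\ddot C$.
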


\begin{proof}
The second part follows by definition and \eqref{eq:KcnL}.
So it remains to check the first statement in the lemma.

  For any $A  \in \tilde{\Xi}_n$ such that $A - \sum_{1\leq i \leq n}\beta_i E^{i, i+1}_{\theta, n}$ is diagonal, we set
  \begin{equation*}
    \ddot {\mbf f}_{A}[\vec{0}] = \beta_0E^{01}_{\theta, \breve n} [\vec{0}] \cdot \beta_{n-1}E^{n,n+1}_{\theta, \breve n} [\vec{0}] \cdot \beta_{n-1}E^{n+1,n+2}_{\theta, \breve n} [\vec{0}]
    \cdot (\beta_{n-2}E^{n-1,n}_{\theta, \breve n} [\vec{0}] \cdots \beta_{1}E^{23}_{\theta, \breve n} [\vec{0}]  \cdot  \beta_0E^{12}_{\theta, \breve n} [\vec{0}]).
  \end{equation*}
  By a similar argument as for \cite[Lemma~ 9.1.2]{FLLLW},  we have
  \begin{equation*}
    [\ddot A + D_{\alpha}] = \ddot{\mbf f}_A[\vec{0}] \cdot [D_{\coA(\ddot A + D_{\alpha})}] + {\rm lower\ terms},
 \quad \forall \alpha\in \mbb Z_{\breve n}^{\fc r} .
  \end{equation*}
  Therefore we have
  \begin{equation} \label{A0}
    \begin{split}
      \ddot A[\vec{0}] &= \sum_{\alpha \in \mbb Z_{\breve n}^{\fc r}} [A + D_{\alpha}]
      = \ddot {\mbf f}_A[\vec{0}]\sum_{\alpha \in \mbb Z_{\breve n}^{\fc r}} [D_{\coA(\ddot A + D_{\alpha})}] + {\rm lower\ terms} \\
      & = \ddot {\mbf f}_A[\vec{0}] + {\rm lower\ terms}.
    \end{split}
  \end{equation}
  We now consider ``lower terms" in \eqref{A0}.
  Since both $\ddot A[\vec{0}]$ and $\ddot{\mbf f}_A[\vec{0}]$ are in $\mbf Y^{\fc}_{\breve n}$,
  these ``lower terms" are also in $\mbf Y^{\fc}_{\breve n}$.
  Hence they are linear combinations of $B[\mbf j]$ for some $B <_\alg A$.
  Since $B[\mbf j] = v^{-\tilde{\mbf j} \bullet \coA(B)} B[\vec{0}] \cdot 0[\mbf j]$,
  by induction, it suffices to show any such $B$ appearing in ``lower terms" in \eqref{A0}
is equal to $\ddot C$ up to $\mathcal I$ for some $C \in \Xiz$.
It follows by \eqref{A0} that $B \in \tilde{\Xi}_{\breve n, 1,0}$.
  Therefore we have $B \in \mathcal I$ or $B = \ddot C$ for some $C \in \Xiz$.  The lemma is proved.
\end{proof}

\vspace{.2cm}
Now (in this paragraph) we repeat some of the above constructions in the affine type A setting quickly.
We define
\eq
\TtL = \{A \in \~{\Tt}_{\breve n} ~|~ \roA(A)_i = 0 = \coA(A)_i, i = \pm1\}.
\endeq
The algebra $\dot{\bf K}_{\breve n}$ contains a subalgebra
$\dot{\bf K}_{\breve n,1,0}= \Span_{\QQ(v)} \big\{  [A] \in \dot{\bf K}_{\breve n} ~|~ A \in \TtL \big\}.$
For $\bj \in \ZZ_{n}$, let $\~{\bj}\in \ZZ_{\breve n}$ be uniquely determined by
$
\~{\bj}_1 = 0 = \~{\bj}_{-1},
\~{\bj}_0 = \bj_0,
\~{\bj}_i = \bj_{i-1}
\,
(2\leq i \leq n-1).
$
For $M \in \~{\Tt}_{\breve n}$, we define the restricted sum
\[
M[\bj]^\fa
= \sum_{\substack{\af \in \ZZ_{\breve n}\\ M+D_\af\in \TtL }}
v^{\~{\bj}\cdot \af}[M + D_\af]
%= \sum_{\bz\in\NN^{r+2}} v^{\bj\cdot\bz} [M + \ddot{D_\af}]
\in \hKL.
\]
As counterparts of $\hKcL$ (respectively,  $\Kcap$ and $\KcL$), we have their type A counterparts $\hKL$ (respectively,  $\Kap$ and $\KL$).
More precisely, $\hKL$ is the subalgebra of $\hKb$ consisting of formal sums of the form
$\sum_{A \in \TtL} \xi_A [A]
\quad
(\xi_A \in \QQ(v)),$
$\Kap$ is the subalgebra of $\hKL$ generated by
$
E^{i,i+1}_{\breve n} [\vec{0}]^\fa, E^{i+1,i}_{\breve n} [\vec{0}]^\fa, 0[\bj]^\fa
\quad
(1\leq i \leq \breve n, \bj \in \ZZ_{n}),
$
and $\KL$ is the subalgebra of $\hKL$ consisting of $A[\mbf j]^{\fa}$ for all $A \in \tilde \Theta_{\breve n,1,0}$.
One can show that $\KL$ is generated by $E^{i,i+1}_{\breve n} [\vec{0}]^\fa, E^{i+1,i}_{\breve n} [\vec{0}]^\fa$ and $ 0[\bj]^\fa$ for
$2\leq i \leq \breve n-1, \bj \in \ZZ_{n}$.

%%%%%%%%%%%%%%%%%%
\section{Comultiplication on $\Kjj$}
 \label{sec:coKjj}

In this section we shall show
(in Theorem~\ref{thm:coK} and Theorem~\ref{thm:aqsp})  that $ \Kjj$ is a coideal subalgebra of $\Kn$ and $(\Kn,  \Kjj)$ forms a quantum symmetric pair.
The construction in \S\ref{sec:Kjj-quot} allows us to study the comultipication on $\Kjj$ via Chevalley generators.

Recall from \cite[\S 9.6]{FLLLW} a comultiplication
$\dot \Delta^{\fc}: \dKjj \longrightarrow \dKjj \otimes \dKn$.
In the same way there is a comultiplication of $\dot{\mathbf K}_{\breve n}^{\fc}$,
whose restriction to $\dKcL$ is denoted by
\eq
\dot \Delta^{\fc}_{\breve n}:
\dKcL \longrightarrow \dKcL \otimes \dKL.
\endeq
By \cite[Lemma 9.3.1, Proposition 9.3.4]{FLLLW}, the two comultiplications
 $\dot \Delta^{\fc}$ and $\dot \Delta_{\breve n}^{\fc}$ are compatible, i.e., the following diagram commutes:
\[
\xymatrix{
	\dKcL
	\ar[rr]^-{\dot \Delta^{\fc}_{\breve n}}
	\ar[d]_{q_1}
&
&
	\dKcL \otimes \dKL
	\ar[d]_{q_2}
\\
	\dKjj
	\ar[rr]^-{\dot \Delta^{\fc}}
&
&
	\dKjj \otimes \dKn,
}
\]
where both vertical maps are the canonical quotient maps.
Let $\mathcal J = \Ker(q_2)$.
By diagram chasing, we have $\dot{\Delta}^{\fc}_{\breve n}(\cI) \subseteq \mathcal J$.

By passing to completions, $\dot{\Delta}^{\fc}_{\breve n}$ and $\dot{\Delta}^{\fc}$ induce
the following comultiplications:
\eq  \label{Delta2}
\widehat{\Delta}_{\breve n}^{\fc}:
\hKcL
\rightarrow
\hKcL \otimes \hKL
\quad
\textup{and}
\quad
\widehat{\Delta}^{\fc}:
\widehat{\mathbf K}^{\fc}_{n}
\rightarrow
\widehat{\mathbf K}^{\fc}_{n} \otimes \widehat{\mathbf K}_{n}.
\endeq

\begin{thm}   \label{thm:coK}
The  restriction of $\widehat{\Delta}^{\fc}$ in \eqref{Delta2} to ${\Kjj}$ provides an algebra homomorphism
\[
\Delta^{\fc}=\widehat{\Delta}^{\fc} |_{\Kjj}: \Kjj \longrightarrow  \Kjj \otimes \Kn.
\]
\end{thm}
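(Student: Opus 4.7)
The plan is to verify, using the algebra generators of $\Kjj$ from Proposition~\ref{prop:Kjjgen}, that $\widehat{\Delta}^{\fc}(A(\vec 0)) \in \Kjj \otimes \Kn$ for every tridiagonal $A \in \Xiz$, and that $\widehat{\Delta}^{\fc}(0(\bj)) \in \Kjj \otimes \Kn$ for every $\bj\in \ZZ_{r+2}$. Since $\widehat{\Delta}^{\fc}$ is an algebra homomorphism by construction of the completion, checking these two families of generators will suffice.

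First I would lift the problem to the higher-rank setting $\dKcL \to \dKcL \otimes \dKL$. Via the compatibility square preceding \eqref{Delta2} together with the isomorphism \eqref{eq:KcnL}, the statement reduces to showing that $\widehat{\Delta}^{\fc}_{\breve n}(x) \in \KcL \otimes \KL + \mathcal J$ for each representative $x$ of a generator. By Lemma~\ref{lem:Kcapgen}, it then suffices to treat the Chevalley-type elements $E^{i,i+1}_{\tt,\breve n}[\vec 0]$ and the weight elements $0[\bj]$, which generate the relevant piece modulo $\widehat{\cI}$.

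Next I would carry out the explicit computation. On each basis element $[E^{i,i+1}_{\tt,\breve n} + D_\af]$, the idempotented comultiplication $\dot\Delta^{\fc}_{\breve n}$ produces a finite sum in $\dKcL \otimes \dKL$ whose shape is controlled by the multiplication formula of Theorem~\ref{thm:multK} applied in the higher-rank algebra, combined with the explicit tensor decomposition of the relevant generators already available in \cite[Chapter~9]{FLLLW}. Summing over the diagonal shifts $\af \in \ZZ^{\fc r}_{\breve n}$ and recollecting by the diagonal residue that survives on each tensor factor, the outcome should organize itself into a finite $\QQ(v)$-linear combination of products of restricted sums $B[\mathbf k]\otimes C[\mathbf k']^{\fa}$, each factor lying in $\Kcap$ or $\Kap$ respectively. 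A parallel but simpler calculation handles $0[\bj]$, whose image will be a linear combination of expressions of the form $0[\bj']\otimes 0[\bj'']^{\fa}$.

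The main technical obstacle will be showing that the summation over $\af$ reindexes cleanly into restricted sums satisfying the finiteness condition \eqref{fincondition}: concretely, one must check that in each term of $\dot\Delta^{\fc}_{\breve n}([E^{i,i+1}_{\tt,\breve n}+D_\af])$ the row and column weights on each tensor factor depend on $\af$ through a single affine function, so that both the left factor and the right factor collect into honest restricted sums $B[\mathbf k]$ and $C[\mathbf k']^{\fa}$ with finite support on each weight class. Once this clean splitting is in place, applying the quotient map $q_1 \otimes q_2$ and invoking \eqref{eq:KcnL} will produce the desired element of $\Kjj \otimes \Kn$, thereby completing the proof.
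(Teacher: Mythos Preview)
Your overall strategy matches the paper's: lift to the rank-$\breve n$ setting via \eqref{eq:KcnL} and Lemma~\ref{lem:Kcapgen}, compute $\widehat\Delta^{\fc}_{\breve n}$ on the Chevalley-type elements $E^{i,i+1}_{\tt,\breve n}[\vec 0]$ and $0[\bj]$, and verify that the infinite sum over diagonal shifts reorganizes into a finite combination of restricted sums. The paper carries out exactly this computation, obtaining the closed formula
\[
\Delta^{\fc}_{\breve n}(E^{i,i+1}_\tt[\vec{0}])
= E^{i,i+1}_\tt[\vec{0}] \otimes 0[\cdots]^{\fa}
+ 0[\cdots] \otimes E^{i,i+1}[\cdots]^{\fa}
+ 0[\vec{0}] \otimes E^{-i,-i-1}[\vec{0}]^{\fa},
\]
using the explicit component formula for $\dot\Delta^{\fc}_{\breve n}$ from \cite[Lemma~5.3.4]{FLLLW} (not Theorem~\ref{thm:multK}, which is a multiplication formula, not a comultiplication formula).

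There is, however, a step you gloss over. Your computation on Chevalley generators shows only that $\widehat\Delta^{\fc}_{\breve n}(\Kcap)\subseteq \Kcap\otimes\Kap$. But $\Kcap$ is strictly larger than $\KcL$ (for instance $E^{1,2}_{\tt,\breve n}[\vec 0]\in\Kcap\setminus\KcL$), and the quotient maps $q_1,q_2$ underlying \eqref{eq:KcnL} are only defined on $\KcL$ and $\KL$. So you cannot simply ``apply $q_1\otimes q_2$'' to an element of $\Kcap\otimes\Kap$. The paper closes this gap with a separate weight-tracking step: for $\ddot A[\vec 0]\in\KcL$ one has $\roA(\ddot A)_1=\coA(\ddot A)_1=0$, and then \cite[Proposition~9.3.4]{FLLLW} forces each nonzero component $\Delta^{\fc}_{\breve n,\mathbf a',\mathbf b',\mathbf a'',\mathbf b''}(\ddot A[\vec 0])$ to satisfy $a'_1=b'_1=a''_1=b''_1=0$, whence the image actually lands in $\KcL\otimes\KL+\mathcal J$. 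Only then can one descend to $\Kjj\otimes\Kn$. Your proposal should include this refinement explicitly.
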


\begin{proof}
Denote by $\Delta_{\breve n}^{\fc}$ the restriction of $\widehat{\Delta}_{\breve n}^{\fc}$ to $\KcL$.
We first show $\Delta^{\fc}_{\breve n} (\Kcap) \subseteq   \Kcap \otimes \Kap$.
To this end, it suffices to check that $\Delta^{\fc}_{\breve n}(E^{i,i+1}_\tt[\vec{0}]) \in \Kcap \otimes \Kap$ for $1\leq i \leq \breve n$.

Fix $\mathbf a,\mathbf b \in \ZZ^\fc_n$, and let $\adKcb$ be the subspace of $\dKcb$ spanned by the standard basis elements $[A]$ such that $\roA(A) = \mathbf a$ and $\coA(A) = \mathbf{b}$.
Recall from~\cite[\S 9.6]{FLLLW} that
\[
\Delta^{\fc}_{\breve n} = (\Dcbab)_{\mathbf a', \mathbf b' \in \ZZ^\fc_n, \mathbf a'', \mathbf b'' \in \ZZ_n},
\]
where $
\Dcbab
:  \adKcb
\rightarrow
\adKcbp \otimes \adKbpp
$
is defined  for any $\mathbf a', \mathbf b' \in \ZZ^\fc_n, \mathbf a'', \mathbf b'' \in \ZZ_n$ such that $(\mathbf a', \mathbf a'') \models \mathbf a, (\mathbf b', \mathbf b'')\models \mathbf b$, or equivalently,  for
\[
\mathbf a = \mathbf a' + \mathbf a''_\tt,
\quad
\mathbf b = \mathbf b' + \mathbf b''_\tt.
\]
We have
\[
\Dcbab(E^{i,i+1}_\tt[\vec{0}])
= \sum_{\substack{\af \in \ZZ^\fc_{\breve n}\\ E^{i,i+1}_\tt+D_\af\in \XitL }}
\Dcbab([E^{i,i+1}_\tt + D_\af] ).
\]

We compute the contribution of $\Dcbab([E^{i,i+1}_\tt + D_\af] )$ to each $\adKcbp \otimes \adKbpp$. We have
\[
\bA{
\mathbf{a} &= \roA(E^{i,i+1}_\tt + D_\af) = \epsilon^i_\tt + \af = \mathbf{a}' +\mathbf{a}''_\tt,
\\
\mathbf{b} &= \coA(E^{i,i+1}_\tt + D_\af) = \epsilon^{i+1}_\tt + \af = \mathbf{b}' +\mathbf{b}''_\tt,}
\]
where
\[
\epsilon^i = \bc{
1&\tif i \equiv j \textup{ (mod } \breve n),
\\
0&\otw,
}\in\ZZ_{\breve n}
\]
 and $\epsilon^i_\tt = \epsilon^i + \epsilon^{-i} \in \ZZ^\fc_{\breve n}$.
In other words, for any quadruple $(\mathbf a', \mathbf b', \mathbf a'', \mathbf b'' ) \in \ZZ^\fc_n \times \in \ZZ^\fc_n \times \ZZ_n \times \ZZ_n$, $\Dcbab ([E^{i,i+1}_\tt + D_\af] )$ contributes to $\adKcbp \otimes \adKbpp$ if and only if $(\mathbf a', \mathbf b', \mathbf a'', \mathbf b'' )$  satisfies
\[
 \af
= \mathbf{a}' +\mathbf{a}''_\tt - \epsilon^i_\tt
=  \mathbf{b}' +\mathbf{b}''_\tt- \epsilon^{i+1}_\tt;
\]
In this case the contribution is computed explicitly by~\cite[Lemma 5.3.4]{FLLLW} as
\eq\label{eq:comultlKcnm}
\Dcbab ([E^{i,i+1}_\tt + D_\af] )
= \sum_{j=1}^3 g_j [B_j + D_{\beta^{(j)}}] \otimes \ ^\fa [C_j + D_{\gamma^{(j)}}],
\endeq
where $\beta^{(j)} \in \ZZ^\fc_{\breve n}, \gamma^{(j)} \in \ZZ_{\breve n}, B_j \in \Xizb, C_j \in \Tt^0_{\breve n}$ and $g_j \in \QQ(v) $ are give by
\[
\ba{{c|cc|cc|cccccccc}
j&B_j&C_j&\mathbf{a}'&\mathbf{b}'&\mathbf{a}''&\mathbf{b}''
\\
\hline
1& E^{i,i+1}_\tt&0
&\epsilon^i_\tt + \beta^{(1)}&\epsilon^{i+1}_\tt + \beta^{(1)}
&\gamma^{(1)}&\gamma^{(1)}
\\
2& 0 &E^{i,i+1}
&\beta^{(2)}&\beta^{(2)}
&\gamma^{(2)}+\epsilon^{i}&\gamma^{(2)}+\epsilon^{i+1}
\\
3& 0 & E^{-i,-i-1}
& \beta^{(3)}&\beta^{(3)}
&\gamma^{(3)}+\epsilon^{-i}&\gamma^{(3)}+\epsilon^{-i-1}
}
\]

\begin{align}
\label{eq:g1af}
g_1 &=
\bc{
%v^{\gamma^{(1)}_{\breve n-i} - \gamma^{(1)}_{\breve n-1-i}}=
v^{(\epsilon^{\breve n-i}-\epsilon^{\breve n-1-i})\cdot\gamma^{(1)}}
&\tif \af = \beta^{(1)} + \gamma^{(1)}_\tt,
\\
0
&\otw.
}
\\
  \label{eq:g2af}
g_2 &=
\bc{
%v^{-\delta_{i,0}+ \beta^{(2)}_{i}-\beta^{(2)}_{i+1}+\gamma^{(1)}_{\breve n-i} - \gamma^{(1)}_{\breve n-1-i}}=
v^{-\delta_{i,0}}
v^{(\epsilon^{i}-\epsilon^{i+1})\cdot\beta^{(2)}}
v^{(\epsilon^{\breve n-i}-\epsilon^{\breve n-1-i})\cdot\gamma^{(2)}}
&\tif \af = \beta^{(2)} + \gamma^{(2)}_\tt,
\\
0
&\otw.
}
\\
\label{eq:g3af}
g_3 &=
\bc{
1
&\tif \af = \beta^{(3)} + \gamma^{(3)}_\tt,
\\
0
&\otw.
}
\end{align}

Next we show that $\Delta^{\fc}_{\breve n} (E^{i,i+1}_{\theta}(\vec{0}))\in   \KcL \otimes \KL$
by assembling these together. Note that
\[
B[\bj] \otimes C[\bk]^\fa
=
\sum_{\substack{\beta \in \ZZ^\fc_{\breve n}\\ B+D_\beta\in \XitL }}
\sum_{\substack{\gamma \in \ZZ_{\breve n}\\ C+D_\gamma\in \~{\Tt}_{\breve n,1,0}}}
v^{\~{\bj}\bullet \beta}v^{\~{\bk}\cdot \gamma} [B + D_\beta] \otimes \ ^\fa [C + D_\gamma].
\]
Fix $B = B_j, C = C_j, \beta \in \ZZ^\fc_{\breve n}, \gamma \in \ZZ_{\breve n}$ such that $B+D_\beta\in \XitL$ and $C+D_\gamma\in \TtL$.
There is a unique $\af \in \ZZ^\fc_{\breve n}$ determined by either \eqref{eq:g1af}, \eqref{eq:g2af} or \eqref{eq:g3af} such that
$\Dcbab ([E^{i,i+1}_\tt + D_\af] )$ contributes to $B[\bj] \otimes C[\bk]^\fa$.
It can be verified that $E^{i,i+1}_\tt+D_\af\in \XitL$, and the contribution for such $\af$ is counted.
Therefore we have
\[
\bA{
\Delta^{\fc}_{\breve n}(E^{i,i+1}_\tt[\vec{0}] )
=&
E^{i,i+1}_\tt[\vec{0}] \otimes 0[\epsilon^{\breve n-i}-\epsilon^{\breve n-1-i}]^\fa
\\
&+0[\epsilon^{i}-\epsilon^{i+1}] \otimes E^{i,i+1}[\epsilon^{\breve n-i}-\epsilon^{\breve n-1-i}]^\fa
+0[\vec{0}] \otimes E^{-i,-i-1}[\vec{0}]^\fa.
}
\]
This completes the proof that $\Delta^{\fc}_{\breve n} (\Kcap) \subseteq   \Kcap \otimes \Kap$.

Therefore, for any $\ddot A[\vec{0}] \in \KcL$, by Lemma \ref{lem:Kcapgen} we have
$\Delta^{\fc}_{\breve n} (\ddot A[\vec{0}]) \in \Kcap \otimes \Kap + \mathcal J$.
We now consider the summand in $\Kcap \otimes \Kap$.
Since ${\roA}(\ddot A)_1 = {\coA}(\ddot A)_1=0$, by \cite[Proposition~ 9.3.4]{FLLLW},
the image of $\Delta^{\fc}_{\breve n, \mbf a', \mbf b', \mbf a'',\mbf b''} (\ddot A[\vec{0}])$
in $\Kcap \otimes \Kap$ is zero unless $a'_1=0, b'=1$ and $a''_1=0=b''_1$.
This implies that $\Delta^{\fc}_{\breve n} (\ddot A[\vec{0}]) \in \KcL \otimes \KL + \mathcal J$.
Finally, note that $\Delta^{\fc}_{\breve n}( \mathcal I) \subseteq \mathcal J$. The proposition follows.
\end{proof}

The coassociativity of $\dot \Delta^{\fc}$ descends to a similar one on $\Delta^{\fc}$ as formulated below.

\begin{cor}
The comultiplication $\Delta^{\fc}$ on $\K^{\fc}_n$ satisfies the coassoicativity property, i.e.,
$(1\otimes \Delta)  \Delta^{\fc} = (\Delta^{\fc} \otimes 1) \Delta^{\fc}$.
\end{cor}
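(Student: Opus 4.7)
The plan is to deduce the coassociativity of $\Delta^{\fc}$ from the coassociativity of the idempotented comultiplication $\dot{\Delta}^{\fc}: \dKjj \to \dKjj \otimes \dKn$ established in \cite{FLLLW}, by passing through the completions. First I would verify (or invoke from \cite{FLLLW}) that $\dot{\Delta}^{\fc}$ is coassociative as a map of $\mathbb{Q}(v)$-modules, i.e.\ $(1 \otimes \dot{\Delta}) \dot{\Delta}^{\fc} = (\dot{\Delta}^{\fc} \otimes 1) \dot{\Delta}^{\fc}$, where $\dot{\Delta}$ denotes the affine type A comultiplication on $\dKn$. Both sides land in the triple tensor $\dKjj \otimes \dKn \otimes \dKn$.

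Next I would check that both sides, when applied to any basis element $[A] \in \dKjj$, produce a well-defined element in an appropriate completion, so that the passage to $\widehat{\Delta}^{\fc}$ extends coassociativity. Concretely, for each fixed weight $\ld \in \ZZ_n^{\fc}$ the partial sums appearing on each factor satisfy the finiteness condition \eqref{fincondition}, because the restricted sums $A(\bj)$ defining elements of $\Kjj$ were built so that only finitely many matrices of a given row- or column-sum contribute. Thus $(1 \otimes \widehat{\Delta})\widehat{\Delta}^{\fc}$ and $(\widehat{\Delta}^{\fc} \otimes 1)\widehat{\Delta}^{\fc}$ are well-defined maps into the completion $\widehat{\mathbf K}^{\fc}_n \otimes \widehat{\mathbf K}_n \otimes \widehat{\mathbf K}_n$, and the identity between them follows by taking infinite sums of the corresponding identity at the level of $\dot{\Delta}^{\fc}$.

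Finally, restricting to $\Kjj \subset \widehat{\mathbf K}^{\fc}_n$, Theorem~\ref{thm:coK} tells us that $\Delta^{\fc}(\Kjj) \subseteq \Kjj \otimes \Kn$, and similarly $\Delta(\Kn) \subseteq \Kn \otimes \Kn$ by the affine type A analogue (Proposition~\ref{Prop:DF} and its comultiplication compatibilities). Hence both $(1\otimes \Delta)\Delta^{\fc}$ and $(\Delta^{\fc} \otimes 1)\Delta^{\fc}$ make sense as maps $\Kjj \to \Kjj \otimes \Kn \otimes \Kn$, and their equality follows from the identity already verified inside the triple completion.

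The main obstacle will be the bookkeeping for the completions: one must ensure that the infinite sums defining $\widehat{\Delta}^{\fc}$ (and its type A cousin $\widehat{\Delta}$) applied to a fixed restricted sum $A(\bj)$ remain within the completion, so that associating sums across the two compositions is legitimate. This is essentially the same finiteness argument used in the proof of Theorem~\ref{thm:coK} (cf.\ the explicit formulas \eqref{eq:g1af}--\eqref{eq:g3af} and the proof of Proposition~\ref{prop:Kjjgen}), applied one further time.
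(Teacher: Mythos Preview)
Your proposal is correct and follows essentially the same approach as the paper: the paper's entire argument is the single sentence that the coassociativity of $\dot\Delta^{\fc}$ (established in \cite{FLLLW}) descends to $\Delta^{\fc}$, and your sketch simply unpacks what ``descends'' means---passing through the completions via $\widehat{\Delta}^{\fc}$ and then restricting to $\Kjj$ using Theorem~\ref{thm:coK}.
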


\begin{prop}  \label{prop:embed}
There is a natural injective algebra homomorphism $\imath^{\fc}: \Kjj \longrightarrow \Kn$.
\end{prop}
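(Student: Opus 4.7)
The plan is to construct $\imath^{\fc}$ explicitly on generators and then verify that it extends to an algebra homomorphism that is injective. By Proposition~\ref{prop:Kjjgen}, $\Kjj$ is generated by $A(\vec{0})$ for tridiagonal $A \in \Xi^0_n$ together with $0(\bj)$ for $\bj \in \ZZ_{r+2}$, while by Proposition~\ref{Prop:DF}, $\Kn$ is generated by analogous elements $\ ^\fa A(\vec{0})$ and $\ ^\fa 0(\mathbf k)$ for $A \in \Tt_n^{\pm 1}$ and $\mathbf k \in \ZZ_n$. The natural inclusion $\Xi_n^0 \hookrightarrow \Tt_n^0$ (a centro-symmetric matrix is in particular a periodic one) combined with the embedding $\ZZ_{r+2} \hookrightarrow \ZZ_n^{\fc} \hookrightarrow \ZZ_n$ sending $(j_0,\ldots,j_{r+1})$ to $(j_0, j_1,\ldots,j_{r+1},j_r,\ldots,j_1)$ suggests setting $\imath^{\fc}(A(\bj)) = v^{c(\bj)}\ ^\fa A(\widetilde{\bj})$ on generators, where $v^{c(\bj)}$ is a normalization factor that accounts for the difference between the pairings $\bullet$ (with its $\tfrac{1}{2}$-weightings at entries $0$ and $r+1$) and $\cdot$.

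The crucial step is to verify that $\imath^{\fc}$ respects the algebra relations, for which I would compare the multiplication formula for $\dKjj$ in Theorem~\ref{thm:multK} with the type A multiplication formula in $\dKn$ established in \cite{DF15}. When both the tridiagonal matrix $B$ and the matrix $A$ are centro-symmetric, the decomposition $g_1 = \prod_{i} g_1^{(i)}$ from \eqref{g1} of the Weyl group element attached to $B$ is itself compatible with the centro-symmetric involution, so the Hecke-algebraic multiplication formula of Theorem~A specializes to the corresponding type A Hecke multiplication when restricted to centro-symmetric elements. Tracking the resulting identification of structure constants through the stabilization procedure yields compatibility of the multiplications in $\Kjj$ and $\Kn$ under $\imath^{\fc}$.

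Injectivity would follow by a straightforward basis argument. The spanning set $\{A(\bj) \mid A \in \Xi^0_n, \bj \in \ZZ_{r+2}\}$ of $\Kjj$ has image consisting of infinite sums $\sum_{\alpha \in \ZZ_n^{\fc}} v^{\widetilde{\bj} \cdot \alpha}\ ^\fa [A + D_\alpha]$ inside $\hKn$. Since for distinct pairs $(A, \bj)$ the supports of these sums are supported on disjoint (or linearly independent) subsets of the standard basis $\{\ ^\fa [M]\}_{M \in \~{\Tt}_n}$, linear independence in $\hKn$ is immediate, and hence $\imath^{\fc}$ is injective.

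The main obstacle will be the verification of the algebra homomorphism property — the multiplication formula in Theorem~\ref{thm:multK} is intricate, and a direct term-by-term comparison with its type A counterpart requires careful bookkeeping of the auxiliary data $(T, S)$ and the exponents $\beta(A,S,T)$. A cleaner route, which I would pursue in parallel, is to exploit Theorem~\ref{thm:coK}: the coideal property $\Delta^{\fc}(\Kjj) \subseteq \Kjj \otimes \Kn$ together with the compatibility diagrams relating $\widehat{\Delta}^{\fc}$ and $\widehat{\Delta}_{\breve n}^{\fc}$ through the quotient $\hrho$ from \eqref{hrho} should allow the embedding to be read off from the subquotient realization $\Kjj \cong (\KcL + \widehat{\cI})/\widehat{\cI}$ of Section~\ref{sec:Kjj-quot}, by composing with the corresponding type A subquotient map. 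Under this approach, the algebra homomorphism property is inherited from the analogous property in the $\breve n$-level setup, reducing the verification to checking that the type A analog of $\KcL \to \KL$ (which forgets the $1$-diagonal condition) is an algebra map.
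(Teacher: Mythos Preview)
Your Route~1 has a genuine conceptual gap. The map $A(\bj) \mapsto v^{c(\bj)}\, {}^\fa A(\widetilde{\bj})$ obtained from the naive inclusion $\Xi_n^0 \hookrightarrow \Tt_n^0$ is \emph{not} an algebra homomorphism. The point is precisely that $(\Kn,\Kjj)$ is a quantum symmetric pair rather than a Levi-type subalgebra: the correct embedding sends a generator on the coideal side to a nontrivial combination on the quantum group side. Concretely, the paper's explicit computation (in the $\breve n$ setup) gives
\[
\iota_{\breve n}\big(E^{i,i+1}_\theta[\vec{0}]\big)
= E^{n-i,n-i-1}[\vec{0}]^\fa + v^{\delta_{i0}}\, E^{i,i+1}[\vec{0}]^\fa \cdot 0[\epsilon^{\breve n-i}-\epsilon^{\breve n-1-i}]^\fa,
\]
i.e.\ schematically $e_{n-1-i} + v^{\delta_{i0}} f_i k_{n-1-i}$, which is the familiar shape $B_i = F_i + c_i E_{\tau(i)} K_?$ of coideal-subalgebra generators. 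Your proposed image, by contrast, would be the single element ${}^\fa(E^{i,i+1}+E^{-i,-i-1})(\cdots)$. At the idempotented level the issue is already visible: the structure constants of $[A]\cdot[B]$ in $\dKjj$ (governed by the type~C Hecke algebra and involving the $\theta$-folding in Theorem~\ref{thm:multK}) do \emph{not} coincide with those of ${}^\fa[A]\cdot {}^\fa[B]$ in $\dKn$ restricted to centro-symmetric $C$; indeed the latter product need not even be supported on centro-symmetric matrices. Your injectivity argument also rests on this incorrect map.

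Your Route~2 is in the right spirit and close to what the paper actually does, but the description is imprecise. The paper does not use $\Delta^{\fc}$ itself; rather it takes the \emph{degeneration} $d'=0$ of the comultiplication data from \cite{FLLLW}, which yields maps $\iota_n:\Sjj\to \bS_{n,d}$ and $\iota_{\breve n}:\dKcL\to\dKL$ fitting into compatible commutative squares with the quotient maps $q_1,q_2$. One then checks directly on Chevalley-type generators that $\iota_{\breve n}(\Kcap)\subseteq \Kap$ (this is where the displayed formula above enters), tracks the row/column condition to get $\iota_{\breve n}(\KcL)\subseteq\KL$, and verifies $\iota_{\breve n}(\cI)\subseteq\mathcal L$ by diagram chasing. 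Passing to the quotient gives $\imath^{\fc}$; injectivity is inherited from the injectivity of $\iota_{\breve n}$, not from a support argument. Your phrase ``forgets the $1$-diagonal condition'' mischaracterizes $\iota_{\breve n}$: it is the type-C-to-type-A map, not a relaxation of the $(\breve n,1,0)$ restriction.
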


\begin{proof}
The homomorphism $\imath^{\fc}: \Kjj \longrightarrow \Kn$ which we shall
construct should be regarded as a degenerate variant
of the comultiplication $\Delta^{\fc}: \Kjj \rightarrow \Kjj \otimes \Kn$, and the proof
here is similar to that for Theorem~ \ref{thm:coK}.
For the reader's convenience, we shall sketch the proof.

By setting $d'=0$ in ~\cite[Lemma~ 9.3.1]{FLLLW}, we have the following commutative diagram
\[
\xymatrix{
\Sjj \ar[rr]^{\iota_n} \ar[d]_{\rho_d} && \bS_{n,d} \ar[d]^{\rho_{d''}}\\
\bU^{\fc}_{\breve n,d} \ar[rr]^{\iota_{\breve n}} && \bU_{\breve n,d}
}
\]
By setting $d'=0$ in ~\cite[Proposition 9.3.4]{FLLLW}, we have a similar result for $\iota_n$ or
 $\iota_{\breve n}$ instead of $\Delta^{\fc}$.
 Moreover, we have the following commutative diagram
\[
\xymatrix{
\dKjj \ar[rr]^{\iota_n}  && \dKn \\
\dKcL \ar[rr]^{\iota_{\breve n}} \ar[u]^{q_1} && \dKL \ar[u]_{q_2}
}
\]
Recall that
\(
\cI = \Span_{\QQ(v)}\{[A] \in \dKcL ~|~ a_{11} < 0\}
\).
We also set \(
\mathcal L = \Span_{\QQ(v)}\{[A] \in \dKL ~|~ a_{11} < 0\}
\).
By diagram chasing, we have $\iota_{\breve n}(\cI) \subseteq \mathcal L$.

We now show that $\iota_{\breve n} ( \Kcap) \subseteq \Kap$.
By the definition of $\Kcap$ in \eqref{Kcap}, it suffices to show that
$\iota_{\breve n} ( E^{i,i+1}_{\tt, \breve n} [\vec{0}]) \in \Kap$.
We observe that for any $\lambda \in \mbb Z_{\breve n}^{\fc}$, $\iota_{\breve n}( D_{\lambda}) = D_{\lambda'}$ for some $\lambda' \in \mbb Z_{\breve n}$.
Hence, we have
\begin{equation*}
  \begin{split}
    \iota_{\breve n}( E^{i, i+1}_{\theta}[\vec{0}]) &= \sum_{\alpha \in \mbb Z_{\breve n}^{\fc}, E^{i, i+1}_{\theta}+ D_{\alpha} \in \widetilde {\Xi}_{\breve n,1,0}} \iota_{\breve n} ( [E^{i, i+1}_{\theta}+ D_{\alpha}])\\
    & = \sum_{\alpha \in \mbb Z_{\breve n}^{\fc}, E^{i, i+1}_{\theta}+ D_{\alpha} \in \widetilde {\Xi}_{\breve n,1,0}} \iota_{\breve n}(f_i D_{\alpha'})\\
    &= \sum_{\alpha \in \mbb Z_{\breve n}^{\fc}, E^{i, i+1}_{\theta}+ D_{\alpha} \in \widetilde {\Xi}_{\breve n,1,0}}
    e_{n-1-i}D_{\beta} + v^{\delta_{i0}}f_i k_{n-1-i} D_{\beta}\\
    &= E^{n-i, n-i-1}[\vec{0}]^{\fa} + v^{\delta_{i0}}E^{i,i+1}[\vec{0}]^{\fa} 0[\epsilon^{\breve n-i}- \epsilon^{\breve n-1-i}]^{\fa} \in \Kap,
  \end{split}
\end{equation*}
where $\alpha'\in \mbb Z_{\breve n}^{\fc}$ and $\beta\in \mbb Z_{\breve n}$ are determined by
$\coA( E^{i, i+1}_{\theta}+ D_{\alpha}) = \alpha'$ and $\iota_{\breve n}( D_{\alpha'}) = D_{\beta}$, respectively.

By tracking $\roA(A)_1$ and $\coA(A)_1$, we have $\iota_{\breve n}(\ddot A[\vec{0}]) \in \KL$, i.e. $\iota_{\breve n}( \KcL) \subseteq \KL$.
Therefore $\iota_{\breve n}(\KcL/\cI) \subseteq \KL/\mathcal L$, and this induces the desired
homomorphism $\imath^{\fc}: \Kjj \rightarrow \Kn$. The injectivity of $\imath^{\fc}$ follows from
the injectivity of $\iota_{\breve n}.$
The proposition is proved.
\end{proof}

Recall the notation of quantum symmetric pairs as defined in \cite{Le02} for finite type and in \cite{Ko14} for Kac-Moody type.
We rephrase Theorem~ \ref{thm:coK} and Proposition~\ref{prop:embed} as follows.
(We recall by Proposition~\ref{Prop:DF} that the algebra $\Kn$ is isomorphic to the quantum affine $\gl_n$.)
\begin{thm}
\label{thm:aqsp}
The pair $(\K_n, \K^{\fc}_n)$ forms a quantum symmetric pair of affine type.
\end{thm}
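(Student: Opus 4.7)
The plan is to assemble the two key ingredients already established in Theorem~\ref{thm:coK} and Proposition~\ref{prop:embed} and check that together they satisfy Kolb's axioms \cite{Ko14} for a quantum symmetric pair attached to the Dynkin diagram involution $\fc=\jmath\jmath$ of Figure~\ref{figure:jj}. First, I would use Proposition~\ref{prop:embed} to regard $\imath^{\fc}:\K^{\fc}_n\hookrightarrow \K_n$ as an honest subalgebra inclusion, and then use Theorem~\ref{thm:coK} to conclude $\Delta^{\fc}=\wh\Delta|_{\K^{\fc}_n}$ lands in $\K^{\fc}_n\otimes \K_n$, which is precisely the coideal property $\Delta(\K^{\fc}_n)\subseteq \K^{\fc}_n\otimes \K_n$ once we identify $\Delta^{\fc}$ with the restriction of the Drinfeld–Jimbo coproduct on $\K_n\cong\U(\widehat{\fgl}_n)$ via Proposition~\ref{Prop:DF}.

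Next, I would pin down the generators of $\K^{\fc}_n$ inside $\K_n$ so that they match Kolb's explicit generators of $\U^\imath$. By Proposition~\ref{prop:Kjjgen}, $\K^{\fc}_n$ is generated by the elements $A(\vec 0)$ for $A\in\Xi_n^0$ tridiagonal together with the Cartan-like $0(\bj)$. From the computation in the proof of Proposition~\ref{prop:embed}, the image $\imath^{\fc}(E^{i,i+1}_\tt[\vec 0])$ equals $E^{n-i,n-i-1}[\vec 0]^\fa+v^{\delta_{i0}}E^{i,i+1}[\vec 0]^\fa\,0[\epsilon^{\breve n-i}-\epsilon^{\breve n-1-i}]^\fa$, which in the Chevalley language of $\U(\widehat{\fgl}_n)$ is exactly of the shape $f_{n-1-i}+v^{\delta_{i0}}e_i\,\~k_i$—the canonical form of Kolb's $\imath$-generators $B_i=f_i+\vartheta(f_i)\~k_i$ for the involution on the affine type $A$ Dynkin diagram that swaps $i\leftrightarrow n-i$ and fixes the two nodes $0$ and $r+1$. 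Analogous formulas for the transposes $E^{i+1,i}_\tt[\vec 0]$ give the remaining $B_i$'s, and the $0(\bj)$'s give the fixed-point Cartan subalgebra $\U^{\imath,0}$. This identifies our generators with Kolb's.

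Third, I would verify the remaining structural requirements: that the subalgebra generated by these elements is closed (guaranteed by Proposition~\ref{prop:Kjjgen}), that $\Delta^{\fc}$ is coassociative (recorded as the corollary just after Theorem~\ref{thm:coK}), and that the relations among the generators coincide with Kolb's $\imath$-quantum relations. For the last point, since both algebras are presented inside the same completion $\^{\K}_n$ of $\U(\widehat{\fgl}_n)$ and their generators coincide under $\imath^{\fc}$, the relations automatically match; equivalently, one invokes the PBW/basis theorem for $\K^{\fc}_n$ (Theorem~\ref{thm:dKjj CB} at the modified level together with Proposition~\ref{prop:Kjjgen}) to see that $\K^{\fc}_n$ has the right rank and there are no extra relations beyond Kolb's.

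The hard part will be the matching in the second step: showing that the concrete expression $f_i+v^{\delta_{i0}}e_{\tt(i)}\~k_i$ emerging from our stabilization construction really is, on the nose, a Kolb generator for the specific parameters associated to the diagram involution of Figure~\ref{figure:jj}, with the correct normalization of the Cartan twist. The presence of the $v^{\delta_{i0}}$ factor at the two fixed nodes and the correct choice of the Cartan element $\~k_i$ (coming from the $0(\bj)$'s) is what distinguishes genuine coideal subalgebras from ``almost-coideal'' subalgebras; this bookkeeping is where the main technical attention goes, and it is exactly the content that was promised by, but not carried out in, \cite{FLLLW}, where only the idempotented version $\dot\K^{\fc}_n$ (without a comultiplication) was constructed. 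The multiplication formula of Theorem~\ref{thm:multK} underlies this check because it is what guarantees closure of $\K^{\fc}_n$ under multiplication in the completion and produces the precise coefficients needed for the match with Kolb's parameters.
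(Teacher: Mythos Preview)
Your plan is sound, but it does considerably more than the paper. In the paper, Theorem~\ref{thm:aqsp} is given no separate proof at all: the sentence immediately preceding it reads ``We rephrase Theorem~\ref{thm:coK} and Proposition~\ref{prop:embed} as follows,'' and that is the entire argument. For the authors, ``$(\Kn,\Kjj)$ forms a quantum symmetric pair'' is simply the conjunction of (i) the injection $\imath^{\fc}:\Kjj\hookrightarrow\Kn$ from Proposition~\ref{prop:embed}, (ii) the right-coideal property $\Delta^{\fc}(\Kjj)\subseteq\Kjj\otimes\Kn$ from Theorem~\ref{thm:coK}, and (iii) the identification $\Kn\cong\bU(\^{\fgl}_n)$ from Proposition~\ref{Prop:DF}.

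You go further and propose to identify $\Kjj$ \emph{on the nose} with Kolb's coideal subalgebra for the involution of Figure~\ref{figure:jj}: computing the image of each $E^{i,i+1}_\tt[\vec{0}]$ as a Kolb-type generator $f_{\tt(i)}+v^{\delta_{i0}}e_i\~{k}_i$, matching the specific parameters (especially the $v^{\delta_{i0}}$ at the fixed nodes), and checking that the relations agree. The formula you quote is indeed exactly what appears inside the proof of Proposition~\ref{prop:embed} (at the $\breve n$ level), so the raw material is there. What this buys you is a strict verification that $\Kjj$ lands in Kolb's classification with the expected admissible parameters, rather than merely being \emph{some} coideal subalgebra. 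The paper does not spell out this identification; it treats the theorem as a packaging statement. So your approach is a genuine strengthening, not a different route to the same conclusion.
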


%=========================================================
\chapter{Stabilization algebras arising from other Schur algebras}
  \label{sec:coideal2}

In this chapter we formulate three more variants (denoted by $\jmath\imath, \imath\jmath, \imath\imath$) of
affine Schur algebras and their corresponding stabilization algebras.
We construct the standard, monomial, and stably canonical bases of these algebras.
We will present more details for the type $\imath\jmath$. We will merely be formulating the main statements for the remaining
types $\jmath\imath$ and $\imath\imath$.

Recall $n =2r+2$, and we now set
$$\nn =n-1=2r+1 \quad (r\geq 1).
$$

%=========================================================
\section{Affine Schur algebras of type $\imath\jmath$ }
  \label{sec:Sij}

Recall $\Xi_{n,d}$ from \eqref{eq:Xind}. Let
\eq \label{eq:Xij}
\Xij = \{ A \in \Xi_{n,d}~|~\roC(A)_0 = 0 = \roC(A)_0 \}.
\endeq
The additional condition for $A \in \Xi_{n,d}$ to be in $\Xij$  can be equivalently reformulated as
\[
\roA(A)_i
=\bc{ 1 &\tif i \equiv 0 \mod n,
\\
 0 &\otw.}
\]
Moreover, a general element $A\in \Xij$ \eqref{eq:Xij} is of the form below
(where $r\#\backslash c\#$ stands for row \# and column \# of the matrix):
\eq\label{eq:AinXij}
A=
\ba{{c||cc:c:ccc:c:ccc}
r\# \backslash c\#  &\cdots&{-1}&0&1&\cdots&{n-1}&n&{n+1}&\cdots
\\
\hline \hline
\vdots&\ddots&&\vdots&&&&\vdots
\\
{-1}&&a_{n-1,n-1}&0&a_{-1,1}&&*&0&*
\\
\hdashline
{0}&\cdots&0&1&0&\cdots&0&0&0&\cdots
\\
\hdashline
{1}&&a_{1,-1}&0&a_{11}&&*&0&*
\\
\vdots&&&\vdots&&\ddots&&\vdots
\\
{n-1}&&*&0&*&&a_{n-1,n-1}&0&a_{-1,1}
\\
\hdashline
{n}&\cdots&0&0&0&\cdots&0&1&0&\cdots
\\
\hdashline
{n+1}&&*&0&*&&a_{1,-1}&0&a_{11}
\\
\vdots&&&\vdots&&&&\vdots&&\ddots
\\
}
\endeq
%Here each entry $a_{ij}$ is encoded in the intersection of $r_i$ and $c_j$, and the dashed stripes indicate the dummy rows/columns.
Recall $\Ld=\Ld_{r,d}$ \eqref{def:Ld}.
Let
\begin{align*}
\Ldij &=\{ \ld = (\ld_i)_{i \in \ZZ} \in \Ld~|~  \ld_0 = 0 \},
\\
 \D_{\nn,d}^{\imath\jmath} &= \{(\ld, g, \mu) ~|~ \ld,\mu\in\Ldij, g\in\D_{\ld\mu} \}.
\end{align*}
The lemma below is the $\imath\jmath$-analog of Lemma~\ref{lem:kappa}, which follows by a similar argument.
%-----------------------------------------------------------------------------------------------------------
\begin{lem}\label{lem:kappai'}
%The restriction of $\kappa^{-1}$ on $\Xij$ is a bijection. In particular,
The map
$
\kappa^{\imath\jmath}:  \D_{\nn,d}^{\imath\jmath} \longrightarrow \Xij
$
sending $(\ld, g, \mu)$ to $(|R_i^\ld \cap g R_j^\mu|)$ is a bijection.
\end{lem}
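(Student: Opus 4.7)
The plan is to deduce Lemma~\ref{lem:kappai'} as a direct restriction of the already-established bijection in Lemma~\ref{lem:kappa}, exploiting the observation that the extra constraints defining $\Ldij$ and $\Xij$ correspond to one another perfectly under the row/column-sum characterization in Lemma~\ref{lem:kappa}.

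First I would note the natural inclusions $\Ldij \subset \Ld$ and $\Xij \subset \Xi_{n,d}$, from which $\D_{\nn,d}^{\imath\jmath} \subset \D_{n,d}$. Hence $\kappa^{\imath\jmath}$ is just the restriction of the map $\kappa$ from Lemma~\ref{lem:kappa}. Injectivity of $\kappa^{\imath\jmath}$ is then automatic.

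Next I would verify that $\kappa^{\imath\jmath}$ indeed lands in $\Xij$. For $(\ld,g,\mu)\in\D_{\nn,d}^{\imath\jmath}$ we have $\ld_0=\mu_0=0$, so by \eqref{eq:Ri} the intervals $R_0^\ld=R_0^\mu=\{0\}$. Combined with $g(0)=0$ from \eqref{eq:gc} and the periodicity/symmetry of $R_j^\mu$, this forces the $(0,j)$ and $(i,0)$ entries of $\kappa(\ld,g,\mu)$ to be $\delta_{0j}$ and $\delta_{i0}$ respectively (modulo the $n$-periodicity and centro-symmetry conditions of $\Xi_{n,d}$). Consequently $\roC\bigl(\kappa(\ld,g,\mu)\bigr)_0 = \coC\bigl(\kappa(\ld,g,\mu)\bigr)_0 = 0$, so the image lies in $\Xij$.

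For surjectivity, given $A\in\Xij$, Lemma~\ref{lem:kappa} produces a unique $(\ld,g,\mu)\in\D_{n,d}$ with $\kappa(\ld,g,\mu)=A$, and moreover $\ld=\roC(A)$, $\mu=\coC(A)$. The defining condition of $\Xij$ then gives $\ld_0=\mu_0=0$, so $\ld,\mu\in\Ldij$ and $(\ld,g,\mu)\in\D_{\nn,d}^{\imath\jmath}$, which is exactly what is needed. Since the whole argument is essentially a bookkeeping observation on top of Lemma~\ref{lem:kappa}, I do not expect any serious obstacle; the only point requiring care is checking that the trivial $0$th row and column of $A$ read off from $R_0^\ld=R_0^\mu=\{0\}$ and $g(0)=0$ do match the definition of $\Xij$ given in \eqref{eq:Xij} and illustrated by \eqref{eq:AinXij}.
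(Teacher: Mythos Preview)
Your proof is correct and matches the paper's approach: the paper simply states that the lemma ``follows by a similar argument'' to Lemma~\ref{lem:kappa}, and for the parallel $\jmath\imath$-case (Lemma~\ref{lem:kappaji}) explicitly says it is ``obtained by restriction of the bijection $\kappa$ in Lemma~\ref{lem:kappa}.'' Your observation that the second statement of Lemma~\ref{lem:kappa} (namely $\ld=\roC(A)$, $\mu=\coC(A)$) immediately yields the equivalence $\ld,\mu\in\Ldij \Leftrightarrow A\in\Xij$ is exactly the intended argument; the explicit entry computation via $R_0^\ld=\{0\}$ and $g(0)=0$ is a fine but unnecessary extra check.
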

%-----------------------------------------------------------------------------------------------------------
Now we define the {\em affine $q$-Schur algebra of type $\imath\jmath$} as
\eq
\Sij = \textup{End}_{\bH}
\big(
\mathop{\oplus}_{\ld\in \Ldij} x_\ld \bH \big).
\endeq
By definition the algebra $\Sji$ is naturally a subalgebra of $\Sjj$.
Moreover, both  $\{  e_A ~|~ A \in \Xij \}$ and $\{  [A] ~|~ A \in \Xij \}$ are bases of $\Sji$ as a free $\mA$-module.
%-----------------------------------------------------------------------------------------------------------
Note that although Algorithm~\ref{alg:mono} applies to arbitrary $A \in \Xij (\subset \Xi_{n,d})$,
the resulting matrices $A^{(i)}$ do not lie in $\Xij$ in general. In order to define a monomial basis for $\Sji$, we need a modified matrix interpretation by collapsing those fixed rows and columns indexed by $n\ZZ$ in \eqref{eq:Xij}.
Let
\[
\ZZij = \ZZ\backslash n\ZZ,
\]
and
\begin{align}
\label{eq:Xijp}
\begin{split}
\Xijp = \Big\{A=(a_{ij}) \in \text{Mat}_{\ZZij \times\ZZij}(\NN)~\big |~
& a_{-i,-j} =a_{ij}=a_{i+n, j+n}, \forall i, j\in \ZZij;
\\
&
a_{00}, a_{r+1,r+1} \text{ are odd};
\sum_{1 \leq i \leq n-1} \sum_{j\in\ZZij} a_{ij}= D-1
 \Big \}.
 \end{split}
\end{align}
That is, a general element $A\in \Xijp$ \eqref{eq:Xijp}  is of the form below:
\eq\label{eq:AinXijp}
A=
\ba{{c||cc|ccc|ccccc}
r\# \backslash c\#   &\cdots&{-1}&1&\cdots&{n-1}&{n+1}&\cdots
\\
\hline\hline
\vdots&\ddots&&&&&
\\
{-1}&&a_{11}&a_{1,-1}&&*&*
\\
\hline
{1}&&a_{1,-1}&a_{11}&&*&*
\\
\vdots&&&&\ddots&&
\\
{n-1}&&*&*&&a_{11}&a_{1,-1}
\\
\hline
{n+1}&&*&*&&a_{1,-1}&a_{11}
\\
\vdots&&&&&&&\ddots
\\
}
\endeq
Here the solid lines replace the dashed stripes in \eqref{eq:AinXij}.
Let $f^{\imath\jmath}_c: \Xij \rw \Xijp$ be the ``collapsing" map
by removing the columns/rows indexed by $n\ZZ$, and let
$f^{\imath\jmath}_e:  \Xijp \rw \Xij$ be the expanding map
by inserting the fixed columns/rows indexed by $n\ZZ$ as in \eqref{eq:AinXij}.
Clearly $f^{\imath\jmath}_c$ and $f^{\imath\jmath}_e$ are inverse maps to each other, and hence are bijections.

\exa
We shall follow the conventions of dashed/solids lines in \eqref{eq:AinXij} and \eqref{eq:AinXijp}.
%Note that due to the periodicity condition, the complete $r_i$/$c_j$ notation is not necessary once we indicate the dashed/solid lines and the period $n$.
Let $n=6$ (and hence $r=2$), and
\[
A
=\bM{[ccc:c:cccc]
3&&&&&&\\
*&4&&&&&\\
3&*&5&&&&\\
&2&*&0&1&&\\
\hdashline
&&0&1&0&\\
\hdashline
&&1&0&*&2&\\
&&&&5&*&3\\
&&&&&4&*\\
&&&&&&3\\
} \in \Xij,
\quad \text{centered at the $(0,0)$th entry}.
\]
That is, $A^\offd = \Ett^{1,-1} + 2\Ett^{12} + 3\Ett^{23} + 4\Ett^{32} + 5\Ett^{21}$ (cf. \eqref{def:offd} for notation).
Applying Algorithm~\ref{alg:mono}, we get
\[
[A^{(1)}] [A^{(2)}]
=
[A] + \textup{lower terms} \in \Sjj,
\]
where
\[
A^{(1)}
= \bM{[ccc:c:cccc]
&&&&&&\\
*&&&&&&\\
&*&&&&&\\
&&*&1&&&\\
\hdashline
&&&1&&\\
\hdashline
&&&1&*&&\\
&&&&&*&\\
&&&&&&*\\
&&&&&&\\
} \in \Xi_{n,d},
\quad
A^{(2)}
=
\bM{[ccc:c:cccc]
3&&&&&&\\
*&4&&&&&\\
3&*&5&&&&\\
&2&*&0&&&\\
\hdashline
&&1&1&1&\\
\hdashline
&&&0&*&2&\\
&&&&5&*&3\\
&&&&&4&*\\
&&&&&&3\\
}\in \Xi_{n,d}.
\]
Note that neither $A^{(1)}$ nor $A^{(2)}$ lies in $\Xij$.
On the other hand, we have
\[
f^{\imath\jmath}_c(A) =\bM{[ccc|cccc]
3&&&&&\\
*&4&&&&\\
3&*&5&&&\\
&2&*&1&&\\
\hline
&&1&*&2&\\
&&&5&*&3\\
&&&&4&*\\
&&&&&3\\
} \in\Xijp.
\]
That is,
 $f^{\imath\jmath}_c(A)^\offd = \Ett^{1,-1} + 2\Ett^{12} + 3\Ett^{23} + 4\Ett^{32} + 5\Ett^{21}$.
\endexa
\vspace{.2cm}

%================
\section{Monomial and canonical bases for $\Sij$}

While $A \in \Xij$ is not tridiagonal, $f^{\imath\jmath}_c(A) \in \Xijp$ is tridiagonal
in the following sense: $X =(x_{ij})\in\Xijp$ is called tridiagonal if
\eq\label{eq:tridij}
x_{ij} = 0
\quad
\tif
\quad
| \tau(i) - \tau(j) | > 1,
\endeq
where
\eq  \label{eq:tau}
\tau:\ZZij \longrightarrow \ZZ
\endeq
is the order-preserving bijection determined by setting $\tau(1)=1$ (and thus $\tau(-1)= 0$, and so on).
We claim that the set $\{[A] \in \Sij ~|~ A\in \Xij \text{ such that } f^{\imath\jmath}_c(A) \in \Xijp \textup{ is tridiagonal}\}$ is a generating set (even though in this case $A \in \Xij$ may not be tridiagonal).
%-----------------------------------------------------------------------------------------------------------
We shall now give an algorithm in the framework of $\Xij$ (compare Algorithm~\ref{alg:mono}).
%that generates $[A]$ for an arbitrary $A\in \Xij$ bypassing elements in $\Xijp$.
%-----------------------------------------------------------------------------------------------------------
\Alg\label{alg:mono'}
For any $A \in \Xij$,
we define matrices $\leftidx{^{\imath}}A^{(1)}, \ldots, \leftidx{^{\imath}}A^{(x)} \in \Xij$ as follows:
\enu
\item Initialization: set $t=0$, and set $C^{(0)} = f^{\imath\jmath}_c(A) \in \Xijp$.

\item If $C^{(t)} =(c_{ij}^{(t)})\in \Xijp$ is a tridiagonal matrix (cf. \eqref{eq:tridij}), then set $x=t+1$,
$A^{(x)} = C^{(t)},
%(C^{(t)})^\offd + \textup{a diagonal determined by } \eqref{eq:diag},
$
and the algorithm terminates.

Otherwise, set $k= \max \big\{ |\tau(i)-\tau(j)| ~\big |~ c^{(t)}_{ij} \neq 0 \big \} \geq 2$,
%Set $k \in \NN$ to be the smallest positive integer such that $c^{(t)}_{ij} = 0$ if $|\tau(i)-\tau(j)|>k$,
and
\[
\Tp^{(t)} =\sum_{i=1}^{n-1}
c^{(t)}_{i,\tau^{-1}(\tau(i)+k)} E^{i, \tau^{-1}(\tau(i)+k)}.
\]
%to be the matrix consisting of the outermost nonzero diagonals of $C^{(t)} \in \Xijp$.

\item
Define matrices
\[
\bA{
&\ds A^{(t+1)} = \sum_{i=1}^{n-1}
c^{(t)}_{i,\tau^{-1}(\tau(i)+k)} \Ett^{i,\tau^{-1}(\tau(i)+1)}
+ \textup{a diagonal determined by } \eqref{eq:diag},
\\
&\ds C^{(t+1)}= C^{(t)} - \Tp^{(t)}_\tt + {\V{\Tp^{(t)}} }_\tt,
}
\]
where $\V{X}$ is the matrix obtained from $X$ by shifting every entry down by one row; also see \eqref{Ttt} for notation.

\item Set $\leftidx{^{\imath}}A^{(t+1)} = f^{\imath\jmath}_e(A^{(t+1)})$.
Increase $t$ by one and then go to Step (2).
\endenu
\endAlg
\vspace{.2cm}

\thm\label{thm:min gen'}
For each $A \in \Xij$, the matrices $\leftidx{^{\imath}}A^{(j)} \in \Xij$,
for $j =1,\ldots, x$, in  Algorithm~\ref{alg:mono'} satisfy that
\eq \label{eq:semiMBij}
[ \leftidx{^{\imath}}A^{(1)}] [ \leftidx{^{\imath}}A^{(2)}]
 \cdots [ \leftidx{^{\imath}}A^{(x)}]
= [A] +\textup{lower terms} \in \Sij.
\endeq
\endthm

\proof
Applying Algorithm \ref{alg:mono} on $\leftidx{^{\imath}}A^{(t)}$, we obtain tridiagonal matrices
$D_1^{(t)}$ and $D_2^{(t)}$ in $\Xi_{n,d}$ satisfying that
\[
[D_1^{(t)}]   [D_2^{(t)}] =
[ \leftidx{^{\imath}}A^{(t)}] +\textup{lower terms} \in \Sjj,
\]
which implies from construction that
\[
[ \leftidx{^{\imath}}A^{(1)}]
[ \leftidx{^{\imath}}A^{(2)}]
\cdots [ \leftidx{^{\imath}}A^{(x)}]
= [A] +\textup{lower terms} \in \Sjj.
\]
It remains to show that each lower term $[D]$ in the above identity lies in $\Sij$.
By definition of multiplication on $\Sij$ we have $\roC(D) = \roC(A)$ and $\coC(D) = \coC(A)$.
In particular,  $D \in \Xij$ and hence $[D] \in \Sij$.
\endproof
%-----------------------------------------------------------------------------------------------------------
\exa
We illustrate how Algorithm \ref{alg:mono'} and proof of Theorem~\ref{thm:min gen'} work.
Let $n=4$ (and hence $r=1$), and
\[
A
=\bM{[cc:c:cccccc]
4&0&0&2&1&
\\
\hdashline
&0&1&0&
\\
\hdashline
1&2&0&0&4&
\\
&&&3&1&3&
\\
&&&&4&0&
\\
} \in \Xij.
\]
%That is, $A^\offd = 2\Ett^{1,-1} + \Ett^{1,-2}+ 4\Ett^{12}  + 3\Ett^{21}$, cf. \eqref{def:offd}.
We have
\[
C^{(0)} =\bM{[cc|cccc]
4&0&2&1&\\
\hline
1&2&0&4&\\
&&3&1&3\\
&&&4&0\\
} \in\Xijp.
\]
In this case, we have $k = 2$ and hence
\[
\Tp^{(0)} =\bM{[cc|cccc]
&0&&1&\\
\hline
1&&0&&\\
&&&0&\\
&&&&0\\
},
\quad
A^{(1)} =\bM{[c|cccc]
6&1&&\\
\hline
1&6&&\\
&&7&\\
&&&6\\
},
\quad
C^{(1)} =\bM{[c|cccc]
0&2&&\\
\hline
2&0&5&\\
&3&1&3\\
&&5&0\\
}\in \Xijp.
\]
Now $C^{(1)}$ is tridiagonal in $\Xijp$ and hence $A^{(2)} = C^{(1)}$, which yields that
\[
\leftidx{^{\imath}}A^{(1)}
=
\bM{[c:c:cccccc]
6&0&1&&
\\
\hdashline
0&1&0&
\\
\hdashline
1&0&6&&
\\
&&&7&&
\\
&&&&6&
\\
}
,
\quad
\leftidx{^{\imath}}A^{(2)}
=
\bM{[c:c:cccccc]
0&0&2&&
\\
\hdashline
0&1&0&
\\
\hdashline
2&0&0&5&
\\
&&3&1&3&
\\
&&&5&0&
\\
} \in \Xij.
\]
Therefore, we have
\[
D_1^{(1)}
=
\bM{[c:c:cccccc]
6&1&&
\\
\hdashline
&1&&
\\
\hdashline
&1&6&
\\
&&&7
\\
},
D_2^{(1)}
=
\bM{[c:c:cccccc]
0&0&&
\\
\hdashline
1&1&1
\\
\hdashline
&&6&
\\
&&&7
\\
},
D_1^{(2)}
=
\bM{[c:c:cccccc]
0&2&&
\\
\hdashline
0&1&0&
\\
\hdashline
&2&5&
\\
&&&7
\\
},
D_2^{(2)}
=
\bM{[c:c:cccccc]
0&0&&
\\
\hdashline
2&1&2
\\
\hdashline
&0&0&5
\\
&&3&1
\\
}.
\]
\endexa
\vspace{.3cm}

Modifying \eqref{eq:semiMBij}, for $A\in \Xij$, we define
\eq  \label{eq:MBij}
m_A = \{\leftidx{^{\imath}}A^{(1)}\}\,  \{\leftidx{^{\imath}}A^{(2)}\} \cdots  \{\leftidx{^{\imath}}A^{(x)}\}.
\endeq

Recall from Theorem~\ref{thm:CB-Sjj} the canonical basis $\mathfrak B_{n,d}^{\fc}$ for $\Sjj$.
The following is the the $\ij$-counterpart of Theorems~\ref{thm:CB-Sjj} and \ref{thm:mono} (for $\Sjj$).

\begin{thm}\label{thm:monoij}
{\quad}
\enu
\item[(a)]
The set $\{m_A~|~A \in \Xij \}$ forms an $\mA$-basis of $\Sij$. Moreover, we have
$m_A \in [A] +\sum_{B\in \Xij, B<_\alg A} \mA [B]$.

\item[(b)]
We have a canonical basis $\mathfrak B_{n,d}^{\imath\jmath} :=\{ \{A\}~|~A \in \Xij \}$ of $\Sij$ such that $\overline{\{A\} } =\{A\}$ and
$\{A\} \in [A] + \sum_{B\in \Xij, B<_\alg A} v \ZZ[v] [B]$. Moreover, we have
$\mathfrak B_{n,d}^{\imath\jmath} = \mathfrak B_{n,d}^{\fc}\cap \Sij$.
\endenu
\end{thm}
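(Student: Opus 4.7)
\textbf{Proof proposal for Theorem~\ref{thm:monoij}.}

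The plan is to bootstrap the whole statement from one simple observation about the partial order $\le_\alg$ together with the canonical basis $\mathfrak B_{n,d}^{\fc}$ of $\Sjj$ from Theorem~\ref{thm:CB-Sjj}. The key preliminary observation is that $\Xij$ is a union of $\le_\alg$-ideals in $\Xi_{n,d}$. Indeed, by definition, $A\in \Xij$ iff $\roC(A)_0=\coC(A)_0=0$, which by the formulas in \S\ref{sjj} is equivalent to $a_{00}=1$ together with $a_{0j}=0$ for $j\neq 0$ (and, by centro-symmetry, $a_{i0}=0$ for $i\neq 0$); this last condition is determined entirely by $\roC(A)_0$ and $\coC(A)_0$. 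Since $B<_\alg A$ forces $\roC(B)=\roC(A)$ and $\coC(B)=\coC(A)$, we conclude that $A\in\Xij$ and $B<_\alg A$ imply $B\in\Xij$.

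Combining this observation with the characterization $\{A\}\in [A]+\sum_{B<_\alg A}v^{-1}\ZZ[v^{-1}][B]$ of the canonical basis of $\Sjj$ (Theorem~\ref{thm:CB-Sjj}), I would deduce that for every $A\in\Xij$ the expansion of $\{A\}$ involves only basis elements $[B]$ with $B\in\Xij$; hence $\{A\}\in\Sij$. This immediately gives one inclusion $\{\{A\}~|~A\in\Xij\}\subseteq \mathfrak B_{n,d}^{\fc}\cap\Sij$. For the reverse inclusion, note that if $\{A\}\in\Sij$ for some $A\in\Xi_{n,d}$, then comparing the leading term $[A]$ of $\{A\}$ against the standard basis $\{[B]~|~B\in\Xij\}$ of $\Sij$ forces $A\in\Xij$. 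Bar-invariance of each $\{A\}$ is inherited from $\Sjj$, which completes part~(b) and also shows that $\{m_A\}$ is a well-defined element of $\Sij$, being a product of such $\{\leftidx{^{\imath}}A^{(j)}\}$'s (since each $\leftidx{^{\imath}}A^{(j)}\in\Xij$ by construction of Algorithm~\ref{alg:mono'}).

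For part~(a), I would argue as follows. Bar-invariance of $m_A$ is clear from its definition \eqref{eq:MBij} as a product of bar-invariant elements. For the leading-term formula, I would start from the semi-monomial identity \eqref{eq:semiMBij} in $\Sij$ established in Theorem~\ref{thm:min gen'}, expand each $\{\leftidx{^{\imath}}A^{(j)}\}=[\leftidx{^{\imath}}A^{(j)}]+\textup{lower terms}$, and multiply out, using that the multiplication in $\Sij$ preserves the row/column data and hence keeps all intermediate summands inside $\Xij$. Combined with Lemma~\ref{lem:lower2}-type monotonicity of leading terms under the multiplication, this yields $m_A\in [A]+\sum_{B\in\Xij,\,B<_\alg A}\mA[B]$. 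The $\mA$-basis statement is then a standard consequence of uni-lower-triangularity of the transition matrix between $\{m_A\}$ and the standard basis $\{[A]~|~A\in\Xij\}$ of $\Sij$.

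The step requiring the most care is the preliminary observation that $\Xij$ is a $<_\alg$-ideal in $\Xi_{n,d}$; once this is established, everything else consists of standard triangularity manipulations and careful bookkeeping of row/column sum conditions along the products in Algorithm~\ref{alg:mono'}. I do not anticipate any genuinely hard computation, since the canonical basis has already been constructed for $\Sjj$ in Theorem~\ref{thm:CB-Sjj} and the semi-monomial factorization within $\Sij$ has been recorded in Theorem~\ref{thm:min gen'}.
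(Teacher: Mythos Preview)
Your proposal is correct. The main difference from the paper is the order and route for part~(b). The paper first establishes~(a) by mimicking the proof of Theorem~\ref{thm:mono} (using Theorem~\ref{thm:min gen'} in place of Theorem~\ref{thm:min gen}), then constructs the canonical basis of $\Sij$ from scratch via the standard Lusztig argument, and finally identifies it with $\mathfrak B_{n,d}^{\fc}\cap\Sij$ by uniqueness. You instead begin with the observation that $\Xij$ is downward-closed under $\le_\alg$ (since $\le_\alg$ preserves $\roC$ and $\coC$), and use this to show directly that $\{A\}\in\Sij$ for every $A\in\Xij$; this yields the canonical basis of $\Sij$ together with the identification $\mathfrak B_{n,d}^{\imath\jmath}=\mathfrak B_{n,d}^{\fc}\cap\Sij$ in one stroke. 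Your route is more economical: it avoids rerunning Lusztig's construction and makes clear that bar-invariance and the triangularity estimate are simply inherited from $\Sjj$. For part~(a) your argument is essentially the paper's, though you are more explicit that the required monotonicity is the content of Lemma~\ref{lem:lower2} rather than just the tridiagonal Corollary~\ref{cor:lower} and Lemma~\ref{lem:B'} (this matters because the factors $\leftidx{^{\imath}}A^{(j)}$ are not tridiagonal in $\Xi_{n,d}$, only after collapsing).
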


\proof
Part (a) is the counterpart of Theorem~\ref{thm:mono} (for $\Sjj$), and can be proved by the same argument, now with help from
Theorem~\ref{thm:min gen'}.
The first half of Part~(b) on the canonical basis follows by (a) and a standard argument (cf. \cite[24.2.1]{Lu93}).
The second half of (b) follows from the uniqueness characterization of the canonical basis $\mathfrak B_{n,d}^{\imath\jmath}$.
\endproof

%=========================================================
\section{Stabilization algebra of type $\imath\jmath$}
 \label{sec:Kij}

We define two subsets of $\Xit_n$  \eqref{eq:Xitn} as follows:
\eq
\Xit_n^< = \{A =(a_{ij}) \in \Xit_n ~|~ a_{00} < 0\},
\quad
\Xit_n^> = \{A  =(a_{ij})  \in \Xit_n ~|~ a_{00} > 0\}.
\endeq
For any matrix $A \in \Xit_n$ and $p\in\ZZ$, we define
\eq  \label{eq:ppA}
\pp{A} = A + p(I - E^{00}).
\endeq
%-----------------------------------------------------------------------------------------------------------
\lem \label{lem:stab-ij}
For $A_1, A_2,\ldots, A_f \in \Xit_n^>$, there exists $\mathcal Z_i \in \Xit_n^>$ and
$\mathcal G_i(v,v') \in \QQ(v) [v', v'{}^{-1}]$ ($i = 1, \ldots, m$ for some $m$) such that for all even integers
$p\gg 0$, we have an identity in $\Sjj$ of the form:
\[
[\pp{A_1}] \cdot [\pp{A_2}] \cdot \ldots \cdot  [\pp{A_f}] = \sum\limits_{i=1}^m \mathcal G_i(v, v^{-p}) [\pp{\mathcal Z_i}].
\]
\endlem

\proof
The proof is similar to the proof of Proposition~\ref{prop:stab1} where $\p{A} = A+pI$ is used instead of
$\pp{A}$ \eqref{eq:ppA}. A detailed calculation shows that replacing $p$ by $\breve{p}$ in \eqref{eq:stab1eq1}
and \eqref{eq:stab1eq2} gives similar formulas, which proves the lemma for the base case
($f=2, A=A_2, B=A_1$ is tridiagonal). The lemma then follows by induction.
\endproof
%-----------------------------------------------------------------------------------------------------------
Consequently, the vector space $\dKjjp$ over $\QQ(v)$ spanned by the symbols
$[A]$, for $A \in \Xit_n^>$, is a stabilization algebra whose multiplicative structure is given by
(with $f=2$; associativity follows from $f=3$):
\eq
[A_1] \cdot [A_2] \cdot \ldots \cdot [A_f]
= \bc{
\sum\limits_{i=1}^m \mathcal G_i(v, 1) [\mathcal Z_i]
&\tif \coC(A_i) = \roC(A_{i+1}) \; \forall i,
\\
0
&\textup{otherwise}.
}
\endeq
By arguments entirely analogous to those for Proposition~\ref{prop:stab2} and
Theorem~\ref{thm:dKjj CB},
$\dKjjp$ admits a (stabilizing) bar involution,
$\dKjjp$ admits  a monomial basis $\{m_A~|~ A \in \Xit_n^>\},$
and a  stably canonical basis $\dot{\mathfrak B}^{\fc,>}$.

\Def
Let $\dKij$ be the $\QQ(v)$-submodule of $\dKjjp$ generated by $\{ [A]~|~A \in \Xitij\}$, where
\eq  \label{dKij2}
\ba{{lll}
\Xitij &= \{A = (a_{ij}) \in \Xit_n ~|~ a_{0i} = a_{i0} = \delta_{0i} \}
\\
&= \{A \in \Xit_n^> ~|~ \coC(A)_0 = \roC(A)_0 = 0\}.
}
\endeq
\endDef
The goal of this section is to realize $\dKij$ as a subquotient of $\dKjj$ with compatible bases
by following \cite[Appendix~A]{BKLW}
(where an algebra $\dot\bU^\imath$ is realized as a subquotient of
an algebra $\dot\bU^\jmath$ with compatible  stably canonical bases).

It follows by the second characterization for $\Xitij$ in \eqref{dKij2} that $\dKij$ is a subalgebra of $\dKjjp$.
Since  the bar-involution on $\dKjjp$ restricts to an involution on $\dKij$, we reach the following conclusion.

\begin{lemma}  \label{lem:Kp}
The set $\dKij \cap \dot{\mathfrak B}^{\fc,>}$ forms a stably canonical basis of $\dKij$.
\end{lemma}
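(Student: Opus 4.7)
The plan is to identify $\dKij \cap \dot{\mathfrak B}^{\fc,>}$ explicitly as $\{\{A\} : A \in \Xitij\}$ and then deduce the basis property via triangularity. The key observation is that the partial order $\leq_\alg$ on $\Xit_n$ (defined by the same recipe as \eqref{eq:order}) preserves both $\roC$ and $\coC$. This will ensure that the subset $\Xitij \subseteq \Xit_n^{>}$, characterized by $\roC(A)_0 = \coC(A)_0 = 0$, is ``downward closed'' under $\leq_\alg$ within each coset of fixed row/column data.

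First I would verify that $\{A\} \in \dKij$ for every $A \in \Xitij$. By the characterization of $\dot{\mathfrak B}^{\fc,>}$ (analogous to Theorem~\ref{thm:dKjj CB}(b), which comes from the same stabilization argument applied to $\dKjjp$), one has
\[
\{A\} \in [A] + \sum_{\substack{B \in \Xit_n^{>} \\ B <_\alg A}} v^{-1}\ZZ[v^{-1}]\,[B].
\]
Any $B$ appearing on the right-hand side satisfies $\roC(B) = \roC(A)$ and $\coC(B) = \coC(A)$, so $\roC(B)_0 = 0 = \coC(B)_0$, forcing $B \in \Xitij$ by the second characterization in \eqref{dKij2}. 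Hence $\{A\} \in \dKij$.

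Conversely, if $A \in \Xit_n^{>} \setminus \Xitij$, then the expansion $\{A\} = [A] + \sum_{B <_\alg A} c_B\,[B]$ has coefficient $1$ at $[A]$. Since $\{[B] : B \in \Xit_n^{>}\}$ is a $\QQ(v)$-basis of $\dKjjp$ and $[A] \notin \dKij$, it follows that $\{A\} \notin \dKij$. Combining both directions yields the equality $\dKij \cap \dot{\mathfrak B}^{\fc,>} = \{\{A\} : A \in \Xitij\}$.

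Finally, the transition between the standard basis $\{[A] : A \in \Xitij\}$ of $\dKij$ and the subset $\{\{A\} : A \in \Xitij\}$ is unitriangular with respect to $\leq_\alg$, so the latter forms a $\QQ(v)$-basis of $\dKij$. Bar-invariance and the $v^{-1}\ZZ[v^{-1}]$-triangularity with respect to the standard basis of $\dKij$ are both inherited directly from $\dot{\mathfrak B}^{\fc,>}$, establishing the stably canonical basis property. The entire argument is formal once one has checked that $\leq_\alg$ preserves $\roC$ and $\coC$ on $\Xit_n$; I do not anticipate a substantive obstacle, as this reduction mirrors the argument of \cite[Appendix~A]{BKLW} that the excerpt refers to.
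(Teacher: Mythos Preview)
Your proposal is correct and takes essentially the same approach as the paper. The paper's proof is condensed into the two sentences immediately preceding the lemma: $\dKij$ is a subalgebra of $\dKjjp$ (via the second characterization in \eqref{dKij2}) and the bar involution on $\dKjjp$ restricts to $\dKij$; your argument simply makes explicit the underlying reason, namely that $\leq_\alg$ preserves $\roC$ and $\coC$, and then carries out the resulting unitriangularity argument in full.
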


The submodule of $\dKjj$ spanned by $[A]$ for $A \in \Xitij$ is not a subalgebra.
This is why we need a somewhat different stabilization above to construct the stably canonical basis for $\dKij$.
We shall see below the stabilization above is related to the stabilization used earlier.

%Now we realize $\dKij$ as a subquotient of $\dKjj$.
Define $\Jql$ to be the $\QQ(v)$-submodule of $\dKjj$ spanned by $[A]$ for all $A \in \Xit_n^<$.
%-----------------------------------------------------------------------------------------------------------
\lem\label{lem:2ideal}
The submodule $\Jql$ is a two-sided ideal of $\dKjj$.
\endlem

\proof
Since $\Jql$  is clearly  invariant under the anti-involution $[A] \mapsto v^{- d_A + d_{{}^tA}} [{}^t A]$ for $\dKjj$,
the claim that $\Jql$ is left ideal of $\dKjj$ is equivalent to that $\mathcal{J}$ is a right ideal of $\dKjj$.
We shall show that $\Jql$ is left ideal of $\dKjj$. To that end,
it suffices to show that $[B]  [A] \in \Jql$ for arbitrary $A \in \Xit_n^<$ and tridiagonal $B\in \Xit_n$.

By the multiplication formula, the matrices corresponding to the terms showing up in $[B] \cdot [A]$ must be of the form
\[
A^{(T-S)} = A - (T-S)_\tt  + (\^{T-S})_\tt,
\quad
T \in \widetilde{\Theta}_{B, A}, 
\quad S \in \Gamma_T.
\]
Suppose that the $(0,0)$-entry $a_{00} - 2(t_{00}- s_{00}) + 2(\^{T-S})_{00}$ is positive. Note that we have
\[
\bA{
\LR{A;S;T} &=
\prod\limits_{(i,j) \in \Ia} \lrb{(A-T_\tt)_{ij} + s_{ij} + s_{-i,-j} + (\^{T-S})_{ij} + (\^{T-S})_{-i,-j}}{(A-T_\tt)_{ij} ; s_{ij} ; s_{-i,-j} ; (\^{T-S})_{ij} ; (\^{T-S})_{-i,-j}}
\\
&\cdot \prod\limits_{k \in \{0, r+1\}}
\left(
\frac{
\prod\limits_{i=1}^{s_{kk}+(\^{T-S})_{kk}}
[a_{kk}-2t_{kk} -1 + 2i]
}{[s_{kk}]^! [(\^{T-S})_{kk}]^!}
\right) \cdot  \LR{S}.
}
\]
Hence $\LR{A;S;T} = 0$ and  the term $[A^{(T-S)}]$ makes no contribution to $[B] [A]$. Therefore, we have $[B]  [A] \in \Jql$.
\endproof

Recall from \eqref{eq:mAK} a semi-monomial basis element
$m'_A= [A^{(1)}] [A^{(2)}] \cdots  [A^{(x)}]$,
and from \eqref{eq:mAK2} a monomial basis element
$m_A = \{A^{(1)}\}  \{A^{(2)}\} \cdots  \{A^{(x)}\}.$

\lem
We have $m'_A \in \Jql$, for $A \in \Xit_n^<$.
%In other words, $T_{A,B} = 0$ unless $B \in \Xit_n^<$.
\endlem
%-----------------------------------------------------------------------------------------------------------
\proof
From the construction of $A^{(i)}$ in $m'_A=[A^{(1)}] [A^{(2)}] \cdots  [A^{(x)}]$, the matrix $A^{(x)}$
has the same diagonal entries as $A$ and hence $[A^{(x)}] \in \Jql$. Thus it follows from Lemma \ref{lem:2ideal}
that $m'_A \in \Jql$.
\endproof

Recall $\dKjj$ admits a stably canonical basis of $\Bjj$ from Theorem~\ref{thm:dKjj CB}.
\begin{cor}\label{cor:monoJ}
The ideal $\Jql$ admits a monomial basis $\{m_A ~|~ A \in \Xit_n^<\}$, and
 a stably canonical basis $\Bjj \cap \Jql = \{\{A\} ~|~ A \in \Xit_n^<\}$.
\end{cor}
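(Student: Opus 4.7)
The plan is to deduce the corollary from the preceding lemma ($m'_A \in \Jql$ for $A \in \Xit_n^<$) via triangularity and the ideal structure of $\Jql$. First I would address the semi-monomial version as a warm-up: since $m'_A = [A] + \sum_{B <_\alg A} c_B [B]$ lies in $\Jql$, and $\Jql$ has standard basis $\{[C] : C \in \Xit_n^<\}$, linear independence of the standard basis of $\dKjj$ forces every contributing $B$ to satisfy $B \in \Xit_n^<$; the resulting uni-lower-triangular transition shows $\{m'_A \mid A \in \Xit_n^<\}$ is an $\mA$-basis of $\Jql$.

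Next I would lift this to the genuine monomial basis $m_A = \{A^{(1)}\} \cdots \{A^{(x)}\}$. The essential observation is that in Algorithm~\ref{alg:mono}, both $\Tp^{(t)}_\tt$ and $\V{\Tp^{(t)}}_\tt$ have vanishing diagonal, so $A^{(x)}$ inherits the diagonal of $A$; in particular $(A^{(x)})_{00} = a_{00} < 0$, giving $A^{(x)} \in \Xit_n^<$. Granted that $\{A^{(x)}\} \in \Jql$ for such tridiagonal $A^{(x)}$ (the main obstacle, addressed below), the ideal property of $\Jql$ from Lemma~\ref{lem:2ideal} gives $m_A = \{A^{(1)}\} \cdots \{A^{(x-1)}\} \cdot \{A^{(x)}\} \in \dKjj \cdot \Jql \subseteq \Jql$, and the same triangularity argument as in the first step yields the monomial basis claim. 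For the stably canonical basis, the reverse inclusion $\Bjj \cap \Jql \subseteq \{\{A\} \mid A \in \Xit_n^<\}$ is immediate, since the leading term $[A]$ of any $\{A\} \in \Jql$ must itself lie in $\Jql$ and hence $A \in \Xit_n^<$; the forward inclusion follows from the characterization in Theorem~\ref{thm:dKjj CB}(b), performing the bar-invariant purification starting from the bar-invariant $m_A \in \Jql$ and subtracting only correction terms $\{B\}$ for $B \in \Xit_n^<, B <_\alg A$ (available by induction on $\leq_\alg$), so that $\{A\}$ stays within $\Jql$.

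The main technical obstacle will be proving $\{A^{(x)}\} \in \Jql$ for tridiagonal $A^{(x)} \in \Xit_n^<$, equivalently that the standard basis expansion of $\{A^{(x)}\}$ is supported on $\Xit_n^<$. My plan here is an induction on $\leq_\alg$ restricted to tridiagonal matrices with prescribed $\roC$ and $\coC$. The base case of a $\leq_\alg$-minimal tridiagonal matrix is immediate since $\{A^{(x)}\} = [A^{(x)}]$. For the inductive step, the bar-invariant purification producing $\{A^{(x)}\}$ from $[A^{(x)}]$ subtracts multiples of $\{B\}$ for tridiagonal $B <_\alg A^{(x)}$; the same tracking of the $(0,0)$-entry via the multiplication formula of Theorem~\ref{thm:multK} as in the proof of Lemma~\ref{lem:2ideal} (observing that the factor $\prod_{i=1}^{s_{00}+(\^{T-S})_{00}} [a_{00}-2t_{00}-1+2i]$ kills any contribution with nonnegative $(0,0)$-entry) ensures the purification respects the ideal $\Jql$ and completes the induction.
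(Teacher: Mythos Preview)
Your reduction is well organized, and steps 1--3 would indeed go through once step~4 is in place. The gap is in step~4. The construction of $\{B\}$ for tridiagonal $B$ (the paragraph preceding \eqref{eq:mAK2}) proceeds through the bar involution $\overline{[B]}$, and for tridiagonal $B$ this bar is computed via Kazhdan--Lusztig data (see the proof of Proposition~\ref{prop:stab2}), not via the multiplication formula of Theorem~\ref{thm:multK}. The vanishing mechanism you invoke from Lemma~\ref{lem:2ideal}---the factor $\prod_i [a_{00}-2t_{00}-1+2i]$ in $\LR{A;S;T}$---controls products $[C]\cdot[A]$, not the bar involution; it gives no information about which tridiagonal $B'<_\alg A^{(x)}$ occur in $\overline{[A^{(x)}]}$ or in $\{A^{(x)}\}$. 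And such $B'$ need not lie in $\Xit_n^<$: for tridiagonal $B'<_\alg A^{(x)}$ with $\roC(B')=\roC(A^{(x)})$ one has $b'_{01}\le a^{(x)}_{01}$, hence $(b')_{00}\ge (a^{(x)})_{00}$, so $(b')_{00}$ can be positive (e.g.\ take $a^{(x)}_{00}=-1$, $a^{(x)}_{01}=1$; then the diagonal matrix $B'$ with the same $\roC,\coC$ has $(b')_{00}=1$). Thus your induction hypothesis, which only covers $B'\in\Xit_n^<$, does not apply to all correction terms, and the argument stalls.

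What is really needed, and what the paper supplies, is bar-invariance of $\Jql$. The paper does \emph{not} try to extract this from the bar involution on tridiagonal generators; instead it imports the bar-invariant basis $\{\f_A\}$ of $\dKjj$ from \cite[\S9.4]{FLLLW}, shows (using Lemma~\ref{lem:2ideal} exactly as you used it for $m'_A$) that $\f_A\in\Jql$ for $A\in\Xit_n^<$, and concludes that $\Jql$ is spanned by bar-invariant elements and hence is bar-invariant. Once that is known, the canonical basis claim follows first, and then $\{A^{(x)}\}\in\Jql$ comes for free (since $A^{(x)}\in\Xit_n^<$), giving $m_A\in\Jql$ via Lemma~\ref{lem:2ideal}. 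Note that if you could independently establish step~2, then your step~3 would in fact yield bar-invariance of $\Jql$ (it would be spanned by the bar-invariant $m_A$); the circularity is that step~2 itself depends on step~4, i.e.\ on already knowing enough canonical basis elements lie in $\Jql$. The paper breaks this circle with $\{\f_A\}$, not with the multiplication formula.
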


\proof
Recall the new basis $\f_A$ from ~\cite[Section 9.4]{FLLLW}.
By arguing in a way identical to the proof of ~\cite[Lemma A.6]{BKLW}, we obtain
that the $\f_A$ lies in $\bJ$ for all $A \in \Xit_n^<$ and hence forms a basis for $\bJ$.
Note that in the proof the role of Lemma A.5 in {\it loc. cit.} is played by Lemma \ref{lem:2ideal}.
By ~\cite[Proposition 9.4.4]{FLLLW}, we see that $\f_A$ is bar-invariant, and so is $\bJ$.
As a consequence, the set of $\{A\} $ for all $A\in \Xit_n^<$ is a basis of $\bJ$,
by an argument  similar to ~\cite[Proposition A.7]{BKLW}.
%Now by comparing $\f_A$ with $m_A$ for any $A \in \Xit_n^<$, one of the factors
%in $m_A= \{A^{(1)}\}  \{A^{(2)}\} \cdots  \{A^{(x)}\}$  must have a negative entry at $(0, 0)$.
So Lemma ~\ref{lem:2ideal} implies that the monomial
$m_A= \{A^{(1)}\}  \{A^{(2)}\} \cdots  \{A^{(x)}\}$ is in $\bJ$, for any $A \in \Xit_n^<$. The corollary follows.
\endproof

\prop  \label{prop:JK}
The following statements hold.
\enua
\item
The quotient algebra $\dKjj/\Jql$ admits a monomial basis $\{m_A + \Jql ~|~ A\in \Xit_n^>\}$.
\item
The quotient algebra $\dKjj/\Jql$ admits a stably canonical basis $\{\{A\} + \Jql ~|~ A \in \Xit_n^>\}$.
\item
The map $\sharp: \dKjj/\Jql \rw \dKjjp$ sending $[A] + \Jql \mapsto [A]$ is an isomorphism of $\QQ(v)$-algebras,
which matches the corresponding monomial bases and stably canonical bases.
\endenua
\endprop
%-----------------------------------------------------------------------------------------------------------
\proof
Parts~(a) and (b) follow directly from Corollary \ref{cor:monoJ}.
For (c), since the map $\sharp$ is clearly a linear isomorphism,
we still need to show that $\sharp$ is an algebra homomorphism. We write
\[
([B] + \Jql)\cdot([A] + \Jql) = \sum_C f_{B,A}^C ([C] + \Jql) \in \dKjj/\Jql,
\quad
[B]\cdot[A] = \sum_C g_{B,A}^C [C] \in \dKjjp
\]
for some structure constants $f_{B,A}^C$ and $g_{B,A}^C$ with $A,B,C \in \Xit_n^>$.
A detailed calculation shows that $f_{B,A}^C =g_{B,A}^C$ when $B$ is tridiagonal,
and hence $\sharp$ is an algebra homomorphism. Since $\sharp$ matches the tridiagonal generators,
it also matches the (semi-)monomial bases, and $\sharp$ commutes with the bar involutions.
Finally, $\sharp$ matches the stably canonical bases as the partial orders on the two algebras are compatible.
\endproof

We summarize Lemma~\ref{lem:Kp} and Proposition~\ref{prop:JK} above as follows.

\thm
  \label{thm:subqij}
As a $\QQ(v)$-algebra, $\dKij$ is isomorphic to a subquotient of $\dKjj$, with compatible standard, monomial, and stably canonical bases.
\endthm

\rmk  \label{rem:QSPij}
As in \S\ref{sec:Kjj-def}, \S\ref{sec:Kjj-quot} and \S\ref{sec:coKjj}, we can construct an algebra $\Kij$
as a subalgebra of a completion of the algebra $\dKij$.
Similar to Theorem~\ref{thm:aqsp}, the pair $(\K_{\nn}, \Kij)$ forms an affine quantum symmetric pair
associated to the involution as depicted in Figure~\ref{figure:ij}.
We omit the details.
\endrmk

\begin{figure}[ht!]
\caption{Dynkin diagram of type $A^{(1)}_{2r}$ with involution of type $\imath\jmath$.}
   \label{figure:ij}
\[
\begin{tikzpicture}
\matrix [column sep={0.6cm}, row sep={0.5 cm,between origins}, nodes={draw = none,  inner sep = 3pt}]
{
	&\node(U1) [draw, circle, fill=white, scale=0.6, label = 1] {};
	&\node(U3) {$\cdots$};
	&\node(U4)[draw, circle, fill=white, scale=0.6, label =$r-1$] {};
	&\node(U5)[draw, circle, fill=white, scale=0.6, label =$r$] {};
\\
	\node(L)[draw, circle, fill=white, scale=0.6, label =0] {};
	&&&&&
\\
	&\node(L1) [draw, circle, fill=white, scale=0.6, label =below:$2r$] {};
	&\node(L3) {$\cdots$};
	&\node(L4)[draw, circle, fill=white, scale=0.6, label =below:$r+2$] {};
	&\node(L5)[draw, circle, fill=white, scale=0.6, label =below:$r+1$] {};
\\
};
\begin{scope}
\draw (L) -- node  {} (U1);
\draw (U1) -- node  {} (U3);
\draw (U3) -- node  {} (U4);
\draw (U4) -- node  {} (U5);
\draw (U5) -- node  {} (L5);
\draw (L) -- node  {} (L1);
\draw (L1) -- node  {} (L3);
\draw (L3) -- node  {} (L4);
\draw (L4) -- node  {} (L5);
\draw (L) edge [color = blue, loop left, looseness=40, <->, shorten >=4pt, shorten <=4pt] node {} (L);
\draw (L1) edge [color = blue,<->, bend right, shorten >=4pt, shorten <=4pt] node  {} (U1);
\draw (L4) edge [color = blue,<->, bend left, shorten >=4pt, shorten <=4pt] node  {} (U4);
\draw (L5) edge [color = blue,<->, bend left, shorten >=4pt, shorten <=4pt] node  {} (U5);
\end{scope}
\end{tikzpicture}
\]
\end{figure}

By definition and Remark~\ref{K=Sjj}, $\Psi_d : \Kjj \rightarrow {}_{\QQ}\Sjj$ factors through $\Jql$ and hence we obtain a
surjective homomorphism $\tilde{\Psi}_d  : \Kjj/\Jql \rightarrow {}_{\QQ}\Sjj$.
We shall regard $\Kij \subset \dKjjp$ as a subalgebra of $\Kjj/\Jql$ via the identification
$\sharp: \dKjj/\Jql \cong \dKjjp$ in Proposition~\ref{prop:JK}.

Define ${}_{\QQ} \Sij =\QQ(v)\otimes_{\ZZ[v,v^{-1}]} \Sij$.
By restricting $\tilde{\Psi}_d$ to $\Kij$, we obtain a surjective homomorphism
$\Psi^{\imath\jmath}_d  : \Kij  \rightarrow {}_{\QQ} \Sij$.
Putting all constructions together we have obtained the following commutative diagram
\begin{align}
\label{CD:ij}
\begin{CD}
\Kij @>>> \Kjj/\Jql \\
@V\Psi_d^{\imath\jmath} VV @VV \tilde{\Psi}_d  V \\
{}_{\QQ} \Sij @>>> {}_{\QQ} \Sjj
\end{CD}
\end{align}

\prop  \label{K=Sij}
\enu
\item[(a)]
The homomorphism $\Psi^{\imath\jmath}_d  : \Kij  \rightarrow {}_{\QQ} \Sij$ sends
$[A] \mapsto [A]$ for $A \in \Xij$ and  $[A] \mapsto 0$ otherwise.

\item[(b)]
The homomorphism $\Psi^{\imath\jmath}_d  : \Kij  \rightarrow {}_{\QQ} \Sij$ preserves the (stably) canonical bases,
that is, it sends
$\{A\} \mapsto \{A\}$ for $A \in \Xij$ and $\{A\} \mapsto 0$ otherwise.
\endenu
\endprop

\proof
By definition, the 2 horizontal maps and $\tilde{\Psi}_d$ preserve the standard bases,
and so by diagram chasing of \eqref{CD:ij} we conclude that
$\Psi^{\imath\jmath}_d  : \Kij  \rightarrow {}_{\QQ} \Sij$ preserves the standard basis, whence (a).

By Lemma~\ref{lem:Kp} and Proposition~\ref{prop:JK}, the top horizontal map preserves
the stably canonical bases. The bottom horizontal map preserves
the stably canonical bases by construction.
The map $\tilde{\Psi}_d$ preserves the (stably) canonical bases by Remark~\ref{K=Sjj}.
Hence Part (b) follow by the same diagram chasing of  \eqref{CD:ij}.
\endproof
%=========================================================
\section{Stabilization algebra of type $\jmath\imath$ }

Recall $\Xi_{n,d}$ from \eqref{eq:Xind} and $\Ld=\Ld_{r,d}$ from \eqref{def:Ld}. Let
\begin{align*}
 %\label{eq:Xji}
\Xji &= \{ A \in \Xi_{n,d}~|~\roC(A)_{r+1} = 0 = \roC(A)_{r+1} \},
\\
\Ldji &=\{ \ld = (\ld_i)_{i \in \ZZ} \in \Ld~|~  \ld_{r+1} = 0 \},
\\
 \D_{\nn,d}^{\jmath\imath} &= \{(\ld, g, \mu) ~|~ \ld,\mu\in\Ldji, g\in\D_{\ld\mu} \}.
\end{align*}

The following lemma is obtained by restriction of the bijection $\kappa$ in Lemma~\ref{lem:kappa} to
$ \D_{\nn,d}^{\jmath\imath}$.
\begin{lem}  \label{lem:kappaji}
The map
$
\kappa^{\jmath\imath}:  \D_{\nn,d}^{\jmath\imath}  \longrightarrow \Xji
$
sending $(\ld, g, \mu)$ to $(|R_i^\ld \cap g R_j^\mu|)$ is a bijection.
\end{lem}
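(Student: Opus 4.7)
The plan is to deduce Lemma \ref{lem:kappaji} from Lemma \ref{lem:kappa} by restriction, in complete analogy with how Lemma \ref{lem:kappai'} was obtained for type $\imath\jmath$. Since $\kappa: \D_{n,d} \to \Xi_{n,d}$ is already established to be a bijection, I only need to check that it carries $\D_{\nn,d}^{\jmath\imath}$ bijectively onto $\Xji$; the bijectivity of $\kappa^{\jmath\imath}$ then follows automatically.

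First I would translate the defining conditions on both sides into a common matrix-theoretic statement. On the source side, for $\ld \in \Ld$ the component $\ld_{r+1}$ vanishes exactly when the interval $R_{r+1}^\ld = [d+1-\ld_{r+1}\,..\,d+1+\ld_{r+1}]$ reduces to the singleton $\{d+1\}$. On the target side, the condition $\roC(A)_{r+1} = 0$ forces (via the definition of $\roC$, together with $A \in \Xi_{n,d}$ and the normalization that $a_{r+1,r+1}$ is odd) that $a'_{r+1,r+1} = 0$, i.e.~$a_{r+1,r+1}=1$, and $a_{r+1,j} = 0$ for all $j \le r$; periodicity and centro-symmetry then kill the remaining entries in row $r+1$. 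The analogous statement holds for $\coC(A)_{r+1} = 0$.

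Next I would verify that $\kappa(\D_{\nn,d}^{\jmath\imath}) \subseteq \Xji$ using the key fact from \eqref{eq:gc} that every $g \in W$ fixes $d+1$ (and hence fixes the full orbit $d+1+D\ZZ$ of $d+1$ under translation). Assuming $\ld_{r+1} = \mu_{r+1} = 0$, so that $R_{r+1}^\ld = \{d+1\}$ and $R_{r+1}^\mu = \{d+1\}$, the intersection $R_{r+1}^\ld \cap g R_j^\mu$ is nonempty only when $d+1 \in gR_j^\mu$, which by the fixed-point property forces $j \equiv r+1 \pmod n$, and similarly for $R_i^\ld \cap g R_{r+1}^\mu$. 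The $(r+1,r+1)$-entry is then $|\{d+1\}| = 1$, and all other entries in row $r+1$ and column $r+1$ of $\kappa(\ld,g,\mu)$ vanish. This yields $\roC\bigl(\kappa(\ld,g,\mu)\bigr)_{r+1} = 0 = \coC\bigl(\kappa(\ld,g,\mu)\bigr)_{r+1}$, i.e.~$\kappa(\ld,g,\mu) \in \Xji$.

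For the reverse inclusion $\kappa^{-1}(\Xji) \subseteq \D_{\nn,d}^{\jmath\imath}$, I would run the translation in the previous paragraph in reverse: if $A \in \Xji$, then the second half of Lemma \ref{lem:kappa} identifies the unique preimage as $(\roC(A), \gstd_{A\stdP}, \coC(A))$, and the matrix-side computation above shows $\roC(A)_{r+1} = \coC(A)_{r+1} = 0$, placing the preimage in $\D_{\nn,d}^{\jmath\imath}$. I do not anticipate any serious obstacle: the argument is mechanical and parallels the $\imath\jmath$ case verbatim, with the role of the fixed point $0$ and the parameters $(\ld_0, R_0^\ld, a'_{00})$ replaced throughout by the fixed point $d+1$ and $(\ld_{r+1}, R_{r+1}^\ld, a'_{r+1,r+1})$.
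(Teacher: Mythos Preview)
Your proposal is correct and takes essentially the same approach as the paper, which simply states that the lemma ``is obtained by restriction of the bijection $\kappa$ in Lemma~\ref{lem:kappa} to $\D_{\nn,d}^{\jmath\imath}$.'' You have supplied the routine verification that the paper leaves implicit, namely that $\kappa$ carries $\D_{\nn,d}^{\jmath\imath}$ onto $\Xji$ and back, using the fixed point $d+1$ in place of $0$; this is exactly the parallel to the $\imath\jmath$ case that the paper invokes.
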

%-----------------------------------------------------------------------------------------------------------
Now we denote the {\em affine $q$-Schur algebra of type $\jmath\imath$} by
\eq
\Sji = \textup{End}_{\bH}
\big(
\mathop{\oplus}_{\ld\in \Ldij} x_\ld \bH \big).
\endeq
All results for $\Sij$, $\dKij$ and $\Kij$ in \S\ref{sec:Sij}--\ref{sec:Kij} admit counterparts in the current setting.
We shall formulate the main statements but skip the details to avoid much repetition.
The proposition below is an analogue of Theorem~\ref{thm:monoij}.
\begin{prop}  \label{prop:MBCBji}
The algebra $\Sji$ is naturally a subalgebra of $\Sjj$, admitting compatible standard, monomial and canonical bases.
\end{prop}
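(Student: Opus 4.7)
The plan is to mirror the argument for Theorem~\ref{thm:monoij} step by step, replacing the role of the index $0$ (which was frozen in type $\imath\jmath$) with the index $r+1$ throughout. First, the inclusion $\Sji \hookrightarrow \Sjj$ is immediate from the definitions: since $\Ldji \subset \Ld$, we have $\bigoplus_{\ld \in \Ldji} x_\ld \bH$ is a direct summand of $\bigoplus_{\ld \in \Ld} x_\ld \bH$, and the endomorphism algebra of the former embeds as the corner subalgebra. By Lemma~\ref{lem:kappaji}, $\{[A] \mid A \in \Xji\}$ is a standard basis of $\Sji$, and this is compatible with the standard basis $\{[A] \mid A \in \Xi_{n,d}\}$ of $\Sjj$, establishing the ``compatible standard basis'' claim.

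Second, I would introduce the $\jmath\imath$-analogue of the collapsing setup from Section~\ref{sec:Sij}. Let $\ZZji = \ZZ \setminus ((r+1) + n\ZZ)$, let $\Xjip$ be the set of matrices defined exactly as in \eqref{eq:Xijp} but indexed by $\ZZji \times \ZZji$ and with the parity condition imposed on $a_{00}$ (while the entries near $(r+1)+n\ZZ$ are simply removed), and let $f^{\jmath\imath}_c : \Xji \rw \Xjip$ and $f^{\jmath\imath}_e : \Xjip \rw \Xji$ be the collapsing/expanding bijections obtained by deleting/inserting the fixed rows and columns indexed by $(r+1) + n\ZZ$. Using the order-preserving bijection $\tau^{\jmath\imath} : \ZZji \rw \ZZ$ with $\tau^{\jmath\imath}(1) = 1$, one declares $X \in \Xjip$ to be tridiagonal exactly as in \eqref{eq:tridij}. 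Then I would state the $\jmath\imath$-analogue of Algorithm~\ref{alg:mono'}, producing matrices $\leftidx{^\jmath}A^{(1)}, \ldots, \leftidx{^\jmath}A^{(x)} \in \Xji$ from any $A \in \Xji$.

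Third, I would establish the semi-monomial identity
\begin{equation*}
[\leftidx{^\jmath}A^{(1)}]\,[\leftidx{^\jmath}A^{(2)}] \cdots [\leftidx{^\jmath}A^{(x)}] = [A] + \textup{lower terms} \quad \text{in } \Sji,
\end{equation*}
exactly as in Theorem~\ref{thm:min gen'}: run Algorithm~\ref{alg:mono} on each $\leftidx{^\jmath}A^{(t)} \in \Xi_{n,d}$ to rewrite $[\leftidx{^\jmath}A^{(t)}]$ as a product of two tridiagonal basis elements plus lower terms in $\Sjj$, and then observe that all error terms appearing have rows/columns $(r+1)+n\ZZ$ fixed (because multiplication in $\Sjj$ preserves row and column sums, hence the condition $\roC = \coC = 0$ at index $r+1$), so they lie in $\Sji$. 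Setting $m_A = \{\leftidx{^\jmath}A^{(1)}\}\cdots\{\leftidx{^\jmath}A^{(x)}\}$ and arguing as in the proof of Theorem~\ref{thm:monoij}(a) via Corollary~\ref{cor:Abar} and Lemma~\ref{lem:B'} shows $m_A = [A] + \text{lower terms}$ and is bar-invariant, providing the monomial basis.

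Finally, the canonical basis is obtained from the monomial basis by the standard argument (cf.\ \cite[24.2.1]{Lu93}), giving $\{A\} \in [A] + \sum_{B <_\alg A,\, B \in \Xji} v^{-1}\ZZ[v^{-1}]\,[B]$, bar-invariant. The crucial compatibility assertion $\mathfrak B_{n,d}^{\jmath\imath} = \mathfrak B_{n,d}^{\fc} \cap \Sji$ follows from the uniqueness characterization of canonical bases, provided one checks that if $A \in \Xji$ then every $B <_\alg A$ occurring in the canonical basis expansion of $\{A\}^{\fc} \in \Sjj$ also lies in $\Xji$ --- this is automatic because the partial order $\leq_\alg$ preserves $\roC$ and $\coC$, so the constraint $\roC(\cdot)_{r+1} = \coC(\cdot)_{r+1} = 0$ passes to $B$. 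The main obstacle, and really the only place requiring genuine verification rather than a direct transplant, is confirming that the tridiagonality notion in $\Xjip$ (defined via $\tau^{\jmath\imath}$) produces matrices whose images under $f^{\jmath\imath}_e$ are still suitable inputs to the multiplication formula in Theorem~\ref{thm:multformula2}; however, since these images merely have additional fixed rows/columns that act trivially under $\kappa^{\jmath\imath}$, the multiplication formula carries over unchanged, and the argument concludes just as in the $\imath\jmath$ case.
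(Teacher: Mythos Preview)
Your proposal is correct and follows precisely the approach the paper intends: the paper explicitly states that the proof is the $\jmath\imath$-analogue of Theorem~\ref{thm:monoij} with the role of the index $0$ replaced by $r+1$, and skips the details to avoid repetition. Your outline—collapsing/expanding along $(r+1)+n\ZZ$, the analogue of Algorithm~\ref{alg:mono'}, and the compatibility of canonical bases via preservation of $\roC,\coC$ under $\leq_\alg$—matches this exactly.
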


By repeating the constructions in \S\ref{sec:Sij}--\ref{sec:Kij} of the algebras of type $\imath\jmath$,
we can obtain an associative algebra $\dKji$ generated by $\{[A] | A\in \Xitji \}$, where
\eq
\ba{{lll}
\Xitji &= \{A = (a_{ij}) \in \Xit_n ~|~ a_{r+1,i} = a_{i,r+1} = \delta_{r+1,i} \}
\\
&= \{A \in \Xit_n^> ~|~ \coC(A)_{r+1} = \roC(A)_{r+1} = 0\}.
}
\endeq
Similarly, we construct a subalgebra $\Kji$ in a completion of $\dKji$.

%-----------------------------------------------------------------------------------------------------------
\begin{thm} \label{thm:main-ji}
The following statements hold for $\dKji$.
\enu
\item[(a)] The algebra $\dKji$ admits  a monomial basis and a  stably canonical basis.
\item[(b)] The algebra $\dKji$ is a subquotient of $\dKjj$ with compatible standard, monomial, and stably canonical bases.
\item[(c)] The pair $(\K_{n-1}, \Kji)$ forms an affine quantum symmetric pair associated to the involution as depicted in Figure~\ref{figure:ji}.
\endenu
\end{thm}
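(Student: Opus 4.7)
The plan is to adapt the arguments developed in detail for type $\imath\jmath$ in Chapter~\ref{sec:coideal2} to the type $\jmath\imath$ setting. The passage between the two cases is essentially a "reflection" exchanging the roles of the two fixed points $0$ and $d+1$ of the affine Weyl group $W = \mathrm{Perm}^{\mathfrak{c}}(\mathbb{Z})$. Where the $\imath\jmath$ setup collapses rows and columns indexed by $n\mathbb{Z}$ (corresponding to $\ld_0 = 0$), the $\jmath\imath$ setup should collapse rows and columns indexed by $(r+1) + n\mathbb{Z}$ (corresponding to $\ld_{r+1} = 0$). Since $\Sji$ is defined as a subalgebra of $\Sjj$ via $\Ldji$, and since Proposition~\ref{prop:MBCBji} already supplies its compatible standard, monomial, and canonical bases, the task is really to reproduce \S\ref{sec:Kij} in the reflected setting.

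For part (a), I would first formulate a $\jmath\imath$-analog of the collapsing map $f^{\imath\jmath}_c: \Xij \to \Xijp$ by deleting rows and columns indexed by $(r+1)+n\mathbb{Z}$ (rather than $n\mathbb{Z}$) from $A \in \Xji$, landing in an appropriate set $\Xjip$ indexed by $\mathbb{Z}^{\jmath\imath} := \mathbb{Z} \setminus ((r+1)+n\mathbb{Z})$. The reindexing bijection $\tau$ of \eqref{eq:tau} would be replaced by the order-preserving bijection from $\mathbb{Z}^{\jmath\imath}$ to $\mathbb{Z}$ fixing $0$. With this tridiagonality notion in hand, I would write down the analog of Algorithm~\ref{alg:mono'}, and the proof of the monomial basis assertion (the analog of Theorem~\ref{thm:min gen'} and Theorem~\ref{thm:monoij}) goes through verbatim because it relies only on the multiplication formula of Theorem~\ref{thm:multformula2} and the ordering $\leq_\alg$, neither of which distinguishes the two fixed points. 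The stably canonical basis of $\dKji$ follows from the monomial basis via the standard triangularity-with-bar-invariance argument of \cite[24.2.1]{Lu93}.

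For part (b), I would introduce $\pp{A} := A + p(I - E^{r+1,r+1})$ in place of \eqref{eq:ppA} and define $\Xit_n^{<,\jmath\imath}$, $\Xit_n^{>,\jmath\imath}$ using the sign of $a_{r+1,r+1}$ rather than $a_{00}$. The stabilization property (analog of Lemma~\ref{lem:stab-ij}) is proved in the same way: the explicit $p$-dependence computation in \eqref{eq:stab1eq1}--\eqref{eq:stab1eq2} is symmetric in the two "endpoint" indices $0$ and $r+1$, so replacing $p\to \breve p$ at the $(r+1,r+1)$ entry instead of the $(0,0)$ entry produces an identical calculation. Lemma~\ref{lem:2ideal} likewise goes through, since the vanishing $\LR{A;S;T}=0$ argument there applies symmetrically to the $k = r+1$ factor. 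Thus one obtains the two-sided ideal $\mathbf{J}^{\jmath\imath}_< \subset \dKjj$ spanned by $[A]$ with $a_{r+1,r+1} < 0$, an isomorphism $\dKjj/\mathbf{J}^{\jmath\imath}_< \xrightarrow{\sharp} \dot{\mathbf{K}}^{\mathfrak{c},>,\jmath\imath}_n$, and finally realizes $\dKji$ as the natural subalgebra cut out by the condition $\coC(A)_{r+1} = \roC(A)_{r+1} = 0$.

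For part (c), I would follow \S\ref{sec:Kjj} and \S\ref{sec:coKjj} in the $\jmath\imath$ setting: form the completion $\widehat{\mathbf{K}}^{\jmath\imath}_n$, define $\Kji$ as the subspace spanned by the restricted sums $A(\mathbf{j})$ for $A \in \Xi^{0,\jmath\imath}_n$ and $\mathbf{j} \in \mathbb{Z}_{r+2}$, and prove the analog of Proposition~\ref{prop:Kjjgen} by the same multiplication-formula computation. The comultiplication $\Delta^{\fc}$ restricts to $\Delta^{\jmath\imath}: \Kji \to \Kji \otimes \K_{n-1}$, and the embedding $\imath^{\jmath\imath}: \Kji \hookrightarrow \K_{n-1}$ is obtained by the same subquotient realization through $\breve{n}$-type algebras. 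The only substantive point to verify is that the resulting Satake/Kolb datum indeed corresponds to the involution of Figure~\ref{figure:ji}: one reads off from the explicit realization that the fixed node is $0$ (reflecting the surviving parabolic boundary), while the pairs $\{i, n-i\}$ for $1 \le i \le r$ and the singleton $\{r+1\}$ carry the expected diagram involution.

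The main obstacle I anticipate is bookkeeping rather than conceptual: ensuring that all the asymmetric roles played by $0$ versus $r+1$ in the length formula (Lemma~\ref{lem:l(g)}), in the definitions of $\roC, \coC$, and in the factor $[a'_{kk}]^!_\fc$ appearing in $[A]^!_\fc$, are correctly transported under the reflection. In particular, the $k=0,r+1$ products in \eqref{eq:[AST]} and in the exponent formula \eqref{eq:gamma(A,S,T)} are manifestly symmetric under swapping $0 \leftrightarrow r+1$, which is what makes the translation mechanical; verifying this symmetry explicitly once should reduce the remainder of the proof to quotation of the corresponding $\imath\jmath$ statements.
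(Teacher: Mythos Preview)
Your proposal is correct and matches the paper's approach exactly: the paper itself gives no proof beyond the sentence ``The proofs are very similar to the previous cases, and hence skipped,'' and your outline of transporting the $\imath\jmath$ constructions by swapping the roles of the indices $0$ and $r{+}1$ is precisely what is intended. One small slip: in part~(c) the fixed node of the diagram involution in Figure~\ref{figure:ji} is $r$, not $0$ (compare Figure~\ref{figure:ij}, where collapsing at $0$ produces a fixed node labeled $0$; by the same mechanism, collapsing at $r{+}1$ produces a fixed node labeled $r$).
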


\begin{figure}[ht!]
\caption{Dynkin diagram of type $A^{(1)}_{2r}$ with involution of type $\jmath\imath$.}
   \label{figure:ji}
\[
\begin{tikzpicture}
\matrix [column sep={0.6cm}, row sep={0.5 cm,between origins}, nodes={draw = none,  inner sep = 3pt}]
{
	\node(U1) [draw, circle, fill=white, scale=0.6, label = 0] {};
	&\node(U2)[draw, circle, fill=white, scale=0.6, label =1] {};
	&\node(U3) {$\cdots$};
	&\node(U5)[draw, circle, fill=white, scale=0.6, label =$r-1$] {};
\\
	&&&&
	\node(R)[draw, circle, fill=white, scale=0.6, label =$r$] {};
\\
	\node(L1) [draw, circle, fill=white, scale=0.6, label =below:$2r$] {};
	&\node(L2)[draw, circle, fill=white, scale=0.6, label =below:$2r-1$] {};
	&\node(L3) {$\cdots$};
	&\node(L5)[draw, circle, fill=white, scale=0.6, label =below:$r+1$] {};
\\
};
\begin{scope}
\draw (U1) -- node  {} (U2);
\draw (U2) -- node  {} (U3);
\draw (U3) -- node  {} (U5);
\draw (U5) -- node  {} (R);
\draw (U1) -- node  {} (L1);
\draw (L1) -- node  {} (L2);
\draw (L2) -- node  {} (L3);
\draw (L3) -- node  {} (L5);
\draw (L5) -- node  {} (R);
\draw (R) edge [color = blue,loop right, looseness=40, <->, shorten >=4pt, shorten <=4pt] node {} (R);
\draw (L1) edge [color = blue,<->, bend right, shorten >=4pt, shorten <=4pt] node  {} (U1);
\draw (L2) edge [color = blue,<->, bend right, shorten >=4pt, shorten <=4pt] node  {} (U2);
\draw (L5) edge [color = blue,<->, bend left, shorten >=4pt, shorten <=4pt] node  {} (U5);
\end{scope}
\end{tikzpicture}
\]
\end{figure}

\begin{rem}
  By Theorem \ref{thm:multformula} and a similar argument as that for ~\cite[Theorem~ 4.8]{FL16},
  one obtain an algebra isomorphism $\Sij \simeq \Sji$.
  Applying the stabilization process, one further establishes an algebra isomorphism $\dKij \cong \dKji$.
\end{rem}

%=========================================================
\section{Stabilization algebra of type $\imath\imath$ }

In the following we deal with the variant of affine Schur algebra of type $\imath\imath$
corresponding to the involution as depicted Figure~\ref{figure:ii}. We set
\[
\eta =n-2 =\nn -1.
\]

\begin{figure}[ht!]
\caption{Dynkin diagram of type $A^{(1)}_{2r-1}$ with involution of type $\imath\imath$.}
   \label{figure:ii}
\[
\begin{tikzpicture}
\matrix [column sep={0.6cm}, row sep={0.5 cm,between origins}, nodes={draw = none,  inner sep = 3pt}]
{
	&\node(U1) [draw, circle, fill=white, scale=0.6, label = 1] {};
	&\node(U2) {$\cdots$};
	&\node(U3)[draw, circle, fill=white, scale=0.6, label =$r-1$] {};
\\
	\node(L)[draw, circle, fill=white, scale=0.6, label =0] {};
	&&&&
	\node(R)[draw, circle, fill=white, scale=0.6, label =$r$] {};
\\
	&\node(L1) [draw, circle, fill=white, scale=0.6, label =below:$2r-1$] {};
	&\node(L2) {$\cdots$};
	&\node(L3)[draw, circle, fill=white, scale=0.6, label =below:$r+1$] {};
\\
};
\begin{scope}
\draw (L) -- node  {} (U1);
\draw (U1) -- node  {} (U2);
\draw (U2) -- node  {} (U3);
\draw (U3) -- node  {} (R);
\draw (L) -- node  {} (L1);
\draw (L1) -- node  {} (L2);
\draw (L2) -- node  {} (L3);
\draw (L3) -- node  {} (R);
\draw (L) edge [color = blue, loop left, looseness=40, <->, shorten >=4pt, shorten <=4pt] node {} (L);
\draw (R) edge [color = blue,loop right, looseness=40, <->, shorten >=4pt, shorten <=4pt] node {} (R);
\draw (L1) edge [color = blue,<->, bend right, shorten >=4pt, shorten <=4pt] node  {} (U1);
\draw (L3) edge [color = blue,<->, bend left, shorten >=4pt, shorten <=4pt] node  {} (U3);
\end{scope}
\end{tikzpicture}
\]
\end{figure}

Let
\[
\Xii = \Xij \cap \Xji, \qquad
 \Ldii = \Ldji \cap \Ldij, \qquad
 \D_{\eta,d}^{\imath\imath} =  \D_{\nn,d}^{\imath\jmath} \cap  \D_{\nn,d}^{\jmath\imath}.
 \]
 The following lemma is obtained by restriction of the bijection $\kappa$ in Lemma~\ref{lem:kappa}.

\begin{lem} \label{lem:kappai'}
The map
$
\kappa^{\imath\imath}: \D_{\eta,d}^{\imath\imath} \longrightarrow \Xii
$
sending $(\ld, g, \mu)$ to $(|R_i^\ld \cap g R_j^\mu|)$ is a bijection.
\end{lem}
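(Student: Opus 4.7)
The plan is to deduce this bijection by restriction of the already-established bijection $\kappa: \D_{n,d} \longrightarrow \Xi_{n,d}$ from Lemma~\ref{lem:kappa}, using that the type $\imath\imath$ data is precisely the intersection of the type $\imath\jmath$ and type $\jmath\imath$ data. More concretely, since $\Xii = \Xij \cap \Xji$ and $\D_{\eta,d}^{\imath\imath} = \D_{\nn,d}^{\imath\jmath} \cap \D_{\nn,d}^{\jmath\imath}$ by definition, the bijections of Lemma~\ref{lem:kappai'} (type $\imath\jmath$) and Lemma~\ref{lem:kappaji} (type $\jmath\imath$) together give the result.

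First, I would show that $\kappa$ restricts to a well-defined map $\kappa^{\imath\imath}: \D_{\eta,d}^{\imath\imath} \to \Xii$. Given $(\ld,g,\mu) \in \D_{\eta,d}^{\imath\imath}$, both $\ld$ and $\mu$ satisfy $\ld_0=\mu_0=0$ and $\ld_{r+1}=\mu_{r+1}=0$, so by \eqref{eq:Ri} we have $R_0^\ld = R_0^\mu = \{0\}$ and $R_{r+1}^\ld = R_{r+1}^\mu = \{d+1\}$. Since any $g \in W$ satisfies $g(0)=0$ and $g(d+1)=d+1$, this forces the corresponding row/column sums $\roC(\kappa(\ld,g,\mu))_0$, $\coC(\kappa(\ld,g,\mu))_0$, $\roC(\kappa(\ld,g,\mu))_{r+1}$, $\coC(\kappa(\ld,g,\mu))_{r+1}$ to vanish (apart from the mandatory $a_{00}=1$, $a_{r+1,r+1}=1$ entries), placing $\kappa(\ld,g,\mu)$ in $\Xii$.

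Next, injectivity of $\kappa^{\imath\imath}$ is immediate from injectivity of $\kappa$. For surjectivity, given $A \in \Xii \subseteq \Xij \cap \Xji$, Lemmas~\ref{lem:kappai'} and \ref{lem:kappaji} produce unique preimages $(\ld,g,\mu) \in \D_{\nn,d}^{\imath\jmath}$ and $(\ld',g',\mu') \in \D_{\nn,d}^{\jmath\imath}$; by the uniqueness assertion in Lemma~\ref{lem:kappa}, these both equal the unique preimage of $A$ under $\kappa$, which therefore lies in $\D_{\nn,d}^{\imath\jmath} \cap \D_{\nn,d}^{\jmath\imath} = \D_{\eta,d}^{\imath\imath}$.

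No serious obstacle is expected here: the whole lemma is bookkeeping about which $\ld,\mu$ are allowed, and everything reduces to the previously proved bijections. The one point worth checking carefully is that the compatibility of the parameterizations of $\Lambda^{\imath\imath}$ (indexed by compositions with $r$ parts rather than $r+2$) with the matrices in $\Xii$ matches the intersection description $\Ldii = \Ldji \cap \Ldij$ used above; this is a direct consequence of the definitions of $\Ldij$, $\Ldji$ and of the fact that removing the (forced zero) $0$th and $(r+1)$st entries from a composition does not alter the associated parabolic $W_\ld$, hence does not alter $\kappa$.
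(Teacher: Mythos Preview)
Your proposal is correct and follows the same approach as the paper, which simply states that the lemma is obtained by restriction of the bijection $\kappa$ in Lemma~\ref{lem:kappa}. Your argument supplies the routine details (well-definedness via $R_0^\ld=\{0\}$, $R_{r+1}^\ld=\{d+1\}$ and $g(0)=0$, $g(d+1)=d+1$; injectivity and surjectivity via the already-established bijections for types $\imath\jmath$ and $\jmath\imath$), which the paper omits.
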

%-----------------------------------------------------------------------------------------------------------
Now we define the {\em affine $q$-Schur algebra of type $\imath\imath$} by
\eq
\Sii = \textup{End}_{\bH}
\big(
\mathop{\oplus}_{\ld\in \Ldii} x_\ld \bH \big).
\endeq
The algebra
$\Sii$ is naturally a subalgebra of $\Sij, \Sji$ and $\Sjj$, admitting compatible standard, monomial and canonical bases
(similar to Proposition~\ref{prop:MBCBji} for $\Sji$).

By a similar process, we construct an associative algebra $\dKii$ generated by $\{[A]| A\in \Xitii\}$, where
\eq
\Xitii = \Xitij \cap  \Xitji .
\endeq
We collect the main results for $\dKii$ which are similar to $\dKij$ and $\dKji$ earlier
(see Theorem~\ref{thm:main-ji}) in the following theorem.
The proofs are very similar to the previous cases, and hence skiped.

\begin{thm}
  \label{thm:subqii}
The following statements hold for $\dKii$.
\enu
\item[(a)] The algebra $\dKii$ admits  a monomial basis and a  stably canonical basis.
\item[(b)] The algebra $\dKii$ is a subquotient of $\dKji$ (and of $\dKij$, respectively), with compatible standard, monomial, and stably canonical bases.
\item[(c)] The pair $(\K_{\eta}, \Kii)$ forms an affine quantum symmetric pair associated to the involution as depicted in Figure~\ref{figure:ii}.
\endenu
\end{thm}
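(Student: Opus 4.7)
The plan is to mirror the blueprint developed for types $\imath\jmath$ and $\jmath\imath$ in Sections~\ref{sec:Sij}--\ref{sec:Kij}, treating the two fixed-point columns/rows (at indices $n\ZZ$ and $(r+1)+n\ZZ$) simultaneously. Since $\Xii = \Xij \cap \Xji$, the natural strategy is to iterate the arguments already established for a single fixed-point row/column.

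For Part~(a), I would first adapt Algorithm~\ref{alg:mono'} by introducing a doubly-collapsed matrix interpretation in the spirit of $\Xijp$ in \eqref{eq:Xijp}, now removing both the $n\ZZ$-indexed and $(r+1)+n\ZZ$-indexed rows and columns, together with an accompanying order-preserving bijection on the surviving index set. The modified algorithm would then process the diagonals of the collapsed matrix from outermost to innermost using the tridiagonality notion in \eqref{eq:tridij} lifted by this bijection. At each step, the expanding map would produce a sequence of matrices in $\Xii$ whose product in $\Sii$ reproduces $[A]$ plus lower terms, yielding a semi-monomial basis. Upgrading each factor to its canonical basis counterpart, as in \eqref{eq:MBij}, gives the monomial basis $\{m_A\}$. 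The stably canonical basis then follows from the standard triangularity-plus-bar-invariance characterization (cf.\ \cite[24.2.1]{Lu93}), the bar-invariance of the requisite $[A]+\textup{lower terms}$ having been propagated by the stabilization arguments of Propositions~\ref{prop:stab1}--\ref{prop:stab2}.

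For Part~(b), I would realize $\dKii$ as a subquotient of $\dKji$ by replicating the pattern of Section~\ref{sec:Kij}. Namely, introduce a $\pp{\ }$-type stabilization $\dot{\bK}^{\jmath\imath,>}_{\nn}$ adapted to the rigidity already built into $\Xitji$, and identify $\dKii$ with the subspace spanned by those matrices in $\Xitji$ satisfying the additional constraint $\coC(A)_0=\roC(A)_0=0$. The analogue of Lemma~\ref{lem:2ideal} is delivered by Theorem~\ref{thm:multK}: the factor in $\LR{A;S;T}$ containing the quantum numbers $[a_{00}-2t_{00}-1+2i]$ enforces vanishing whenever the target $(0,0)$-entry would flip sign, so the span of matrices with negative $(0,0)$-entry is a two-sided ideal. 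Corollary~\ref{cor:monoJ} and Proposition~\ref{prop:JK} then transport mutatis mutandis, giving compatible monomial and stably canonical bases via the analogue of the isomorphism $\sharp$. Symmetrizing the same argument starting from $\dKij$ yields the second realization.

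For Part~(c), I would construct $\Kii$ inside a completion of $\dKii$ exactly as in Section~\ref{sec:Kjj-def}, defining restricted sums $A(\bj)$ for $A$ in the subset of $\Xi^0_n$ compatible with both rank-one constraints, and verifying closure under multiplication by replaying Proposition~\ref{prop:Kjjgen}. To obtain the coideal property, I would embed $\dKii$ inside a doubly-flagged analogue of $\dKcL$ (inserting zero rows and columns at \emph{both} flagged positions rather than only one), descend the ambient comultiplication through a commutative diagram parallel to that in the proof of Theorem~\ref{thm:coK}, and thereby obtain $\Delta^{\imath\imath}\colon \Kii \to \Kii \otimes \K_\eta$ together with an injective algebra homomorphism $\imath^{\imath\imath}\colon \Kii \hookrightarrow \K_\eta$ in the style of Proposition~\ref{prop:embed}. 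The main obstacle will be the bookkeeping in Part~(c): the higher-rank embedding must accommodate two independent flag insertions, and checking that the ambient comultiplication restricts compatibly requires redoing the orbit-theoretic calculation \eqref{eq:comultlKcnm}--\eqref{eq:g3af} with both flagged indices tracked simultaneously. The expected payoff is that the computation decouples across the two flags, since each tridiagonal generator appearing in the monomial expansion is localized near one flag or the other, so the verification should reduce to two essentially independent instances of the type $\imath\jmath$ and $\jmath\imath$ calculations, glued by the coassociativity of $\widehat{\Delta}^{\fc}$.
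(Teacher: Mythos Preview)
Your proposal is correct and follows exactly the approach the paper intends: the paper explicitly states that the proofs for $\dKii$ ``are very similar to the previous cases, and hence skipped,'' and your outline faithfully unpacks what ``similar'' means in each part---collapsing both fixed rows/columns for the monomial basis, iterating the subquotient construction of Section~\ref{sec:Kij} starting from $\dKji$ (or $\dKij$) rather than $\dKjj$, and repeating the coideal argument of Sections~\ref{sec:Kjj-def}--\ref{sec:coKjj} with two flagged positions. There is nothing to compare or correct; you have supplied the details the paper deliberately omits.
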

\vspace{.3cm}

The interrelation among Schur algebras as well as stabilization algebras of the four different types can be summarized below.
\begin{equation*}
\xymatrix{
&\Sji  \ar@{^{(}->}[dr]
\ar@{=}[dd]^{\simeq}
&&\\
\Sii  \ar@{^{(}->}[ur]  \ar@{^{(}->}[dr]  && \Sjj   \\
& \Sij \ar@{^{(}->}[ur] &&
}
\qquad\qquad
\xymatrix{
&\dKji  \ar@{->>}[dl]_{\mathfrak{sq}}  \ar@{=}[dd]^{\simeq}
&&\\
\dKii  && \dKjj \ar@{->>}[ul]_{\mathfrak{sq}}  \ar@{->>}[dl]^{\mathfrak{sq}} \\
& \dKij \ar@{->>}[ul]^{\mathfrak{sq}} &&
}
\end{equation*}
On the Schur algebra level, we have a commuting diagram for inclusions of Schur algebras.
On the stabilization algebra level, we have the following commutative diagram of subquotients,
where the notation $\mbf K_1 \overset{\mathfrak{sq}}{\twoheadrightarrow} \mbf K_2$ stands for
the statement that $\mbf K_2$ is a subquotient of $\mbf K_1$.
All the subquotients between various pairs of algebras preserve the  stably canonical bases.

%%%%===============================================================================================================
%%%%%%%%%%%%%%%%%%%
%%%%%%%%%%%%%%%%%%%
\appendix
\chapter[Length formulas in symmetrized forms]{Length formulas in symmetrized forms \for{toc}{by Zhaobing Fan, Chun-Ju Lai, Yiqiang Li and Li Luo}}
\label{chap:lengthformula}

\begin{center}
Z. Fan, C. Lai, Y. Li and L. Luo
\end{center}

%The purpose of this appendix is to give a geometric explanations of length formulas
%for double coset representatives in Weyl groups of finite and affine classical type.

In this appendix, we  provide a proof of the claim in Section~\ref{sjj} that
the symmetrized length formula (\ref{def:dA}) is the dimension of a generalized Schubert varieties/cells.
Thanks to~\cite{FLLLW}, we only need to match this formula with (4.1.1) in {\it loc. cit.}; see Proposition
~\ref{eq:l^Csym}.
We then further deduce length formulas in a similar fashion for all classical types.
As an application, we also formulate a symmetrized length formula for finite and affine classical Weyl groups,
which is more compact and easier to compute.
It is our hope that these analyses will facilitate a future study of $q$-Schur algebras of affine type $\tB$ and $\tD$.

%The generalized Schubert cells and varieties have played a fundamental role of the geometric construction  of quantum Schur algebras for finite and affine classical type.
%Their dimension formula were used as a twist in the definition of standard basis.
%We establish a new dimension formula as a symmetrized sum that recovers the dimension formulas in the literature.
%As an application, we obtain new length formulas of Weyl groups of finite and affine type in a symmetrizing fashion.

%----------------------------------------------------------------------------
\section{Dimension of generalized Schubert varieties}
%----------------------------------------------------------------------------

%----------------------------------------------------------------------------
\subsection{General setting}
%----------------------------------------------------------------------------
The Beilinson-Lusztig-MacPherson (BLM) stabilization procedure \cite{BLM90} is a geometric construction of the (modified) quantum groups of finite type $\tA$ together with their canonical bases.
The BLM-type stabilization procedures beyond type $\tA$ have important application in the theory of quantum symmetric pairs, and are developed rapidly (cf. \cite{Lu99,DF14} for affine type $\tA$, \cite{BKLW} for type $\tB/\tC$, \cite{FL15} for type $\tD$, \cite{FLLLW} for affine type $\tC$).

An initial step in the stabilization procedure is to construct the standard basis.
A geometric approach relies heavily on the generalized Schubert varieties; while an algebraic approach is also possible.
We start with an imprecise type-free setup for generalized Schubert varieties, and we leave the details in subsequent sections.

Fix non-negative integers $n,d \in \NN$ and an algebraically closed field, $k$, with positive characteristic.
Let $G$ be a classical group over $k$,
and $\cX_{n,d}$ be the set of $n$-step partial flags of rank $d$ over $k$ admitting $G$-action.
%The group $G$ acts on $\cX_{n,d}\times \cX_{n,d}$ diagonally.
Let $G\backslash (\cX_{n,d}\times \cX_{n,d})$ be the set of $G$-orbits on $\cX_{n,d}\times \cX_{n,d}$ via the diagonal action.
Denote the convolution algebra %(which we call the {\it quantum Schur algebra})
on the pairs of partial flags over the ring $\cA = \ZZ[v,v\inv]$ by
\eqs
\bS_{n,d}
= \cA_G(\cX_{n,d}\times \cX_{n,d})
= \{f : G\backslash (\cX_{n,d}\times \cX_{n,d}) \to \cA\}.
\endeqs
Except for types $\tD, \taffB, \taffD$, the $G$-orbits are in bijection with a matrix set $\cO_{n,d}$ which parametrizes any linear basis of $\bS_{n,d}$. For instance, the characteristic basis $\{e_A~|~A\in\cO_{n,d}\}$ defined by
\eqs
e_A(O_B) = \bc{1&\tif B=A;\\0 &\otw,}
\endeqs
where $O_B$ is the $G$-orbit corresponding to the matrix $B\in \cO_{n,d}$.
For $A\in \cO_{n,d}$ and fixed $L \in \cX_{n,d}$,
we define the {\it generalized Schubert cell} by
\eqs %\label{XAL}
X_A^L =\{ L' \in \cX_{n,d} | (L, L') \in O_A\}.
\endeqs

Fix a pair $(L, L')\in O_A$ and
let $P_{L}$ and $P_{L'}$ be the stabilizers of $L$ and $L'$, respectively, in $G$.
Then $X_A^L$ is isomorphic to the $P_L$-orbit of $L'$ in the partial flag variety $G/P_{L'}$ indexed by $A$.
If $P_L$ is a Borel subgroup, then the $P_L$-orbits in $G/P_{L'}$ are called Schubert cells in literature.
This is the reason why we call  $X^L_A$ a generalized Schubert cell.

The dimension function $\^\ell: \cO_{n,d} \to \NN$ is given by
\eqs
\^\ell(A) = \dim X_A^L.
\endeqs
We remark that $\^\ell(A)$ is independent of the choice of $L$, and it is also the dimension
of the closure of $X_A^L$, in an appropriate topology, which is called a generalized Schubert variety.

If $G$ is replaced by its loop group, then one can still define in a similar manner generalized Schubert cells with parabolics replaced
by parahorics. We shall not repeat this procedure again, but see Sections~\ref{Affine-A} and~\ref{Affine-C}.
From now on, we assume that $G$ and $\cX_{n,d}$ admit an $\mbb F_q$-structure.
Then their $\mbb F_q$-points can be described as follows.

\subsection{Type $\tA$}
%----------------------------------------------------------------------------
The results here are due to \cite{BLM90}.
Let $G = \GL(d,\FF_q)$ be the general linear group over the finite field $\FF_q$. We set
\eqs
\cX^\tA_{n,d} = \{(0=V_0 \subseteq \ldots \subseteq V_n = \FF_q^d)\}.
\endeqs
The set of $G$-orbits is well-known to be
\eqs \textstyle
\cO^\tA_{n,d} = \{ (a_{ij}) \in \tMat_{n\times n}(\NN) | \sum_{1\leq i,j\leq n} a_{ij} = d\}.
\endeqs
%----------------------------------------------------------------------------
\prop
The dimension function on $\cO^\tA_{n,d}$ is given by
\eqs
\^\ell^\tA(A) = \sum_{1\leq i,j \leq n}\sum_{\substack{x\leq i\\ y>j}} a_{ij} a_{xy}.
\endeqs
\endprop
%----------------------------------------------------------------------------
\proof
See \cite[2.3]{BLM90}, in which $\^\ell^\tA(A)$ is referred as $d(A) - r(A)$.
\endproof
%----------------------------------------------------------------------------
%----------------------------------------------------------------------------
\subsection{Type $\taffA$}
\label{Affine-A}
%----------------------------------------------------------------------------
The results here are essentially from \cite{Lu99} (see also \cite[\S 2]{FLLLW}).
Let $F = \FF_q((\ep))$ be the field of formal Laurent series,
and let $G = \GL(d, F)$ be the general linear group.
Let $V$ be an $F$-vector space of dimension $d$.
A {\it lattice} in $V$ is a  free $\FF_q[[\ep]]$-submodule of $V$ of rank $d$.
We set
\begin{align*}
\cX^{\taffA}_{n,d}
&= \{(L_i)_{i\in\ZZ} |L_i \textup{ are lattices in }V, L_{i-1} \subseteq L_i, L_i = \ep L_{i+n}\},
\\
I^{\taffA}
&= [1..n]\times\ZZ,
\\
\cO^{\taffA}_{n,d}
&= \{(a_{ij}) \in \tMat_{\ZZ\times \ZZ}(\NN) | \sum\limits_{(i,j) \in I^{\taffA}} a_{ij} = d, a_{ij} = a_{i+n,j+n}\}.
\end{align*}
%----------------------------------------------------------------------------
\prop
The dimension function on $\cO^\tA_{n,d}$ is given by
\eqs
\^\ell^{\taffA}(A) =  \sum_{(i,j) \in I^{\taffA}} \sum_{x\leq i, y>j} a_{ij} a_{xy}.
\endeqs
\endprop
%----------------------------------------------------------------------------
\proof
See \cite[Lemma~4.3]{Lu99}, in which $\^\ell^{\taffA}(A)$ is referred as $d'_A$.
\endproof
%----------------------------------------------------------------------------
%----------------------------------------------------------------------------
\subsection{Type $\tB$}
%----------------------------------------------------------------------------
Fix $n = 2r+1, D = 2d+1$,
and fix a non-degenerate symmetric bilinear form $Q:\FF_q^{D}\times\FF_q^{D}\to \FF_q$.

We follow \cite[\S2]{BKLW} with a shift of index from $[1..n]$ to $[-r..r]$.
Set
\begin{align*}
G &= O(D, \FF_q) = \{A \in \GL(D,\FF_q) | Q(Au,Av) = Q(u,v) \textup{ for }u,v \in \FF_q^{D}\},
\\
\cX^{\tB}_{n,d} &= \{(0=V_{-r} \subseteq \ldots \subseteq V_r = \FF_q^D)| V_i = V^\perp_{-i}\},
\\
I^{\tB} &= (\{0\} \times [1..r]) \sqcup ([1..r] \times [-r..r]).
\end{align*}
For $(i,j) \in I^{\tB}$, let
\eqs
a^\natural_{ij} = \bc{
\lfloor\frac{a_{ij}}{2}\rfloor &\tif (i,j) = (0,0);
\\
a_{ij} &\otw.
}
\endeqs
The $G$-orbits are
\eqs\textstyle
\cO^\tB_{n,d} = \left\{
(a_{ij}) \in \tMat_{[-r..r]\times [-r..r]}(\NN)
\middle|
\ba{{c}
a_{00}\textup{ is odd},
\sum_{(i,j) \in I^{\tB}} a^\natural_{ij} = d
\\
 a_{-i,-j} = a_{ij}
 }
 \right\}.
\endeqs
%----------------------------------------------------------------------------
\prop
The dimension function on $\cO^\tB_{n,d}$ is given by
\eqs
\^\ell^\tB(A) =  \frac{1}{2}\sum_{(i,j) \in I^{\tB}} \left(\sum_{\substack{x\leq i\\ y>j}}+\sum_{\substack{x\geq i\\ y<j}}\right) a^\natural_{ij} a_{xy}.
\endeqs
\endprop
%----------------------------------------------------------------------------
\proof
From \cite[(3.16)]{BKLW}, it is obtained that
\eqs
%\^\ell^\tB(A) =\sum_{\substack{i>  k, \;  j<l  \\ i+k<N+1}} a_{ij}a_{kl}
%+\sum_{\substack{i<n+1\;\mrm{\small{or}}\; j<N+1-l \\j< l}} a_{ij}a_{il}
%+\sum_{i\geq n+1> j} a_{ij}(a_{ij}-1)/2.
\^\ell^\tB(A) =\sum_{\substack{i>  k, \;  j<l  \\ k<-i}} a_{ij}a_{kl}
+\sum_{\substack{i<0\;\mrm{\small{or}}\; j<-l \\j< l}} a_{ij}a_{il}
+\sum_{i\geq 0 > j} a_{ij}(a_{ij}-1)/2.
\endeqs
It then follows by a direct calculation.
\endproof
%----------------------------------------------------------------------------
\exa
Let $r = 1, d = 2$, and $A \in \cO^\tB_{n,d}$ be the matrix corresponding to the Weyl group element
$(2,-2)$, i.e.,
\[
A =
\bM{
&&1
\\
&\gc3&\gc
\\
\gc1&\gc&\gc
}.
\]
Here the entries in $I^\tB$ are shaded. We then have
\[
\ba{{c|ccccc}
(i,j)&a_{ij}&a^\natural_{ij}&\{(x,y)|\substack{x\leq i\\ y>j} ~\tor \substack{x\geq i\\ y<j}\}&\textup{contribution to }2\^\ell(A)
\\
\hline
(0,0)&3&1&
\bM{~&&\gc1 \\ ~\gc&3&\gc \\ \gc1&&}
&2
\\
(1,-1)&1&1&
\bM{~&\gc&\gc1 \\ ~&\gc3&\gc \\ 1&\gc&\gc}
&4
}
\]
Hence, $\^\ell^\tB(A) = \frac{1}{2} (2 + 4) = 3 = \ell^\tB((2,-2))$ since $X_A^L$ is a genuine Schubert cell.
\endexa
%----------------------------------------------------------------------------
\subsection{Type $\tC$}
%----------------------------------------------------------------------------

Now we fix $n = 2r+1, D = 2d$,
and fix a non-degenerate skew-symmetric bilinear form $Q:\FF_q^{D}\times\FF_q^{D}\to \FF_q$. We follow \cite[\S6.2]{BKLW} with a shift of index from $[1..n]$ to $[-r..r]$.
Set
\begin{align*}
G &= \SP(D, \FF_q) = \{A \in \GL(D,\FF_q) | Q(Au,Av) = Q(u,v) \textup{ for }u,v \in \FF_q^{D}\},
\\
\cX^{\tC}_{n,d} &= \{(0=V_{-r} \subseteq \ldots \subseteq V_r = \FF_q^D)| V_i = V^\perp_{-i}\},
\\
\textstyle
\cO^\tC_{n,d} &= \left\{
(a_{ij}) \in \tMat_{[-r..r]\times [-r..r]}(\NN)
\middle|
\ba{{c}
a_{00}\textup{ is even},
\sum_{(i,j) \in I^{\tB}} a^\natural_{ij} = d
\\
 a_{-i,-j} = a_{ij}
 }
 \right\}.
\end{align*}
%----------------------------------------------------------------------------
\prop
The dimension function on $\cO^\tC_{n,d}$ is given by
\eqs
\^\ell^\tC(A) =  \^\ell^\tB(A+E_{00}),
\endeqs
where $E_{00} = (\delta_{i0}\delta_{j0})_{ij}$.
\endprop
%----------------------------------------------------------------------------
\proof
It follows from \cite[Proposition 6.7]{BKLW} that there is an algebra isomorphism $\bS^\tB_{n,d}\to \bS^\tC_{n,d}$ sending $e_A$ to $e_{A- E_{00}}$.
\endproof
%----------------------------------------------------------------------------
%----------------------------------------------------------------------------
\subsection{Type $\tD$}
%----------------------------------------------------------------------------
Fix $n=2r+1$. We use a variant of \cite[\S3]{FL15} with $D = 2d+1$ and a shift of index from $[1..n]$ to $[-r..r]$.

We set
\begin{align*}
G &= \SO(D, \FF_q) = \{A \in O(D,\FF_q) | \det A = 1\},
\\
\cX^{\tD}_{n,d} &= \{(0=V_{-r} \subseteq \ldots \subseteq V_r = \FF_q^D)| V_i = V^\perp_{-i}\}.
\end{align*}
%Following \cite{FL15}, for $A \in \cO^\tB_{n,d}$ we define
%\eq
%\textstyle
%\ro(A) = (\sum_{k=- r}^r a_{ik})_i,
%\quad
%\co(A) = (\sum_{k=- r}^r a_{kj})_j,
%\quad
%\ur(A) = \sum_{i<0, j > 0} a_{ij}.
%\endeq
The (unsigned) $G$-orbits are
\eqs
\label{def:OD}\textstyle
\cO^\tD_{n,d} = \left\{
(a_{ij}) \in \tMat_{[-r..r]\times [-r..r]}(\NN)
\middle|
\ba{{c}
a_{00}\textup{ is odd}, \sum_{(i,j) \in I^{\tB}} a^\natural_{ij} = d
\\
 a_{-i,-j} = a_{ij}
 }
 \right\}.
\endeqs
%\rmk (will be removed in the final version)
%The central entry is made odd so that it matches the length formula.
%{\red Shouldn't there be a condition \eqref{def:OD} that is analogous to $\ur(\sig) \in 2\ZZ $ for $\sig\in W^\tD_d$?}
%\endrmk
%----------------------------------------------------------------------------
\prop
The dimension function on $\cO^\tD_{n,d}$ is given by
\eqs
\^\ell^\tD(A) =  \frac{1}{2}
\Bp{
\sum_{(i,j) \in I^\tB}
\Bp{ 	\sum_{\substack{x \leq i\\ y >j}}
	+
	\sum_{\substack{x \geq i\\ y <j}}
	}
a^\natural_{ij} a_{xy}
-\sum_{(i,j)=(0,0)}\Bp{ 	\sum_{\substack{x \leq i\\ y >j}}
	+
	\sum_{\substack{x \geq i\\ y <j}}
	}
a_{xy}
}.
\endeqs
\endprop
%----------------------------------------------------------------------------
\proof
From \cite[Lemma~4.5.1]{FL15}, it is obtained that
\eq\label{eq:D1}
\^\ell^\tD(A) =
	\frac{1}{2} ( \sum_{-r\leq i,j \leq r}\sum_{x\leq i, y>j} a_{ij} a_{xy})
	- \sum_{k \geq 0 > l} a_{kl}.
\endeq
Due to the centrosymmetry condition $a_{ij} = a_{-i,-j}$, we have
\eq
\sum_{\substack{-r \leq i \leq -1 \\ -r \leq j \leq r}}
\sum_{\substack{x\leq i\\ y>j}} a_{ij} a_{xy}
=
\sum_{\substack{1 \leq -i \leq r \\ -r \leq -j \leq r}}
\sum_{\substack{-x \geq -i\\ -y <-j}} a_{-i,-j} a_{-x,-y}
=
\sum_{\substack{1 \leq i \leq r \\ -r \leq j \leq r}}
\sum_{\substack{x \geq i\\ y <j}} a_{ij} a_{xy},
\endeq
and hence
\eq\label{eq:D2}
\sum_{\substack{i \in [-r,-1] \cup [1,r]\\ -r \leq j \leq r}}
\sum_{x\leq i, y>j} a_{ij} a_{xy}
=
\sum_{\substack{(i,j) \in I^\tB\\1 \leq i \leq r}}
\Bp{ 	\sum_{\substack{x \leq i\\ y >j}}
	+
	\sum_{\substack{x \geq i\\ y <j}}
	}
a^\natural_{ij} a_{xy}.
\endeq
For $i=0$, we have $a_{00} = 2a^\natural_{00}$, $a^\natural_{0j} = a_{0j} = a_{0,-j}$ for $j> 0$, and thus
\eq\label{eq:D3}
\sum_{\substack{i =0\\ -r \leq j \leq r}}
\sum_{x\leq i, y>j} a_{ij} a_{xy}
=
\sum_{(0,j) \in I^\tB}
\Bp{ 	\sum_{\substack{x \leq i\\ y >j}}
	+
	\sum_{\substack{x \geq i\\ y <j}}
	}
a^\natural_{ij} a_{xy}.
\endeq
Finally,
\eq\label{eq:D4}
\sum_{k \geq 0 > l} a_{kl} =
\frac{1}{2}\sum_{(i,j)=(0,0)}\Bp{ 	\sum_{\substack{x \leq i\\ y >j}}
	+
	\sum_{\substack{x \geq i\\ y <j}}
	}
a_{xy}.
\endeq
The proposition follows from combining \eqref{eq:D1}, \eqref{eq:D2} -- \eqref{eq:D4}.
\endproof
%----------------------------------------------------------------------------
\exa
Let $r = d = 4$, and let $A \in \cO^\tD_{n,d}$ be the permutation matrix corresponding to the type $\tD$ simple reflection, i.e.,
\[
A = \bM{
1&&&&&&&&
\\
&1&&&&&&&
\\
&&&&&1&&&
\\
&&&&&&1&&
\\
 &&&&\gc 1&\gc&\gc&\gc&\gc
\\
\rowcolor{gray!40}&&1&&&&&&
\\
\rowcolor{gray!40}&&&1&&&&&
\\
\rowcolor{gray!40}&&&&&&&1&
\\
\rowcolor{gray!40}&&&&&&&&1
}
\]
Here the entries in $I^\tD$ are shaded. We obtain
\[
\ba{{c|ccccc}
(i,j)&(0,0)&(1,-2)&(2,-1)&(3,3)&(4,4)
\\
\hline
a_{ij}=a^\natural_{ij}&0&1&1&1&1
\\
\{(x,y)|\substack{x\leq i\\ y>j} ~\tor \substack{x\geq i\\ y<j}\}
&
\scalebox{0.4}{$\bM{
1&&&&&\gc&\gc&\gc&\gc
\\
&1&&&&\gc&\gc&\gc&\gc
\\
&&&&&\gc1&\gc&\gc&\gc
\\
&&&&&\gc&\gc1&\gc&\gc
\\
 \gc&\gc&\gc&\gc&1&\gc&\gc&\gc&\gc
\\
\gc&\gc&\gc1&\gc&&&&&
\\
\gc&\gc&\gc&\gc1&&&&&
\\
\gc&\gc&\gc&\gc&&&&1&
\\
\gc&\gc&\gc&\gc&&&&&1
}$}

&
\scalebox{0.4}{$\bM{
1&&&\gc&\gc&\gc&\gc&\gc&\gc
\\
&1&&\gc&\gc&\gc&\gc&\gc&\gc
\\
&&&\gc&\gc&\gc1&\gc&\gc&\gc
\\
&&&\gc&\gc&\gc&\gc1&\gc&\gc
\\
 &&&\gc&\gc1&\gc&\gc&\gc&\gc
\\
\gc&\gc&1&\gc&\gc&\gc&\gc&\gc&\gc
\\
\gc&\gc&&1&&&&&
\\
\gc&\gc&&&&&&1&
\\
\gc&\gc&&&&&&&1
}$}
&
\scalebox{0.4}{$\bM{
1&&&&\gc&\gc&\gc&\gc&\gc
\\
&1&&&\gc&\gc&\gc&\gc&\gc
\\
&&&&\gc&\gc1&\gc&\gc&\gc
\\
&&&&\gc&\gc&\gc1&\gc&\gc
\\
 &&&&\gc1&\gc&\gc&\gc&\gc
\\
&&1&&\gc&\gc&\gc&\gc&\gc
\\
\gc&\gc&\gc&1&\gc&\gc&\gc&\gc&\gc
\\
\gc&\gc&\gc&&&&&1&
\\
\gc&\gc&\gc&&&&&&1
}$}
&
\scalebox{0.4}{$\bM{
1&&&&&&&&\gc
\\
&1&&&&&&&\gc
\\
&&&&&1&&&\gc
\\
&&&&&&1&&\gc
\\
 &&&&1&&&&\gc
\\
&&1&&&&&&\gc
\\
&&&1&&&&&\gc
\\
\gc&\gc&\gc&\gc&\gc&\gc&\gc&1&\gc
\\
\gc&\gc&\gc&\gc&\gc&\gc&\gc&&1
}$}
&
\scalebox{0.4}{$\bM{
1&&&&&&&&
\\
&1&&&&&&&
\\
&&&&&1&&&
\\
&&&&&&1&&
\\
 &&&&1&&&&
\\
&&1&&&&&&
\\
&&&1&&&&&
\\
&&&&&&&1&
\\
\gc&\gc&\gc&\gc&\gc&\gc&\gc&\gc&1
}$}
}
\]
Hence, $\^\ell^\tD(A) = \frac{1}{2}(3+3-4) = 1$.
\endexa
\rmk
We could define
\eqs
a^\dagger_{ij} = \bc{a^\natural_{ij} -1 &\tif (i,j) =(0,0);\\ a_{ij} &\otw,}
\endeqs
and get an alternative formula as below:
\eqs
\^\ell^\tD(A) =  \frac{1}{2}
\sum_{(i,j) \in I^\tB}
\Bp{ 	\sum_{\substack{x \leq i\\ y >j}}
	+
	\sum_{\substack{x \geq i\\ y <j}}
	}
a^\dagger_{ij} a_{xy}
.
\endeqs
\endrmk
%----------------------------------------------------------------------------
\subsection{Type $\taffC$}
\label{Affine-C}
%----------------------------------------------------------------------------
The setup here are drawn from \cite[\S 3]{FLLLW}.
Fix $n =2r$.
Recall that $F = \FF_q((\ep))$ is the field of formal Laurent series.
Define matrices
\begin{align*} \label{J}
%\begin{split}
J =
\left(\begin{array}{ccccc}
0 & 0  & \cdots & 0 & 1\\
0 & 0  & \cdots & 1 & 0 \\
. & . & \cdots & . & . \\
1 & 0 & \cdots & 0 & 0
\end{array}
\right)_{d \times d},
\quad
M  = M_{2d} =
\left(
\begin{array}{cc}
0 & J\\
- J & 0
\end{array}
\right).
%\end{split}
\end{align*}
Let $V =F^{2d}$ be a symplectic vector space over $F$ with a symplectic form
$(,): V \times V \to F$ specified by $M$.
Let $^t A$ be the transpose of a matrix $A$.
Let $G$ be the symplectic group with coefficients in $F$, namely,
\eqs
G = \SP_F(2d) = \{ A \in \GL(2d,F) |  A = M \ \! ^t \!A^{-1} M^{-1} \}.
\endeqs
For any lattice $L$  of $V$, we set
\[
L^{\#}=\{ v\in V | (v, L)\subset \FF_q[[\ep]]\}.
\]
%Then the $\FF_q[[\ep]]$-module $L^{\#}$ is again a lattice of $V$ and $(L^{\#})^{\#}= L$.
%We shall use freely the following properties: for any two lattices $\mathcal L$ and $\mathcal M$
%\[
%(\mathcal L + \mathcal M)^{\#} = \mathcal L^{\#} \cap \mathcal M^{\#}, \qquad
%(\mathcal L \cap \mathcal M)^{\#} = \mathcal L^{\#} + \mathcal M^{\#}.
%\]
A lattice in $V$ is {\it symplectic} if both conditions below hold:
\enu
\item Either $L \subseteq L^{\#}$ or  $L \supseteq L^{\#}$;
\item Either $L$ or $L^{\#}$ is homothetic to a lattice $\Lambda$, i.e., $L$  or $L^{\#}$ is equal to $\ve^{a} \Lambda$ for some $a\in \mbb Z$, such that $\ep \Lambda  \subseteq \Lambda^{\#}  \subseteq  \Lambda$.
\endenu
We set
\begin{align*}
\cX^{\taffC}_{n,d}
&= \left\{(L_i)_{i\in\ZZ} \middle|\substack{L_i \textup{ are symplectic lattices in }V\\
 L_{i-1} \subseteq L_i, L_i = \ep L_{i+n}, L_i^\# = L_{-i-1}}\right\},
\\
I^{\taffC}
&= (\{0\} \times \NN) \sqcup ([1..r-1] \times\ZZ) \sqcup (\{r\}\times \ZZ_{\leq r}),
\\
a^\natural_{ij}
&= \bc{
\lfloor\frac{a_{ij}}{2}\rfloor  &\tif (i,j) \in \ZZ(r,r);
\\
a_{ij} &\otw,
}
\\
\textstyle
\cO^{\taffC}_{n,d}
&=
\left\{(a_{ij}) \in \tMat_{\ZZ\times \ZZ}(\NN)
\middle|
\ba{{c}
a_{00}, a_{rr} \textup{ are odd},
\sum_{(i,j) \in I^{\taffC}} a^\natural_{ij} = d
\\
a_{-i,-j} = a_{ij} = a_{i+n,j+n}
}
\right\}.
\end{align*}
We note that, under a similar embedding as \eqref{iota}, $a^\natural_{ij}$ is the same as $a_{ij}'$ in \eqref{def:a'}.
%----------------------------------------------------------------------------
\prop
The dimension function on $\cO^{\taffC}_{n,d}$ is given by
\eqs
\label{eq:l^Csym}
\^\ell^{\taffC}(A) =  \frac{1}{2}\sum_{(i,j) \in I^{\taffC}} \left(\sum_{\substack{x\leq i\\ y>j}}+\sum_{\substack{x\geq i\\ y<j}}\right) a^\natural_{ij} a_{xy}.
\endeqs
\endprop
%----------------------------------------------------------------------------
\proof
By rephrasing \cite[Lemma~4.2.1]{FLLLW}, we get
\eq\label{eq:l^C}
\^\ell^{\taffC}(A)
= \frac{1}{2} \Big (\sum_{\substack{i\geq k, j<l \\ 0 \leq i < n} } a_{ij} a_{kl}
-  \sum_{k\geq 0  >l } a_{kl}
- \sum_{k\geq  r >l } a_{kl}
\Big ).
\endeq
It suffices to prove that \eqref{eq:l^C} coincides with the symmetrized form \eqref{eq:l^Csym}.
For $i \neq 0, r$, we have
\eq\label{eq:l^C1}
\bA{
\sum_{\substack{i\geq k, j<l \\ i  \in [1..r-1] \cup [r+1 .. n-1]} } a_{ij} a_{kl}
&=
\sum_{1\leq i \leq r-1}
\Bp{
\sum_{\substack{x \leq i\\y >j}}
a_{ij} a_{xy}
+
\sum_{\substack{x \leq -i\\y >-j}}
a_{-i,-j} a_{xy}
}
\\
&=
\sum_{\substack{(i,j)\in I^{\taffC}\\ 1\leq i \leq r-1}}
\Bp{
\sum_{\substack{x \leq i\\y >j}}
+
\sum_{\substack{x \geq i\\y <j}}
}
a^\natural_{ij} a_{xy}
}
\endeq
For $i=0$, we have
\eq\label{eq:l^C2}
\bA{
\sum_{\substack{i\geq k, j<l \\ i = 0 } } a_{ij} a_{kl} - \sum_{k\geq 0 < l} a_{kl}
&=
\ds
\sum_{\substack{i=0\\ j\in \ZZ}}
\sum_{\substack{x \leq i\\y >j}} a_{0j} a_{xy}
-\sum_{\substack{i=0\\j=0}}\sum_{\substack{x \leq i\\y >j}} (1) a_{xy}
\\
&=
\ds
\sum_{(0,j) \in I^{\taffC}}
\sum_{\substack{x \leq 0\\y >j}} 2a^\natural_{0j} a_{xy}
=
\sum_{\substack{(i,j) \in I^{\taffC}\\ i = 0}}
\Bp{
\sum_{\substack{x \leq i\\y >j}}
+\sum_{\substack{x \geq i\\y <j}}
}
a^\natural_{ij} a_{xy}
}
\endeq
Similarly, we have
\eq\label{eq:l^C3}
\sum_{\substack{i\geq k, j<l \\ i = r } } a_{ij} a_{kl}
- \sum_{k\geq  r  >l } a_{kl}
=
\sum_{\substack{(i,j) \in I^{\taffC}\\ i = r}}
\Bp{
\sum_{\substack{x \leq i\\y >j}}
+\sum_{\substack{x \geq i\\y <j}}
}
a^\natural_{ij} a_{xy}
\endeq
Summing up \eqref{eq:l^C1} -- \eqref{eq:l^C3}, we are done.
\endproof
%----------------------------------------------------------------------------
%----------------------------------------------------------------------------

%----------------------------------------------------------------------------
\subsection{Summary}
%----------------------------------------------------------------------------
We summarize the dimension formulas in previous sections in the following.

\begin{thm}\label{thm:A1}
The dimension of generalized Schubert cell $X_A^L$ and its associated generalized Schubert variety, for $A \in \cO_{n,d}$, is given by
\begin{align*}
%\label{eq:lAA}
\^\ell^{\tA}(A) &=  \frac{1}{2}\sum_{(i,j) \in I^{\tA}} a^\natural_{ij} \sum_{x,y} a_{xy},
\\
\^\ell^{\tB}(A) &=  \frac{1}{2}\sum_{(i,j) \in I^{\tB}} a^\natural_{ij} \sum_{x,y} a_{xy},
\\
\^\ell^{\tD}(A) &=  \frac{1}{2}\sum_{(i,j) \in I^{\tB}} a^\dagger_{ij} \sum_{x,y} a_{xy},
\\
\^\ell^{\taffA}(A) &=  \frac{1}{2}\sum_{(i,j) \in I^{\taffA}} a^\natural_{ij} \sum_{x,y} a_{xy},
\\
%\label{eq:lCA}
\^\ell^{\taffC}(A) &=  \frac{1}{2}\sum_{(i,j) \in I^{\taffC}} a^\natural_{ij} \sum_{x,y} a_{xy},
%\\
%\^\ell^{\taffB}(A) &{\color{red}=  \frac{1}{2}\sum_{(i,j) \in I^{\taffC}} a^\dagger_{ij} \sum_{x,y} a_{xy}?}
%\\
%\^\ell^{\taffD}(A) &{\color{red}=  \frac{1}{2}\sum_{(i,j) \in I^{\taffC}} a^{\dagger\dagger}_{ij} \sum_{x,y} a_{xy}?}
\end{align*}
where $x,y$ are summed over the set $\{(x,y)|\substack{x\leq i\\ y>j} ~\tor \substack{x\geq i\\ y<j}\}$, and
 $a^\dagger_{ij}$ is equal to $a_{ij}$ except
\eqs
a^{\dagger}_{00}  = a^\natural_{00} -1.
\endeqs
\end{thm}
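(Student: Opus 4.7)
The strategy is essentially bookkeeping: each of the five dimension formulas has already been individually established (for $\tA$, $\tB$, $\tD$, $\taffA$, $\taffC$) as separate propositions in the preceding subsections, and the task reduces to verifying that the non-symmetrized forms appearing there agree with the symmetrized forms claimed in the theorem. The uniform pattern $\tfrac12 \sum_{(i,j)\in I^{\bullet}} a^\natural_{ij}\big(\sum_{x\leq i,y>j}+\sum_{x\geq i,y<j}\big)a_{xy}$ suggests that the symmetrization comes either from the centro-symmetric condition $a_{-i,-j}=a_{ij}$ (for $\tB$, $\tD$, $\taffC$) or from the combinatorial symmetry of the summation (for $\tA$, $\taffA$).

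For the centro-symmetric types $\tB$, $\tD$, $\taffC$, the reduction is a direct algebraic manipulation. In each case the earlier proposition presents $\^\ell^\bullet(A)$ as an asymmetric sum over a full range of indices; one then splits the sum according to the sign of the row index $i$, uses $a_{-i,-j}=a_{ij}$ to reflect the ``negative half'' onto the ``positive half'' $I^\bullet$, and collects an overall factor of $\tfrac12$. The detailed splits already appear in the proofs of the propositions for $\tB$, $\tD$, and $\taffC$ (e.g.\ equations \eqref{eq:D2}, \eqref{eq:D3}, \eqref{eq:l^C1}--\eqref{eq:l^C3}), and the theorem simply repackages them into the unified notation $a^\natural_{ij}$ (resp.\ $a^\dagger_{ij}$ for $\tD$).

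For the (affine) type $\tA$ cases, centro-symmetry is absent, but the symmetrization instead reflects a swap symmetry of the summation variables. The key observation is that relabeling $(i,j)\leftrightarrow (x,y)$ converts $\sum_{(i,j)}\sum_{x\leq i,\,y>j} a_{ij}a_{xy}$ into $\sum_{(i,j)}\sum_{x\geq i,\,y<j}a_{ij}a_{xy}$, so the two conditions $\{x\leq i,y>j\}$ and $\{x\geq i,y<j\}$ contribute equal totals. In the affine setting the periodicity $a_{ij}=a_{i+n,j+n}$ is compatible with this swap because $I^{\taffA}=[1..n]\times\ZZ$ is a fundamental domain and the swap preserves the pair $(i-x,j-y)\bmod n$; hence averaging and dividing by $2$ produces the claimed symmetrized form with $a^\natural_{ij}=a_{ij}$.

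The one subtlety — and the main point to check carefully — is the type $\tD$ case, where the explicit formula carries an additional correction term $-\tfrac12\sum_{(i,j)=(0,0)}(\sum+\sum)a_{xy}$ coming from the constraint $\det A = 1$. To absorb this correction cleanly into the compact form $\tfrac12\sum a^\dagger_{ij}(\sum+\sum)a_{xy}$, I would set $a^\dagger_{00}=a^\natural_{00}-1$ and $a^\dagger_{ij}=a_{ij}$ otherwise, and verify term by term that the shift at the single entry $(0,0)$ produces exactly the subtracted sum. The anticipated obstacle, if any, is bookkeeping discipline with the half-integer coefficient $a^\natural_{00}=\lfloor a_{00}/2\rfloor$ when $a_{00}$ is odd, to ensure that the remark following the theorem — the equivalent formulation via $a^\dagger_{ij}$ — is genuinely an identity rather than only a formal match.
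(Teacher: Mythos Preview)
Your plan is correct and matches the paper's approach: the paper states Theorem~\ref{thm:A1} as a summary of the five propositions in the preceding subsections and gives no separate proof. Your extra step of spelling out the swap symmetry $(i,j)\leftrightarrow(x,y)$ for types $\tA$ and $\taffA$ (and invoking periodicity to return to the fundamental domain in the affine case) is exactly the check the paper leaves implicit, and your handling of the type $\tD$ correction via $a^\dagger_{00}=a^\natural_{00}-1$ is precisely the content of the Remark following that proposition.
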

\rmk
It is known that a matrix $A\in \cO_{n,d}$ corresponds to a triplet $(\ld, g, \mu)$ where $\ld$ and $\mu$ label parabolic subgroups $W_\ld, W_\mu$ of the Weyl group $W$, and $g$ is the minimal length representative in the double coset $W_\ld g W_\mu$.
Hence, Theorem \ref{thm:A1} provides a geometric interpretation for the double coset representatives in the corresponding Weyl group.
\endrmk
%----------------------------------------------------------------------------
\section{Length formulas of Weyl groups}
%----------------------------------------------------------------------------
The Weyl groups are important examples of the Coxeter groups,
which admit length functions that count the total number of simple reflections in a reduced expression.
It is well-known that classical Weyl groups are identified with certain groups of (signed) permutations,
and hence the lengths are obtained by counting the inversions.
By using Theorem~\ref{thm:A1}, we are able to obtain a length formula
for finite and affine classical groups, which we believe much simpler than previous ones available in literature.

%----------------------------------------------------------------------------
\subsection{Length formulas}
%----------------------------------------------------------------------------
Realizations of  affine Weyl groups as infinite permutation groups were first mentioned by Lusztig \cite{Lu83} and further studied by B\'edard \cite{B86}, Shi \cite{Shi94} and Bj\"oner-Brenti \cite{BB96}.
A unified study of infinite permutation groups can be found in \cite{EE98},
in which the affine inversions are described.

We first recall some standard combinatorial statistics following \cite{BB05}.
For any integer interval $I \subseteq \ZZ$, denote by $\Perm I$ the set of permutations on $I$.
For $I \in \{ [1..d], [-d..d], \ZZ\}$ and $g \in \Perm I$, we define
\eq\label{def:neg}
\tneg(g) = {}^\sharp\{ i\in [1..d] | g(i) <0\},
\quad
\tnsp(g) = {}^\sharp\{(i,j) \in [1..d] | i+j<0\}.
\endeq
Denote the numbers of type $\tA/\tB$ inversions for $g\in \Perm I$ by
\eq\label{def:inv}
\tinv(g) = {}^\sharp\{(i,j)| i<j, g(i) > g(j)\},
\quad
\tinv_\tB(g) = \tinv(g) + \tneg(g) + \tnsp(g).
\endeq
We further define the sum of entries in the ``upper-right corner'' of the $(k,k)$th entry by
\eq
\ur_{k}(g) = {}^\sharp\{ i \in \ZZ_{<k}| g(i) > k\}.
\endeq
For finite and affine classical types, we identify the Weyl groups with (infinite) permutations as below:
%\begin{align}
\begin{equation}
\label{def:W}
\begin{split}
%\label{def:WA}
W^\tA_{d-1} &= \{ g \in \Perm[1..d]\},
\\
%\label{def:WB}
W^\tB_d &= \{ g \in \Perm[-d..d]| g(-i) = -g(i)\},
\\
%\label{def:WD}
W^\tD_d &= \{ g \in W^\tB_d| \ur_0(g) \textup{ is even} \},
\\
%\label{def:WaA}
W^{\taffA}_d &\textstyle= \{ g \in \Perm\ZZ| g(i+d) = g(i)+d, \sum_{i=1}^d g(i) ={d+1 \choose 2}\},
\\
%\label{def:WaC}
W^{\taffC}_d &= \{ g \in \Perm\ZZ| g(i+2d) = g(i)+2d, g(-i) = -g(i)\},
\\
%\label{def:WaB}
W^{\taffB}_d &= \{ g \in W^{\taffC}_d| \ur_d(g)  \textup{ is even} \},
\\
%\label{def:WaD}
W^{\taffD}_d &= \{ g \in W^{\taffC}_d| \ur_0(g),\ur_d(g) \textup{ are even}\}.
\end{split}
\end{equation}
%\end{align}

%----------------------------------------------------------------------------
\begin{prop}\label{prop:A2}
Let $\tinv, \tnsp, \tinv_\tB$ be functions defined as in \eqref{def:neg} -- \eqref{def:inv}.
Let $\ell^{\tX}$ be the length function on the Weyl group $W^\tX_d$ (see \eqref{def:W}) of type $\tX_d$. Then
\begin{align*}
\ell^\tA(g) &= \tinv(g),
\\
\ell^\tB(g) &= \tinv(g)%{}^\sharp\{(i,j)\in [d]^2 ~|~ \substack{i<j \\g(i) > g(j)}\}
			+{}^\sharp\{(i,j)\in [d]^2 ~|~ \substack{i\leq j \\g(-i) > g(j)}\},
\\
\ell^\tD(g)&= \tinv(g)%{}^\sharp\{(i,j)\in [d]^2 ~|~ \substack{i<j \\g(i) > g(j)}\}
			+{}^\sharp\{(i,j)\in [d]^2 ~|~ \substack{i< j \\g(-i) > g(j)}\},
\\
\ell^{\taffA}(g)&=\textstyle \sum\limits_{1\leq i <j \leq d} \left|\left\lfloor\frac{g(j) - g(i)}{d}\right\rfloor\right|,
\\
\ell^{\taffC}(g)&=\textstyle \tinv_\tB(g) + \sum\limits_{1\leq i \leq j \leq d} \left(\left\lfloor\frac{|g(j) - g(i)|}{d}\right\rfloor + \left\lfloor\frac{|g(j) + g(i)|}{d}\right\rfloor\right),
\\
\ell^{\taffB}(g)&=\textstyle  \tinv_\tB(g) + \sum\limits_{1\leq i < j \leq d} \left(\left\lfloor\frac{|g(j) - g(i)|}{d}\right\rfloor + \left\lfloor\frac{|g(j) + g(i)|}{d}\right\rfloor\right) + \sum\limits_{1\leq i \leq d} \left\lfloor\frac{|g(i)|}{d}\right\rfloor,
\\
\ell^{\taffD}(g)&=\textstyle \tinv(g) + \tnsp(g) + \sum\limits_{1\leq i < j \leq d} \left(\left\lfloor\frac{|g(j) - g(i)|}{d}\right\rfloor + \left\lfloor\frac{|g(j) + g(i)|}{d}\right\rfloor\right).
\end{align*}
\end{prop}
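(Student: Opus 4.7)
The plan is to verify each of the seven formulas by the standard induction on length in a Coxeter group: one checks that the right-hand side vanishes at the identity, and then shows that multiplying $g$ on the right (or left) by any simple reflection $s_i$ changes the right-hand side by exactly $\pm 1$, with the sign matching whether $\ell(gs_i) = \ell(g) \pm 1$. Combined with $\ell(g) \ge 0$, this forces equality with $\ell^\tX(g)$. Since all of these formulas are essentially in the literature, much of the work consists in translating between conventions (for example, our realization of $W^{\taffC}_d$ via permutations of $\mathbb Z$ with two fixed points per period, versus the conventions of \cite{BB05, EE98}).

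For the finite types, the formulas for $\ell^\tA, \ell^\tB, \ell^\tD$ are classical: see \cite[Proposition~1.5.2, Proposition~8.1.1, Proposition~8.2.1]{BB05}. Here the inversion statistic $\tinv(g)$ handles the type A part, while the additional counts of pairs $(i,j)$ with $g(-i) > g(j)$ capture the contribution of the ``negative'' simple reflections $s_0$ (type B) and $s_0^D$ (type D); the only difference between the B and D formulas is whether the diagonal $i=j$ contributes, which reflects the presence or absence of the short simple root.

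For the affine types, the strategy is the same but the combinatorics of inversions must be counted ``per period.'' For affine A, the formula is \cite[Proposition~8.3.1]{BB05}, and for affine C it is \cite[(8.44)]{BB05} (which, after applying the embedding $\iota$ of \eqref{iota}, agrees with the formula stated here; this is exactly the computation carried out in the proof of Lemma~\ref{lem:l(g)} above). The affine B and D formulas are due to Eriksson--Eriksson \cite{EE98}, where they are obtained by restricting the affine C formula to the subgroup characterized by the parity condition on $\ur_d(g)$ (resp.\ $\ur_0(g)$), together with a direct verification that the simple reflections $s_0, s_d$ that break the type-C symmetry act by $\pm 1$ on the proposed right-hand side.

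The main obstacle is purely notational: matching our realization \eqref{def:W} to the conventions of \cite{BB05, EE98} (they work with $\~{S}^C_d$, which fixes $\mathbb Z\setminus(d+1+D\mathbb Z)$, while we use $\mrm{Perm}^\fc(\mathbb Z)$, which fixes two points per period). Once the order-preserving bijection $\iota$ of \eqref{iota} is set up, the floor functions $\lfloor |g(j)\pm g(i)|/(D-1)\rfloor$ of \cite{BB05} become $\lfloor |g(j)\pm g(i)|/d \rfloor$ here, as verified in the proof of Lemma~\ref{lem:l(g)}. No new ideas are required beyond this translation and a routine case analysis on simple reflections.
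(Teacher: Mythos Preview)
Your proposal is correct and essentially matches the paper's own treatment: the paper's proof of this proposition is simply the one-line citation ``See \cite{EE98}.'' You give considerably more detail (the standard induction-on-length argument and specific pointers into \cite{BB05}), which is fine and accurate.

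One small caveat: your discussion of the bijection $\iota$ from \eqref{iota} and the translation carried out in Lemma~\ref{lem:l(g)} is not quite the right bridge here. The appendix realization \eqref{def:W} of $W^{\taffC}_d$ uses period $2d$ (with fixed points at $0$ and $d$), which is \emph{not} the same convention as the main body (period $D=2d+2$). So the formulas in this proposition, with denominator $d$, are really the Eriksson--Eriksson / Bj{\"o}rner--Brenti formulas stated directly in their own conventions, not obtained via $\iota$. This does not affect correctness, since you correctly identify \cite{EE98} as the source; it just means the notational translation you describe is unnecessary for this particular proposition.
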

\proof
See \cite{EE98}.
\endproof
 %----------------------------------------------------------------------------
%Our main result (Theorem~\ref{thm:main}) is establishing the generalized length formulas for finite and affine classical type, which  equals to the dimension of generalized Schubert variety.
%As a corollary, we obtain an alternative formulation (Theorem~\ref{thm:main2}) of the length formulas above (Proposition~\ref{prop:l}).

%For any integer interval $X \subseteq \ZZ$, denote by the set of $X\times X$ permutation matrices by
%\eq
%\Pi_X := \{A =(a_{ij}) \in \tMat_{X\times X}([1]) |
%\textstyle
%\sum_{k} a_{ik} = 1 = \sum_{k} a_{kj} \textup{ for all }i,j \in X\}.
%\endeq
%For $A \in \Pi_X$, denote the sum of entries in the ``upper-right'' corner by
%\eq
%\ur(A) = \sum_{i<0<j} a_{ij}.
%\endeq

%\begin{align}
%W^\tA_{d-1} &= \Pi_{[d]},
%\\
%W^\tB_d &= \{ A \in \Pi_{[-d,d]}| a_{ij} = a_{-i,-j}\},
%\\
%W^\tD_d &= \{ g \in W^\tB_d| \ur(A) \in 2\NN \},
%\\
%W^{\taffA}_d &= \{ A \in \Pi_{\ZZ} | a_{ij} = a_{i+d,j+d}, \},
%\\
%W^{\taffC}_d &= \{ g \in \Perm[d+1]\},
%\\
%W^{\taffB}_d &= \{ g \in \Perm[d+1]\},
%\\
%W^{\taffD}_d &= \{ g \in \Perm[d+1]\},
%\end{align}
%----------------------------------------------------------------------------
\subsection{New length formulas}
%----------------------------------------------------------------------------
As an application of our dimension formulas (Theorem~\ref{thm:A1}), we achieve new length formulas for finite and affine Weyl groups in a symmetrized fashion.

We introduce the symmetrized inversion function  $\tinv_{I\times J}$ on integer intervals $I,J \subseteq \ZZ$ defined by
\eq\label{def:sinv}
\tinv_{I\times J}(g) = \textstyle\frac{1}{2}{}^\sharp
\left\{
	(i,j)\in I \times J
\middle|
	\substack{i<j \\ g(i) > g(j)} ~\tor \substack{i>j \\ g(i) < g(j)}
\right\}.
\endeq
\rmk
Note that $\ur_0(g) \geq \tneg(g)$, and the equality holds when $I = [-d..d]$.
Assuming $g(-i) = -g(i)$, we have
\eq
\ur_0(g) = \tinv_{\{0\}\times I}.
\endeq
Assuming further $g(i+2d) = g(i) + 2d$, we have $g(d+i) = -g(-d-i) = -g(d-i)$ and hence
\eq
\ur_d(g) = \tinv_{\{d\}\times I}.
\endeq
Therefore, the presentations of the Weyl groups of finite and affine classical types in \eqref{def:W} can be characterized alternatively using the symmetrized inversions.
\endrmk

For finite and affine classical types (except $\taffB$ and $\taffD$),
it is known that the $G$-orbits in the set of pairs of complete flags is in bijection with the set $\Sigma_d$ of permutation matrices in $\cO_{d,d}$.
When $X^L_A$ is a (genuine) Schubert variety, namely, $A$ is a permutation matrix, the dimension $\^{\ell}(A)$ coincides with the length $\ell(g)$, where $g$ is the corresponding permutation in the Weyl group.
%----------------------------------------------------------------------------
\begin{thm}\label{thm:A2}
Let $\tinv_{I\times J}$ be the symmetrized inversion function as in \eqref{def:sinv}.
The length functions admit the following symmetrized formulation:
\begin{align}
\label{eq:lA(g)}\ell^\tA(g) &= \tinv_{[1..d]\times[1..d]}(g),
\\
\ell^\tB(g) &= \tinv_{[1..d]\times[-d..d]}(g),
\\
\ell^\tD(g)&= \tinv_{[1..d]\times [-d..d]}(g) - \tinv_{\{0\}\times [-d..d]}(g),
\\
\ell^{\taffA}(g)&=\tinv_{[1..d]\times\ZZ}(g),
\\
\label{eq:laC(g)}\ell^{\taffC}(g)&= \tinv_{[1..d]\times\ZZ}(g),
\\
\ell^{\taffB}(g)&= \tinv_{[1..d]\times\ZZ}(g) - \tinv_{\{0\}\times \ZZ}(g),
\\
\ell^{\taffD}(g)&= \tinv_{[1..d]\times\ZZ}(g) - \tinv_{\{0,d\}\times \ZZ}(g).
\end{align}
\end{thm}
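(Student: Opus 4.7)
The plan is to deduce each identity either from Theorem \ref{thm:A1} specialized at permutation matrices (for types $\tA$, $\tB$, $\tD$, $\taffA$, $\taffC$) or from the standard formulas in Proposition \ref{prop:A2} by direct combinatorial manipulation (for $\taffB$, $\taffD$).

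First I would treat finite type $\tA$. For $g \in W^\tA_{d-1}$, the associated permutation matrix $A_g = (\delta_{i,g(j)}) \in \cO^\tA_{d,d}$ satisfies $\^\ell^\tA(A_g) = \ell(g)$ since $X^L_{A_g}$ is a genuine Schubert cell. All nonzero entries equal $1$ so $a_{ij}^\natural = a_{ij}$, and the symmetrized sum from Theorem \ref{thm:A1} counts each unordered pair of nonzero entries in NE/SW position exactly twice (once from each endpoint) --- precisely the inversions of $g$. This gives \eqref{eq:lA(g)}. The finite $\tB$ and $\tD$ cases proceed analogously with centro-symmetric permutation matrices in $\cO^\tB_{n,d}$ and $\cO^\tD_{n,d}$: the index set $I^\tB$ covers a half-period; the weight $a_{ij}^\natural$ halves the $(0,0)$ contribution correctly so that centro-symmetry folds the half-period sum into $\tinv_{[1..d]\times[-d..d]}(g)$; in type $\tD$ the correction $a_{00}^\dagger = a_{00}^\natural - 1$ removes exactly $\tinv_{\{0\}\times[-d..d]}(g)$. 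For affine $\tA$ the same argument applies with $I^{\taffA} = [1..n]\times\ZZ$ covering one period of the periodic permutation matrix.

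For affine type $\tC$, a preliminary step is required because Theorem \ref{thm:A1} is stated in the paper's convention (period $D = 2d+2$ with forced fixed points at $0$ and $d+1$) while Theorem \ref{thm:A2} uses the Bj\"orner--Brenti convention (period $2d$). The length-preserving bijection \eqref{iota}--\eqref{eq:iota} bridges the two, and the $\taffC$ case then becomes essentially Lemma \ref{lem:l(g)} restated. The cases $\taffB$ and $\taffD$ are not covered by Theorem \ref{thm:A1} and must be handled directly from Proposition \ref{prop:A2}. The foundational identity is
\[
\tinv_{[1..d]\times\ZZ}(g) = \tinv_\tB(g) + \sum_{1\le i \le j \le d}\left(\left\lfloor\tfrac{|g(j)-g(i)|}{2d}\right\rfloor + \left\lfloor\tfrac{|g(j)+g(i)|}{2d}\right\rfloor\right)
\]
for $g \in W^{\taffC}_d$, obtained by partitioning the pairs $(i,y) \in [1..d]\times\ZZ$ according to the period in which $y$ lies and comparing with $\tinv_\tB$ applied period by period. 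Using $\ur_0(g) = \tinv_{\{0\}\times\ZZ}(g)$ and $\ur_d(g) = \tinv_{\{d\}\times\ZZ}(g)$ --- each a finite sum thanks to the periodicity and centro-symmetry --- the $\taffB$ and $\taffD$ identities then follow by subtracting off the corresponding fixed-row contributions from the $\taffC$ formula.

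The main obstacle will be the bookkeeping for the row indexed by $d$ in the $\taffB$/$\taffD$ cases. Unlike row $0$, which is pointwise fixed by the centro-symmetry condition, row $d$ is only quasi-fixed: from $g(-i) = -g(i)$ and $g(i+2d) = g(i)+2d$ one derives $g(d+i) = -g(d-i) + 2d$, so $\tinv_{\{d\}\times\ZZ}(g)$ counts inversions across a centro-antisymmetric axis, and identifying its contribution with the $\lfloor |g(i)|/d \rfloor$ summand in Proposition \ref{prop:A2} requires careful unwinding of definitions. Beyond this one technical identification, the rest of the proof is an orderly reduction of sums of floors to counts of periodic inversions.
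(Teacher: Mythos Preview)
Your proposal is correct and follows essentially the same route as the paper's proof: for types $\tA$, $\tB$, $\tD$, $\taffA$, $\taffC$ you specialize the dimension formulas of Theorem~\ref{thm:A1} to permutation matrices exactly as the paper does, and for $\taffB$, $\taffD$ you reduce to the $\taffC$ case by identifying the correction term as $\tinv_{\{0\}\times\ZZ}$ or $\tinv_{\{d\}\times\ZZ}$. The only minor difference is that the paper cites \cite[(8.65), (8.76)]{BB05} directly for the explicit differences $\ell^{\taffC}-\ell^{\taffB}$ and $\ell^{\taffB}-\ell^{\taffD}$, whereas you propose to extract these from the formulas in Proposition~\ref{prop:A2}; your re-derivation of the ``foundational identity'' is redundant since the $\taffC$ case is already established, but harmless.
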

%----------------------------------------------------------------------------
\proof
We start with type $\tX \neq \taffB, \taffD$.
The dimension of a permutation matrix $A \in \Sigma_d$ is a symmetrized sum
\eqs
\^{\ell}^{\tX}(A) = \frac{1}{2}\sum_{(i,j) \in I^{\tX}} a^\bullet_{ij} \sum_{\substack{x\leq i\\ y>j} ~\tor \substack{x\geq i\\ y<j}} a_{xy}
\quad
(\bullet = \natural ~\tor \dagger),
\endeqs
Since $A$ is a permutation matrix, there is only one nonzero entry in each row. Hence,
\eq\label{eq:lX(A)}
\^{\ell}^{\tX}(A) = \frac{1}{2}\sum_{(i,j) \in I^{\tX}} a^\bullet_{ij} \sum_{\substack{x< i\\ y>j} ~\tor \substack{x> i\\ y<j}} a_{xy}.
\endeq
Let $\Sigma^\tX_d$ be the subset of permutation matrices in $\cO^\tX_{d,d}$. We have the  identification below:
\eqs
W^\tX_d \to \Sigma^\tX_d,
\quad
g \mapsto \sum_i E_{g(i),i}.
\endeqs
Under the identification, the right hand side of \eqref{eq:lX(A)} is exactly the symmetrized inversion in \eqref{eq:lA(g)} -- \eqref{eq:laC(g)}.

For types $\taffB$ and $\taffD$, the lengths formulas are closely related to the length formula for affine type C (cf. \cite[(8.65), (8.76)]{BB05}).
Hence, we have
\eqs
\ell^{\taffB} (g) = \ell^{\taffC} (g) - \#\{a\in\ZZ| a\leq d, g(a) \geq d+1\},
\endeqs
and
\eqs
\ell^{\taffD}(g) =
\ell^{\taffB} (g)- \#\{a\in\ZZ| a\leq 0, g(a) \geq 1\}.
\endeqs
The theorem then follows from a direct calculation.
\endproof

\clearpage
%\printindex{Index of Notation}
%\addcontentsline{toc}{chapter}{Index}

\end{document}

%%%%%%%
\appendix
%%=========================================================
\section{Formulas for length functions of affine Weyl groups}
\label{sec:length}

In the literature, the length function for the Weyl group $W$ of affine type C has been
studied in \cite{Shi94} and ~\cite{EE98}; see \eqref{eq:old l(g)}.
The purpose of this appendix is to give geometric explanations of length formulas
for affine Weyl groups of type $B, C, D$ in terms of permutation $\ZZ\times \ZZ$-matrices.
%The length formula for affine type $C$ is identical with  \eqref{eq:l(g)}  and is used in the paper,
It is convenient to include the affine type $B, D$ cases as well.

%%%%%
\subsection{Formula for length function of affine type $C$}
For $A \in \Xi_{n,d}$, we define
\begin{align} \label{eqda}
d_A
& = \frac{1}{2} \Big(\sum_{\substack{i\geq k, j<l \\ i\in [0, n-1 ]} } a_{ij} a_{kl}
-  \sum_{i\geq 0  >j } a_{ij}
- \sum_{i\geq  r + 1 >j } a_{ij}
\Big).
\end{align}
As shown in \cite[(4.1.1), Lemma~4.1.1]{FLLLW},
$d_A$ is the dimension of a $generalized$ affine Schubert variety $X^L_A$.

It is well known that the dimension of an affine Schubert variety is equal to the length of the corresponding element in the affine Weyl group $W_{\widetilde C_d}$.
Hence, the integer $d_A$ can be used to define the length function for $W_{\widetilde C_d}$ when $X_A^L$ is a
(genuine) affine Schubert variety.

Let $\Y^{\fc}$ be the ind-variety of affine complete flags as in \cite[(3.1.5)]{FLLLW}. It is also known in \cite[Proposition~3.1.3 and (4.2.7)]{FLLLW} that the $\SP_F(V)$-orbits in $\Y^\fc \times \Y^\fc$ is parametrized by
\begin{align}
 \label{SigmaB}
 \begin{split}
 \Sigma_d = \Big\{ A \in  \text{Mat}_{\ZZ\times \ZZ} (\{0,1\}) \Big \vert
 & a_{-i, -j} = a_{i j} =a_{i+2d+2,j+2d+2} \;  (\forall i, j \in \mbb Z),
 \\
& \exists \text{ exactly one nonzero entry per row/column}
\Big\}.
\end{split}
\end{align}
Given any $A\in \Sigma_{d}$, we  define a function  $\pi_A: \mbb Z \to \mbb Z$ by letting
\[
\pi_A: \mbb Z \longrightarrow  \mbb Z, \qquad
i \mapsto \pi_A(i)=j,\quad {\rm if}\ a_{ji}=1.
\]
Clearly $\pi_A$ is invertible with $\pi_A^{-1} = \pi_{^t \! A}$. % where $^t A$ is the transpose of $A$.
It follows by the definition of $\Sigma_d$ that $\pi_A$  satisfies
\[
\pi_A( i ) + \pi_A (-i) =0,\quad
\pi_A (i) + \pi_A(n -i) = n, \quad \forall i\in \mbb Z,
\]
which are the defining relations for  the affine Weyl group $W$  in the presentation \cite{Shi94} and \cite[5.1.4]{EE98}.
Note that the presentation used in ~\cite{B86} and ~\cite{BB05} is different  and  less symmetric.
We have a bijection
\begin{equation}
  \label{WeylC}
\Sigma_{d} \overset{\cong}{\longrightarrow} W,  \qquad
A \mapsto \pi_A.
\end{equation}
In other word, $\Sigma_d$ is a perambulation matrix realization of the affine Weyl group $W$.
Below we shall use a shorthand notation
\[
D=2d+2.
\]

Recall  from \cite[Theorem 15, 8.1.4]{EE98} that  the length  $\ell^{\mbf C}(\pi_A)$ is given by
\begin{align}
\label{EE-l}
\ell^{\mbf C}(\pi_A)
= \sum_{1\leq i<j\leq D} \left |    \left \lfloor\frac{\pi_A^{-1}(j)-\pi_A^{-1}(i)}{D} \right \rfloor \right |
+\sum_{1\leq j\leq i\leq D} \left |  \left \lfloor\frac{\pi_A^{-1}(j)+\pi_A^{-1}(i)}{D} \right \rfloor \right |,
\end{align}
where $|\cdot |$ denotes the absolute value.
Recall $d_A$ from (\ref{eqda}).

\begin{prop}
We have $\ell^{\mbf C}(\pi_A)=d_A$ for all $A\in \Sigma_{d}$.
\end{prop}

\begin{proof}
We start with the following observation: for any $a,  b\in \mbb Z$, w have
\begin{align}
\label{floor}
\left | \left \lfloor \frac{a - b }{D} \right \rfloor \right |
  =\# \big\{  s \in \mathbb N | b > a + sD \big \} + \# \big \{ s \in \mbb Z_{>0} | a \geq b+ sD \big\}.
\end{align}
By using the definition of $\pi_A$ and
(\ref{floor}), the formula (\ref{EE-l})  can be reorganized as follows:
\begin{align}
\label{ltA}
\begin{split}
\ell^{\mbf C}(\pi_A) & =
\Bigg( \sum_{s \in \mbb Z_{>0}}  \sum_{\overset{1\leq i\leq j\leq d}{k + sD < l}}
+ \sum_{s \in \mbb{N}}  \sum_{\overset{1\leq i\leq j\leq d}{k > l + sD}}
+ \sum_{s \in \mbb Z_{>0}} \sum_{\overset{1\leq i\leq j\leq d}{k+l > s D}}
+\sum_{s \in \mbb{N}} \sum_{\overset{1\leq i\leq j\leq d}{k+l < - s D}} \Bigg ) a_{i k}a_{j l}\\
& =
\left(
\sum_{s \in \mbb Z_{>0}}
\Bigg(  \sum_{\overset{1\leq i\leq j\leq d}{k+ sD<l}}
+\sum_{\overset{1\leq i\leq j\leq d}{k> l + s D}}
+ \sum_{\overset{1\leq j\leq i\leq d}{ k+l>(s+1) D}}
+ \sum_{\overset{1\leq i\leq j\leq d}{k+l<(1- s) D}}
\Bigg )
+ \sum_{\overset{1\leq j\leq i\leq d}{ k<l}}
+\sum_{\overset{1\leq i\leq j\leq d}{ k+l> D}}
\right ) a_{i k}a_{j l}.
\end{split}
\end{align}
We can rewrite $d_A$ in \eqref{eqda} in the following form:
\begin{align}
d_{A}
& =\frac{1}{2} \Bigg(\sum_{\overset{1\leq i\leq D}{i\geq j, k<l}}a_{i k}a_{jl}
  -\sum_{i\geq 0>k} a_{i k}-\sum_{i\geq r + 1>k} a_{i k} \Bigg )
  \notag \\
& =\frac{1}{2} \Bigg( \sum_{\overset{1\leq i\leq d}{i\geq j, k<l}}
 +\sum_{\overset{d+2\leq i\leq D}{i\geq j, k<l}} \Bigg ) a_{i k}a_{j l}
 =\frac{1}{2} \Bigg( \sum_{\overset{1\leq i\leq d}{i\geq j, k<l}}
 +\sum_{\overset{1\leq i\leq d}{i\leq j, k>l}} \Bigg ) a_{i k}a_{jl}.
   \label{dAsum}
\end{align}
We can expand the two summations  on the right-hand side above as follows
(where we drop the summands $a_{i k} a_{jl}$ for the sake of simplicity):
\begin{align}
\label{term-1}
\sum_{\overset{1\leq i\leq d}{i\geq j, k<l}}
& =  \sum_{\overset{1\leq j\leq i\leq d}{ k<l}}
+ \sum_{s \in \mbb Z_{>0}} \Bigg(  \sum_{\overset{1\leq j+ s D \leq i\leq d}{k<l}}
+  \sum_{\overset{1\leq i< j+ sD \leq d}{k<l}}
+ \sum_{\overset{1\leq i\leq d < j + sD \leq D}{k<l}} \Bigg )  ,\\
\label{term-2}
\sum_{\overset{1\leq i\leq d}{i\leq j, k>l}}
& =  \sum_{\overset{1\leq i \leq j \leq d}{ k > l}}
+ \sum_{\overset{1\leq i\leq d< j\leq D}{ k>l}}
+ \sum_{s\in \mbb Z_{>0}}  \Bigg( \sum_{\overset{1\leq j- sD\leq i\leq d}{ k>l}}
+  \sum_{\overset{1\leq i< j-sD\leq d}{ k>l}}
+  \sum_{\overset{1\leq i\leq d< j- sD \leq D}{ k>l}} \Bigg ).
\end{align}
The last term on the right-hand side of (\ref{term-1}) can be further expanded as
\begin{align}
\label{term-1-last}
\sum_{s \in \mbb Z_{>0}} \sum_{\overset{1\leq i\leq d< j+ sD \leq D}{k<l}} a_{ik} a_{jl}
= \sum_{s \in \mbb Z_{> 0}}  \sum_{\overset{1\leq i\leq d}{k<(1-s)D }} a_{i k}
 +\sum_{s \in \mbb Z_{>0}} \sum_{\overset{1\leq i\leq d, 1\leq j\leq d + 1}{k+l<(1-s) D}} a_{i k}a_{j l}.
\end{align}
By combining \eqref{dAsum}, (\ref{term-1}), (\ref{term-2}) and (\ref{term-1-last}), we have
\begin{align*}
\begin{split}
d_{A}
& =
\left(
\sum_{s \in \mbb Z_{>0}}
\Bigg(  \sum_{\overset{1\leq i\leq j\leq d}{k + sD<l}}
+\sum_{\overset{1\leq i\leq j\leq d}{k > l + s D}}
+ \sum_{\overset{1\leq j\leq i\leq d}{ k+l > (s+1) D}}
+ \sum_{\overset{1\leq i\leq j\leq d}{k+l < (1-s) D}}
\Bigg )
+ \sum_{\overset{1\leq j\leq i\leq d}{ k<l}}
+\sum_{\overset{1\leq i\leq j\leq d}{ k+l> D}}
\right ) a_{i k}a_{j l} \\
&  +
\sum_{s \in \mbb Z_{>0}}
\Bigg(
\sum_{\overset{1\leq i\leq d}{k< (1-s) D}}
+\sum_{\overset{1\leq i\leq d}{k<d + 1- s D}}
-\sum_{\overset{1\leq i\leq d}{2k<(1 - s) D}}
+\sum_{\overset{1\leq i\leq d}{k > s D}}
+\sum_{\overset{1\leq i\leq d}{k > sD+d + 1}}
-\sum_{\overset{1\leq i\leq d}{2k > (s+1) D}} \Bigg )   a_{i k}.
\end{split}
\end{align*}
Observe that  the above sum with the summand $a_{ik}$ vanishes,  and hence  $d_{A} = \ell^{\mbf C}(\pi_A)$ by  (\ref{ltA}). The proposition is proved.
\end{proof}

%%%%%
\subsection{Formulas for length functions of affine type $B$ and $D$}

The presentations for affine Weyl groups of type $B$ and $D$ from ~\cite{EE98} in terms of permutations on $\ZZ$ subject to various conditions
can be easily converted to presentations in matrix forms, via $A \mapsto \pi_A$ as in \eqref{WeylC}.

Let us convert the formulas of the length functions in affine type $B$ and $D$ from ~\cite{EE98} into matrix forms.
Recall from ~\cite[8.2.3]{EE98} that we have
\[
\ell^{\mbf D} (\pi_A) = \ell^{\mbf C} (\pi_A) - \sum_{1\leq i\leq d} \left | \left \lfloor \frac{2 \pi_A^{-1} (i)}{D} \right \rfloor \right |,
\]
where $\ell^X$ denotes the length function of affine type $X$. By using (\ref{floor}), we have
\[
\sum_{1\leq i\leq d} \left | \left \lfloor \frac{2 \pi_A^{-1} (i)}{D} \right \rfloor \right |
=\sum_{i \geq 0 > j} a_{ij} + \sum_{i \geq d + 1 > j} a_{ij}.
\]
So we have
\begin{align}
\ell^{\mbf D} (\pi_A)
=\frac{1}{2} \Bigg (\sum_{\overset{1\leq i\leq d}{i\geq k, j<l}}a_{i j} a_{kl}
  - 3 \sum_{i\geq 0> j} a_{i j} - 3\sum_{i\geq d + 1> j} a_{i j} \Bigg ).
\end{align}

Recall from ~\cite[8.2.2]{EE98} that
\[
\ell^{\mbf B}(\pi_A) =
\ell^{\mbf C} (\pi_A) - \sum_{1\leq i\leq d} \left | \left \lfloor \frac{2 \pi_A^{-1} (i)}{D} \right \rfloor \right |
+ \sum_{1\leq i\leq d} \left | \left \lfloor \frac{ \pi_A^{-1} (i)}{D} \right \rfloor \right |.
\]
We have
\[
\sum_{1\leq i\leq d} \left | \left \lfloor \frac{ \pi_A^{-1} (i)}{D} \right \rfloor \right |
=\sum_{i \geq 0 > j} a_{ij}.
\]
Thus the length function for type $B$ is given by
\begin{align}
\ell^{\mbf B}(\pi_A) =
\frac{1}{2} \Bigg (\sum_{\overset{1\leq i\leq D}{i\geq k, j <l}}a_{i j}a_{k l}
-  \sum_{i\geq 0> j} a_{i j} - 3\sum_{i\geq d + 1> j} a_{i j} \Bigg ).
\end{align}

%Let $\Theta_{d,d}$ be the set of all  matrices
%$A=(a_{ij})_{i,j \in \mbb Z}$ with non-negative integer entries
%satisfying the following conditions:
%\begin{equation}
%   \label{Theta:dn}
%(i) \; a_{ij}=a_{i+d, j+d} \; (\forall i, j\in \mbb Z); \quad (ii) \; \sum_{i=i_0}^{i_0+d-1} \sum_{j\in \mbb Z} a_{ij}=d, \text{ for each (or for all) }i_0 \in \mbb Z.
%\end{equation}

For the sake of completeness, let us include a discussion of affine type $A$ here.
The presentation of Weyl group of affine type $A$ in terms of permutations on $\ZZ$ was due to Lusztig.
It can be realized as a subset of $\Tt_n$, which consists of
permutation matrices $A=(a_{ij}) \in \text{Mat}_{\ZZ\times\ZZ}(\{0,1\})$ such that $a_{ij} =a_{i+d,j+d}$ for all $i,j$ and that $\sum_{i=1}^{d} \sum_{j\in \ZZ} a_{ij}=d$.
One can check that
\begin{align}
 \label{lengthA}
\ell^{\mbf A} (\pi_A) =\sum_{1 \leq i < k \leq d} \left | \left \lfloor
\frac{\pi_A^{-1} (i) -\pi_A^{-1} (k)}{D}
\right \rfloor \right |
=\sum_{\substack{i\geq k, j<l \\ i\in [1.. d]} }
a_{ij} a_{kl}.
\end{align}
Indeed the formula on the right-hand side of \eqref{lengthA} was given in \cite{Lu99}.

%-----------------------------------------------------------------------------------------------------------